%% file: main.tex
\documentclass{article}

\usepackage{arxiv}

\usepackage[utf8]{inputenc}
\usepackage[T1]{fontenc}
\usepackage[english]{babel}
\usepackage{amsmath,amssymb,amsthm}
\usepackage{graphicx}
\usepackage{array}
\usepackage{booktabs}
\usepackage{enumitem}
\usepackage{microtype}
\usepackage[hidelinks]{hyperref}
\graphicspath{{figures/}}

\theoremstyle{plain}
\newtheorem{theorem}{Theorem}[section]
\newtheorem{lemma}[theorem]{Lemma}
\newtheorem{proposition}[theorem]{Proposition}
\newtheorem{corollary}[theorem]{Corollary}

\theoremstyle{definition}
\newtheorem{definition}[theorem]{Definition}
\newtheorem{convention}[theorem]{Convention}
\newtheorem{example}[theorem]{Example}

\theoremstyle{remark}
\newtheorem{remark}[theorem]{Remark}

\theoremstyle{plain}
\newtheorem{mainthm}{Theorem}

\newtheorem*{assumptionH}{Assumption (H)}

\newenvironment{displaystatement}
  {\begin{list}{}{\setlength{\leftmargin}{1.2em}\setlength{\rightmargin}{0em}%
                  \setlength{\listparindent}{0pt}\setlength{\itemindent}{0pt}}%
   \item\relax}
  {\end{list}}

\newcommand{\ord}{\operatorname{ord}}

\numberwithin{equation}{section}

\newcommand{\tabtext}[1]{\footnotesize #1}

\def\mscname{{\bfseries \emph{2020 Mathematics Subject Classification}}}
\def\msc#1{\par\addvspace\medskipamount{\rightskip=0pt plus1cm
  \noindent\mscname\enspace\ignorespaces#1\par}}

\setlist[enumerate]{topsep=2pt,itemsep=1pt,leftmargin=2em}
\setlist[itemize]{topsep=2pt,itemsep=1pt,leftmargin=1.6em}

\title{Unsigned Frenet Data of Closed Space Curves:\\ Exact Fibres, Generic Rigidity, and Conditional Stability}

\author{%
  JiYe Liu \\
  School of Artificial Intelligence \\
  Tianjin University, Tianjin, China \\
  \texttt{jayliu@tju.edu.cn}
  \And
  Wenkai Wang \\
  School of Artificial Intelligence \\
  Tianjin University, Tianjin, China \\
  \texttt{wkwang@tju.edu.cn}
  \AND
  Qiang Tian \\
  School of Computer and \\
  Information Engineering \\
  Tianjin Normal University \\
  Tianjin 300384, China \\
  Zhongzhen Hengyu Intelligent Technology (Tianjin) Co., Ltd. \\
  Tianjin, CN \\
  \texttt{tianqiang@tjnu.edu.cn}
  \And
  Wenjun Wang\thanks{Corresponding author.} \\
  School of Artificial Intelligence \\
  Tianjin University, Tianjin, China \\
  \texttt{wjwang@tju.edu.cn}%
}

\date{}

\begin{document}

\maketitle

\input{sections/00-abstract}
\input{sections/01-introduction}
\input{sections/02-unsigned-datum}
\input{sections/03-branch-invariant}
\input{sections/04-rigidity}
\input{sections/05-flexibility}
\input{sections/06-genericity}
\input{sections/07-engine}
\input{sections/08-dichotomy}
\input{sections/09-relation}
\input{sections/10-questions}

\appendix
\input{sections/A-construction-chain}
\input{sections/B-proposition-57}
\input{sections/C-constants}
\input{sections/D-notation}

\input{sections/E-declarations}

\input{sections/bibliography}
\end{document}

%% file: sections/00-abstract.tex
\begin{abstract}
A closed positively curved space curve is determined by its curvature and \emph{signed} torsion up to
an orientation-preserving rigid motion; that sign is the only place the ambient orientation enters.
We ask what survives its loss, for closed embedded curves in $\mathbb R^3$ with $\kappa>0$ compared
pointwise in a common arclength label. The answer is governed by the branch invariant
$c(\tau)$, the number of components left by the infinite-order zero set of $\tau$: the smooth signed
lifts of $|\tau|$ number exactly $2^{c(\tau)}$, and reduce to $\{\tau,-\tau\}$ precisely when
$c(\tau)\le1$. Hence a given unsigned datum is carried by at most $2^{c(\tau)}$ classes modulo
$SE(3)$, and by a single $E(3)$-orbit when $c(\tau)\le1$. Both extremes
occur: for arbitrary knot types $K_1,\dots,K_m$ there is a datum with fibre \emph{exactly} $2^m$
classes modulo $SE(3)$, realising all connected sums of the $K_i$ and their mirrors; under a
chirality hypothesis these are $2^m$ knot types. Conversely, curves with only simple torsion zeros
are open and dense, hence residual, among parametrised $C^r$ embeddings ($r\ge4$), and each is
determined up to $E(3)$ by its datum. No uniform quantitative form of this rigidity exists; but on
each stratum $\Delta=\inf_s\sqrt{\tau^2+(\tau')^2}\ge\delta>0$ with uniform $C^5$ and curvature
bounds the orbit distance obeys a log-Lipschitz bound, whose optimal constants diverge as
$\delta\downarrow0$ on the strata containing a fixed exact ambiguous pair. The engine is a
one-dimensional inverse estimate for the signed square root, logarithmically optimal at that level.
\end{abstract}

\keywords{Frenet data \and torsion \and closed space curves \and knot type
\and generic rigidity \and conditional stability}

\msc{Primary 53A04; Secondary 26A16, 26E10, 57K10.}

%% file: sections/01-introduction.tex
\section{Introduction}
\label{sec:introduction}

\subsection{Forgetting the sign of the torsion}
\label{sec:1.1}

A regular closed curve in $\mathbb R^3$ with nowhere vanishing curvature carries two classical scalar
functions of arclength: the curvature $\kappa>0$ and the torsion $\tau$. The fundamental theorem of
space curves states that the \emph{signed} pair $(\kappa,\tau)$ determines the curve up to an
orientation-preserving rigid motion, hence determines its knot type as well. Among these two
functions the torsion is distinguished: it is the only place where the ambient orientation enters.
A reflection of $\mathbb R^3$ leaves $\kappa$ and $|\tau|$ untouched and replaces $\tau$ by $-\tau$.
Whenever the Frenet data are recorded by a procedure that has no access to an orientation --- a
measurement, a discretisation, an invariant built from unoriented quantities --- what is available is
not $(\kappa,\tau)$ but the \textbf{unsigned Frenet datum}
\[
F_{(\kappa,|\tau|)}(\gamma)=\bigl(\kappa(\cdot),\,|\tau(\cdot)|\bigr).
\]
The question of this paper is what such a datum still determines.

There is a natural guess. Since a reflection is exactly what flips the sign of $\tau$, one expects
the loss to be a single global mirror ambiguity: two curves with the same $(\kappa,|\tau|)$ should
differ by an isometry of $\mathbb R^3$, and their knot types should therefore agree up to mirror
image. This guess is correct for a large and generic class of curves, and where it fails it fails in the
strongest way available: the set of curves sharing a datum can be made exponentially large, with
arbitrary prescribed knot types. Our purpose is to draw the line between
the two regimes, to show that the line is drawn by one explicitly defined branch invariant, and to make
the failure quantitative on both sides of it.

Throughout, curves are compared \textbf{through the same arclength label}: two curves of length $L$ are
compared through the functions they induce on one and the same circle $\mathbb R/L\mathbb Z$.
Without this convention the question degenerates, since a reparametrisation could be absorbed into
the comparison. All fibre and stability comparisons below are therefore made between parametrised
curves in a common arclength label; we pass to the reparametrisation quotient
$\mathcal E^r/\mathrm{Diff}^+(S^1)$ at exactly one place, the genericity statement of
Corollary~\ref{cor:6.11}, where the assertion is about the space of curves and not about a
comparison of two data. The labelling convention is a genuine hypothesis and it is used everywhere:
it is what makes the pointwise comparison of two data meaningful.

\subsection{The branch invariant and the main results}
\label{sec:1.2}

Let $\gamma$ be a closed embedded curve as above, with torsion $\tau$, and let
\[
Z_\infty(\tau):=\{s\in S^1:\ \tau^{(k)}(s)=0\ \text{for all }k\ge0\}
\]
be the set where the torsion vanishes to infinite order. Write $c(\tau)$ for the number of connected
components of $S^1\setminus Z_\infty(\tau)$; it is a \textbf{cardinal},
\[
c(\tau)\in\mathbb N_0\cup\{\aleph_0\},
\]
not in general an integer, and it is what governs the entire qualitative picture. Three counts are
attached to a datum $d=(\kappa,|\tau|)$ and are kept apart throughout (Definition~\ref{def:2.4}):
the set $\mathrm{Fib}_{SE}(d)$ of curves carrying $d$ in the common arclength label modulo
orientation-preserving congruence, the same set modulo the full Euclidean group,
$\mathrm{Fib}_E(d)$, and the set $\mathcal F(d)$ of knot types realised in the fibre. Always
$|\mathcal F(d)|\le|\mathrm{Fib}_{SE}(d)|$ and
$|\mathrm{Fib}_E(d)|\le|\mathrm{Fib}_{SE}(d)|\le2|\mathrm{Fib}_E(d)|$.

\begin{displaystatement}
\begin{mainthm}[the branch invariant governs the fibre; Theorems~\ref{thm:3.7},
\ref{thm:3.9} and \ref{thm:4.1}, Corollaries~\ref{cor:3.8} and \ref{cor:4.5}]
\label{thm:A}
Let $\gamma$ be as above, with datum $d=(\kappa,|\tau|)$.
\begin{enumerate}
\item \emph{(One-dimensional classification.)} The smooth signed lifts of the unsigned torsion are in
   bijection with the locally constant signs on the components of $S^1\setminus Z_\infty(\tau)$, so
   $|\mathcal L(\tau)|=2^{c(\tau)}$; at that level the criterion is \textbf{exact}, the lifts reducing
   to $\{\tau,-\tau\}$ \textbf{if and only if} $c(\tau)\le1$.
\item \emph{(Geometric consequence.)} $\bigl|\mathrm{Fib}_{SE}(d)\bigr|\le2^{c(\tau)}$, and therefore
   also $|\mathcal F(d)|\le2^{c(\tau)}$.
\item \emph{(Rigidity.)} \textbf{Whenever $c(\tau)\le1$}, the whole fibre lies in a single
   $E(3)$-orbit: every competitor equals $Q\gamma+a$ with $Q\in O(3)$, $a\in\mathbb R^3$, so
   $|\mathrm{Fib}_E(d)|=1$ and $|\mathrm{Fib}_{SE}(d)|\le2$, the two classes being exchanged by a
   reflection. This holds in particular whenever every zero of $\tau$ has finite order, and whenever
   $\gamma$ is real-analytic.
\end{enumerate}
\end{mainthm}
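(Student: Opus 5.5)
The plan is to prove the three parts in order, with the one-dimensional classification as the foundation and the fundamental theorem of space curves turning it into geometric statements.

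\emph{Part 1.} Let $\sigma$ be a smooth function on $S^1$ with $|\sigma|=|\tau|$, and put $U=S^1\setminus Z_\infty(\tau)$.

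First, $\sigma$ and $\tau$ have the same infinite-order zero set. At a point where $\tau$ vanishes to finite order $k$, the zero is isolated and $|\tau|\sim c|s-s_0|^k$. Since $\sigma^2=\tau^2$, matching Taylor expansions shows that $\sigma$ also vanishes to order exactly $k$ there.

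Second, on each component $I$ of $U$ the ratio $\sigma/\tau$ is a well-defined sign.
\begin{itemize}
\item Off the zeros of $\tau$, the ratio lies in $\{\pm1\}$ and is continuous, hence locally constant.
\item At an isolated zero of finite order $k$, write $\tau=(s-s_0)^k g$ and $\sigma=(s-s_0)^k h$ with $g,h$ continuous and nonvanishing near $s_0$. Then $\sigma/\tau=h/g$ extends continuously across $s_0$, so it cannot switch sign there.
\item Since $I$ is connected, $\sigma=\epsilon_I\tau$ on $I$ for a single sign $\epsilon_I\in\{\pm1\}$.
\end{itemize}

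Third, every choice of signs is realised. Given any locally constant $\epsilon:U\to\{\pm1\}$, define $\sigma=\epsilon\tau$ on $U$ and $\sigma=0$ on $Z_\infty(\tau)$.
\begin{itemize}
\item On $U$, $\sigma$ is smooth, with all derivatives $\pm\tau^{(j)}$.
\item At a point $p\in Z_\infty(\tau)$, every derivative $\tau^{(j)}$ is continuous and vanishes on $Z_\infty(\tau)$, so $|\sigma^{(j)}|=|\tau^{(j)}|\to0$ as $s\to p$ through $U$.
\item An induction on $j$ using the mean value theorem then shows that $\sigma$ is $C^\infty$ at $p$, with all derivatives zero.
\end{itemize}

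This gives a bijection $\mathcal L(\tau)\leftrightarrow\{\pm1\}^{\pi_0(U)}$, so $|\mathcal L(\tau)|=2^{c(\tau)}$. The exactness claim follows directly:
\begin{itemize}
\item If $c(\tau)=0$, then $\tau\equiv0$ and the only lift is $0=\tau=-\tau$.
\item If $c(\tau)=1$, the only lifts are $\pm\tau$.
\item If $c(\tau)\ge2$, flipping the sign on one component gives a lift different from $\pm\tau$.
\end{itemize}

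\emph{Part 2.} A competitor $\tilde\gamma$ carrying the datum $d$ in the same arclength label has curvature $\kappa$ and a smooth torsion $\tilde\tau$ with $|\tilde\tau|=|\tau|$. Hence $\tilde\tau\in\mathcal L(\tau)$. By the fundamental theorem of space curves, the signed pair $(\kappa,\tilde\tau)$ determines $\tilde\gamma$ up to $SE(3)$. This gives an injection $\mathrm{Fib}_{SE}(d)\hookrightarrow\mathcal L(\tau)$, so $|\mathrm{Fib}_{SE}(d)|\le2^{c(\tau)}$. The bound $|\mathcal F(d)|\le|\mathrm{Fib}_{SE}(d)|$ holds because knot type is constant on $SE(3)$-classes.

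\emph{Part 3.} If $c(\tau)\le1$, Part 1 gives $\tilde\tau=\pm\tau$.
\begin{itemize}
\item If $\tilde\tau=\tau$, the fundamental theorem gives $\tilde\gamma=Q\gamma+a$ with $Q\in SO(3)$.
\item If $\tilde\tau=-\tau$, compare $\tilde\gamma$ with $R\gamma$ for a reflection $R$. The curve $R\gamma$ has Frenet data $(\kappa,-\tau)$, so $\tilde\gamma=QR\gamma+a$ with $Q\in SO(3)$.
\end{itemize}
So every competitor lies in the $E(3)$-orbit of $\gamma$, which gives $|\mathrm{Fib}_E(d)|=1$ and $|\mathrm{Fib}_{SE}(d)|\le2$.

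For the two special cases:
\begin{itemize}
\item If every zero has finite order, then $Z_\infty(\tau)=\emptyset$, so $U=S^1$ and $c(\tau)=1$.
\item If $\gamma$ is real-analytic, then $\tau$ is analytic, because $\kappa>0$ makes the Frenet frame analytic. Either $\tau\equiv0$, giving $c=0$, or $\tau$ has only finite-order zeros, giving $c=1$.
\end{itemize}

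The step I expect to need the most care is the smoothness of the sign-flipped lift at accumulation points of $Z_\infty(\tau)$. The subtlety is that $U$ may have countably many components accumulating there. The uniform vanishing of all $\tau^{(j)}$ on $Z_\infty(\tau)$, combined with continuity of each $\tau^{(j)}$, is exactly what handles this.
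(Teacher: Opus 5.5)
Your proposal is correct and follows the paper's own route: the bijection of Theorem~\ref{thm:3.7}, with smoothness of the sign-flipped lift across $Z_\infty$ proved by induction on derivatives and no sign flip allowed at a finite-order zero, followed by Lemmas~\ref{lem:2.10} and \ref{lem:2.7} for the fibre bound and rigidity, and the identity theorem for the analytic case. The one difference is cosmetic: you block a flip at an order-$k$ zero by first matching orders through $\sigma^2=\tau^2$ and then factoring both $\tau$ and $\sigma$ as $(s-s_0)^k\cdot(\text{nonvanishing})$, whereas Lemma~\ref{lem:3.3} compares the one-sided limits of $\sigma^{(k)}$. In the smoothness induction, apply the mean value theorem to the smooth $\tau^{(j)}$, via $|\sigma^{(j)}(t)|/|t-p|\le|\tau^{(j)}(t)-\tau^{(j)}(p)|/|t-p|$, rather than to $\sigma^{(j)}$ on intervals containing other flat points, since there its differentiability is exactly what is being proved.
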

\end{displaystatement}

Two remarks on how to read this. First, the criterion is exact at the level of signed lifts but only
one implication holds at the level of curves: when $c(\tau)\ge2$ the extra signed lifts exist, but a
lift must still integrate to a \emph{closed embedded} curve to enter the fibre, and closure is a
non-trivial constraint on the Frenet system, characterised by Grinevich--Schmidt \cite{ref13}. It can
eliminate individual sign choices and push the count strictly below $2^{c(\tau)}$
(Remark~\ref{rem:5.19}); Theorem~\ref{thm:B} supplies data for which it eliminates none, so the
upper bound is sharp even though it is not always attained.

Second, the invariant is the right one. Neither the size of the zero set $Z(\tau)$ nor its
finiteness is the relevant quantity: a torsion with infinitely many zeros accumulating at a point
still has only the two lifts $\pm\tau$, while a torsion vanishing on two arcs has four
(Example~\ref{ex:3.10}). Only the topology of $Z_\infty(\tau)$ enters, and only through the number of components
$c(\tau)$.

The upper bound of Theorem~\ref{thm:A} is sharp, and sharp with arbitrary prescribed topology.

\begin{displaystatement}
\begin{mainthm}[exact fibre; Theorems~\ref{thm:5.8}, \ref{thm:5.11} and \ref{thm:5.16}]
\label{thm:B}
For every $m\ge1$ and arbitrary knot types $K_1,\dots,K_m$ there is a closed embedded $C^\infty$
curve $\gamma$ of positive curvature whose torsion satisfies $c(\tau_0)=m$, together with a family
$\{\gamma_\varepsilon\}_{\varepsilon\in\{\pm1\}^m}$ of closed embeddings sharing one and the same
pointwise unsigned datum $d_m$ along one arclength label, such that
$[\gamma_\varepsilon]=\#_{i=1}^mK_i^{\varepsilon_i}$, with $K^{+1}=K$ and $K^{-1}=\overline K$ the
mirror image. \textbf{With no hypothesis on the $K_i$}:
$\bigl|\mathrm{Fib}_{SE}(d_m)\bigr|=2^m$ \textbf{exactly}, with exactly one member of the family in
each class, and $\bigl|\mathrm{Fib}_{E}(d_m)\bigr|=2^{m-1}$; equivalently, every smooth signed lift of
$|\tau_0|$ is realised by a closed embedding, uniquely up to $SE(3)$. The $2^m$ classes realise all
the connected sums $\#_iK_i^{\varepsilon_i}$, but these knot types \emph{may repeat}. \textbf{Under the
chirality hypothesis (H)} --- the $K_i$ prime and chiral with $\{K_i,\overline{K_i}\}_{i\le m}$
pairwise distinct --- they do not: $|\mathcal F(d_m)|=2^m$, forming $2^{m-1}$ mirror classes.
\end{mainthm}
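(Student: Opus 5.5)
The plan is a modular construction. I build $m$ ``knotted modules'', each carrying one $K_i$ and each with planar (zero-torsion) ends, and chain them along planar arcs so that $Z_\infty(\tau_0)$ consists of exactly $m$ arcs. Then I show that flipping the sign of the torsion on one module replaces that module by its mirror image, while the rest of the curve stays put.

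\emph{Step 1 (module).} For each $i$, take a long knotted arc $A_i$ representing the tangle of $K_i$. By a $C^\infty$ modification near its ends, make $A_i$ have $\kappa>0$ and $\tau\not\equiv0$ on its interior, with $\tau$ vanishing to infinite order at the endpoints. Near each endpoint I arrange that the arc is contained in a fixed plane $P$ and coincides with a standard planar germ, for instance a circular arc or a straight segment bent with positive curvature.

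\emph{Step 2 (reflection lemma).} Take the reflection $R$ of $\mathbb R^3$ in the plane $P$. It fixes the planar end germs pointwise, including their Frenet frames up to the sign of the binormal. Since the binormal is $\pm$ the normal of $P$ there, $R$ is compatible with the germ. Replacing $A_i$ by $RA_i$ therefore yields a smooth arc with the same $\kappa$, torsion $-\tau$ on the module, and the same end germs. Its knot tangle is the mirror tangle.

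\emph{Step 3 (assembly).} Join the modules in a common plane $P$ by planar $C^\infty$ connectors with $\kappa>0$. The modules are placed far apart so that the closed curve is embedded, and the closing arc is also planar. The resulting closed curve $\gamma=\gamma_{(+,\dots,+)}$ has torsion vanishing identically on the $m$ planar arcs. I must ensure that $\tau$ has no infinite-order zeros inside any module, which can be arranged by choosing $\tau$ analytic-like (for example with only simple zeros) on the interiors. This gives $c(\tau_0)=m$.

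For $\varepsilon\in\{\pm1\}^m$, set $\gamma_\varepsilon$ by reflecting module $i$ through $P$ when $\varepsilon_i=-1$. Each module lies in a slab-separated region with reflection-invariant footprint, for example modules placed in disjoint balls centred on $P$. Hence embeddedness is preserved. The arclength label and $(\kappa,|\tau|)$ are unchanged, and $[\gamma_\varepsilon]=\#K_i^{\varepsilon_i}$ because the connected sum is realised by the planar connectors.

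\emph{Step 4 (counting).} By Theorem~\ref{thm:A}(1), the signed lifts of $|\tau_0|$ are exactly the $2^m$ sign patterns. By the fundamental theorem, each lift determines at most one curve modulo $SE(3)$, and each is realised by some $\gamma_\varepsilon$. The lifts are distinct functions, so the curves are pairwise non-congruent under $SE(3)$ in the common label. Hence $|\mathrm{Fib}_{SE}(d_m)|=2^m$ exactly.

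Modulo $E(3)$, a reflection maps the lift $\varepsilon$ to $-\varepsilon$ and does nothing else, since the lift determines the curve. The classes therefore pair up, giving $2^{m-1}$.

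Under (H), the knot types are distinct by uniqueness of prime decomposition (Schubert), since the multiset $\{K_i^{\varepsilon_i}\}$ recovers $\varepsilon$. The types pair into $2^{m-1}$ mirror classes.

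\emph{Main obstacle.} The delicate point is Step 1–3 smoothness: gluing at the planar ends must be $C^\infty$ with $\kappa>0$ throughout, so the Frenet frame stays defined. The torsion must also vanish to infinite order exactly on the connectors and nowhere else in a way that would split components. My first attempt would be to prescribe $(\kappa,\tau)$ directly, with $\tau$ a flat bump times a function with simple zeros, and integrate. The difficulty with this route is closure and knottedness: closure would then need a perturbation argument in the spirit of Grinevich--Schmidt. I therefore prefer the geometric gluing above.
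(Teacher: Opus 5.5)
Your architecture is the paper's: tangles in disjoint balls centred on a mirror plane, joined by exactly planar arcs, reflected independently, then counted via the lift classification and Frenet uniqueness. Step~4 is correct \emph{once} $c(\tau_0)=m$ is known, but you never establish that.

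In Step~3 you say that the absence of infinite-order zeros inside the modules ``can be arranged by choosing $\tau$ analytic-like''. On the geometric-gluing route you adopted, however, $\tau$ is not at your disposal: it is whatever the glued curve produces, and you explicitly declined to prescribe $(\kappa,\tau)$ because of closure. Nothing in Steps~1--3 rules out a planar sub-arc inside a knotted region, or an isolated interior point where $\tau$ is flat. Each such set splits a component of $S^1\setminus Z_\infty(\tau_0)$, so then $c(\tau_0)>m$ and Theorem~\ref{thm:A}(1) supplies more than $2^m$ lifts. Your $2^m$ curves would no longer exhaust $\mathcal L(\tau_0)$, and you would be left with only $\bigl|\mathrm{Fib}_{SE}(d_m)\bigr|\ge 2^m$. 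The first sentence of Step~4 presupposes exactly this missing fact, and it is what the paper calls the technical heart of the construction.

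Closing the gap takes two separate tools, because the open module interior is not compact.
\begin{itemize}
\item On a compact sub-arc of each core, a small generic perturbation supported there makes all zeros of $D=\det(\gamma',\gamma'',\gamma''')$ simple. This works because $\{D=D'=0\}$ has codimension $2$ in the $4$-jet space wherever $\kappa>0$ (Proposition~\ref{prop:5.7}). Since it holds for almost every arbitrarily small parameter, it can be done inside the $C^4$-open conditions that protect embeddedness, tangle types and collars.
\item Near the points where the module leaves the plane, no compactly supported perturbation reaches, and all jets of the curve approach those of a planar arc, so genericity says nothing. The paper handles these ends with an explicit displacement $\lambda\theta_ie_3$ of the circular collar by the flat bump $\theta_i$. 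The exact identity $D_{\eta+\phi e_3}=\kappa_c(\phi'''+\kappa_c^2\phi')$, together with $x^6e^{1/x}(\theta_i'''+\kappa_c^2\theta_i')\to e^{-1/(d_i-c_i)}>0$ as $x=s-c_i\downarrow0$, gives $\tau\neq0$ on one-sided neighbourhoods of both core endpoints for every $\lambda\neq0$ (Lemmas~\ref{lem:5.5}--\ref{lem:5.6}). One-sided planarity then makes the endpoints themselves flat.
\end{itemize}

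A repair closer to your ``analytic-like'' instinct also works. Write the core as a circular arc plus a displacement $\widetilde u$ compactly supported in the core, and replace $\widetilde u$ by $\theta_iW$, with $W$ a polynomial approximating $\widetilde u/\theta_i$ in $C^4$. The curve is then real-analytic on the whole open core. By the identity theorem, $\tau$ has no flat point there unless the core arc lies in the plane, which a non-trivial tangle or a small added $e_3$-component excludes.

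Finally, Step~1 only asserts $\kappa>0$ on the knotted arc. That also needs an argument relative to the planar ends, such as the $2$-jet transversality of Lemma~\ref{lem:5.3}.
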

\end{displaystatement}

So the unsigned datum does not determine the knot type, not even up to mirror image, and the failure
is exponential in the number of prescribed factors. The family exhausts the fibre rather than sitting
inside a larger one; what makes that equality available is that the reference curve can be built with its infinite-order zero
set computed exactly, which is the content of Theorem~\ref{thm:5.8} and the technical heart of the
construction (\S\ref{sec:1.3} below).

Rigidity, meanwhile, is the rule rather than the exception.

\begin{displaystatement}
\begin{mainthm}[genericity; Theorem~\ref{thm:6.8}, Corollaries~\ref{cor:6.10} and \ref{cor:6.11}]
\label{thm:C}
Let $r\ge4$ and let $\mathcal E^r$ be the space of parametrised closed $C^r$ embeddings with
positive curvature. The curves whose torsion has only simple zeros form an \textbf{open and dense},
hence residual, subset $\mathcal G^r\subset\mathcal E^r$, and its image is open and dense in the
reparametrisation quotient $\mathcal E^r/\mathrm{Diff}^+(S^1)$. Curves in $\mathcal E^r$ are not
required to be unit speed; for every $\gamma\in\mathcal G^r$ it is the \emph{arclength
reparametrisation} $\widehat\gamma$ that is determined up to $E(3)$ by its unsigned datum, among all
competitors that are $C^4$, unit speed, positively curved and share its arclength label.
\end{mainthm}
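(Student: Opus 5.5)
The proof has three parts. First, $\mathcal G^r$ is open. Second, it is dense. Third, the rigidity statement for $\gamma\in\mathcal G^r$ follows from Theorem~\ref{thm:A}(3) once we pass to arclength.

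\emph{Openness.} For $\gamma\in\mathcal E^r$ with $r\ge4$, the torsion
\[
\tau=\frac{\det(\gamma',\gamma'',\gamma''')}{|\gamma'\times\gamma''|^2}
\]
is a $C^{r-3}\subset C^1$ function of the parameter $t\in S^1$. It depends continuously on $\gamma$ in the $C^r$ topology, as a map into $C^1(S^1)$, because $|\gamma'\times\gamma''|>0$. The phrase "only simple zeros" means $\tau(t)=0\Rightarrow\tau'(t)\ne0$. Since $S^1$ is compact, this condition is equivalent to $\inf_t\bigl(|\tau(t)|+|\tau'(t)|\bigr)>0$. That infimum is continuous under $C^1$ perturbations of $\tau$, so $\mathcal G^r$ is open. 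Simplicity of zeros is invariant under orientation-preserving reparametrisation, since $\tau$ is a geometric invariant composed with the diffeomorphism. Hence $\mathcal G^r$ is a union of $\mathrm{Diff}^+(S^1)$-orbits. Because the quotient map is open, the image of $\mathcal G^r$ is open, and density in the quotient follows from density upstairs.

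\emph{Density.} This is the main step. I would use a parametric transversality argument. Fix $\gamma\in\mathcal E^r$, and approximate it first by a $C^\infty$ curve (even a trigonometric-polynomial curve) in $\mathcal E^r$; embeddedness and $\kappa>0$ are open conditions. Next choose finitely many smooth variations $\gamma_p=\gamma+\sum_{j}p_jV_j$, with $p\in\mathbb R^N$ small, so that the map
\[
\Phi(t,p)=\bigl(\tau_{\gamma_p}(t),\,\tau_{\gamma_p}'(t)\bigr)
\]
has surjective differential in $p$ at every $t$. Locally near $t_0$, in a Frenet frame, adding $\phi(t)B(t_0)$ changes the numerator $\det(\gamma',\gamma'',\gamma''')$ at first order by roughly $\kappa\,\phi'''$, plus lower-order terms. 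Taking $\phi$ with $(\phi'''(t_0),\phi''''(t_0))$ arbitrary therefore moves $(\tau,\tau')$ arbitrarily at $t_0$. A finite cover of $S^1$ by such bumps gives submersivity everywhere. The parametric transversality theorem, or simply Sard's theorem applied to $\Phi$, then gives a set of full measure of $p$ for which $(0,0)$ is a regular value of $t\mapsto\Phi(t,p)$. Since the domain is one-dimensional and the target two-dimensional, regularity means $\Phi(\cdot,p)$ never hits $(0,0)$, which is exactly the simple-zero condition. Choosing such $p$ arbitrarily small gives density. I expect the main obstacle to be the careful submersivity computation: one must check that the first-order variation of $\tau$ from a binormal bump controls both $\tau$ and $\tau'$ simultaneously. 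This requires $r\ge4$ only through $\tau'$ being continuous. Residuality is then immediate, since the set is open and dense in $\mathcal E^r$, which is a Baire space as an open subset of a Banach space.

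\emph{Rigidity.} Let $\gamma\in\mathcal G^r$ and let $\widehat\gamma$ be its arclength reparametrisation. Then $\widehat\gamma$ is $C^r$ and unit speed, and its torsion $\widehat\tau$ is $\tau$ composed with a $C^r$ diffeomorphism, so its zeros remain simple. A simple zero has finite order, so $Z_\infty(\widehat\tau)=\emptyset$ and $c(\widehat\tau)\le1$. By the finite-order clause of Theorem~\ref{thm:A}(3), every $C^4$ unit-speed positively curved competitor sharing the datum $(\widehat\kappa,|\widehat\tau|)$ in the common arclength label equals $Q\widehat\gamma+a$ with $Q\in O(3)$ and $a\in\mathbb R^3$. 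Here $C^4$ is exactly what makes $\tau\in C^1$, the regularity in which the lift classification is applied.
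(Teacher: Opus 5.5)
Your openness, quotient and density steps are sound; the gap is in the rigidity step. You invoke Theorem~\ref{thm:A}(3), that is, Theorem~\ref{thm:4.1}, but that is a statement about $C^\infty$ curves. Its proof places the competitor's torsion in $\mathcal L(\tau)=\{g\in C^\infty:|g|=|\tau|\}$ and uses the $C^\infty$ classification of Theorem~\ref{thm:3.7}. Moreover, $Z_\infty$ and $c(\cdot)$ are defined only for smooth functions.

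In Theorem~\ref{thm:C} the reference $\widehat\gamma$ is only $C^r$. For $r=4$ its torsion is merely $C^1$, so ``$Z_\infty(\widehat\tau)=\varnothing$, $c(\widehat\tau)\le1$'' has no meaning. The competitor is only $C^4$, so $\widetilde\tau\in C^1$ lies in no lift set that Theorem~\ref{thm:A} controls. Remark~\ref{rem:4.2} does lower the competitor's regularity, but only for a $C^\infty$ reference. Your closing sentence, that $C^1$ is ``the regularity in which the lift classification is applied'', is therefore not something the cited theorem provides.

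The repair is short, and it is what the paper does in Corollary~\ref{cor:6.10}: argue directly at $C^1$.
\begin{itemize}
\item The zeros of $\widehat\tau$ are simple, hence isolated and finite in number.
\item Off those zeros, $\widetilde\tau/\widehat\tau$ is continuous with values in $\{\pm1\}$, hence locally constant.
\item At a zero $p$ with side signs $\varepsilon_\pm$, differentiability of $\widetilde\tau$ at $p$ gives $\widetilde\tau'(p)=\lim_{s\to p^\pm}\widetilde\tau(s)/(s-p)=\varepsilon_\pm\widehat\tau'(p)$. Since $\widehat\tau'(p)\neq0$, this forces $\varepsilon_+=\varepsilon_-$. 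This is the case $k=1$ of Lemma~\ref{lem:3.3}, which by Remark~\ref{rem:3.4}(iv) needs only $C^1$.
\item Connectedness of $\mathbb R/L\mathbb Z$ then gives $\widetilde\tau\equiv\pm\widehat\tau$.
\item Lemma~\ref{lem:2.10}, which needs only $C^3$ curves, together with Lemma~\ref{lem:2.7} in the minus case, yields $\widetilde\gamma=Q\widehat\gamma+a$ with $Q\in O(3)$.
\end{itemize}

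On the remaining parts, your routes differ from the paper's but both work.
\begin{itemize}
\item \emph{Openness.} Your argument via the single quantity $\inf_t(|\tau|+|\tau'|)$, which is $1$-Lipschitz in $C^1$, is tidier than the two-case proof of Proposition~\ref{prop:6.6}.
\item \emph{Density.} You use binormal bumps with prescribed third and fourth derivatives at $t_0$ (and vanishing first and second), then Sard on the projection of $\Phi^{-1}(0)$ to parameter space. This replaces the paper's appeal to Thom's $C^\infty$ jet transversality theorem for the $3$-jet hypersurface $\Sigma$. It is the explicit finite-family argument the paper itself gives in Appendix~\ref{app:B} for Proposition~\ref{prop:5.7}, run on the whole circle.
\item \emph{One point to state in the density step.} Surjectivity of $\partial_p\Phi$ is first obtained at $p=0$. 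Only after the finite family is fixed do you shrink to a parameter ball on which it persists uniformly over $S^1$, by continuity and compactness.
\end{itemize}
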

\end{displaystatement}

Theorems~\ref{thm:A}--\ref{thm:C} settle the qualitative structure of the signed lifts, and give a general upper bound
for curve fibres that Theorem~\ref{thm:B} shows to be sharp. The remaining question is quantitative,
and it is the one an application would ask: if two curves have \emph{nearly} the same unsigned datum, are
they nearly congruent? Generic rigidity says nothing about this, and in fact no global answer
exists. But the obstruction can be localised, in the following sense. For a normalised curve
$\alpha$ put
\[
\Delta(\alpha):=\inf_{s}\sqrt{\tau_\alpha(s)^2+\tau_\alpha'(s)^2},
\]
a quantitative non-degeneracy: it is positive exactly when all torsion zeros are simple, and it
measures how simple. Two distances between curves defined on one and the same
$\mathbb R/L_0\mathbb Z$ appear: the \textbf{labelled} orbit distance
$d_{\mathrm{lab}}(\alpha,\beta)=\inf_{g\in E(3)}\|\alpha-g\beta\|_{C^0}$, which compares the
parametrisations in the given label, and the label-free Hausdorff orbit distance $D(\alpha,\beta)$
between the images modulo $E(3)$; always $D\le d_{\mathrm{lab}}$
(Lemma~\ref{lem:8.2}). Put $\rho(t):=t\bigl(1+\log_+\frac1t\bigr)$.

\begin{displaystatement}
\begin{mainthm}[global instability and conditional stability; Theorems~\ref{thm:8.8}, \ref{thm:8.17},
\ref{thm:8.20}, \ref{thm:8.22} and Proposition~\ref{prop:8.23}]
\label{thm:D}
\leavevmode
\begin{enumerate}
\item \emph{(No global modulus.)} There are sequences $\alpha_n,\beta_n$ of smooth curves with simple
   torsion zeros, fixed, distinct and non-mirror knot types, such that
   \[
   \begin{gathered}
   \bigl\|(\kappa_{\alpha_n},\tau_{\alpha_n}^2)-(\kappa_{\beta_n},\tau_{\beta_n}^2)\bigr\|_{C^{r-3}}
   \longrightarrow0,\\[2pt]
   \text{while}\quad D(\alpha_n,\beta_n)\ge\delta_0>0 .
   \end{gathered}
   \]
   Hence no modulus of continuity governs the inverse of the unsigned datum on all of
   $\mathcal G^r$, in either distance.
\item \emph{(Conditional stability on the strata.)} On the normalised strata
   $\mathcal K^5_{\delta,M}(L_0)=\{\|\alpha\|_{C^5}\le M,\ \kappa_\alpha\ge M^{-1},\
   \Delta(\alpha)\ge\delta\}$ one has, for all $\alpha,\beta$ in the strata,
   \[
   D(\alpha,\beta)\ \le\ d_{\mathrm{lab}}(\alpha,\beta)\ \le\ C_{\mathrm{geo}}(M,\delta,L_0)\ \rho\Bigl(\bigl\|(\kappa_\alpha,\tau_\alpha^2)-(\kappa_\beta,\tau_\beta^2)\bigr\|_{C^0}\Bigr),
   \]
   with an explicit constant satisfying $C_{\mathrm{geo}}\le1.6\times10^8L_0^2M^{36}\delta^{-4}$ for
   $M\ge1$ and $0<\delta\le1$.
\item \emph{(Local version.)} The same modulus holds on a relative $C^5$ neighbourhood, inside the
   normalised slice, of any fixed curve with simple torsion zeros (Theorem~\ref{thm:8.20}).
\item \emph{(The uniform constant must and does degenerate.)} For a fixed exact ambiguous pair as in
   Theorem~\ref{thm:B} with $m=2$ there is a finite $M_0$, depending on that pair, such that for
   every $M\ge M_0$ the optimal constant of part 2 diverges as $\delta\downarrow0$. Conversely,
   \emph{every} sequence of pairs obeying uniform $C^5$ and curvature bounds whose data converge
   while their orbit distances stay bounded below must satisfy $\Delta\to0$.
\end{enumerate}
\end{mainthm}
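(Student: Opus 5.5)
This is an introductory summary whose four parts are proved in Section~8. The plan is to assemble them from the one-dimensional inverse estimate for the signed square root (Section~7) and the exact ambiguous family of Theorem~\ref{thm:B}. I take the four parts in order 2, 3, 4, 1, because the stability estimate is the engine and the instability statements are read off from it and from the construction.

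\textbf{Part 2.} Fix $\alpha,\beta\in\mathcal K^5_{\delta,M}(L_0)$ and set $\eta=\|(\kappa_\alpha,\tau_\alpha^2)-(\kappa_\beta,\tau_\beta^2)\|_{C^0}$.

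1. Since $\Delta(\alpha)\ge\delta$, every zero of $\tau_\alpha$ is simple, with $|\tau_\alpha'|\ge\delta/\sqrt2$ wherever $|\tau_\alpha|\le\delta/\sqrt2$. The $C^5$ bound controls $\tau_\alpha''$, which separates these zeros by at least $c(M)\delta$, so there are at most $L_0/(c\delta)$ of them.
2. On each component of the complement of the zeros, the square root of $\tau_\beta^2$ must follow $\pm\tau_\alpha$. The one-dimensional engine of Section~7 says: if $f$ has only quantitatively simple zeros, $g$ is $C^1$ with bounded derivative, and $\|f^2-g^2\|_{C^0}\le\eta$, then there is a single global sign $\sigma$ with $\|g-\sigma f\|_{L^1}\lesssim\rho(\eta)$ and $\|g-\sigma f\|_{L^\infty}\lesssim\sqrt\eta$.
3. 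The global sign comes from the fact that $g$ cannot switch branch at a transversal zero of $f$ without $|g'|$ being large. Away from the zeros the comparison is Lipschitz; near each zero one integrates the square-root error $\sqrt{\eta}$ over a window of width $\sqrt\eta/\delta$, which produces the logarithm in $\rho$.
4. Composing with $\beta$ by a reflection if $\sigma=-1$, insert the resulting $L^1$ bound on $\kappa$ and $\tau$ into a Gronwall estimate for the Frenet system $F'=A(\kappa,\tau)F$ with frames in $SO(3)$. This gives $\|F_\alpha-gF_\beta\|_{C^0}\le e^{CL_0M}\int|A_\alpha-A_\beta|$.
5. Integrate the tangents to bound $d_{\mathrm{lab}}$. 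Lemma~\ref{lem:8.2} then gives $D\le d_{\mathrm{lab}}$.
6. Tracking the powers of $M$ and $\delta$ yields $C_{\mathrm{geo}}$. This is routine bookkeeping, deferred to Appendix~C.

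\textbf{Part 3.} The local version follows by restricting to a $C^5$ neighbourhood of a fixed curve with simple torsion zeros. There $\Delta$ is bounded below by half its value at the centre and the curvature stays bounded, so the neighbourhood lies inside a stratum and part 2 applies.

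\textbf{Part 4.} For the converse, suppose pairs satisfy uniform $C^5$ and curvature bounds, their data converge, but their orbit distances stay bounded below. If $\Delta$ stayed bounded below along a subsequence, the pairs would lie in a fixed stratum and part 2 would force the orbit distance to $0$, a contradiction. Hence $\Delta\to0$.

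For the divergence statement, take the exact pair $\gamma_{(+,+)},\gamma_{(+,-)}$ from Theorem~\ref{thm:B} with $m=2$. These two curves share the datum $d_2$ but lie at positive orbit distance, since they sit in distinct $E(3)$ classes. Perturb both in $C^5$ by $O(\delta)$ so that their torsions acquire simple zeros with $\Delta\approx\delta$. The two perturbed data remain within $O(\delta)$ of each other while the orbit distance stays at least $\delta_0/2$. The optimal constant $C(\delta)$ must therefore satisfy $C(\delta)\rho(O(\delta))\ge\delta_0/2$, so $C(\delta)\to\infty$. The threshold $M_0$ is the $C^5$ norm and inverse curvature bound of the pair, plus slack.

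\textbf{Part 1.} Use the same perturbations with $\delta=1/n$, now made $C^\infty$ and with all torsion zeros simple, choosing the knot types in Theorem~\ref{thm:B} chiral and prime so that the two curves are distinct and non-mirror. The $C^{r-3}$ convergence of $(\kappa,\tau^2)$ requires the perturbations themselves to tend to $0$ in $C^r$. I would do this by replacing $\tau$ on the flat arcs by $\pm\epsilon_n\phi$ with $\phi$ having simple zeros, and then restoring closure via the closure characterisation of \cite{ref13}.

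\textbf{Main obstacle.} I expect the hardest step to be this closure-preserving perturbation, which must keep the pair embedded, in the prescribed knot types, and $C^r$-close, while making the zeros simple. The first thing I would try is a finite-dimensional implicit-function correction using a few bump modes supported away from the flat arcs.
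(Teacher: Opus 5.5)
Your part 3 and the converse half of part 4 are correct and are the paper's arguments: a relative $C^5$ ball sits inside a stratum, and a subsequence with $\Delta$ bounded below contradicts part 2. The remaining steps have genuine gaps.

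\textbf{Part 2.} The one-dimensional statement you attribute to Section~\ref{sec:engine} is false as stated. Take $f=\sin$ on $\mathbb R/2\pi\mathbb Z$ and $g=-\sqrt{\sin^2+\eta}$. Then $\|f^2-g^2\|_{C^0}=\eta$ and $|g'|\le1$ uniformly in $\eta$, yet $\|g-f\|_{L^1}\ge4$ and $\|g+f\|_{L^1}\ge4$.

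A branch switch at a transversal zero of $f$ is cheap in $g'$. What it costs is $g''$ (here $|g''(0)|=\eta^{-1/2}$), or equivalently a point where $g^2+(g')^2$ is small (here it equals $\eta$ at $s=0$). That is why Theorem~\ref{thm:7.1} assumes $g^2+(g')^2\ge\delta^2$ and a $C^2$ bound for \emph{both} functions. It obtains the global sign by matching the zeros of $f$ and $g$ within $4\sqrt\eta/\delta$ and using the parity of the paired crossings. On the stratum both torsions satisfy $\Delta\ge\delta$, and Lemma~\ref{lem:8.14} bounds both $C^2$ norms, so the theorem applies once quoted correctly. Separately, the window of width $\sim\sqrt\eta/\delta$ contributes only $O(\eta)$; the logarithm comes from $\int_\sigma^r\eta/(\delta t)\,dt$ over the outer region.

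More seriously, your step 4 cannot give the constant asserted in part 2. The Gr\"onwall factor $e^{CL_0M}$ is not a power of $M$ or $L_0$, and no bookkeeping turns it into one. So your route yields only a qualitative log-Lipschitz bound and cannot certify $C_{\mathrm{geo}}\le1.6\times10^8L_0^2M^{36}\delta^{-4}$. The missing observation is that $\Omega$ is antisymmetric and the frames are orthogonal. In the Duhamel formula $E(s)=\int_0^sF_2\,\Delta\Omega\,F_1^{\mathsf T}F_1(s)\,dt$ both outer factors lie in $SO(3)$, so $\max_s\|E\|_F\le\int_0^{L_0}\|\Delta\Omega\|_F$ with constant $1$ (Lemma~\ref{lem:8.16}). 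This gives $C_{\mathrm{geo}}=\sqrt2L_0\bigl(L_0+C_A(44M^{12},\delta,L_0)\bigr)$, and \eqref{eq:8.1} then follows from \eqref{eq:7.3}.

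\textbf{Parts 1 and 4 (divergence).} The closure-restoring modification of $\tau$ that you flag as the main obstacle is left unproved, and it is unnecessary. Apply the density of $\mathcal G^r$ in $\mathcal E^r$ (Proposition~\ref{prop:6.7}) directly to the closed embeddings $\gamma_\pm$ of an exact ambiguous pair. This gives $C^\infty$ curves $a_n,b_n\in\mathcal G^r$ with $\|a_n-\gamma_+\|_{C^r}\to0$ and $\|b_n-\gamma_-\|_{C^r}\to0$.
\begin{itemize}
\item Closure is automatic.
\item Embeddedness and knot type survive by $C^1$-openness and isotopy extension.
\item The changed lengths are repaired by $\alpha_n:=N_{L_0}(a_n)$ and $\beta_n:=N_{L_0}(b_n)$.
\item Continuity of $\mathcal D$ (Lemma~\ref{lem:8.6}) together with the exact equality $\mathcal D(\gamma_+)=\mathcal D(\gamma_-)$ gives the $C^{r-3}$ convergence of the data.
\end{itemize}

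The separation in part 1 concerns the label-free distance $D$. Lying in distinct $E(3)$-classes gives only $d_{\mathrm{lab}}>0$. To get $D(\gamma_+,\gamma_-)>0$ you need compactness (a limiting isometry would carry one image onto the other) together with the fact that the knot types are distinct and non-mirror (Lemma~\ref{lem:8.7}).

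In the divergence argument, the claim that an $O(\delta)$ perturbation gives $\Delta\approx\delta$ is unsupported. Genericity gives $\Delta>0$ but no lower bound in terms of the perturbation size, and without it $C(\delta)\,\rho(O(\delta))\ge\delta_0/2$ does not follow. The claim is also not needed. $C_*$ is non-increasing in $\delta$ (Definition~\ref{def:8.21}), and each pair $(\alpha_n,\beta_n)$ has $\Delta_n>0$ (Lemma~\ref{lem:8.19}). So each pair lies in $\mathcal K^5_{\delta,M_0}(L_0)$ for every $\delta\le\Delta_n$, whence $\lim_{\delta\downarrow0}C_*(\delta,M_0,L_0)\ge\delta_0/\rho(\Theta_n)$ for every $n$. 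The right-hand side tends to $\infty$ as $n\to\infty$.
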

\end{displaystatement}

Parts 2 and 4 are what join the two halves. Within the uniformly controlled class, every fixed
stratum $\Delta\ge\delta>0$ is uniformly stable, no near-collision can avoid degenerating in
$\Delta$, and no uniform constant survives the limit $\delta\downarrow0$.

The mechanism behind Theorem~\ref{thm:D} is one-dimensional, and we isolate it because it is independent of
the geometry and, as far as we know, new (\S\ref{sec:relation}).

\begin{displaystatement}
\begin{mainthm}[inverse stability of the signed square root; Theorems~\ref{thm:7.1} and \ref{thm:7.2}]
\label{thm:E}
Fix $B,\delta,L_0>0$. For $f,g\in C^2(\mathbb R/L_0\mathbb Z)$ with $\|f\|_{C^2}+\|g\|_{C^2}\le B$
and $f^2+(f')^2\ge\delta^2$, $g^2+(g')^2\ge\delta^2$, there is a \textbf{global} sign
$\varepsilon\in\{\pm1\}$ with
\[
\|f-\varepsilon g\|_{L^1}\ \le\ C_A(B,\delta,L_0)\ \rho\bigl(\|f^2-g^2\|_{C^0}\bigr),
\]
and $C_A$ is explicit. The modulus $\rho$ cannot be replaced by the identity: an explicit family
with $B,\delta,L_0$ held fixed forces the ratio of the two sides to grow like $\log\frac1\eta$.
\end{mainthm}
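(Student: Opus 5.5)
Two things have to be proved: the upper bound $\|f-\varepsilon g\|_{L^1}\le C_A\,\rho(\eta)$, where $\eta:=\|f^2-g^2\|_{C^0}$, and a family showing that $\rho$ cannot be replaced by the identity.

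\emph{Pointwise information.} From $|f^2-g^2|=|f-g|\,|f+g|$ we get, at every $s$,
\[
\min\bigl(|f-g|,|f+g|\bigr)\le\sqrt{\eta},\qquad \min\bigl(|f-g|,|f+g|\bigr)\le \frac{\eta}{\max(|f|,|g|)} .
\]
The two branches are separated by $|(f-g)-(f+g)|=2|g|$ and $|(f-g)+(f+g)|=2|f|$. So wherever $\max(|f|,|g|)\ge\lambda$, for a threshold $\lambda\gg\sqrt\eta$, only one of $f\mp g$ is small. By continuity the "good sign" $\sigma(s)$ with $|f-\sigma g|(s)\le \eta/\lambda$ is then locally constant on each component of $U_\lambda=\{|f|\ge\lambda\}$. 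On $U_\lambda$ we also have $|g|\ge\lambda/2$ once $\eta\le\lambda^2/4$.

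\emph{Globalising the sign.} The nondegeneracy $f^2+f'^2\ge\delta^2$ means that where $|f|<\lambda\le\delta/2$ we have $|f'|\ge\delta/2$. Hence $f$ crosses $0$ transversally, and the complement of $U_\lambda$ is a finite union of short intervals $I_j$. Each $I_j$ has length at most $4\lambda/\delta$, and their number is at most $C L_0 B/\delta$ by a spacing argument using $\|f''\|\le B$. The same holds for $g$.

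The heart of the argument is showing that the sign does not flip across an $I_j$. Suppose it did: $f\approx g$ on the left of $I_j$ and $f\approx -g$ on the right. Since $f$ changes sign across $I_j$, this forces $g$ to keep its sign across $I_j$, so $g$ has a near-zero of $g^2$ while $f^2-g^2$ stays small. More concretely, $h=f^2-g^2$ is $C^0$-small, but $(f^2)'$ and $(g^2)'$ are comparable to $2f f'$ and $2 g g'$. Matching $|g|$ to $|f|$, which vanishes linearly with slope $\ge\delta/2$, forces $|g|$ to vanish somewhere in $I_j$ up to $O(\sqrt\eta)$. The $C^2$ bound together with $g^2+g'^2\ge\delta^2$ then forces $g$ to change sign linearly there, consistent with the same sign $\sigma$. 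This step needs quantitative care: I would compare $f$ and $\pm g$ on $I_j$ using a Taylor expansion around the zero of $f$, with $\lambda$ chosen as a constant multiple of $\delta$, and show that the sign mismatch contradicts $\eta$ being small relative to $\delta^2$. Large $\eta$ is trivial, since both sides are then $O(B L_0)$. I expect this sign-consistency step to be the main obstacle.

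\emph{Integrating the error.} With $\varepsilon$ fixed globally, bound $|f-\varepsilon g|\le \eta/|f|$ off small windows $\{|f|<t\}$, and $|f-\varepsilon g|\le 2\sqrt\eta$ inside them, using $|f-\varepsilon g|\le \sqrt{\eta}$-type bounds near zeros. Near a transversal zero $s_0$ one has $|f(s)|\ge(\delta/4)|s-s_0|$. Integrating $\min\bigl(\sqrt\eta,\ \eta/(\delta|s-s_0|)\bigr)$ over $|s-s_0|\le c$ gives
\[
\sqrt\eta\cdot\frac{\sqrt\eta}{\delta}+\frac{\eta}{\delta}\log\frac{c\,\delta}{\sqrt\eta}\lesssim \frac{\eta}{\delta}\Bigl(1+\log\frac1\eta\Bigr).
\]
This is the origin of $\rho$. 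Summing over at most $CBL_0/\delta$ zeros, and bounding the remainder by $L_0\,\eta/\lambda$, yields the explicit $C_A(B,\delta,L_0)$.

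\emph{Optimality.} Take $f(s)=\sin(2\pi s/L_0)$ and $g=\operatorname{sign}(f)\sqrt{f^2+\eta\,\chi}$, smoothed near the zeros at scale $\sqrt\eta$ so that $\|g\|_{C^2}$ stays bounded and the nondegeneracy survives. Then $|f-g|\approx \eta/(2|f|)$ away from the zeros, and its $L^1$ norm is $\asymp\eta\log(1/\eta)$ while $\|f^2-g^2\|_{C^0}\le\eta$. The delicate part is checking the $C^2$ bound after smoothing.
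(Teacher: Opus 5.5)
Your upper bound takes essentially the paper's route, but the sharpness family fails: cutting off at scale $\sqrt\eta$ is incompatible with a uniform $C^2$ bound. The upper bound uses transversal, separated zeros, a parity argument producing one global sign, the bound $\eta/|f+\varepsilon g|$ away from the zeros, and $\int\eta/(\delta t)\,dt$ as the only source of the logarithm. You organise it around sublevel sets $\{|f|<\lambda\}$ rather than $r$-neighbourhoods of zeros with an explicit matching as in (P3)--(P4).

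The step you expect to be the main obstacle closes in two lines. Your contradiction needs exactly this form of it. On a component $I$ of $\{|f|<\lambda\}$, with $\lambda\le\delta/2$ and $\eta\le\lambda^2/4$, one has $g^2<\lambda^2+\eta<\delta^2$, so non-degeneracy makes $g$ strictly monotone on $I$. Since $|g|\ge\sqrt{\lambda^2-\eta}$ at the ends of $I$ while $|g|\le\sqrt\eta$ at the zero of $f$, $g$ changes sign \emph{exactly once} in $I$. Hence $\operatorname{sign}(fg)$ agrees at both ends. A transversal zero of $g$ somewhere in $I$ would not suffice by itself, since two crossings would let $g$ keep its sign. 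The same monotonicity also justifies your bound $|f-\varepsilon g|\le2\sqrt\eta$ between the zero of $f$ and that of $\varepsilon g$. There the two have opposite signs, and $\bigl||f|-|g|\bigr|\le\sqrt\eta$ alone says nothing about their size.

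The genuine gap is the sharpness family: the $C^2$ check you postpone actually fails, rather than being merely delicate. Where $g-f\approx\eta/(2f)\sim\eta/(2ct)$ one has $(g-f)''\sim\eta/t^3$, which is of order $\eta^{-1/2}$ at $t\sim\sqrt\eta$. Likewise, $\operatorname{sign}(f)\sqrt{f^2+\eta\chi}$ jumps by $2\sqrt{\eta\chi}$ at each zero of $f$. Smoothing that jump over a window of width $\sqrt\eta$ costs $g''\sim\eta^{-1/2}$. So $\|g\|_{C^2}\to\infty$ and the family leaves the class with $B$ held fixed.

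This constraint is intrinsic, not an artefact of your smoothing. Following the profile $\eta/t$ with $|h''|\le B$ is possible only for $t\gtrsim(\eta/B)^{1/3}$. The repair is to cut off there. For example, take $\chi\equiv0$ on $\{|s-s_0|\le\eta^{1/3}\}$, rising to $1$ by $|s-s_0|=2\eta^{1/3}$ with $|\chi''|\lesssim\eta^{-2/3}$. Then $g=f$ near the zeros, and every term of $(g-f)''$ is $O(1)$. The paper instead uses $h_\epsilon=\lambda\epsilon\chi(s)/\sqrt{s^2+T^2}$ with $T=\epsilon^{1/3}$, so that $\epsilon T^{-3}=1$.

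The logarithm survives with a smaller coefficient: on each side of each zero, $\int_{\eta^{1/3}}^{c}\eta\,t^{-1}\,dt=\eta\bigl(\tfrac13\ln\tfrac1\eta+O(1)\bigr)$. You should also check that the branch $\varepsilon=-1$ stays at distance about $2\|f\|_{L^1}$. Only then is the infimum over both signs the quantity you computed.
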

\end{displaystatement}

Thus recovering a function from its square, modulo the global sign that no datum can see, is
log-Lipschitz from $C^0$ to $L^1$ under uniform non-degeneracy, and the logarithmic order is
optimal at this one-dimensional level.

The five statements differ in their objects and in the hypotheses they carry, and it is worth
tabulating them before the proofs begin.

\begin{table}[htbp]
\centering
\setlength{\tabcolsep}{3.5pt}
\renewcommand{\arraystretch}{1.15}
\tabtext{%
\begin{tabular}{@{}>{\raggedright\arraybackslash}p{0.05\textwidth}
                  >{\raggedright\arraybackslash}p{0.155\textwidth}
                  >{\raggedright\arraybackslash}p{0.165\textwidth}
                  >{\raggedright\arraybackslash}p{0.36\textwidth}
                  >{\raggedright\arraybackslash}p{0.16\textwidth}@{}}
\toprule
 & Object & Standing hypotheses & Conclusion & Kind \\
\midrule
\textbf{A} (\S\ref{sec:branch}--\S\ref{sec:rigidity}) &
the lift set $\mathcal L(\tau)$, and the fibre over $(\kappa,|\tau|)$ &
(S1), (S2) only &
exactly $2^{c(\tau)}$ smooth signed lifts, reducing to $\{\tau,-\tau\}$ \textbf{iff} $c(\tau)\le1$; hence $|\mathrm{Fib}_{SE}|\le2^{c(\tau)}$, and $|\mathrm{Fib}_{E}|=1$ \textbf{whenever} $c(\tau)\le1$ &
qualitative; exact at the level of lifts, one-directional at the level of curves \\
\addlinespace
\textbf{B} (\S\ref{sec:flexibility}) &
one constructed datum, for any $m$ and any $K_1,\dots,K_m$ &
(H) for the knot-type count only &
$|\mathrm{Fib}_{SE}|=2^m$ \textbf{exactly} and $|\mathrm{Fib}_{E}|=2^{m-1}$, realising all $\#_iK_i^{\varepsilon_i}$; under (H), $|\mathcal F|=2^m$ in $2^{m-1}$ mirror classes &
construction; shows the bound of A is sharp \\
\addlinespace
\textbf{C} (\S\ref{sec:genericity}) &
the space $\mathcal E^r$ of parametrised $C^r$ embeddings, $r\ge4$ &
positive curvature; simple torsion zeros &
$\mathcal G^r$ is open and dense, hence residual, and each member is determined up to $E(3)$ among $C^4$ competitors &
genericity, in the parametrised space \\
\addlinespace
\textbf{D} (\S\ref{sec:dichotomy}) &
pairs of curves in $\mathcal G^r$, and in $\mathcal K^5_{\delta,M}(L_0)$ &
$r=5$; $\|\alpha\|_{C^5}\le M$, $\kappa_\alpha\ge M^{-1}$, $\Delta\ge\delta$ &
no global modulus on $\mathcal G^r$; $D\le d_{\mathrm{lab}}\le C_{\mathrm{geo}}\,\rho(\Theta)$ on every stratum and on a relative $C^5$ neighbourhood; $C_*\to\infty$ as $\delta\downarrow0$, and every uniformly bounded near-collision must satisfy $\Delta\to0$ &
global instability, conditional stability \\
\addlinespace
\textbf{E} (\S\ref{sec:engine}) &
two functions $f,g\in C^2(\mathbb R/L_0\mathbb Z)$ &
$\|f\|_{C^2}+\|g\|_{C^2}\le B$; $f^2+(f')^2\ge\delta^2$, likewise for $g$ &
$\inf_\varepsilon\|f-\varepsilon g\|_{L^1}\le C_A\,\rho(\|f^2-g^2\|_{C^0})$, with the logarithmic order not removable &
one-dimensional engine; order sharp in one dimension \\
\bottomrule
\end{tabular}}
\caption{\emph{The five main statements, their objects and their hypotheses. The standing
conventions (S1)--(S4) are collected in \S\ref{sec:2.4}; (H) is the chirality hypothesis of
\S\ref{sec:5.3}. Only in D and E does a quantitative non-degeneracy enter.}}
\label{tab:1}
\end{table}

\subsection{Where the difficulty lies}
\label{sec:1.3}

Three points deserve to be identified before the proofs begin, since each marks a place where the
naive approach does not work.

\textbf{(i) The problem is one-dimensional, but the relevant one-dimensional object is invisible in the
values of $\tau$.} If $\widetilde\gamma$ shares the unsigned datum of $\gamma$, then
$\widetilde\tau$ is a \emph{smooth} function with $|\widetilde\tau|=|\tau|$, and by the fundamental
theorem the curve is then determined by that function up to $SE(3)$. Everything therefore reduces
to listing the smooth functions with prescribed absolute value --- a question about one variable. The
subtlety is which feature of $\tau$ controls the answer. It is tempting to expect that many zeros
mean much freedom. That is false in both directions: a torsion with infinitely many simple zeros
accumulating at a point is rigid, whereas a torsion vanishing on two disjoint arcs already admits
four lifts. The sign may be flipped across a zero only if the flip is invisible to every
derivative, so the controlling object is the \emph{infinite-order} zero set $Z_\infty(\tau)$ --- a set that
cannot be read off from finitely many jets --- and only through the number of components of its
complement. This is the classification underlying Theorem~\ref{thm:A}; it restates, on the circle and in
torsion notation, a theorem of Bony--Colombini--Pernazza, and \S\ref{sec:branch} is written to make the transcription
explicit (see \S\ref{sec:1.4} and the attribution note opening \S\ref{sec:branch}).

\textbf{(ii) Exactness in Theorem~\ref{thm:B} requires computing an infinite-order condition while preserving a
topological one, and the two are governed by opposite kinds of hypothesis.} A lower bound of $2^m$
is inexpensive: place $m$ tangles in disjoint balls joined by planar circular arcs (Figure~\ref{fig:2}),
reflect the tangles independently, and observe that reflection changes only the sign of the torsion on the
corresponding arcs. But such a reference curve carries no information about the \emph{size} of
$Z_\infty(\tau_0)$, and without that the count remains a lower bound and the fibre could in
principle be larger. Making the count exact means producing a curve whose infinite-order zero set is
computed exactly --- it must be precisely the planar part --- while every topological feature survives:
embeddedness, the private balls, the tangle types relative to their boundary spheres, and the exactly
circular planar collars on which the reflections are glued. Here the two requirements pull in
opposite directions: the topological features are preserved by \emph{open}, low-order ($C^0$--$C^2$)
conditions, whereas pinning $Z_\infty$ is a high-order, \emph{full-measure} condition, namely relative
$4$-jet transversality. They are compatible because the transversality holds for almost every
arbitrarily small parameter, so the low-order gates may be fixed first and the jet perturbation
chosen inside them afterwards. A second, sharper obstacle sits at the two endpoints of each knotted
core arc, where transversality on a compact core says nothing and the curve must merge into the
exactly planar collar; there we replace the perturbation argument by an exact identity for a normal
displacement of a circular arc, linear in the displacement, which forces non-vanishing on one-sided
neighbourhoods of both endpoints for \emph{every} non-zero amplitude. One-sided planarity then forces
infinite-order flatness at the endpoints themselves. Together these give $Z_\infty(\tau_0)$ exactly,
hence $c(\tau_0)=m$; the details are \S\ref{sec:5.2} and Appendices~\ref{app:A}--\ref{app:B}.

\textbf{(iii) On the quantitative side the two obstacles are sign alignment and the exponential.} Fix two
curves with nearby data. Recovering the torsion from $\tau^2$ costs one sign per branch, and the
branches of $f$ and of $g$ do not sit at the same places: their zeros are merely \emph{near} one another.
The first half of the proof of Theorem~\ref{thm:E} therefore matches the zeros of $f$ and $g$ bijectively and
cyclically, at scale $d\sim\sqrt\eta$, and then shows that the parity of sign changes forces the
branch signs to agree --- a quantitative counterpart of the criterion $c\le1$ of Theorem~\ref{thm:A}, with
``simple zero'' playing the role that ``connected complement'' plays there. The residual error splits
into a misalignment window of width $d$, on which width $\times$ height returns to the linear order
$d^2\sim\eta$, and an outer region on which $|f-g|\le\eta/(\delta t)$. The single integral
$\int_\sigma^r\eta/(\delta t)\,dt$ is the only source of a logarithm in the whole paper, and
Theorem~\ref{thm:7.2} shows it is saturated rather than wasteful.

The second obstacle is the passage from this $L^1$ bound on coefficients to a $C^0$ bound on curves.
The standard route is Grönwall, which would attach a factor $e^{cL_0}$ with $c$ controlled by
$\|\Omega\|$ and hence by $M$ --- fatal on strata with large $M$, and enough to obscure the entire
$\delta$-dependence that Theorem~\ref{thm:D}(4) is about. It is avoidable: the Frenet coefficient matrix is
antisymmetric, so the fundamental solution takes values in $SO(3)$, and the Duhamel kernel
$F_2(t)\Delta\Omega(t)F_1(t)^{\mathsf T}F_1(s)$ is therefore a Frobenius isometry applied to
$\Delta\Omega(t)$. The frame error is bounded by the plain $L^1$ norm of the coefficient difference,
with constant $1$ and no exponential (Lemma~\ref{lem:8.16}) --- and the $L^1$ norm of the coefficient difference
is exactly what Theorem~\ref{thm:E} delivers. This is why the geometric modulus inherits the one-dimensional
modulus without loss.

Finally, the two halves of Theorem~\ref{thm:D} are compatible for a structural reason. A continuous injective map restricted to a compact set has a uniformly continuous inverse;
any counterexample to a uniform modulus must therefore exploit non-compactness, and must escape
along some degeneration. Part 1 escapes along curves whose torsion vanishes on planar arcs, so
$\Delta\to0$ there by inspection. Parts 2 and 4 show that, \emph{within the uniformly controlled
class carrying both the $C^5$ bound and the curvature lower bound}, no escape can avoid degenerating
in $\Delta$, and that this degeneration is genuinely used: the optimal constant on
$\Delta\ge\delta$ is finite for each $\delta>0$ and unbounded as $\delta\downarrow0$.

\subsection{Relation to earlier work}
\label{sec:1.4}

\begin{quote}
\textbf{Input.} The classification of \emph{all} smooth signed lifts of a smooth non-negative datum
in one variable is due to Bony--Colombini--Pernazza \cite[\S1]{ref20}; part 1 of Theorem~\ref{thm:A}
is that result transcribed to $S^1$ and to torsion notation, and the count $2^{c(\tau)}$ is its
immediate cardinality consequence. The only other external input is the constant-torsion
$h$-principle of Ghomi--Raffaelli \cite[Cor.~1.2]{ref3}, used once, in Corollary~\ref{cor:4.6}.

\textbf{What is added here.} (a) The geometric consequences of that classification: the fibre bounds
and rigidity theorems of \S\ref{sec:rigidity}, in the three counts of Definition~\ref{def:2.4}.
(b) The multi-factor local-mirroring construction of \S\ref{sec:flexibility} and the relative
$4$-jet transversality that makes its fibre \emph{exact} rather than merely large
(Theorems~\ref{thm:5.8}, \ref{thm:5.16}). (c) The genericity of rigidity in the parametrised space
and its reparametrisation quotient (\S\ref{sec:genericity}). (d) The quantitative half: the
one-dimensional inverse estimate for the signed square root and its sharpness in order
(\S\ref{sec:engine}), and the conditional geometric stability, the divergence of the uniform
constants and the forced torsion degeneracy of \S\ref{sec:dichotomy}.
\end{quote}

\textbf{Square roots and lifts.} The classification used in \S\ref{sec:branch} belongs to the theory of \emph{admissible
square roots}: continuous $g$ with $g^2=F$ for a given non-negative $F$, sign changes being
permitted. The classical concerns there are existence and optimal regularity, from Glaeser \cite{ref6} to
Bony--Broglia--Colombini--Pernazza \cite{ref22} and Bony--Colombini--Pernazza \cite{ref21}. The classification of \emph{all}
smooth roots, which is what the present question needs, is due to Bony--Colombini--Pernazza \cite[\S1]{ref20};
\S\ref{sec:branch} is a self-contained restatement of it on $S^1$ in signed-torsion notation, with full proofs
included both for self-containedness and because the finite-regularity variants isolated in
Remark~\ref{rem:3.4} are used later in Corollary~\ref{cor:6.10}, where the $C^\infty$ classification is unavailable.
The attribution note opening \S\ref{sec:branch} records clause by clause which part of
\cite[\S1]{ref20} supplies which statement. What we add on
this side is the translation of the sign-level classification into rigidity and fibre statements
about space curves (\S\ref{sec:rigidity}).

\textbf{Lifting curves of polynomials.} A closely related line studies the lifting of curves of
polynomials over their invariants \cite{ref7}, \cite{ref16}, \cite{ref17}, \cite{ref18}, \cite{ref19}, \cite{ref9}, surveyed in \cite{ref8}; its main concern
is again existence and regularity. It does contain a rigidity statement: Alekseevsky--Kriegl--Losik--Michor
\cite{ref7} prove that under a \emph{normal non-flatness} condition any two smooth systems of roots differ by a
constant permutation, a conclusion restated by Losik--Rainer \cite{ref19} and by Rainer \cite{ref9}. In the present
setting that condition is equivalent to $Z_\infty(\tau)=\varnothing$ (Proposition~\ref{prop:9.1}), hence
strictly stronger than $c(\tau)\le1$. Section~\ref{sec:relation} makes that comparison precise and locates Theorem~\ref{thm:E}
against the recent quantitative work of Parusiński--Rainer \cite{ref27}, \cite{ref28} on the continuity of root maps.

\textbf{Geometric inputs and neighbours.} Corollary~\ref{cor:4.6} uses, as its only external input, the
constant-torsion $h$-principle of Ghomi--Raffaelli \cite[Cor.~1.2]{ref3}, which supplies in every knot class a
smooth embedded representative with positive curvature and globally constant torsion. Ghomi \cite[Thm.~1.1]{ref2}
plays the analogous role for the curvature: it makes an embedded closed curve's curvature
identically $c$, for any $c\ge\max\kappa_0$, by a $C^1$-small isotopy preserving unit speed, domain
and length. That already shows, in passing, that $\kappa$ alone determines nothing. Given finitely
many prescribed knot types, first rescale unit-speed representatives to one common length $L$, then
fix $c$ above all the rescaled curvatures, then apply the theorem to each; the results share
$\kappa\equiv c$ on $\mathbb R/L\mathbb Z$ while differing in knot type. Grinevich--Schmidt \cite{ref13}
characterise which periodic pairs $(\kappa,\tau)$ integrate to a closed curve; their condition can
rule out individual sign choices, and Remark~\ref{rem:5.19} records that for the construction of \S\ref{sec:flexibility} it rules
out none. Three further
works touch the unsigned datum from other directions and are complementary to the present results:
Bray--Jauregui \cite{ref10} on the geometric cost of prescribing a global sign for $\tau$, Mucci--Saracco \cite{ref11}
on the naturality of $|\tau|$ at low regularity, and Honma--Saeki \cite{ref12} on the total absolute torsion
as an ingredient of an integral invariant; see \S\ref{sec:9.4}. Koch--Rüland--Salo \cite{ref14} survey instability
mechanisms in inverse problems, and we cite them only for the general pattern ``globally unstable,
conditionally stable under a priori bounds''; the mechanism in \S\ref{sec:dichotomy} is different in kind, being the
collapse of branches of the unsigned quotient rather than smoothing-induced compression.

\subsection{Conventions and organisation}
\label{sec:1.5}

All constants below are explicit and auditable, and are sufficient bounds rather than estimates of
the true order; each is recorded where it is proved, as an inequality on a stated range rather than as
an unquantified $O(\cdot)$ symbol (Lemma~\ref{lem:8.14}, Theorem~\ref{thm:7.1}, \eqref{eq:8.1},
Appendix~\ref{sec:C.3}). Sharpness of a modulus of continuity is asserted only for the function order
$\eta\log\frac1\eta$ of Theorem~\ref{thm:7.2}, and only at the one-dimensional level. The standing hypotheses --- positive
curvature, closedness, embeddedness and the common arclength label --- are collected in \S\ref{sec:2.4}, and each
statement carries its own regularity requirement explicitly.

Section~\ref{sec:datum} fixes the curve class and the information operator and establishes the two symmetries that
act on the Frenet data, reducing the whole problem to a one-dimensional one. Section~\ref{sec:branch} solves that
one-dimensional problem: it classifies the smooth signed lifts of an unsigned torsion and introduces
the branch invariant $c(\tau)$. Section~\ref{sec:rigidity} transports the classification back to curves and proves the
rigidity half of Theorem~\ref{thm:A} together with the fibre bound. Section~\ref{sec:flexibility} constructs the flexible examples
and computes their fibres exactly, proving Theorem~\ref{thm:B}. Section~\ref{sec:genericity} proves that rigidity is generic
(Theorem~\ref{thm:C}). Section~\ref{sec:engine} proves the one-dimensional stability theorem and its sharpness (Theorem~\ref{thm:E}).
Section~\ref{sec:dichotomy} proves Theorem~\ref{thm:D}: the absence of a global modulus, uniform
and local conditional stability on the non-degenerate strata, the divergence of the optimal constant
as $\delta\downarrow0$, and the forced torsion degeneracy of any uniformly bounded near-collision
sequence. Section~\ref{sec:relation} compares the results with the lifting and root-regularity
literature, and \S\ref{sec:questions} lists the questions the paper leaves open. Four appendices contain the
bookkeeping of the construction of \S\ref{sec:flexibility} (Appendix~\ref{app:A}), the proof of the
relative $4$-jet transversality proposition (Appendix~\ref{app:B}), the explicit constant computations
(Appendix~\ref{app:C}) and a notation index (Appendix~\ref{app:D}); the paper closes with the
statements and declarations required by the journal.

%% file: sections/02-unsigned-datum.tex
\section{The unsigned datum and the symmetries acting on it}
\label{sec:datum}

The purpose of this section is to reduce the question of \S\ref{sec:1.1} to a problem about one real
function of one variable. Three facts accomplish that reduction: the torsion is the reflection-odd
part of the Frenet datum (Lemma~\ref{lem:2.7}), parameter reversal contributes no further sign
(Lemma~\ref{lem:2.8}), and the \emph{signed} datum determines the curve up to $SE(3)$ under minimal
regularity (Lemma~\ref{lem:2.10}). Together they say that a competitor sharing the unsigned datum is
completely described by which smooth function with absolute value $|\tau|$ its torsion happens to be.
Section~\ref{sec:branch} then lists those functions.

\subsection{The curve class and the information operator}
\label{sec:2.1}

\begin{definition}[curve class]
\label{def:2.1}
$\gamma:S^1=\mathbb R/L\mathbb Z\to\mathbb R^3$ is $C^\infty$, regular ($\gamma'\neq0$), injective
(an embedding), of unit speed (arclength parameter), of total length $L$.
Sections~\ref{sec:genericity}--\ref{sec:dichotomy} introduce classes of finite regularity.
\end{definition}

\begin{definition}[knot type, mirror]
\label{def:2.2}
A knot type is an ambient isotopy class: there are homeomorphisms $h_t:\mathbb R^3\to\mathbb R^3$
with $h_0=\mathrm{id}$ and $h_1(\gamma_1(S^1))=\gamma_2(S^1)$. Since $h_0=\mathrm{id}$ and $h_t$ is
continuous, $h_1$ is orientation preserving, so ambient isotopy does \textbf{not} include mirroring.
Knot types are \textbf{unoriented} by default. We write $SE(3)$ for the rigid motions with $\det=+1$
--- \emph{orientation-preserving Euclidean congruences} --- and $E(3)=O(3)\ltimes\mathbb R^3$ for the
full group of Euclidean congruences, which includes reflections; $\overline K=\mathrm{mirror}(K)$.
For a sign $\varepsilon\in\{\pm1\}$ we abbreviate
\[
K^{+1}:=K,\qquad K^{-1}:=\overline K .
\]
The superscript is the sign $\varepsilon$: throughout this paper $K^{-1}$ is the mirror image of $K$,
not the reverse knot, which does not occur here since knot types are unoriented. The connected sum $\#$ is well defined, commutative and
associative on unoriented knot types, and $\mathrm{mirror}(K\#J)=\overline K\#\overline J$.
\end{definition}

\begin{definition}[Frenet data; the information operator]
\label{def:2.3}
Assume $\kappa=|\gamma''|>0$ everywhere. Then $T=\gamma'$, $N=T'/\kappa$ and $B=T\times N$ are
pointwise well defined and $C^\infty$, and
\[
T'=\kappa N,\qquad N'=-\kappa T+\tau B,\qquad B'=-\tau N,
\]
with $\tau:=-\langle B',N\rangle\in C^\infty(S^1)$. The \textbf{sign convention is fixed by
$B'=-\tau N$}; we follow do Carmo \cite[\S1-5]{ref1} for the standard account and differ from it only
in fixing that convention explicitly, since the entire paper turns on it. The \textbf{unsigned Frenet
datum} is the image of the information operator
\[
F_{(\kappa,|\tau|)}(\gamma)=\bigl(\kappa,|\tau|\bigr),
\]
read pointwise in the arclength label.
\end{definition}

\begin{definition}[the fibre, and the three counts attached to it]
\label{def:2.4}
Let $d=(\kappa_0,|\tau_0|)$ be an unsigned datum on $\mathbb R/L\mathbb Z$. The \textbf{fibre over $d$}
is the set of curves
\[
\mathrm{Fib}(d):=\bigl\{\gamma\ \text{of the class of Definition~\ref{def:2.1} on }
\mathbb R/L\mathbb Z:\ (\kappa_\gamma,|\tau_\gamma|)=d\ \text{pointwise}\bigr\},
\]
the equality being read \textbf{in the common arclength label}.
Three quotients of it are counted in this paper, and they must be kept apart:
\[
\mathrm{Fib}_{SE}(d):=\mathrm{Fib}(d)/SE(3),\qquad
\mathrm{Fib}_{E}(d):=\mathrm{Fib}(d)/E(3),
\]
\[
\mathcal F(d):=\bigl\{[\operatorname{Im}\gamma]:\gamma\in\mathrm{Fib}(d)\bigr\},
\]
the last being the set of \textbf{knot types} realised in the fibre; we also write
$\mathcal F(\kappa_0,|\tau_0|)$ for it. Since a knot type is an $SE(3)$-invariant and each
$E(3)$-orbit is a union of at most two $SE(3)$-orbits,
\begin{equation}\label{eq:2.1}\tag{2.1}
\bigl|\mathcal F(d)\bigr|\ \le\ \bigl|\mathrm{Fib}_{SE}(d)\bigr|,\qquad
\bigl|\mathrm{Fib}_{E}(d)\bigr|\ \le\ \bigl|\mathrm{Fib}_{SE}(d)\bigr|\ \le\
2\,\bigl|\mathrm{Fib}_{E}(d)\bigr| .
\end{equation}
None of the three inequalities can be reversed in general: a reflection acts trivially on $|\tau|$ but
may or may not change the knot type, and it may or may not move the curve within its $SE(3)$-orbit.
\end{definition}

\begin{remark}[why a compressed datum]
\label{rem:2.5}
If the operator $F$ retained the full embedding or its image, then ``$F$ determines the knot type''
would be true by definition and would not be a result. The substantive question always concerns
\textbf{compressed or derived} geometric information, of which $F_{(\kappa,|\tau|)}$ is the smallest
natural example that still contains the whole classical Frenet content except for one $\mathbb Z_2$.
\end{remark}

\begin{definition}[order of a zero]
\label{def:2.6}
For $f\in C^\infty(S^1)$ and $p\in S^1$ put
$\ord_p(f):=\min\{k\ge0:f^{(k)}(p)\neq0\}\in\mathbb N\cup\{\infty\}$. Write $Z(f)=f^{-1}(0)$,
$Z_{\mathrm{fin}}(f)=\{p\in Z:\ord_pf<\infty\}$ and
$Z_\infty(f)=\{p:\ord_pf=\infty\}=\bigcap_{j\ge0}\{f^{(j)}=0\}$. Both $Z$ and $Z_\infty$ are closed,
and $Z=Z_{\mathrm{fin}}\sqcup Z_\infty$.
\end{definition}

\subsection{Reflection is the only source of the sign}
\label{sec:2.2}

\begin{lemma}[reflection flips the sign of the torsion]
\label{lem:2.7}
Let $R\in O(3)$ with $\det R=-1$ and $\widetilde\gamma=R\gamma$. Then pointwise
$\widetilde\kappa=\kappa$, $\widetilde\tau=-\tau$ and $|\widetilde\tau|=|\tau|$.
\end{lemma}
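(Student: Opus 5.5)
The plan is to compute the Frenet apparatus of $\widetilde\gamma=R\gamma$ directly from Definition~\ref{def:2.3}, keeping the same arclength parameter throughout. Since $R$ is linear and orthogonal, $\widetilde\gamma'=R\gamma'$ and $\widetilde\gamma''=R\gamma''$. Orthogonality then gives $|\widetilde\gamma'|=1$, so $\widetilde\gamma$ is again unit speed in the same label, and $\widetilde\kappa=|R\gamma''|=|\gamma''|=\kappa>0$. The Frenet frame is therefore defined, and I will read off $\widetilde T=RT$ and $\widetilde N=\widetilde T'/\widetilde\kappa=RT'/\kappa=RN$.

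The only point needing care is the binormal, and this is the step where $\det R=-1$ enters. I will use the identity $(Ru)\times(Rv)=(\det R)\,R(u\times v)$, valid for every $R\in O(3)$. To justify it, I pair both sides with an arbitrary $Rw$ and use the determinant formula for the triple product: $\langle Ru\times Rv,Rw\rangle=\det[Ru,Rv,Rw]=\det R\,\det[u,v,w]=\det R\,\langle R(u\times v),Rw\rangle$. Since $Rw$ ranges over all of $\mathbb R^3$, the identity follows. Applying it gives
\[
\widetilde B=\widetilde T\times\widetilde N=(RT)\times(RN)=-R(T\times N)=-RB .
\]

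The torsion then follows from the paper's convention $\tau=-\langle B',N\rangle$. We have $\widetilde B'=-RB'$, so
\[
\widetilde\tau=-\langle\widetilde B',\widetilde N\rangle=\langle RB',RN\rangle=\langle B',N\rangle=-\tau,
\]
using orthogonality of $R$ once more. Equivalently, from $\widetilde B'=-RB'=R(\tau N)=\tau\widetilde N$ and the convention $\widetilde B'=-\widetilde\tau\widetilde N$, we get $\widetilde\tau=-\tau$. Taking absolute values gives $|\widetilde\tau|=|\tau|$ pointwise.

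I expect no real obstacle. The only subtle step is the cross-product identity with the factor $\det R$, together with checking that the sign convention $B'=-\tau N$ is applied consistently on both sides. Translations, if one wanted to include them, would not change any derivative.
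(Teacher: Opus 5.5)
Your proposal is correct and follows the paper's proof essentially step for step: $\widetilde T=RT$, $\widetilde N=RN$, the identity $(Ra)\times(Rb)=\det(R)\,R(a\times b)$ giving $\widetilde B=-RB$, and then reading off $\widetilde\tau=-\tau$ from the convention $B'=-\tau N$. The only addition is that you prove the cross-product identity via the triple product, which the paper simply quotes.
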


\begin{proof}
$R$ is a constant linear isometry, so $\widetilde T=RT$ and $\widetilde T'=RT'=\kappa RN$, whence
$\widetilde\kappa=\kappa$ and $\widetilde N=RN$. From $(Ra)\times(Rb)=\det(R)\,R(a\times b)$ we get
$\widetilde B=\det(R)R(T\times N)=-RB$, so $\widetilde B'=-RB'=-R(-\tau N)=\tau\widetilde N$;
comparison with $\widetilde B'=-\widetilde\tau\widetilde N$ gives $\widetilde\tau=-\tau$.
\end{proof}

\begin{lemma}[parameter reversal does not flip the sign]
\label{lem:2.8}
Put $\beta(s):=\gamma(L-s)$. Then $\kappa_\beta(s)=\kappa(L-s)$ and $\tau_\beta(s)=\tau(L-s)$.
\end{lemma}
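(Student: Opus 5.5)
The proof is a direct computation in the style of Lemma~\ref{lem:2.7}: differentiate $\beta(s)=\gamma(L-s)$ and express the Frenet frame of $\beta$ through that of $\gamma$ evaluated at $u=L-s$. By the chain rule, $\beta'(s)=-\gamma'(L-s)$, so $\beta$ is again unit speed and $T_\beta(s)=-T(L-s)$. Differentiating once more, $T_\beta'(s)=T'(L-s)=\kappa(L-s)N(L-s)$. The two sign flips cancel, so $\kappa_\beta(s)=|\beta''(s)|=\kappa(L-s)>0$, and therefore $N_\beta(s)=N(L-s)$.

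Next I compute the binormal: $B_\beta=T_\beta\times N_\beta=-T(L-s)\times N(L-s)=-B(L-s)$. Differentiating with the chain rule gives
\[
B_\beta'(s)=B'(L-s)=-\tau(L-s)N(L-s)=-\tau(L-s)N_\beta(s).
\]
Comparing with the defining convention $B_\beta'=-\tau_\beta N_\beta$ of Definition~\ref{def:2.3} yields $\tau_\beta(s)=\tau(L-s)$. Equivalently, $\tau_\beta=-\langle B_\beta',N_\beta\rangle$ evaluates to the same quantity.

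The only delicate point is sign bookkeeping. Reversal flips $T$ and $B$ but not $N$. The chain-rule sign on $B_\beta'$ cancels the sign of $B_\beta$, whereas for a reflection the $\det R$ factor survives. I will note this contrast with Lemma~\ref{lem:2.7} explicitly, since it is the content of the lemma: reversal only reindexes the datum and contributes no sign.
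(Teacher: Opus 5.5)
Your computation is correct and is exactly the paper's first proof: $T_\beta(s)=-T(L-s)$, $N_\beta(s)=N(L-s)$, $B_\beta(s)=-B(L-s)$, and the chain-rule sign in $B_\beta'$ cancels the sign of $B_\beta$, giving $\tau_\beta(s)=\tau(L-s)$. The paper also records a second proof via the parametrisation-free formulas $\kappa=|\gamma'\times\gamma''|/|\gamma'|^3$ and $\tau=\langle\gamma'\times\gamma'',\gamma'''\rangle/|\gamma'\times\gamma''|^2$, which it reuses later for curves that are not unit speed.
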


\begin{proof}[Proof 1]
$\beta'(s)=-\gamma'(L-s)$, so $T_\beta(s)=-T(L-s)$ and $T_\beta'(s)=T'(L-s)=\kappa(L-s)N(L-s)$; hence
$\kappa_\beta(s)=\kappa(L-s)$ and $N_\beta(s)=N(L-s)$, so $B_\beta=T_\beta\times N_\beta=-B(L-s)$ and
$B_\beta'(s)=B'(L-s)=-\tau(L-s)N_\beta(s)$, which compared with $B_\beta'=-\tau_\beta N_\beta$ gives
$\tau_\beta(s)=\tau(L-s)$.
\end{proof}

\begin{proof}[Proof 2 (parametrisation-free formulas)]
For a general regular curve $\kappa=|\gamma'\times\gamma''|/|\gamma'|^3$ and
$\tau=\langle\gamma'\times\gamma'',\gamma'''\rangle/|\gamma'\times\gamma''|^2$. Under reversal
$\beta'=-\gamma'$, $\beta''=\gamma''$ and $\beta'''=-\gamma'''$, hence
$\beta'\times\beta''=-(\gamma'\times\gamma'')$, and both numerator and denominator are unchanged.
\end{proof}

The second proof is recorded because the parametrisation-free formulas are used again in
\S\ref{sec:flexibility} and \S\ref{sec:genericity}, where curves are not assumed to be unit speed.

\begin{corollary}[covariance of the Frenet datum]
\label{cor:2.9}
Inside the signed datum $(\kappa,\tau)$ the torsion is the \textbf{reflection-odd} component
(Lemma~\ref{lem:2.7}), while reversal of the parameter acts only by the pullback $s\mapsto L-s$ and
produces \textbf{no extra sign} (Lemma~\ref{lem:2.8}). \textbf{At the level of Euclidean covariance},
passing to $|\tau|$ removes the reflection-odd sign of $\tau$ and nothing else. \hfill$\square$
\end{corollary}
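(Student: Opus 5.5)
The plan is to read Corollary~\ref{cor:2.9} as a statement about how the group $E(3)\times\mathbb Z_2$ of Euclidean motions and parameter reversal acts on the signed datum $(\kappa,\tau)$. I would show that this action is given by an explicit formula, and then observe that composing with $(\kappa,\tau)\mapsto(\kappa,|\tau|)$ kills exactly the $\det$-sign.

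First, I would compute the action of a general $g=(Q,a)\in E(3)$, $\widetilde\gamma=Q\gamma+a$. A translation does not change $\gamma'$, $\gamma''$ or $\gamma'''$, so $(\kappa,\tau)$ is unchanged. For $Q\in O(3)$, the argument of Lemma~\ref{lem:2.7} applies verbatim with $\det R=-1$ replaced by $\det Q$. It gives $\widetilde T=QT$, $\widetilde N=QN$, $\widetilde\kappa=\kappa$ and $\widetilde B=\det(Q)\,QB$, hence
\[
(\kappa_{g\gamma},\tau_{g\gamma})=(\kappa,\ \det(Q)\,\tau).
\]
Thus $E(3)$ acts on the signed datum through the character $\det:E(3)\to\{\pm1\}$, trivially on $\kappa$ and by multiplication by $\det Q$ on $\tau$. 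This is the precise content of ``$\tau$ is the reflection-odd component''.

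Second, I would incorporate reversal $r:s\mapsto L-s$ using Lemma~\ref{lem:2.8}, which says it acts by $(\kappa,\tau)\mapsto(\kappa\circ r,\tau\circ r)$ with no sign. Since the two actions commute (reversal commutes with a constant linear map), the combined action of $(g,r^j)$ is
\[
(\kappa,\tau)\longmapsto\bigl(\kappa\circ r^j,\ \det(Q)\,\tau\circ r^j\bigr).
\]

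Third, I would apply the absolute value, which gives $(\kappa\circ r^j,|\tau|\circ r^j)$. This is independent of $\det Q$ and still records the pullback by $r^j$. So $|\tau|$ is invariant under all of $E(3)$, while the reversal part of the action survives unchanged. That is the sense of ``removes the reflection-odd sign and nothing else'': the map $(\kappa,\tau)\mapsto(\kappa,|\tau|)$ is equivariant, and the induced action of $E(3)\times\mathbb Z_2$ on unsigned data factors exactly through the quotient by $\ker(\text{reversal})$, i.e.\ through $E(3)$ being sent to the identity.

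There is no real obstacle here. The only point needing care is the cross-product identity $(Qa)\times(Qb)=\det(Q)\,Q(a\times b)$, already used in Lemma~\ref{lem:2.7}, together with keeping the fixed sign convention $B'=-\tau N$ throughout.
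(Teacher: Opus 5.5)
Your proposal is correct and follows the paper's route. The paper gives Corollary~\ref{cor:2.9} no separate proof: it simply juxtaposes Lemma~\ref{lem:2.7} (a reflection negates $\tau$) and Lemma~\ref{lem:2.8} (reversal is a pure pullback), and your formula $(\kappa,\tau)\mapsto(\kappa\circ r^j,\det(Q)\,\tau\circ r^j)$ for the combined action of $E(3)\times\mathbb Z_2$ is a slightly more explicit packaging of those two lemmas. Your reading of ``nothing else'' as a statement about the group action on one curve's datum, and not about how many functions share the modulus $|\tau|$, is also exactly the qualification the paper adds right after the corollary.
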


The qualification is essential and is not a formality. What the corollary describes is the action of
$E(3)$ on the datum of \emph{one} curve; it says nothing about how many smooth functions have a given
modulus. Recovering a curve from $|\tau|$ requires choosing a smooth signed lift, and
\S\ref{sec:branch} shows that such a lift may carry an \textbf{independent} sign on each component of
$S^1\setminus Z_\infty(\tau)$. So a single global $\mathbb Z_2$ is what the group action discards;
the lifting problem may nonetheless offer $2^{c(\tau)}$ candidates, and both statements are in force
simultaneously. Nor should the corollary be read as saying that $\operatorname{sign}\tau$ alone
determines chirality: reversal also applies the pullback, so the torsion is not pointwise invariant
in a fixed label.

\subsection{The signed datum determines the curve}
\label{sec:2.3}

\begin{lemma}[Frenet uniqueness for equal signed data]
\label{lem:2.10}
Let $\gamma,\widetilde\gamma:\mathbb R/L\mathbb Z\to\mathbb R^3$ be closed unit-speed curves
\textbf{of class at least $C^3$} with $\kappa>0$ and $\widetilde\kappa>0$ everywhere, sharing
\textbf{one and the same arclength label}, and satisfying $\widetilde\kappa=\kappa$ and
$\widetilde\tau=\tau$ (\textbf{signed}) pointwise. Then $\widetilde\gamma=Q\gamma+a$ for some
$Q\in SO(3)$ and $a\in\mathbb R^3$; in particular the two curves are ambient isotopic.
\end{lemma}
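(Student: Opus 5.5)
The plan is the classical frame-comparison argument for the fundamental theorem of space curves, checked at $C^3$ regularity. Since both curves are $C^3$ and unit speed with $\kappa,\widetilde\kappa>0$, the frames $T=\gamma'$, $N=T'/\kappa$, $B=T\times N$ are $C^1$, and so are their tilded counterparts. The Frenet equations of Definition~\ref{def:2.3} hold in the classical sense. Here $\tau=-\langle B',N\rangle$ is only continuous, which is enough, because it enters solely as a coefficient of a linear ODE. Write $F(s)=[T\;N\;B](s)$ and $\widetilde F(s)$ for the corresponding matrices; both lie in $SO(3)$ because $B=T\times N$. The equations read $F'=F\,\Omega$, with $\Omega$ the antisymmetric matrix built from $\kappa,\tau$. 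By the signed hypothesis $\widetilde\kappa=\kappa$, $\widetilde\tau=\tau$ in the common label, the same $\Omega$ governs $\widetilde F$.

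First, put $Q:=\widetilde F(0)F(0)^{\mathsf T}\in SO(3)$. The matrix function $G(s):=\widetilde F(s)F(s)^{\mathsf T}$ satisfies
\[
G'=\widetilde F\Omega F^{\mathsf T}+\widetilde F\Omega^{\mathsf T}F^{\mathsf T}=0
\]
by antisymmetry. Hence $G\equiv Q$, that is, $\widetilde F(s)=QF(s)$ for all $s$. Equivalently, one could invoke uniqueness for the linear system $X'=X\Omega$ with continuous coefficients. In particular $\widetilde T=QT$.

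Second, integrate: $\widetilde\gamma(s)-\widetilde\gamma(0)=\int_0^s\widetilde T=Q(\gamma(s)-\gamma(0))$. So $\widetilde\gamma=Q\gamma+a$ with $a=\widetilde\gamma(0)-Q\gamma(0)$. The argument works on $[0,L]$, which suffices because the identity then holds as functions on $\mathbb R/L\mathbb Z$. Closedness is used only in that both curves are given on the same circle, and nothing further is needed.

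Third, the ambient isotopy. $SO(3)$ is path connected, so choose a path $Q_t$ from $I$ to $Q$ and set $h_t(x)=Q_tx+ta$. These are homeomorphisms of $\mathbb R^3$ with $h_0=\mathrm{id}$ and $h_1(\gamma(S^1))=\widetilde\gamma(S^1)$, which is exactly Definition~\ref{def:2.2}.

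The only point needing care, and the main obstacle, is the regularity bookkeeping. One must confirm that $C^3$ gives $C^1$ frames, so that $G'$ exists and the derivative computation is legitimate. One must also confirm that $\det F=+1$ for both frames, so that $Q$ lies in $SO(3)$ rather than merely $O(3)$. Both checks are routine from $B=T\times N$ and $\kappa>0$.
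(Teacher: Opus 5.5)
Your proposal is correct and follows the paper's own argument essentially step for step. Like the paper, you check that $C^3$ gives $C^1$ frames in $SO(3)$, observe that both frames satisfy the same system $F'=F\Omega$, set $Q=\widetilde F(0)F(0)^{\mathsf T}$, integrate $\widetilde T=QT$, and use path-connectedness of $SO(3)$ for the isotopy. The one small deviation is that you show $\widetilde F F^{\mathsf T}$ is constant by the antisymmetry computation, rather than quoting uniqueness for a linear ODE with continuous coefficients; this is equally valid and needs only pointwise differentiability of the frames.
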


\begin{proof}
First, the regularity suffices. From $\gamma\in C^3$ we get $T=\gamma'\in C^2$ and
$T'=\gamma''\in C^1$; where $\kappa>0$ one has $\kappa=|\gamma''|\in C^1$, $N=\gamma''/\kappa\in C^1$
and $B=T\times N\in C^1$, so the Frenet frame is $C^1$, $B'$ is continuous, and
$\tau=-\langle B',N\rangle$ is continuous. Continuity of the coefficients is all that the following
linear ODE argument needs.

Both frames $F,\widetilde F$ are therefore everywhere defined, of class $C^1$, take values in
$SO(3)$, and satisfy the \textbf{same} linear system
\[
F'=F\,\Omega(s),\qquad
\Omega=\begin{pmatrix}0&-\kappa&0\\ \kappa&0&-\tau\\ 0&\tau&0\end{pmatrix},
\]
whose coefficients depend only on the common pair $(\kappa,\tau)$; a linear ODE with merely
continuous coefficients has a unique solution. Put $Q:=\widetilde F(0)F(0)^{-1}\in SO(3)$. Then $QF$
and $\widetilde F$ solve the same equation with the same initial value, so $\widetilde F=QF$; in
particular $\widetilde T=QT$, and integrating $\widetilde\gamma'=\widetilde T=QT=(Q\gamma)'$ gives
$\widetilde\gamma=Q\gamma+a$. No closing argument is required: both curves are \emph{given} as closed
curves, $\Omega$ is $L$-periodic, and uniqueness on $[0,L]$ extends by periodicity. Finally $SE(3)$
is path connected to the identity, which produces an ambient isotopy.
\end{proof}

\begin{remark}
\label{rem:2.11}
Lemma~\ref{lem:2.10} assumes nothing whatever about orders of zeros or about $Z_\infty$, so it
applies verbatim in the degenerate situations of \S\ref{sec:rigidity} --- in particular when
$\tau\equiv0$, where every argument based on isolated zeros breaks down. It is the only place where
the classical fundamental theorem is used, and it is used in this minimal-regularity form because the
competitors in Corollary~\ref{cor:6.10} are merely $C^4$.
\end{remark}

Combining, we obtain the reduction announced at the head of the section. If $\widetilde\gamma$ shares
the unsigned datum of $\gamma$, then $\widetilde\tau$ is a smooth function with
$|\widetilde\tau|=|\tau|$; by Lemma~\ref{lem:2.10} that function determines $\widetilde\gamma$ up to
$SE(3)$, and by Lemma~\ref{lem:2.7} the two admissible global signs correspond to the two
determinants in $O(3)$. Everything therefore depends on how many smooth functions have absolute
value $|\tau|$.

\subsection{Standing assumptions}
\label{sec:2.4}

Unless a statement says otherwise, the following are in force throughout.

\begin{quote}
\textbf{(S1)} Curves are closed, embedded, of unit speed, and have \textbf{positive curvature
everywhere}. Points of zero curvature lie outside the scope: there the Frenet frame and $\tau$ are
undefined. \textbf{(S2)} Two curves are compared only \textbf{through one and the same arclength
label}, so that their data may be subtracted pointwise. No infimum over labels or reparametrisations
is taken. \textbf{(S3)} Regularity is $C^\infty$ in \S\S\ref{sec:branch}--\ref{sec:flexibility},
$C^r$ with $r\ge4$ in \S\ref{sec:genericity} and \S\ref{sec:8.1}, and $r=5$ from \S\ref{sec:8.3}
onwards, where $\tau\in C^{r-3}=C^2$ is what the computation requires. \textbf{(S4)} Knot types are
unoriented, and ambient isotopy does not include mirroring (Definition~\ref{def:2.2}).
\end{quote}

%% file: sections/03-branch-invariant.tex
\section{The branch invariant: smooth signed lifts of the unsigned torsion}
\label{sec:branch}

By the reduction just made, the object to be classified is
\[
\mathcal L(f):=\{g\in C^\infty(S^1):\ |g(s)|=|f(s)|\ \text{ for all }s\},
\]
the set of smooth signed lifts of a given unsigned datum, with $f=\tau$ in the application. In the
literature a continuous $g$ with $g^2=F$, sign changes being allowed, is called an \textbf{admissible
square root} of the non-negative function $F$ (the terminology of \cite{ref21}), so $\mathcal L(f)$ is the
set of smooth admissible square roots of $F=f^2$. The classical questions in that direction concern
\textbf{existence and optimal regularity}: \cite{ref22} shows that in one variable one cannot in general require
better than $g\in C^1$, while $g\in C^2$ --- and no better --- is available when $F$ vanishes at all its
local minima, and \cite{ref21} gives the corresponding optimal statements from $C^6$ on. The question needed
here is of a different kind: given that $F$ is already $C^\infty$ and that one smooth root is known,
\textbf{list all} smooth roots.

\begin{quote}
\textbf{Attribution.} That question is answered by Bony--Colombini--Pernazza \cite[\S1]{ref20}, and Theorem~\ref{thm:3.7},
Corollary~\ref{cor:3.8} and Theorem~\ref{thm:3.9} below are a restatement of their classification on the circle $S^1$
and in signed-torsion notation. \footnote{Clause by clause: the independent choice of signs on the components of the complement of
the flat set, and the resulting bijection, are stated in the unnumbered paragraph of
\cite[\S1]{ref20} that follows Def.~1.1 and precedes Lem.~1.2, simultaneously for
$m=1,2,\dots,\infty$; for $m=\infty$ the convention of \cite[(1.1)]{ref20} makes the closed set $G$
there exactly the infinite-order flat set, i.e.\ the $Z_\infty$ used here. The regularity statement
for finite $m$, together with the gluing across a possibly non-discrete $G$, is
\cite[Prop.~1.4]{ref20}; since that proposition does not admit $m=\infty$, the upgrade to $C^\infty$
must be taken from \cite[Cor.~1.5]{ref20} and the sentence following it.} Two transcriptions are
performed below: from an interval to $S^1$, and from the coefficient datum $F=f^2$ to the torsion
$\tau$. We nevertheless reproduce complete proofs, both to keep the paper self-contained and because
the finite-regularity variants isolated in Remark~\ref{rem:3.4}(iv) --- which the $C^\infty$ statement
does not contain --- are exactly what Corollary~\ref{cor:6.10} needs, where competitors are merely
$C^4$ and $Z_\infty$ is not even defined.
\end{quote}

\subsection{The structure of the zero set}
\label{sec:3.1}

The first three lemmas isolate the only mechanism by which a sign can, or cannot, be flipped.

\begin{lemma}[the ratio is locally constant]
\label{lem:3.1}
On the open set $S^1\setminus Z(f)$ the ratio
$\varepsilon:=g/f$ takes values in $\{\pm1\}$ and is locally constant.
\end{lemma}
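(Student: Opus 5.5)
The plan is to read off both claims directly from the defining identity $|g|=|f|$ together with continuity of $f$ and $g$. Here $f,g\in C^\infty(S^1)$, $g\in\mathcal L(f)$, and the only regularity used is $C^0$. First, on $S^1\setminus Z(f)$ the quotient $\varepsilon=g/f$ is well defined, because $f(s)\neq0$. The identity $|g(s)|=|f(s)|$ gives $|\varepsilon(s)|=1$, so $\varepsilon(s)\in\{\pm1\}$ at every point of the open set $S^1\setminus Z(f)$. Openness of this set follows from continuity of $f$, since $Z(f)=f^{-1}(0)$ is closed.

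Second, $\varepsilon$ is a quotient of continuous functions with non-vanishing denominator, so it is continuous on $S^1\setminus Z(f)$. A continuous map from any topological space into the discrete two-point set $\{\pm1\}\subset\mathbb R$ is locally constant. Indeed, $\varepsilon^{-1}(1)=\varepsilon^{-1}((0,\infty))$ and $\varepsilon^{-1}(-1)=\varepsilon^{-1}((-\infty,0))$ are both open. So $\varepsilon$ is constant on each connected component of $S^1\setminus Z(f)$. These components are open arcs, or the whole circle if $Z(f)=\varnothing$. Equivalently, on each component $g=\pm f$ with a single sign.

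I do not expect a genuine obstacle here. The only point to state carefully is that local constancy refers to the open set $S^1\setminus Z(f)$, not to $Z(f)$ itself. The lemma says nothing about how the signs on different components are related across a zero. That question, in which the orders of zeros and $Z_\infty(f)$ enter, is left to the subsequent lemmas.

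I will also note that the argument uses only continuity of $g$. This makes it usable in the finite-regularity variants of Remark~\ref{rem:3.4}, where competitors are merely $C^4$.
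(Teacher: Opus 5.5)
Your proof is correct and is the paper's argument: on $S^1\setminus Z(f)$ the quotient $g/f$ is continuous with $|\varepsilon|\equiv1$, and a continuous map into the discrete set $\{\pm1\}$ is locally constant. One small correction to your closing remark: the argument uses continuity of $f$ as well as of $g$ (for openness of $S^1\setminus Z(f)$ and continuity of the quotient), which is exactly what Remark~\ref{rem:3.4}(iv) records.
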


\begin{proof}
Where $f\neq0$ the function $\varepsilon$ is continuous with $|\varepsilon|\equiv1$, and a
continuous map into a discrete set is locally constant.
\end{proof}

\begin{lemma}[finite-order zeros are isolated]
\label{lem:3.2}
If $\ord_pf=k<\infty$ then
$f(t)=(t-p)^kh(t)$ with $h\in C^\infty$ and $h(p)=f^{(k)}(p)/k!\neq0$; hence $p$ is isolated in
$Z(f)$.
\end{lemma}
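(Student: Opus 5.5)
The plan is to prove the factorisation by Taylor's formula with integral remainder. Translate so that $p=0$ in a local coordinate $t$ on $S^1$, working on a small interval around $p$; this loses nothing, since the claim is local. Because $\ord_pf=k$, we have $f^{(j)}(p)=0$ for $0\le j<k$. Taylor's formula of order $k$ with integral remainder then gives
\[
f(t)=\frac{(t-p)^k}{(k-1)!}\int_0^1(1-u)^{k-1}f^{(k)}\bigl(p+u(t-p)\bigr)\,du
\]
for $k\ge1$. For $k=0$ the statement is trivial with $h=f$. We therefore define
\[
h(t):=\frac{1}{(k-1)!}\int_0^1(1-u)^{k-1}f^{(k)}\bigl(p+u(t-p)\bigr)\,du .
\]

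The key steps, in order, are as follows.

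\begin{enumerate}
\item Show that $h\in C^\infty$. Differentiate under the integral sign in $t$ arbitrarily often. The integrand is smooth in $(t,u)$, and all of its $t$-derivatives are continuous on the compact set $[p-\epsilon,p+\epsilon]\times[0,1]$, so differentiation under the integral is justified.
\item Evaluate $h$ at $p$:
\[
h(p)=\frac{f^{(k)}(p)}{(k-1)!}\int_0^1(1-u)^{k-1}\,du=\frac{f^{(k)}(p)}{k!},
\]
which is nonzero by the definition of $\ord_pf$.
\item Deduce isolation. By continuity, $h\neq0$ on some neighbourhood $U$ of $p$, and $(t-p)^k\neq0$ for $t\neq p$. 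Hence $f(t)\neq0$ for every $t\in U\setminus\{p\}$, so $U\cap Z(f)=\{p\}$.
\end{enumerate}

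The proof has no real obstacle. The only points needing care are the regularity of $h$, which is handled by the integral form of the remainder (unlike the Lagrange form, it gives smoothness directly), and the fact that $S^1$ is one-dimensional, so a local chart reduces everything to the real line.
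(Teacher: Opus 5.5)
Your proof is correct and follows the paper's argument essentially verbatim. In both, Taylor's formula with integral remainder defines $h$, differentiation under the integral gives $h\in C^\infty$ with $h(p)=f^{(k)}(p)/k!\neq0$, and continuity of $h$ near $p$ yields isolation; your separate treatment of $k=0$ and your remark that $(t-p)^k$ is read in a local chart of $S^1$ are harmless details the paper leaves implicit.
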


\begin{proof}
Taylor with integral remainder gives
$f(t)=\frac{(t-p)^k}{(k-1)!}\int_0^1(1-u)^{k-1}f^{(k)}(p+u(t-p))\,du=:(t-p)^kh(t)$; the integrand is
smooth, so $h\in C^\infty$, and $h\neq0$ near $p$ by continuity.
\end{proof}

\begin{lemma}[a finite-order zero forbids a sign flip]
\label{lem:3.3}
Let $I\ni p$ be an open interval,
$f\in C^\infty(I)$ with $\ord_pf=k<\infty$, and let $g\in C^{k}(I)$ satisfy $|g|=|f|$
on $I$. Then there are a neighbourhood $J$ of $p$ and $\eta\in\{\pm1\}$ with $g\equiv\eta f$ on $J$.
\end{lemma}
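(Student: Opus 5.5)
Since $\ord_p f=k<\infty$, Lemma~\ref{lem:3.2} lets me write $f(t)=(t-p)^k h(t)$ with $h$ smooth and $h(p)\neq0$. I then shrink to an interval $J\ni p$ on which $h$ keeps a constant sign. On $J$ the only zero of $f$ is $p$. By Lemma~\ref{lem:3.1}, the ratio $g/f$ is constant on each of the two half-intervals $J_-=\{t<p\}$ and $J_+=\{t>p\}$. Call these constants $\eta_-,\eta_+\in\{\pm1\}$. If $\eta_-=\eta_+=:\eta$, then $g=\eta f$ on $J\setminus\{p\}$, and continuity gives $g(p)=0=\eta f(p)$, which finishes the proof. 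So the whole task is to rule out $\eta_-\neq\eta_+$.

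To do that I suppose $\eta_+=-\eta_-$ and set $u:=g-\eta_+f$. Then $u=0$ on $J_+$, and $u=2\eta_- f=2\eta_-(t-p)^kh(t)$ on $J_-$. Since $g\in C^k$ and $f\in C^\infty$, we have $u\in C^k(J)$, so $u^{(k)}$ is continuous at $p$. Computing one-sided limits of $u^{(k)}$: from the right the limit is $0$. From the left, by the Leibniz rule $\frac{d^k}{dt^k}\bigl[(t-p)^kh\bigr]\to k!\,h(p)$, so the limit is $2\eta_- f^{(k)}(p)\neq0$. This contradicts continuity of $u^{(k)}$ at $p$.

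A cleaner way to organise the same step is to note that $g$ itself is $C^k$ and agrees with $\eta_\pm f$ on $J_\pm$. Hence $g^{(k)}(p)=\eta_+f^{(k)}(p)=\eta_-f^{(k)}(p)$, taking the limits of $g^{(k)}$ from each side. Since $f^{(k)}(p)\neq0$, this forces $\eta_+=\eta_-$. Derivatives of $g$ at interior points of $J_\pm$ equal those of $\eta_\pm f$ because the two functions agree on an open set, and continuity of $g^{(k)}$ carries this to $p$.

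The only delicate point is the regularity bookkeeping. Only $g\in C^k$ is assumed, exactly what is needed to take $k$ one-sided derivative limits; this is why the lemma is stated at finite regularity, for later use with merely $C^4$ competitors. The case $k=0$ is trivial, because then $f(p)\neq0$ and Lemma~\ref{lem:3.1} already applies on a neighbourhood of $p$.
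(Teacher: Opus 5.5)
Your proof is correct and is essentially the paper's argument. The paper likewise isolates $p$ via Lemma~\ref{lem:3.2}, obtains constant signs $\eta_\pm$ on the two half-intervals via Lemma~\ref{lem:3.1}, and uses continuity of $g^{(k)}$ at $p$ to get $\eta_+f^{(k)}(p)=\eta_-f^{(k)}(p)$, which is impossible for opposite signs since $f^{(k)}(p)\neq0$; your separate handling of $k=0$, where $p$ is not a zero, is a small point the paper leaves implicit.
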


\begin{proof}
By Lemma~\ref{lem:3.2} choose $J=(p-\delta,p+\delta)$ with $f\neq0$ on $J\setminus\{p\}$. By
Lemma~\ref{lem:3.1} the ratio $\varepsilon=g/f$ equals a constant $\varepsilon_-$ on $J^-$ and a constant
$\varepsilon_+$ on $J^+$. Suppose $\varepsilon_-\neq\varepsilon_+$; replacing $g$ by
$\varepsilon_-g$ we may assume $g=f$ on $J^-$ and $g=-f$ on $J^+$. Since $g\in C^k$, the derivative
$g^{(k)}$ is continuous at $p$, so
$g^{(k)}(p)=\lim_{t\uparrow p}f^{(k)}(t)=f^{(k)}(p)$ and
$g^{(k)}(p)=\lim_{t\downarrow p}\bigl(-f^{(k)}(t)\bigr)=-f^{(k)}(p)$; subtracting gives
$2f^{(k)}(p)=0$, contradicting $\ord_pf=k$. Hence
$\varepsilon_-=\varepsilon_+=:\eta$, and at $p$ both sides vanish.
\end{proof}

\begin{remark}[four observations on Lemma~\ref{lem:3.3}]
\label{rem:3.4}
(i) The contradiction occurs \textbf{exactly at order $k$}, the jump being $2f^{(k)}(p)$; derivatives of
order $<k$ have two-sided limits $0$ and produce no contradiction. This is why an infinite-order
zero is the only place where a flip can hide.
(ii) Only $g\in C^k$ is used, and $g=\operatorname{sign}(t)\,t^k=t^{k-1}|t|$ lies in
$C^{k-1}\setminus C^{k}$, so the hypothesis cannot be weakened.
(iii) The \textbf{parity of $k$ never enters the proof}: $k=1$ (with $f=t$, $g=|t|$) and $k=2$ (with
$f=t^2$, $g=t|t|$) behave identically.
(iv) \textbf{Finite-regularity versions.} Lemma~\ref{lem:3.1} needs only continuity of $f$ and $g$; Lemma~\ref{lem:3.2}, for
$f\in C^k$ with $\ord_pf=k$, gives $f=(t-p)^kh$ with $h$ merely \textbf{continuous} and
$h(p)\neq0$, which still yields isolation; and Lemma~\ref{lem:3.3} needs only $f,g\in C^k$. Hence for $k=1$
all three lemmas apply to functions that are \textbf{merely $C^1$}. This is precisely the situation of
Corollary~\ref{cor:6.10}, where the $C^\infty$ classification below is unavailable.
\end{remark}

\begin{lemma}[a gluing fact]
\label{lem:3.5}
Let $u:J\to\mathbb R$ be continuous, $C^1$ on $J\setminus\{p\}$, with
$\lim_{t\to p}u'(t)=c$. Then $u'(p)=c$ and $u\in C^1(J)$.
\end{lemma}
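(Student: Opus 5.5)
The natural route is the mean value theorem on each side of $p$. Write $J=(p-\delta,p+\delta)$ and fix $t\in J$ with $t\neq p$, say $t>p$ (the case $t<p$ is symmetric). Since $u$ is continuous on $[p,t]$ and differentiable on $(p,t)$, the mean value theorem gives a point $\xi_t\in(p,t)$ with
\[
\frac{u(t)-u(p)}{t-p}=u'(\xi_t).
\]
Note that differentiability is needed only at interior points of $(p,t)$, so the hypothesis ``$C^1$ on $J\setminus\{p\}$'' suffices, and continuity at the endpoint $p$ is what the theorem requires there. This is where continuity of $u$ at $p$ enters, and it cannot be dropped: a jump at $p$ with $u'\equiv0$ on both sides is a counterexample.

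As $t\downarrow p$ we have $\xi_t\to p$ with $\xi_t\neq p$. The hypothesis $\lim_{s\to p}u'(s)=c$ therefore gives $u'(\xi_t)\to c$, so the right difference quotient tends to $c$. The same argument on $[t,p]$ handles $t\uparrow p$. The two one-sided limits of the difference quotient agree, hence $u$ is differentiable at $p$ with $u'(p)=c$.

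Finally, $u'$ is continuous on $J\setminus\{p\}$ by assumption. At $p$ it is continuous because $\lim_{t\to p}u'(t)=c=u'(p)$. Hence $u'$ is continuous on all of $J$, that is, $u\in C^1(J)$.

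There is no real obstacle here. The only point to handle carefully is the first step: one must observe that the mean value theorem applies with the derivative at $p$ not yet known to exist, since it uses continuity on the closed interval and differentiability only on the open one.
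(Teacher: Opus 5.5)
Your proposal is correct and is essentially the paper's own proof: the mean value theorem on $[p,t]$ (or $[t,p]$) gives the difference quotient as $u'(\xi_t)$ with $\xi_t\to p$, so $u'(p)=c$, and continuity of $u'$ at $p$ follows from $\lim_{t\to p}u'(t)=c$. Taking $J$ symmetric about $p$ is a harmless normalisation, since differentiability at $p$ and continuity of $u'$ there are local statements.
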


\begin{proof}
By the mean value theorem $\frac{u(t)-u(p)}{t-p}=u'(\xi_t)$ with $\xi_t\to p$, so the
difference quotient tends to $c$; and $u'$ is continuous at $p$.
\end{proof}

\begin{proposition}[structure of $Z$]
\label{prop:3.6}
(i) Every point of $Z_{\mathrm{fin}}$ is isolated in $Z$;
(ii) every accumulation point of $Z$ lies in $Z_\infty$; (iii) if $Z_\infty=\varnothing$ then $Z$ is
\textbf{finite}; (iv) $S^1\setminus Z_\infty$ is open, its connected components are open arcs (the single
component being $S^1$ itself when $Z_\infty=\varnothing$), and their number is
$c\in\{0,1,2,\dots\}\cup\{\aleph_0\}$; moreover $c=0\iff Z_\infty=S^1\iff f\equiv0$.
\end{proposition}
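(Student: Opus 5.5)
Each item follows from Lemma~\ref{lem:3.2} and compactness of $S^1$.

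For (i), take $p\in Z_{\mathrm{fin}}$ with $\ord_pf=k<\infty$. Lemma~\ref{lem:3.2} gives $f(t)=(t-p)^kh(t)$ with $h(p)\neq0$. By continuity $h\neq0$ on a neighbourhood $J$ of $p$, so $Z\cap J=\{p\}$.

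For (ii), let $p$ be an accumulation point of $Z$. Since $Z$ is closed, $p\in Z$. If $p$ were in $Z_{\mathrm{fin}}$, then (i) would make it isolated in $Z$. Hence $p\in Z_\infty$, using $Z=Z_{\mathrm{fin}}\sqcup Z_\infty$ from Definition~\ref{def:2.6}.

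For (iii), suppose $Z_\infty=\varnothing$. By (ii), $Z$ has no accumulation points. Being closed in the compact $S^1$, $Z$ is then compact and discrete, hence finite. Equivalently: if $Z$ were infinite, Bolzano--Weierstrass would produce an accumulation point, which (ii) places in $Z_\infty=\varnothing$.

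For (iv), openness comes first. $Z_\infty=\bigcap_j\{f^{(j)}=0\}$ is an intersection of closed sets, so it is closed and its complement is open. A connected open subset of $S^1$ is either $S^1$ itself or an open arc. If $Z_\infty=\varnothing$ the complement is all of $S^1$, giving $c=1$. If $Z_\infty\neq\varnothing$, every component is a proper connected open subset, hence an open arc.

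The cardinality bound is the only step that needs an argument, and it is short. The components are pairwise disjoint nonempty open sets, so each contains a point of the countable dense set $\mathbb Q/L\mathbb Z$, and distinct components contain distinct such points. This gives an injection of the set of components into a countable set, so $c\le\aleph_0$.

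Finally, $c=0$ exactly when the complement is empty, i.e.\ $Z_\infty=S^1$. If $Z_\infty=S^1$ then in particular $f\equiv0$. Conversely, if $f\equiv0$ then every derivative vanishes everywhere, so $Z_\infty=S^1$.
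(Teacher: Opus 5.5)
Your proof is correct and follows essentially the same route as the paper. You derive (i) from Lemma~\ref{lem:3.2}, (ii) from closedness of $Z$ together with (i), (iii) from compactness of $S^1$, and (iv) from closedness of $Z_\infty$ and the structure of open subsets of $S^1$. The only difference is that you prove the countability of the components by picking a point of a countable dense set in each, whereas the paper simply quotes the standard fact that an open subset of $S^1$ is a countable disjoint union of open arcs.
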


\begin{proof}
(i) is Lemma~\ref{lem:3.2}. (ii) An accumulation point $q$ lies in the closed set $Z$; if
$\ord_qf<\infty$ then $q$ is isolated by (i), a contradiction. (iii) Every point of
$Z$ is isolated in $Z$, so for each $p\in Z$ pick an open $U_p$ with $U_p\cap Z=\{p\}$; $Z$ is closed
in the compact $S^1$, hence compact, and a finite subcover shows $Z$ is finite. (iv) Components of
an open set are open; a connected open subset of $S^1$ is an open arc or all of $S^1$; an open set
is a countable disjoint union of open arcs. Finally $Z_\infty=S^1$ means that all derivatives vanish
identically, i.e. $f\equiv0$.
\end{proof}

\subsection{Classification, count, and the rigidity criterion}
\label{sec:3.2}

We can now list $\mathcal L(f)$. The statement is that the only freedom is a choice of sign on each
component of $S^1\setminus Z_\infty(f)$, and that the choices are \textbf{independent}.

\begin{theorem}[{classification; restatement of \cite[\S1]{ref20} on $S^1$}]
\label{thm:3.7}
Put
$\mathcal E(f):=\{\varepsilon:S^1\setminus Z_\infty(f)\to\{\pm1\}\ \text{locally constant}\}$. Then
\[
\Phi:\mathcal E(f)\to\mathcal L(f),\qquad
(\Phi\varepsilon)(s)=\begin{cases}\varepsilon(s)f(s),&s\notin Z_\infty,\\ 0,&s\in Z_\infty,\end{cases}
\]
is a \textbf{bijection}.
\end{theorem}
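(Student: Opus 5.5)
The plan has three parts: show that $\Phi$ is well defined (each $\Phi\varepsilon$ is a smooth lift), that it is injective, and that it is surjective.

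\textbf{Well-definedness.} Fix $\varepsilon\in\mathcal E(f)$ and put $g=\Phi\varepsilon$. Clearly $|g|=|f|$. For smoothness, split into two kinds of points.
\begin{itemize}
\item On the open set $U=S^1\setminus Z_\infty$, $\varepsilon$ is constant on each component, so $g=\pm f$ near every point of $U$. Hence $g\in C^\infty(U)$ and $g^{(j)}=\varepsilon f^{(j)}$ there.
\item At $p\in Z_\infty$ we must show every $g^{(j)}$ exists and vanishes. Argue by induction on $j$, with the hypothesis that $g\in C^j(S^1)$, $g^{(j)}=\varepsilon f^{(j)}$ on $U$, and $g^{(j)}=0$ on $Z_\infty$.
\end{itemize}

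For the inductive step, first note continuity of $g^{(j)}$ at $p\in Z_\infty$: $|g^{(j)}|\le|f^{(j)}|$ everywhere, and $f^{(j)}$ is continuous with $f^{(j)}(p)=0$. Then compute the difference quotient $(g^{(j)}(t)-g^{(j)}(p))/(t-p)=g^{(j)}(t)/(t-p)$ directly, without Lemma~\ref{lem:3.5}, which would need $C^1$ off $p$ and $Z_\infty$ may accumulate at $p$. Its absolute value is at most $|f^{(j)}(t)|/|t-p|=|f^{(j)}(t)-f^{(j)}(p)|/|t-p|$, which tends to $|f^{(j+1)}(p)|=0$. So $g^{(j+1)}(p)=0$. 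On $U$ we already have $g^{(j+1)}=\varepsilon f^{(j+1)}$, and $|g^{(j+1)}|\le|f^{(j+1)}|$ gives continuity of $g^{(j+1)}$ at points of $Z_\infty$. This closes the induction, so $g\in C^\infty$.

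\textbf{Injectivity.} If $\Phi\varepsilon=\Phi\varepsilon'$, then $\varepsilon f=\varepsilon' f$ on $U$, so $\varepsilon=\varepsilon'$ off $Z(f)\cap U$. Each component $A$ of $U$ is an open arc, and $f\not\equiv0$ on $A$, since otherwise all derivatives of $f$ would vanish on $A$, putting $A\subset Z_\infty$. Hence $f(s)\neq0$ at some $s\in A$, and since $\varepsilon,\varepsilon'$ are constant on $A$ they agree on $A$.

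\textbf{Surjectivity.} Given $g\in\mathcal L(f)$, we need a locally constant $\varepsilon$ on $U$ with $g=\varepsilon f$.
\begin{itemize}
\item Away from $Z(f)$, Lemma~\ref{lem:3.1} gives the locally constant ratio $g/f$.
\item At each $p\in Z_{\mathrm{fin}}$, Lemma~\ref{lem:3.3} gives a neighbourhood on which $g=\eta f$ for a single sign $\eta$. This extends the ratio across $p$ and keeps it locally constant.
\end{itemize}
Here one checks that $U=(S^1\setminus Z)\cup Z_{\mathrm{fin}}$, and that every point of $U$ has a neighbourhood where $g=\pm f$ with one sign (using Proposition~\ref{prop:3.6}(i)). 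So $\varepsilon$ is well defined and locally constant on $U$. On $Z_\infty$ we have $g=0=\Phi\varepsilon$ because $|g|=|f|=0$, so $g=\Phi\varepsilon$.

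\textbf{Main obstacle.} The delicate step is smoothness at infinite-order zeros that are not isolated, for instance endpoints of arcs of $Z_\infty$ approached by points of $U$. There the sign may vary among infinitely many components accumulating at $p$, so no single-sided gluing lemma applies. The domination bound $|g^{(j)}|\le|f^{(j)}|$, established inductively together with the difference-quotient estimate, is what handles this uniformly.
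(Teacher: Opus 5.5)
Your proposal is correct and follows the paper's proof essentially step for step: the same induction on the order of differentiation using $|g^{(j)}|\le|f^{(j)}|$ and the direct difference-quotient estimate at points of $Z_\infty$, with no discreteness of $Z_\infty$ assumed, and the same surjectivity argument via Lemma~\ref{lem:3.1} off $Z$ and Lemma~\ref{lem:3.3} at points of $Z_{\mathrm{fin}}$. The only difference is in injectivity: you argue that $f$ cannot vanish identically on a component of $S^1\setminus Z_\infty$ and that $\varepsilon,\varepsilon'$ are constant on each component, whereas the paper uses the isolation of finite-order zeros (Lemma~\ref{lem:3.2}) to carry the agreement from $S^1\setminus Z$ over to $Z_{\mathrm{fin}}$; both arguments are valid.
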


\begin{proof}
\textbf{(i) $\Phi$ is well defined.} Put $g=\Phi\varepsilon$. We claim that for every $m\ge0$:
$g\in C^m$, $g^{(m)}\equiv0$ on $Z_\infty$, and $|g^{(m)}|=|f^{(m)}|$ on $S^1\setminus Z_\infty$.
For $m=0$: on $S^1\setminus Z_\infty$ the function $\varepsilon$ is locally constant, so $g$ is
locally $\pm f$; at $p\in Z_\infty$ one has $|g(t)|=|f(t)|\to0=g(p)$. For the induction step, on
$S^1\setminus Z_\infty$ the derivative $g^{(m+1)}$ exists with $|g^{(m+1)}|=|f^{(m+1)}|$, while at
$p\in Z_\infty$ and $t\neq p$
\[
\Bigl|\frac{g^{(m)}(t)-g^{(m)}(p)}{t-p}\Bigr|=\frac{|g^{(m)}(t)|}{|t-p|}
=\begin{cases}0,&t\in Z_\infty,\\[2pt] \dfrac{|f^{(m)}(t)-f^{(m)}(p)|}{|t-p|},&t\notin Z_\infty,\end{cases}
\]
using $g^{(m)}(p)=0=f^{(m)}(p)$ and the inductive hypothesis; the second expression tends to
$|f^{(m+1)}(p)|=0$. Hence $g^{(m+1)}(p)=0$, and continuity follows in the same way. Note that this
argument nowhere assumes that $Z_\infty$ is discrete: it may contain intervals, or be a Cantor set.

\textbf{(ii) $\Phi$ is injective.} If $\Phi\varepsilon_1=\Phi\varepsilon_2$ then
$\varepsilon_1=\varepsilon_2$ on $S^1\setminus Z$, where $f\neq0$; for $p\in Z_{\mathrm{fin}}$ a
punctured neighbourhood lies in $S^1\setminus Z$ by Lemma~\ref{lem:3.2}, and the $\varepsilon_i$ are locally
constant, so they agree at $p$ as well.

\textbf{(iii) $\Phi$ is surjective.} Let $g\in\mathcal L(f)$. For $s\notin Z$ set
$\varepsilon(s)=g(s)/f(s)$. For $p\in Z_{\mathrm{fin}}$, Lemma~\ref{lem:3.3} with
$k=\ord_pf<\infty$ and $g\in C^\infty\subset C^k$ gives a neighbourhood $J$ and a sign
$\eta$ with $g\equiv\eta f$ there, and we set $\varepsilon(p)=\eta$, consistently with the punctured
neighbourhood. Then $\varepsilon$ is locally constant and $g=\varepsilon f$ on
$S^1\setminus Z_\infty$, while on $Z_\infty$ both sides vanish.
\end{proof}

\begin{corollary}[the branch count]
\label{cor:3.8}
$|\mathcal L(f)|=2^{c}$, where $c=c(f)$ is the number of
connected components of $S^1\setminus Z_\infty(f)$, with the conventions $2^0=1$ and
$2^{\aleph_0}=|\mathcal L(f)|$ when $c=\aleph_0$. \hfill$\square$
\end{corollary}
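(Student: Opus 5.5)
The count follows directly from Theorem~\ref{thm:3.7}. Since $\Phi:\mathcal E(f)\to\mathcal L(f)$ is a bijection, we have $|\mathcal L(f)|=|\mathcal E(f)|$, and it remains to count the locally constant maps $\varepsilon:S^1\setminus Z_\infty(f)\to\{\pm1\}$.

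By Proposition~\ref{prop:3.6}(iv), the open set $U:=S^1\setminus Z_\infty(f)$ is a disjoint union of its connected components $\{A_i\}_{i\in I}$, each an open arc or all of $S^1$, with $|I|=c$.

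A locally constant map on a connected space is constant, so each $\varepsilon\in\mathcal E(f)$ is constant on every $A_i$. Conversely, any family of signs $(\varepsilon_i)_{i\in I}$ defines a locally constant map on $U$, because each $A_i$ is open. Hence $\mathcal E(f)\cong\{\pm1\}^I$, and $|\mathcal E(f)|=2^{|I|}=2^c$.

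It remains to check the edge cases.
\begin{itemize}
\item When $c=0$, we have $U=\varnothing$, and $\mathcal E(f)$ contains only the empty map. Correspondingly $\mathcal L(f)=\{0\}$ (indeed $f\equiv0$ by Proposition~\ref{prop:3.6}(iv)). This matches the convention $2^0=1$.
\item When $c=\aleph_0$, the set $\{\pm1\}^{\mathbb N}$ has cardinality $2^{\aleph_0}$. The stated convention simply records this as $|\mathcal L(f)|$.
\end{itemize}

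There is no genuine obstacle. The only point needing care is that components of an open subset of $S^1$ are open, which is what makes independent sign choices on them locally constant. This is supplied by Proposition~\ref{prop:3.6}(iv).
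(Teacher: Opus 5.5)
Your argument is correct and is essentially the paper's: the corollary is stated there as the immediate cardinality consequence of the bijection $\Phi$ of Theorem~\ref{thm:3.7}, since a locally constant sign on $S^1\setminus Z_\infty(f)$ amounts to an independent choice of $\pm1$ on each of its open components. Your handling of the edge cases $c=0$ (where $\mathcal L(f)=\{0\}$) and $c=\aleph_0$ is also in order.
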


\begin{theorem}[necessary and sufficient rigidity criterion]
\label{thm:3.9}
$\mathcal L(f)\subseteq\{f,-f\}$
\textbf{if and only if} $c(f)\le1$. \hfill$\square$
\end{theorem}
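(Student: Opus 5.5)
The plan is to deduce Theorem~\ref{thm:3.9} directly from the bijection $\Phi:\mathcal E(f)\to\mathcal L(f)$ of Theorem~\ref{thm:3.7}. Under that bijection, $f=\Phi(\varepsilon\equiv+1)$ and $-f=\Phi(\varepsilon\equiv-1)$. The question is therefore whether $\mathcal E(f)$ contains only constant sign functions. I will work throughout with the components of $S^1\setminus Z_\infty(f)$, which Proposition~\ref{prop:3.6}(iv) describes as open arcs, or all of $S^1$, numbering $c=c(f)$ in total.

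\textbf{Sufficiency ($c\le1$).} If $c=0$, then $S^1\setminus Z_\infty=\varnothing$, so $f\equiv0$ by Proposition~\ref{prop:3.6}(iv). In that case $\mathcal E(f)$ has exactly one element, the empty function, and $\mathcal L(f)=\{0\}=\{f,-f\}$. If $c=1$, then $S^1\setminus Z_\infty$ is connected, and a locally constant function on a connected set is constant. Hence $\mathcal E(f)=\{\pm1\}$, and $\Phi$ sends these two elements to $\pm f$. In both cases $\mathcal L(f)\subseteq\{f,-f\}$.

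\textbf{Necessity ($c\ge2$, including $c=\aleph_0$).} Choose one component $A$ and set $\varepsilon=-1$ on $A$ and $\varepsilon=+1$ on every other component. This $\varepsilon$ is locally constant, because the components are open and pairwise disjoint. By Theorem~\ref{thm:3.7}, $g=\Phi\varepsilon$ lies in $\mathcal L(f)$.

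It remains to show $g\neq\pm f$. Let $B\neq A$ be a second component. The key observation is that $f$ is not identically zero on any component $C$. Indeed, if $f\equiv0$ on the open set $C$, then every derivative of $f$ vanishes on $C$, so $C\subseteq Z_\infty$, which is impossible since $C$ lies in the complement of $Z_\infty$. So pick $a\in A$ with $f(a)\neq0$ and $b\in B$ with $f(b)\neq0$. Then $g(a)=-f(a)\neq f(a)$, so $g\neq f$; and $g(b)=f(b)\neq-f(b)$, so $g\neq-f$. This contradicts $\mathcal L(f)\subseteq\{f,-f\}$.

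The only point that needs any care is this non-vanishing of $f$ on each component. Without it, two distinct sign functions could produce the same lift; the injectivity part of Theorem~\ref{thm:3.7} already guarantees this cannot happen. Everything else in the argument is bookkeeping through the bijection $\Phi$.
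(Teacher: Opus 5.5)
Your argument is correct and is essentially the paper's. There, Theorem~\ref{thm:3.9} is recorded without separate proof as an immediate consequence of the bijection $\Phi$ of Theorem~\ref{thm:3.7} (equivalently of the count $|\mathcal L(f)|=2^{c(f)}$ in Corollary~\ref{cor:3.8}), and your flip of the sign on a single component spells that deduction out; your check that $f\not\equiv0$ on each component is sound but redundant, since injectivity of $\Phi$ already gives $\Phi\varepsilon\neq\pm f$ for every non-constant $\varepsilon$.
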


The invariant $c(f)$ is thus the exact measure of the freedom, and the two extremes $c\le1$ and
$c=m$ are the subjects of \S\ref{sec:rigidity} and \S\ref{sec:flexibility} respectively.

\subsection{The criterion is exactly $c\le1$, and not a statement about $Z$}
\label{sec:3.3}

\begin{figure}[tbp]
\centering
\includegraphics{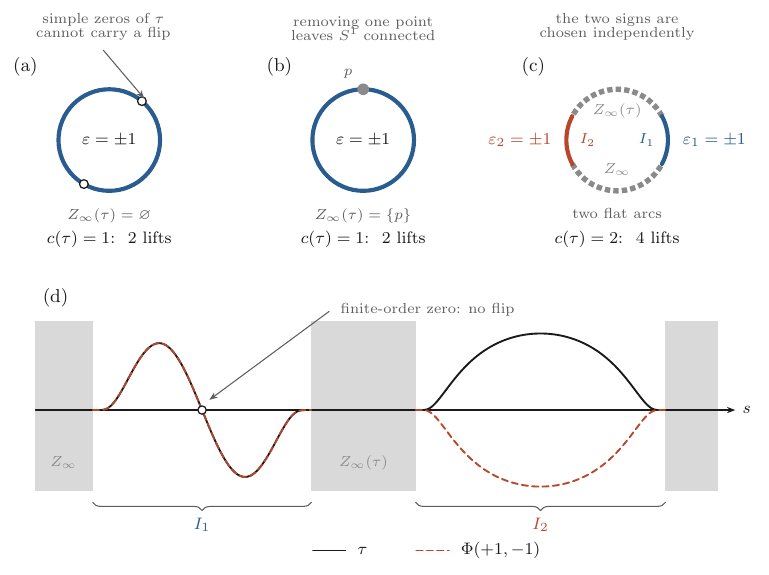}
\caption{\emph{Smooth sign choices across the infinite-order zero set.} Panels (a)--(c): the branch
invariant $c(\tau)$ counts the connected components of $S^1\setminus Z_\infty(\tau)$, and only those
components carry an independent sign. A finite-order zero (a) and a single infinitely flat point (b)
leave the circle connected, so only the global sign survives; two flat arcs (c) disconnect it, and
the signs on the two components $I_1,I_2$ may then be chosen independently. Panel (d): a torsion with
$c(\tau)=2$, drawn as the solid curve, together with the lift $\Phi(\varepsilon)$ for
$\varepsilon=(+1,-1)$, drawn dashed. The two lifts agree on $I_1$ --- including at the simple zero
inside it, where Lemma~\ref{lem:3.3} forbids a flip --- and differ on $I_2$. The leftmost and rightmost shaded
pieces of (d) are identified periodically, so the flat set drawn there has two components, not
three.}
\label{fig:1}
\end{figure}

No simpler invariant would do. The following three examples, drawn in
Figure~\ref{fig:1}, show that finiteness of the zero set is neither necessary nor sufficient for rigidity, and that a single flat
point is harmless while two flat arcs are not.

\begin{example}
\label{ex:3.10}
\leavevmode
\begin{enumerate}
\item \textbf{Two zero intervals: flexibility.} On $S^1=\mathbb R/2\pi\mathbb Z$ take the disjoint open arcs
   $J_1=(\tfrac\pi4,\tfrac{3\pi}4)$ and $J_2=(\tfrac{5\pi}4,\tfrac{7\pi}4)$. Let $x_j$ be the affine
   coordinate mapping $J_j$ onto $(-1,1)$ and put $\varphi_j=\exp\bigl(-1/(1-x_j^2)\bigr)$ for
   $|x_j|<1$ and $\varphi_j=0$ otherwise. Then $\varphi_j\in C^\infty(S^1)$, its positivity set is
   exactly $J_j$, its support is $\overline{J_j}$, and all derivatives vanish on $\partial J_j$. For
   $f=\varphi_1+\varphi_2$ one gets $Z_\infty(f)=S^1\setminus(J_1\cup J_2)$, hence $c=2$ and
   $|\mathcal L(f)|=4$.
\item \textbf{Infinitely many simple zeros accumulating: still rigid.} Put
   $u(\theta):=\sin^2(\theta/2)=\tfrac{1-\cos\theta}2\in[0,1]$, which is $2\pi$-periodic, and
   $f(\theta):=e^{-1/u^2}\sin(1/u)$ for $\theta\neq0$, $f(0):=0$. Then $f\in C^\infty(S^1)$. The
   zeros with $\theta\neq0$ are given by $u=1/(n\pi)$, $n\ge1$, each $n$ contributing two points of
   $(0,2\pi)$, so there are infinitely many, accumulating at $\theta=0$ from both sides; and they
   are \textbf{all of order one}, since there $\sin\frac1u=0$ and $\cos\frac1u=\pm1$, so
   $f'=\mp e^{-1/u^2}u'/u^2$, while $u'=\tfrac12\sin\theta$ vanishes only at $\theta=0,\pi$ and at
   $\theta=\pi$ one has $u=1$, which is not a zero. Hence $\theta=0$ is the unique infinite-order
   zero, $Z_\infty=\{0\}$, $c=1$ and $|\mathcal L(f)|=2$: \textbf{rigid}.
   Thus finiteness of $Z$ is not necessary for rigidity --- only the topology of $Z_\infty$ matters ---
   and the hypothesis of Proposition~\ref{prop:3.6}(iii) cannot be dropped.
   (The choice $u=\sin^2(\theta/2)$ is forced: for $\sin(\theta/2)$ one has $u(\theta+2\pi)=-u(\theta)$,
   and since $e^{-1/u^2}$ is even and $\sin(1/u)$ odd in $u$, $f$ would be antiperiodic and not a
   function on $S^1$.)
\item \textbf{One isolated flat zero is not enough.} If $Z_\infty$ is a single point, then
   $S^1\setminus Z_\infty$ is an open arc, hence connected, so $c=1$ and rigidity holds. To create a
   new lift one must \textbf{disconnect} $S^1\setminus Z_\infty$; on the circle this requires at least
   two flat points, and \S\ref{sec:flexibility} will supply them as the endpoints of planar arcs.
\end{enumerate}
\end{example}

%% file: sections/04-rigidity.tex
\section{Rigidity: from the branch count to the curve}
\label{sec:rigidity}

Section~\ref{sec:branch} answered the one-dimensional question. This section transports the answer back to curves.
Every argument below follows the same pattern: a competitor's torsion lies in $\mathcal L(\tau)$, Theorem~\ref{thm:3.7} lists that set, and Lemma~\ref{lem:2.10} converts each element of the
list into a curve determined up to $SE(3)$. What varies from statement to statement is only how the
list is shown to be short.

\subsection{Finite-order zeros force rigidity}
\label{sec:4.1}

\begin{theorem}[rigidity whenever the branch invariant is at most one]
\label{thm:4.1}
Let
$\gamma,\widetilde\gamma$ be curves as in Definition~\ref{def:2.1} sharing \textbf{one and the same arclength
label}, with $\widetilde\kappa=\kappa$ and $|\widetilde\tau|=|\tau|$ pointwise. If $c(\tau)\le1$ ---
in particular whenever $Z_\infty(\tau)=\varnothing$, and hence whenever every zero of $\tau$ has
finite order --- then $\widetilde\tau\equiv\pm\tau$ and there are $Q\in O(3)$, $a\in\mathbb R^3$ with
$\widetilde\gamma=Q\gamma+a$; in the $+$ case $\det Q=+1$ and in the $-$ case $\det Q=-1$. If both
curves are embeddings, then $[\widetilde\gamma]\in\{[\gamma],\mathrm{mirror}[\gamma]\}$.
\end{theorem}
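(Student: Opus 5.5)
The plan is to follow the pattern announced at the head of this section: place the competitor's torsion in the lift set, shorten the list with Theorem~\ref{thm:3.9}, and then convert each surviving lift into a congruence with Lemmas~\ref{lem:2.10} and~\ref{lem:2.7}.

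\textbf{Step 1.} Since $\widetilde\gamma$ is of the class of Definition~\ref{def:2.1} with $\widetilde\kappa>0$, its torsion $\widetilde\tau$ is a $C^\infty$ function on the common circle $\mathbb R/L\mathbb Z$. The hypothesis $|\widetilde\tau|=|\tau|$ pointwise says exactly that $\widetilde\tau\in\mathcal L(\tau)$. Theorem~\ref{thm:3.9}, applied with $f=\tau$ and $c(\tau)\le1$, gives $\mathcal L(\tau)\subseteq\{\tau,-\tau\}$, so $\widetilde\tau\equiv\pm\tau$ with one global sign.

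It remains to record the stated special cases. If $Z_\infty(\tau)=\varnothing$, Proposition~\ref{prop:3.6}(iv) shows that $S^1\setminus Z_\infty=S^1$ has exactly one component, so $c=1$. If every zero of $\tau$ has finite order, then $Z_\infty=\varnothing$ by Definition~\ref{def:2.6}. The degenerate case $c=0$ means $\tau\equiv0$; then $\widetilde\tau\equiv0=\tau$ and both signs coincide.

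\textbf{Step 2, the case $\widetilde\tau=\tau$.} The two curves share the same signed data in one arclength label. Lemma~\ref{lem:2.10} then yields $\widetilde\gamma=Q\gamma+a$ with $Q\in SO(3)$.

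\textbf{Step 3, the case $\widetilde\tau=-\tau$.} Fix a reflection $R\in O(3)$ with $\det R=-1$, for instance $R=\mathrm{diag}(1,1,-1)$, and set $\gamma_R:=R\gamma$. This is again a closed, unit-speed, positively curved $C^\infty$ curve on the same label. By Lemma~\ref{lem:2.7} it has $\kappa_{\gamma_R}=\kappa=\widetilde\kappa$ and $\tau_{\gamma_R}=-\tau=\widetilde\tau$. Lemma~\ref{lem:2.10} applied to $\gamma_R$ and $\widetilde\gamma$ gives $\widetilde\gamma=Q'R\gamma+a$ with $Q'\in SO(3)$. Setting $Q:=Q'R$ gives $\det Q=-1$.

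If $\tau\equiv0$, both cases hold at once, and the conclusion should be read as ``some $Q$ of the stated determinant exists''.

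\textbf{Step 4, knot types.} If $\det Q=+1$, Lemma~\ref{lem:2.10} already supplies an ambient isotopy, so $[\widetilde\gamma]=[\gamma]$. If $\det Q=-1$, write $\widetilde\gamma=Q'(R\gamma)+a$. The image of $R\gamma$ is a mirror image of $\gamma$, so $[R\gamma]=\mathrm{mirror}[\gamma]$ by Definition~\ref{def:2.2}. The map $Q'\cdot+a$ lies in $SE(3)$, which is path-connected, so it preserves knot type.

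\textbf{Main obstacle.} There is no real obstacle. The one point requiring care is in Step 3: $R\gamma$ must lie in the admissible class on the same label, and the sign of its torsion must be tracked under the convention $B'=-\tau N$. Lemma~\ref{lem:2.7} handles both.
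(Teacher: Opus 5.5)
Your proposal is correct and follows the paper's proof: you place $\widetilde\tau$ in $\mathcal L(\tau)$, apply Theorem~\ref{thm:3.9}, use Lemma~\ref{lem:2.10} in the $+$ case, and use Lemma~\ref{lem:2.7} followed by Lemma~\ref{lem:2.10} in the $-$ case. The differences are cosmetic: you reflect $\gamma$ where the paper reflects $\widetilde\gamma$, and you spell out two things the paper leaves implicit, namely the knot-type clause and the fact that both signs coincide when $\tau\equiv0$.
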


\begin{proof}
Since $\widetilde\gamma$ is $C^\infty$, the function $\widetilde\tau$ lies in
$\mathcal L(\tau)$, so Theorem~\ref{thm:3.9} gives $\widetilde\tau=\pm\tau$ as soon as $c(\tau)\le1$. For the
stated special case, $Z_\infty=\varnothing$ makes $S^1\setminus Z_\infty=S^1$ connected, hence
$c(\tau)=1$; and if every zero of $\tau$ has finite order then $Z_\infty=\varnothing$ by definition.
The $+$ case is exactly Lemma~\ref{lem:2.10}. In the $-$ case pick a reflection $R$;
by Lemma~\ref{lem:2.7} the curve $R\widetilde\gamma$ has curvature $\kappa$ and torsion
$-\widetilde\tau=\tau$, so Lemma~\ref{lem:2.10} applied to $\gamma$ and $R\widetilde\gamma$ gives
$R\widetilde\gamma=Q_0\gamma+a_0$ with $Q_0\in SO(3)$, whence
$\widetilde\gamma=R^{-1}Q_0\gamma+R^{-1}a_0$ with $\det(R^{-1}Q_0)=-1$.
\end{proof}

\begin{remark}[where the hypothesis sits, and how much regularity the competitor needs]
\label{rem:4.2}
The
hypothesis of Theorem~\ref{thm:4.1} is imposed \textbf{only on the reference torsion $\tau$}; the competitor is
constrained only through $|\widetilde\tau|=|\tau|$. This asymmetry is what allows the regularity of
the competitor to be lowered. If the orders of the zeros of $\tau$ are uniformly bounded by $k$,
then Lemma~\ref{lem:3.3} requires of the competing lift only $\widetilde\tau\in C^{k}$, and a sufficient
condition at the level of curves is
\[
\widetilde\gamma\in C^{k+3},\qquad \widetilde\kappa>0,
\]
because $\widetilde\gamma\in C^{k+3}\Rightarrow\widetilde\gamma'''\in C^k\Rightarrow\widetilde\tau\in C^k$,
the denominator $|\widetilde\gamma'\times\widetilde\gamma''|^2$ being continuous and non-vanishing.
The exponent is optimal in the following sense: the classical torsion involves third derivatives, so
$\widetilde\tau\in C^k$ cannot be inferred from $\widetilde\gamma\in C^{k}$, nor from
$\widetilde\gamma\in C^{k+1}$. For $k=1$ --- the generic case of \S\ref{sec:genericity} --- the sufficient condition reads
$\widetilde\gamma\in C^4$, which is exactly the hypothesis of Corollary~\ref{cor:6.10}.
\end{remark}

\subsection{The planar branch and the real-analytic class}
\label{sec:4.2}

Theorem~\ref{thm:4.1} covers every reference curve with $c(\tau)\le1$, including those whose torsion has
infinite-order zeros. Two such situations still deserve a separate statement: the planar branch,
where strictly more is true than Theorem~\ref{thm:4.1} gives, and the real-analytic class, where the
hypothesis $c(\tau)\le1$ can be verified a priori rather than assumed.

\begin{proposition}[the planar branch $\tau\equiv0$]
\label{prop:4.3}
If $\tau\equiv0$ then $\mathcal L(\tau)=\{0\}$,
so every competitor sharing the unsigned datum has $\widetilde\tau\equiv0$ and, by Lemma~\ref{lem:2.10},
$\widetilde\gamma=Q\gamma+a$ with $Q\in SO(3)$. Moreover $\gamma$ is planar and
$[\gamma]=\mathrm{mirror}[\gamma]$, so the fibre contains exactly one knot type.
\end{proposition}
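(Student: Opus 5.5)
The proof splits into three steps: the lift set, the congruence claim, and the knot-type claim.

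\textbf{The lift set.} If $\tau\equiv0$, any $g\in\mathcal L(\tau)$ satisfies $|g|=|\tau|=0$ pointwise, so $g\equiv0$. This is also the case $c=0$ of Corollary~\ref{cor:3.8}: here $Z_\infty(\tau)=S^1$ by Proposition~\ref{prop:3.6}(iv), so $\mathcal E(\tau)$ consists of the empty function and $|\mathcal L(\tau)|=2^0=1$.

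\textbf{The congruence claim.} A competitor $\widetilde\gamma$ in the fibre is $C^\infty$ with $\widetilde\kappa=\kappa>0$. Its torsion $\widetilde\tau$ is a smooth function with $|\widetilde\tau|=0$, so $\widetilde\tau\equiv0=\tau$ holds as a signed equality. Lemma~\ref{lem:2.10} then applies directly and gives $\widetilde\gamma=Q\gamma+a$ with $Q\in SO(3)$. In particular every member of the fibre is ambient isotopic to $\gamma$, and the fibre realises a single knot type. Note that there is no $-$ case here, since $-\tau=\tau$.

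\textbf{Planarity.} From $B'=-\tau N=0$ the binormal $B\equiv B_0$ is constant. Then
\[
\frac{d}{ds}\langle\gamma(s)-\gamma(0),B_0\rangle=\langle T,B\rangle=0,
\]
so $\gamma$ lies in the affine plane through $\gamma(0)$ orthogonal to $B_0$.

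\textbf{Mirror symmetry of the knot type.} Let $R$ be the reflection in that plane. Then $R\gamma=\gamma$ pointwise, so $\mathrm{mirror}[\gamma]=[R\gamma]=[\gamma]$. One could add that a planar embedded closed curve is an unknot by the Jordan--Schoenflies theorem, but this is not needed for the statement.

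The only step requiring a little care is this last identification. Since $\mathrm{mirror}$ is defined in Definition~\ref{def:2.2} via any orientation-reversing isometry, I would note that the mirror knot type is independent of which reflection is chosen. Two reflections differ by an element of $SO(3)$, which is connected to the identity and therefore yields an ambient isotopy. Choosing $R$ to be the reflection fixing the plane of $\gamma$ then gives the claim immediately. Everything else is a direct application of Lemma~\ref{lem:2.10}, with no obstacle.
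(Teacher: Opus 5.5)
Your proof is correct and follows the paper's own argument: $|\widetilde\tau|=0$ forces $\widetilde\tau\equiv0$ with no discussion of zero orders, Lemma~\ref{lem:2.10} then gives $\widetilde\gamma=Q\gamma+a$ with $Q\in SO(3)$, and $B'=0$ makes $\gamma$ planar, so the reflection in its plane fixes $\gamma$ pointwise. Your extra remark about the choice of reflection is fine, with one small correction: two reflections in affine planes differ by an element of $SE(3)$ rather than $SO(3)$, and $SE(3)$ is equally path connected, so the conclusion is unaffected.
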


\begin{proof}
From $|\widetilde\tau|=0$ we get $\widetilde\tau\equiv0$ directly; \textbf{no} discussion of
orders of zeros is involved, and one cannot invoke isolation of zeros of analytic functions --- this
is precisely the case where that fails. Lemma~\ref{lem:2.10} applies because its hypotheses concern only the
pointwise equality of the signed data and involve $Z_\infty$ nowhere. For the last statement,
$\tau\equiv0$ and $\kappa>0$ give $B'=0$, so $B\equiv B_0$ and
$\langle\gamma(s)-\gamma(0),B_0\rangle'=\langle T,B_0\rangle=0$; hence $\gamma$ lies in the plane
$\Pi_0$ through $\gamma(0)$ with normal $B_0$, and the reflection in $\Pi_0$ fixes $\gamma$
pointwise.
\end{proof}

Note that here $Z_\infty=S^1$, so a hypothesis phrased as $Z_\infty=\varnothing$ would exclude this
case although rigidity holds. This is exactly why Theorem~\ref{thm:4.1} is stated in terms of the branch
invariant: $c(\tau)=0$ here, and the sharp criterion supplied by Theorem~\ref{thm:3.9} is $c\le1$, not
$Z_\infty=\varnothing$. What Proposition~\ref{prop:4.3} adds beyond Theorem~\ref{thm:4.1} is the conclusion $\det Q=+1$,
the planarity, and the resulting collapse of the knot-type fibre to a single element.

\begin{corollary}[the real-analytic class]
\label{cor:4.4}
Let $\gamma$ be real-analytic, unit speed, with
$\kappa>0$ everywhere. Then every $C^\infty$ competitor $\widetilde\gamma$ sharing the label and the
unsigned datum satisfies $\widetilde\gamma=Q\gamma+a$ with $Q\in O(3)$, $a\in\mathbb R^3$.
\end{corollary}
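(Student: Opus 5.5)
The plan is to reduce Corollary~\ref{cor:4.4} to Theorem~\ref{thm:4.1} by checking the a priori condition $c(\tau)\le1$ for real-analytic $\gamma$. Since $\gamma$ is real-analytic and unit speed with $\kappa>0$, the Frenet frame is real-analytic. Indeed, $T=\gamma'$ is analytic, $\kappa=|\gamma''|$ is analytic because it is the square root of a positive analytic function, and so $N=\gamma''/\kappa$ and $B=T\times N$ are analytic. Hence $\tau=-\langle B',N\rangle$ is a real-analytic function on the circle $S^1=\mathbb R/L\mathbb Z$. One point deserves a remark. If $\gamma$ is only assumed analytic in some parameter, its arclength reparametrisation is still analytic, because $s\mapsto\int|\gamma'|$ is an analytic diffeomorphism. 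Under the stated unit-speed hypothesis, however, nothing needs checking.

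Next I would split into two cases according to whether $\tau$ vanishes identically. If $\tau\equiv0$, then Proposition~\ref{prop:4.3} applies directly and gives $\widetilde\gamma=Q\gamma+a$ with $Q\in SO(3)\subset O(3)$. Equivalently, $Z_\infty(\tau)=S^1$ and $c(\tau)=0$.

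If $\tau\not\equiv0$, I would show that $Z_\infty(\tau)=\varnothing$. Suppose $p\in Z_\infty(\tau)$. Then the Taylor series of $\tau$ at $p$ vanishes identically, and by analyticity $\tau$ vanishes on a neighbourhood of $p$. The set of points at which $\tau$ vanishes on some neighbourhood is therefore open. It is also closed: any limit point $q$ of this set lies in $Z_\infty(\tau)$, since every derivative vanishes on nearby open sets accumulating at $q$, and so by the same argument $\tau$ vanishes near $q$. Because $S^1$ is connected, this set is either empty or all of $S^1$. It cannot be all of $S^1$, as $\tau\not\equiv0$, so it is empty, and hence $Z_\infty(\tau)=\varnothing$. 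Then $S^1\setminus Z_\infty=S^1$ has exactly one component, so $c(\tau)=1$, and Theorem~\ref{thm:4.1} yields $\widetilde\tau=\pm\tau$ and $\widetilde\gamma=Q\gamma+a$ with $Q\in O(3)$.

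Theorem~\ref{thm:4.1} only needs the competitor to be $C^\infty$, so that $\widetilde\tau\in\mathcal L(\tau)$ and Theorem~\ref{thm:3.9} applies; no analyticity of $\widetilde\gamma$ is required. The one step that needs care is the identity-theorem argument in the previous paragraph, namely using connectedness of $S^1$ to pass from a local flat point to global vanishing. Everything else is a direct citation of Proposition~\ref{prop:4.3} and Theorem~\ref{thm:4.1}.
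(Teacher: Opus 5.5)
Your proposal is correct and follows the paper's proof essentially step for step. Both proofs establish analyticity of the Frenet data, split into $\tau\equiv0$ (handled by Proposition~\ref{prop:4.3}) and $\tau\not\equiv0$, run a clopen argument on the connected circle to get $Z_\infty(\tau)=\varnothing$, and then invoke Theorem~\ref{thm:4.1}. The only cosmetic difference is where the clopen argument is run: you apply it to the set of points near which $\tau$ vanishes identically, while the paper applies it directly to $Z_\infty(\tau)$, which is always closed and is open by analyticity; by analyticity the two sets coincide.
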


\begin{proof}
$T=\gamma'$ is analytic; $\kappa=\sqrt{\langle T',T'\rangle}>0$ is analytic, since
$\sqrt\cdot$ is analytic on the positive axis; hence $N=T'/\kappa$, $B=T\times N$ and
$\tau=-\langle B',N\rangle$ are analytic. Two branches. \textbf{(a) $\tau\equiv0$:} Proposition~\ref{prop:4.3},
which needs no isolation of zeros. \textbf{(b) $\tau\not\equiv0$:} the set
$W=\{q:\ord_q\tau=\infty\}$ is closed, and also open, since $q\in W$ makes the Taylor
series of $\tau$ at $q$ vanish and hence, by analyticity, $\tau\equiv0$ near $q$. As $S^1$ is
connected, $W=\varnothing$ or $W=S^1$, the latter being excluded; so $Z_\infty(\tau)=\varnothing$
and Theorem~\ref{thm:4.1} applies.
\end{proof}

Corollary~\ref{cor:4.4} is a statement \textbf{inside} the real-analytic class. It does not by itself imply that
rigidity is generic, because analyticity is not a generic property in the $C^\infty$ topology; the
genuine genericity statement is Theorem~\ref{thm:6.8}, proved by an entirely different argument.

\subsection{The bound on $|\mathrm{Fib}_{SE}|$, and a rigid representative in every knot class}
\label{sec:4.3}

\begin{corollary}[fibre upper bound, in the three counts of Definition~\ref{def:2.4}]
\label{cor:4.5}
Let $d=(\kappa,|\tau|)$ be the datum of a reference curve $\gamma$, read in the common arclength
label. Then
\[
\bigl|\mathcal F(d)\bigr|\ \le\ \bigl|\mathrm{Fib}_{SE}(d)\bigr|\ \le\
|\mathcal L(\tau)|=2^{\,c(\tau)},\qquad
\bigl|\mathrm{Fib}_{E}(d)\bigr|\ \le\ \bigl|\mathrm{Fib}_{SE}(d)\bigr| .
\]
If moreover $c(\tau)\le1$ then $\bigl|\mathrm{Fib}_{E}(d)\bigr|=1$: the whole fibre lies in one
$E(3)$-orbit, and $\bigl|\mathrm{Fib}_{SE}(d)\bigr|\le2$, the two $SE(3)$-classes --- when there are
two --- being exchanged by a reflection. They coincide, so that
$\bigl|\mathrm{Fib}_{SE}(d)\bigr|=1$, exactly when $\gamma$ is congruent to its own mirror image by
an element of $SE(3)$; this happens in particular when $\tau\equiv0$
(Proposition~\ref{prop:4.3}).
\end{corollary}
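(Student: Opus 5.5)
The plan is to construct an explicit map from the fibre into the lift set and show it is well defined and injective on $SE(3)$-classes. Given $\widetilde\gamma\in\mathrm{Fib}(d)$, its torsion $\widetilde\tau$ is $C^\infty$ (Definition~\ref{def:2.3}) and satisfies $|\widetilde\tau|=|\tau|$ pointwise in the common label, so $\widetilde\tau\in\mathcal L(\tau)$. The assignment $\widetilde\gamma\mapsto\widetilde\tau$ is constant on $SE(3)$-orbits, since an orientation-preserving rigid motion leaves both $\kappa$ and the signed $\tau$ unchanged (the computation in Lemma~\ref{lem:2.7} with $\det R=+1$). It therefore descends to a map $\Psi:\mathrm{Fib}_{SE}(d)\to\mathcal L(\tau)$.

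Injectivity of $\Psi$ is exactly Lemma~\ref{lem:2.10}. If two competitors have the same signed torsion, they share $(\kappa,\widetilde\tau)$ in the common label and hence differ by an element of $SE(3)$. This gives $|\mathrm{Fib}_{SE}(d)|\le|\mathcal L(\tau)|$, and Corollary~\ref{cor:3.8} identifies the right-hand side with $2^{c(\tau)}$. The remaining inequalities $|\mathcal F(d)|\le|\mathrm{Fib}_{SE}(d)|$ and $|\mathrm{Fib}_E(d)|\le|\mathrm{Fib}_{SE}(d)|$ are those of \eqref{eq:2.1}.

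For the case $c(\tau)\le1$, Theorem~\ref{thm:4.1} applied to $\gamma$ and any competitor $\widetilde\gamma$ gives $\widetilde\gamma=Q\gamma+a$ with $Q\in O(3)$. Hence every competitor lies in the $E(3)$-orbit of $\gamma$. That orbit is nonempty, since it contains $\gamma$, so $|\mathrm{Fib}_E(d)|=1$. The orbit splits into the $SE(3)$-class of $\gamma$ and the $SE(3)$-class of $R\gamma$ for a fixed reflection $R$. Any element with $\det Q=-1$ equals $(QR^{-1})R\gamma+a$ with $QR^{-1}\in SO(3)$, so the fibre has at most these two classes and a reflection exchanges them. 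They coincide exactly when $R\gamma\in SE(3)\gamma$, that is, when $\gamma$ is $SE(3)$-congruent to its own mirror image. For $\tau\equiv0$ this holds by Proposition~\ref{prop:4.3}: the reflection in the plane of $\gamma$ fixes $\gamma$, so $R\gamma$ is in fact equal to $\gamma$.

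None of these steps is a real obstacle; the argument is bookkeeping on top of Theorem~\ref{thm:3.7} and Lemma~\ref{lem:2.10}. The one point needing care is the cardinal $c(\tau)=\aleph_0$. There the bound should be read through the injection $\Psi$ into $\mathcal L(\tau)$, consistently with the convention fixed in Corollary~\ref{cor:3.8}, rather than through integer arithmetic.
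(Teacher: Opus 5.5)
Your proof is correct and follows the paper's argument essentially step for step: you inject the $SE(3)$-classes into $\mathcal L(\tau)$ via Lemma~\ref{lem:2.10}, quote \eqref{eq:2.1}, and use Theorem~\ref{thm:4.1} to place the fibre in one $E(3)$-orbit that splits into the $SE(3)$-classes of $\gamma$ and $R\gamma$. The one point the paper states and you leave implicit is the converse inclusion $Q\gamma+a\in\mathrm{Fib}(d)$ for every $Q\in O(3)$, in particular $R\gamma\in\mathrm{Fib}(d)$ by Lemma~\ref{lem:2.7}; the ``only if'' half of the criterion $|\mathrm{Fib}_{SE}(d)|=1\iff R\gamma\in SE(3)\gamma$ depends on it, so you should say it explicitly.
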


\begin{proof}
Any curve realising the datum has torsion in $\mathcal L(\tau)$ by Theorem~\ref{thm:3.7}; given the
torsion, Lemma~\ref{lem:2.10} determines the curve up to $SE(3)$; and the knot type is an
$SE(3)$-invariant, which gives the two displayed chains together with \eqref{eq:2.1}. If $c(\tau)\le1$
then Theorem~\ref{thm:4.1} places every competitor in $\{Q\gamma+a:Q\in O(3),a\in\mathbb R^3\}$, a
single $E(3)$-orbit, which is the union of the $SE(3)$-orbit of $\gamma$ and that of $R\gamma$ for one
fixed reflection $R$. Conversely every such $Q\gamma+a$ does lie in $\mathrm{Fib}(d)$: it is again a
unit-speed closed embedding of the class of Definition~\ref{def:2.1} in the same arclength label,
with the same curvature and, by Lemma~\ref{lem:2.7}, the same $|\tau|$. In particular
$R\gamma\in\mathrm{Fib}(d)$, so the two $SE(3)$-classes coincide precisely when $R\gamma$ is
$SE(3)$-congruent to $\gamma$, which is the stated criterion.
\end{proof}

The bound is informative exactly when $c(\tau)<\aleph_0$; it holds for every reference curve, which is
what no construction can supply. Section~\ref{sec:flexibility} produces a datum for which both
inequalities are equalities.

\begin{corollary}[every knot type has a rigid representative]
\label{cor:4.6}
For every knot type $K$ there is a
representative $\gamma_K$ of the class of Definition~\ref{def:2.1} --- a $C^\infty$ unit-speed closed embedding
with $\kappa>0$ everywhere and $[\gamma_K]=K$ --- which is determined up to $E(3)$ by its unsigned
datum; consequently its knot type is determined up to mirroring by that datum.
\end{corollary}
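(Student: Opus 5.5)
The plan is to take the representative from the constant-torsion $h$-principle of Ghomi--Raffaelli \cite[Cor.~1.2]{ref3}, the one external input announced in \S\ref{sec:1.4}, and then apply Theorem~\ref{thm:4.1}. Given a knot type $K$, that result supplies a smooth closed embedding $\gamma_0$ with $[\gamma_0]=K$, positive curvature, and torsion identically equal to a constant $\tau_0$. Two normalisations come first. Reparametrise $\gamma_0$ by arclength: this keeps it $C^\infty$, embedded, regular and of the same image. By Lemma~\ref{lem:2.8} and the parametrisation-free formulas, it also keeps $\kappa>0$ and $\tau\equiv\tau_0$, because curvature and torsion are geometric invariants and reversal introduces no sign. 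The result is a curve $\gamma_K$ of the class of Definition~\ref{def:2.1} on $\mathbb R/L\mathbb Z$. Next, $\tau_0\neq0$: a closed curve with $\tau\equiv0$ and $\kappa>0$ is planar (proof of Proposition~\ref{prop:4.3}), hence unknotted. So for $K$ nontrivial the constant is automatically non-zero. For the unknot I would either read $\tau_0\neq0$ off the Ghomi--Raffaelli statement, or note that a planar round circle is also fine, since $c(0)=0\le1$.

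Now compute the branch invariant. If $\tau\equiv\tau_0\neq0$, then $\tau$ has no zeros at all, so $Z_\infty(\tau)=\varnothing$ and $c(\tau)=1$. If $\tau\equiv0$, then $c(\tau)=0$. In either case $c(\tau)\le1$. Theorem~\ref{thm:4.1} then shows that every competitor $\widetilde\gamma$ in the same arclength label with $(\widetilde\kappa,|\widetilde\tau|)=(\kappa,|\tau|)$ equals $Q\gamma_K+a$ with $Q\in O(3)$. Equivalently, by Corollary~\ref{cor:4.5}, $|\mathrm{Fib}_E(d)|=1$. Since an orientation-preserving $Q$ gives an ambient isotopy (Lemma~\ref{lem:2.10}) and an orientation-reversing one gives the mirror, the knot type of any competitor lies in $\{K,\overline K\}$. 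This proves the ``consequently'' clause.

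The main obstacle is not the rigidity argument, which is immediate. It is confirming that the cited $h$-principle really delivers everything Definition~\ref{def:2.1} requires: $C^\infty$ regularity, embeddedness in the prescribed knot class, $\kappa>0$ \emph{everywhere}, and a \emph{global} constant torsion, so that the orientation convention $B'=-\tau N$ is unambiguous. One must also check that arclength reparametrisation preserves all of these. I would check this by quoting \cite[Cor.~1.2]{ref3} precisely. If it only gives torsion of constant sign that is nowhere zero, the argument goes through unchanged, because all that is needed is $Z(\tau)=\varnothing$.
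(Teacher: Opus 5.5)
Your proposal is correct and follows the paper's proof essentially step for step: a constant-torsion representative from Ghomi--Raffaelli, arclength reparametrisation, non-vanishing of the constant for non-trivial $K$ via planarity, and then Theorem~\ref{thm:4.1}. The only differences are that the paper handles the unknot separately with a planar circle and Proposition~\ref{prop:4.3}, and that it proves the constant is non-zero. Your observation that any constant torsion already has $c(\tau)\le1$, even when the constant is $0$, shows that this non-vanishing is not needed for the rigidity conclusion, which slightly streamlines the argument.
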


\begin{proof}
If $K$ is trivial, take a circle in a plane and apply Proposition~\ref{prop:4.3}. Let $K$ be
non-trivial.

\textbf{Step 1 (input).} By the constant-torsion $h$-principle \cite[Cor.~1.2]{ref3} the isotopy class of every
knot contains a $C^\infty$ embedded closed curve $\gamma^\ast$ with $\kappa>0$ everywhere and
torsion equal to a \textbf{global constant} $c$; that the constant may be taken with $c>0$ follows from
Prop.~3.1 and the common-constant assembly in \S3 of \cite{ref3}. If only $c\neq0$ is needed, it also follows
directly: $c=0$ together with $\kappa>0$ gives $B'\equiv0$, so $\gamma^\ast$ is planar, and a planar
simple closed curve is isotopic to a circle by Jordan--Schoenflies, making $K$ trivial.

\textbf{Step 2 (arclength label).} Reparametrise $\gamma^\ast$ by arclength once. The image is unchanged,
hence so is the knot type; $\kappa>0$ and the values of the torsion along the curve are unchanged.
Call the result $\gamma_K$, of length $L$.

\textbf{Step 3 (rigidity).} Since $\tau_{\gamma_K}\equiv c\neq0$ we have $Z(\tau_{\gamma_K})=\varnothing$,
in particular $Z_\infty=\varnothing$. Theorem~\ref{thm:4.1} gives, for every competitor $\widetilde\gamma$ of
the class of Definition~\ref{def:2.1} sharing the arclength label and the pointwise datum,
$\widetilde\gamma=Q\gamma_K+a$ with $Q\in O(3)$, so $[\widetilde\gamma]\in\{K,\overline K\}$.
\end{proof}

Only Step 1 is imported; the remaining two steps are carried out here. The corollary is the reason
why the flexibility of \S\ref{sec:flexibility} must be a statement about \emph{representatives} and not about knot types: every
knot type also has a rigid representative, so flexibility is a property of the curve, not of its
class. Proposition~\ref{prop:5.20} makes this dependence explicit.

%% file: sections/05-flexibility.tex
\section{Flexibility: a datum whose fibre has exactly $2^m$ $SE(3)$-classes}
\label{sec:flexibility}

We now build the extreme case of Theorem~\ref{thm:A}. The goal is a single unsigned datum whose fibre is as
large as Corollary~\ref{cor:4.5} permits, with $c(\tau_0)=m$ prescribed and with prescribed knot types in every
branch; see Figure~\ref{fig:2} for the shape of the construction. The specification the reference
curve must meet is:

\begin{quote}
\textbf{(R1)} it is a $C^\infty$ unit-speed closed embedding with $\kappa>0$ everywhere;
\textbf{(R2)} it carries $m$ tangles realising $K_1,\dots,K_m$ inside pairwise disjoint balls that are
symmetric under one fixed reflection $R$, joined by arcs lying \textbf{exactly} in the mirror plane of
$R$, so that reflecting any subfamily of tangles produces a smooth curve with the same unsigned
datum;
\textbf{(R3)} its torsion satisfies $Z_\infty(\tau_0)=S^1\setminus\coprod_i\Gamma_i$ \textbf{exactly}, where
$\Gamma_i$ is the knotted core arc of the $i$-th ball, so that $c(\tau_0)=m$.
\end{quote}

Requirement (R2) is what produces $2^m$ curves; requirement (R3) is what makes $2^m$ the whole fibre
rather than a lower bound, through Corollary~\ref{cor:4.5}. The two pull against each other, as explained in
\S\ref{sec:1.3}(ii): (R1) and (R2) are preserved by open low-order conditions, whereas (R3) is a statement about
infinite order. The construction below meets both by fixing the low-order gates first and choosing
the high-order perturbations inside them --- legitimate because the transversality involved holds for
almost every arbitrarily small parameter.

Throughout \S\ref{sec:flexibility} let $\Pi=\{z=0\}$ and $R(x,y,z)=(x,y,-z)$. A \textbf{planar circular-arc collar} is an arc
inside $\Pi$ of constant curvature $\kappa_c>0$ and $\tau\equiv0$, defined on an open arclength
interval; $R$ is the identity on it. We write $\kappa_c$ always for the constant curvature \textbf{value}
of the base circle and of the collars, and $\kappa_0(\cdot),\tau_0(\cdot)$ always for the curvature
and torsion \textbf{functions} of the reference curve; the two are kept distinct throughout.

\begin{figure}[tbp]
\centering
\includegraphics{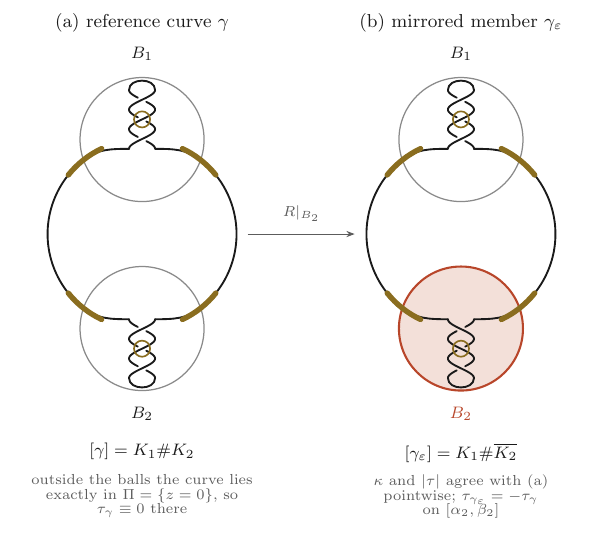}
\caption{\emph{Independent local mirroring inside private symmetric balls}, drawn looking down the
$z$-axis onto $\Pi=\{z=0\}$, in which view the reference curve is an honest knot diagram. The case
$m=2$ is shown; it is the one used in Example~\ref{ex:5.14} and throughout \S\ref{sec:dichotomy}, and the construction is
identical for any $m$. (a) Outside the balls the curve lies exactly in $\Pi$, so
$\tau_\gamma\equiv0$ there; the thick arcs straddling each $\partial B_i$ are the exact
$\Pi$-circular collars. (b) Applying $R|_{B_2}\colon(x,y,z)\mapsto(x,y,-z)$ to the shaded ball
exchanges every one of its crossings --- the gold marker is not part of the curve, and picks out the
middle crossing of each tangle; compare that crossing in the two panels, and note that
the marked crossing of $B_1$ is unchanged. This leaves $\kappa$ and $|\tau|$ unchanged pointwise and
replaces the factor $K_2$ by its mirror. The signed torsion satisfies
$\tau_{\gamma_\varepsilon}=-\tau_\gamma$ on $[\alpha_2,\beta_2]$ and $=\tau_\gamma$ off it; since
$\tau_\gamma\equiv0$ on the collars, the non-trivial sign change occurs only on the core arc
$\Gamma_2$ (Remark~\ref{rem:5.17}).}
\label{fig:2}
\end{figure}

\subsection{The topological seed and the low-order gates}
\label{sec:5.1}

\begin{lemma}[relative local knot insertion: the topological seed]
\label{lem:5.1}
Let $m\ge1$ and let $K_1,\dots,K_m$ be arbitrary knot types. Then there exist a base circle
$C_0\subset\Pi$ of curvature $\kappa_c>0$, pairwise disjoint closed round balls $B_1,\dots,B_m$
centred at points of $C_0$ --- hence $R(B_i)=B_i$ --- concentric balls
$B_i'\Subset\operatorname{int}B_i$, and a $C^\infty$ embedding $\gamma_0:S^1\to\mathbb R^3$, such that:
\begin{enumerate}
\item[\textbf{(T1)}] $\gamma_0$ is defined on the same circle as the arclength parametrisation $c$
   of $C_0$, and there are pairwise disjoint open parameter arcs $J_1,\dots,J_m$ with
   $\gamma_0(\overline{J_i})\subset\operatorname{int}B_i'$ and $\gamma_0=c$ \textbf{as maps} on the complement of
   $\bigcup_iJ_i$; in particular $\operatorname{Im}\gamma_0$ coincides with $C_0$ outside
   $\bigcup_i\operatorname{int}B_i'$, and $\gamma_0$ is an \emph{exact} unit-speed circular arc of
   $\Pi$ of curvature $\kappa_c$ on each component of
   $\gamma_0^{-1}\bigl(B_i\setminus\operatorname{int}B_i'\bigr)$;
\item[\textbf{(T2)}] $\gamma_0$ meets each $\partial B_i$ transversally in exactly two points, and
   $\gamma_0^{-1}(B_i)$ is a single closed parameter arc; the two intersection points therefore carry
   open two-sided $\Pi$-circular \textbf{collars} on which $\gamma_0$ agrees with $C_0$;
\item[\textbf{(T3)}] $(B_i,\ \operatorname{Im}\gamma_0\cap B_i)$ is a $1$-string tangle realising
   $K_i$;
\item[\textbf{(T4)}] $[\gamma_0]=\#_{i=1}^mK_i$, each $\partial B_i$ exhibiting the corresponding
   factorisation --- a decomposing sphere in the strict sense whenever $K_i$ is non-trivial;
\item[\textbf{(T5)}] for any $S\subseteq\{1,\dots,m\}$, replacing $\gamma_0$ by $R\circ\gamma_0$ on
   $\gamma_0^{-1}(B_i)$ for $i\in S$, and leaving it unchanged elsewhere, again yields a $C^\infty$
   embedding, whose knot type is $\#_i K_i^{\varepsilon_i}$ with $\varepsilon_i=-1$ exactly for
   $i\in S$.
\end{enumerate}
\end{lemma}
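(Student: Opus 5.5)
The plan is a standard local knot insertion performed inside $m$ disjoint round balls centred on a planar circle, arranged so that the reflection $R$ acts on each ball independently. First fix the base circle $C_0\subset\Pi$ of radius $1/\kappa_c$, with arclength parametrisation $c$. Choose $m$ points $p_i$ on $C_0$, pairwise far apart. Take closed round balls $B_i$ centred at the $p_i$, of radius $r$ so small that the $B_i$ are pairwise disjoint and each meets $C_0$ in a single short arc crossing $\partial B_i$ transversally in two points. This transversality holds because $C_0$ passes through the centre of $B_i$ and is nearly straight at scale $r\ll1/\kappa_c$. Also take concentric balls $B_i'$ of radius $r/2$. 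Since each centre lies in $\Pi$, we have $R(B_i)=B_i$ and $R(B_i')=B_i'$. Let $J_i$ be a parameter arc slightly shorter than $c^{-1}(B_i')$ and concentric with it. This already fixes the data in (T1) and (T2) outside the $J_i$.

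Next I carry out the insertion inside each $B_i'$. Pick a smooth $1$-string tangle realising $K_i$, for instance a long knot obtained by cutting a smooth representative of $K_i$ at one point and isotoping it into a thin tube around a segment. Scale it into $\operatorname{int}B_i'$. Splice it into $c$ on $J_i$ so that $\gamma_0=c$ as maps off $J_i$. The splice is smooth and its endpoints agree with $c$ to infinite order, because $\gamma_0-c$ is supported in $\overline{J_i}$ and is built as a compactly supported smooth deformation of the straight-ish arc $c|_{J_i}$. I do not need unit speed on $J_i$, only regularity and injectivity; the curve is reparametrised later in the construction. Injectivity holds because the modified pieces stay inside the pairwise disjoint sets $\operatorname{int}B_i'$, and off $\bigcup_iB_i'$ the curve is just the embedded arc of $C_0$. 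Then (T1) and (T2) are immediate: $\gamma_0^{-1}(B_i)=c^{-1}(B_i)$ is a single arc, and on $c^{-1}(B_i\setminus\operatorname{int}B_i')$ the curve equals $c$. (T3) holds by construction. (T4) is the standard fact that splicing $1$-string tangles into disjoint balls along an unknot gives the connected sum. Each $\partial B_i$ meets the knot in two points and separates off the summand $K_i$ from the remainder $\#_{j\ne i}K_j$. It is a decomposing sphere in the strict sense when $K_i$ is nontrivial, and the remainder side is nontrivial whenever some other $K_j$ is.

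Finally I verify (T5), which I expect to be the main point. For $S\subseteq\{1,\dots,m\}$, define $\gamma_S$ to be $R\circ\gamma_0$ on $\gamma_0^{-1}(B_i)$ for $i\in S$ and $\gamma_0$ elsewhere.
\begin{itemize}
\item \emph{Smoothness.} On the collars $\gamma_0=c$ takes values in $\Pi$, and $R$ is the identity there. So $\gamma_S=\gamma_0$ on open neighbourhoods of the junction parameters, and $\gamma_S$ is $C^\infty$.
\item \emph{Injectivity.} $R$ maps $B_i$ onto itself bijectively, so points of different pieces remain in disjoint sets, and within $B_i$ the map $R\circ\gamma_0$ is injective.
\item \emph{Knot type.} The modified tangle $(B_i,R(\operatorname{Im}\gamma_0\cap B_i))$ is the mirror tangle, since $R$ is an orientation-reversing homeomorphism of $B_i$ fixing the two boundary points. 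The connected-sum identification of (T4) applied to the new tangles then gives $[\gamma_S]=\#_iK_i^{\varepsilon_i}$.
\end{itemize}
The only care needed is to justify that mirroring a tangle by a reflection of its ball mirrors the summand. This follows because the summand is the closure of the tangle by an unknotted boundary arc. That arc can be chosen inside $\Pi\cap\partial B_i$, which is fixed by $R$, so the closure of the reflected tangle is $R$ applied to the closure, namely $\overline{K_i}$.
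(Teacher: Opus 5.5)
Your construction is the paper's: round balls centred on $C_0$ with concentric half-radius balls $B_i'$, a long knot spliced in on $J_i$ so that $\gamma_0=c$ as maps off $J_i$, and (T5) from the fact that $R$ fixes the collars in $\Pi$ pointwise. For the mirrored summand you close the tangle along an arc of the great circle $\Pi\cap\partial B_i$, which contains both endpoints and is fixed by $R$; this makes explicit a step the paper states in one line.

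The step whose written justification does not suffice is injectivity of $\gamma_0$. Disjointness of the sets $\operatorname{int}B_i'$ separates points in different balls, and points outside all balls. But you took $J_i$ strictly shorter than $c^{-1}(B_i')$, so each $B_i'$ also contains the two unmodified end arcs $c\bigl(c^{-1}(B_i')\setminus J_i\bigr)$. Nothing in ``scale the tangle into $\operatorname{int}B_i'$ and splice'' keeps the knotted piece off them. This collision between the knotted piece and unmodified arcs of $c$ is the ``mixed case'' the paper treats separately. A related gap: ``a compactly supported deformation of the straight-ish arc $c|_{J_i}$'' does not say how a long knot, which is standard along a \emph{straight} axis, is made to coincide \emph{exactly} with the circular arc near $\partial J_i$.

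The paper settles both points with the explicit tube chart $\Psi(t,(y,z))=c(t)+y\,n(t)+z\,e_3$. This is a diffeomorphism of $(\mathbb R/L_c\mathbb Z)\times\overline D^2_\varrho$ onto a solid-torus neighbourhood of $C_0$, with $\Psi(t,0)=c(t)$.
\begin{itemize}
\item On the block $|t-t_i|\le2\lambda_i$ it sets $\gamma_0(t_i+s)=\Psi\bigl(t_i+u_1(s),u_2(s),u_3(s)\bigr)$ for the rescaled long knot $u^{(i)}_{\lambda_i}=(u_1,u_2,u_3)$. This agrees with $c$ on the standard ends and is injective on the block.
\item A block point lying on $C_0$ must have $u_2=u_3=0$, so it equals $c\bigl(t_i+u_1(s)\bigr)$.
\item The bound $|u_1|<L_c/2$ rules out wrap-around. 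Together with injectivity of the long knot on all of $\mathbb R$, this excludes a collision with $c$ outside the block.
\item The chart is checked to be orientation preserving ($c'\times n=e_3$), since otherwise (T3) would give $\overline{K_i}$. You can avoid this check by inserting whichever of $K_i,\overline{K_i}$ is needed, but you have to say so.
\end{itemize}

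Your ``thin tube around a segment'' picture is the right intuition. Once the knotted arc is carried by such a chart into $\Psi(\overline{J_i}\times\overline D^2_\epsilon)$ and coincides with $c$ near the two end discs, the missing case disappears, because that tube meets $C_0$ only in $c(\overline{J_i})$. The rest of your argument then goes through as written.
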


The curve is built so that it \emph{is} the base circle outside the small inner balls, rather than
merely isotopic to it there; that is what makes the collars exactly circular. The proof is in
Appendix~\ref{sec:A.1}.

\begin{convention}[ball arcs, collar width, core open arcs]
\label{conv:5.2}
Since $\gamma_0\cap B_i$ is a
$1$-string tangle, $\gamma_0^{-1}(B_i)$ is a \textbf{single closed arc} $[\alpha_i,\beta_i]$ and
$\gamma_0^{-1}(\partial B_i)=\{\alpha_i,\beta_i\}$. By (T1)--(T2) of Lemma~\ref{lem:5.1} one may fix
a common $\rho>0$, small enough that $\alpha_i+2\rho<\beta_i-2\rho$, such that $\gamma_0$ equals
\textbf{exactly} a $\Pi$-circular arc of curvature $\kappa_c$ on
\[
(\alpha_i-2\rho,\ \alpha_i+2\rho)\quad\text{and}\quad(\beta_i-2\rho,\ \beta_i+2\rho),
\]
while the \textbf{knotted part} --- by which we always mean the parameter arc $J_i$ of
Lemma~\ref{lem:5.1}(T1), on which $\gamma_0$ may leave $\Pi$ --- is contained in
$[\alpha_i+2\rho,\ \beta_i-2\rho]$. Such a $\rho$ exists because $\overline{J_i}$ is a compact subset
of the open arc $\gamma_0^{-1}(\operatorname{int}B_i')\subset(\alpha_i,\beta_i)$. Define the
\textbf{core open arc} of the $i$-th ball,
\[
\Gamma_i:=(c_i,\ d_i),\qquad c_i:=\alpha_i+\rho,\quad d_i:=\beta_i-\rho,\qquad
\Gamma:=\coprod_{i=1}^m\Gamma_i .
\]
Then: \textbf{(P1)} $c_i,d_i$ lie in the \textbf{interior} of the open collars, so $\gamma_0$ is still an exact
$\Pi$-circular arc on $[c_i,c_i+\rho)$ and on $(d_i-\rho,d_i]$; \textbf{(P2)}
$\overline{\Gamma_i}=[c_i,d_i]\subset(\alpha_i,\beta_i)$, so $\gamma_0(\overline{\Gamma_i})$ is a
\textbf{compact} subset of $\operatorname{int}B_i$; \textbf{(P3)} the knotted part is contained in $\Gamma$;
\textbf{(P4)} $\overline{\Gamma_1},\dots,\overline{\Gamma_m}$ are pairwise disjoint and $\Gamma\neq S^1$;
\textbf{(P5)} $\gamma_0(S^1\setminus\Gamma)\subset\Pi$.
\end{convention}

Figure~\ref{fig:3} shows the resulting anatomy. Property (P1) is the one that will be used at the delicate
point of \S\ref{sec:5.2}: it guarantees that the curve is \emph{exactly} a circular arc on a one-sided neighbourhood
of each core endpoint, which is what makes the exact identity of Lemma~\ref{lem:5.5} available there.

\begin{figure}[tbp]
\centering
\includegraphics{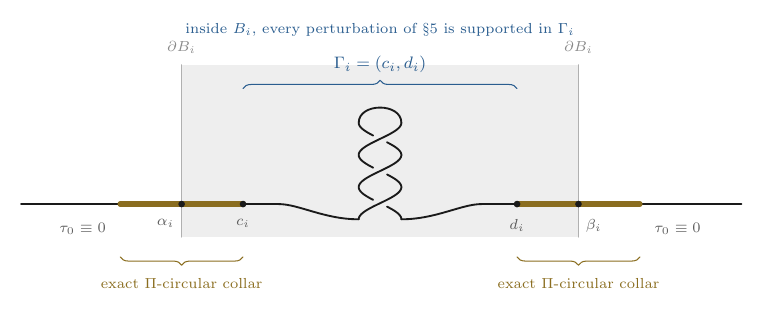}
\caption{\emph{The anatomy of one private ball, with the parameter circle unrolled.} The curve meets
$\partial B_i$ transversally at $\alpha_i$ and $\beta_i$, and is an exact $\Pi$-circular arc on the
collars straddling those two points. The core arc $\Gamma_i=(c_i,d_i)$ sits strictly inside, and
carries the support of every perturbation of \S\ref{sec:flexibility}. Outside $\coprod_i\Gamma_i$ the torsion vanishes
identically, and inside each $\Gamma_i$ its zeros are finite in number and all simple; this is what
makes $Z_\infty(\tau_0)$ exactly the complement of $\coprod_i\Gamma_i$, and hence $c(\tau_0)=m$
(Theorem~\ref{thm:5.8}).}
\label{fig:3}
\end{figure}

\begin{lemma}[relative removal of zero curvature]
\label{lem:5.3}
Let $\gamma_0$ be as in Lemma~\ref{lem:5.1} and let $A$ be
the closed union of the $2\rho$-collars of Convention~\ref{conv:5.2}, that is
$A:=\coprod_i\bigl([\alpha_i-2\rho,\alpha_i+2\rho]\cup[\beta_i-2\rho,\beta_i+2\rho]\bigr)$. Put $Z:=(j^2\gamma_0)^{-1}(\Sigma_2)$, where
$\Sigma_2=\{(x,v,a):v\neq0,\ v\times a=0\}$ is a smooth codimension-$2$ submanifold of the regular
$2$-jet space. Then $Z$ is compact and $Z\subset\bigcup_i\operatorname{int}(B_i)\setminus A$, and for
every $\epsilon>0$ there is a $C^\infty$ curve $\widetilde\gamma$ with
$\|\widetilde\gamma-\gamma_0\|_{C^2}<\epsilon$, equal to $\gamma_0$ on a neighbourhood of $A$, and
with $\kappa>0$ everywhere. Moreover the support of the perturbation may be taken to satisfy
\begin{equation}\label{eq:5.1}\tag{5.1}
\overline U\ \subset\ \coprod_i\ (\alpha_i+2\rho,\ \beta_i-2\rho)\ \subset\ \Gamma.
\end{equation}
\end{lemma}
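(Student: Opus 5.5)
We first locate the degenerate set. On the collars $A$, and indeed wherever $\gamma_0=c$ as maps (the complement of $\bigcup_iJ_i$ by (T1)), the curve is an exact unit-speed circular arc of curvature $\kappa_c>0$. There $\gamma_0'\times\gamma_0''\neq0$, so $j^2\gamma_0\notin\Sigma_2$. Hence
\[
Z\subset\bigcup_iJ_i\subset\bigcup_i(\alpha_i+2\rho,\beta_i-2\rho),
\]
using (P3) and Convention~\ref{conv:5.2}. Since $\gamma_0$ is $C^\infty$ and regular, the map $j^2\gamma_0$ is continuous into the regular $2$-jet space, and $\Sigma_2$ is closed there ($v\times a=0$ with $v\neq0$). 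So $Z$ is closed in the compact $S^1$, hence compact. Also $\gamma_0(\overline{J_i})\subset\operatorname{int}B_i'\subset\operatorname{int}B_i$, which gives $Z\subset\bigcup_i\operatorname{int}B_i\setminus A$. That $\Sigma_2$ is a codimension-$2$ submanifold follows because, for $v\neq0$, the condition $v\times a=0$ says that $a$ lies on the line $\mathbb Rv$. This is a rank-$2$ linear condition on $a$, so $\Sigma_2$ is a rank-$2$ vector subbundle over $\{v\neq0\}$.

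Next we perturb. Choose open $U$ with $Z\subset U$ and $\overline U\subset\coprod_i(\alpha_i+2\rho,\beta_i-2\rho)$, together with a cutoff $\chi\in C_c^\infty(U)$, $0\le\chi\le1$, $\chi\equiv1$ near $Z$. Fix finitely many smooth fields $e_1,\dots,e_N:S^1\to\mathbb R^3$ (for instance $e_k(s)=\phi_k(s)$ times constant vectors, or polynomial directions) and consider
\[
\gamma_\lambda=\gamma_0+\chi\sum_k\lambda_ke_k,\qquad\lambda\in\mathbb R^{N}.
\]
The plan is to apply parametric transversality to the map $(s,\lambda)\mapsto j^2\gamma_\lambda(s)$ on a neighbourhood of $\{\chi=1\}$. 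This map is a submersion onto the $a$-fibre as soon as the families $\{e_k''(s)\}$ span $\mathbb R^3$ pointwise. Constant vectors times $s^2/2$, localized by a partition of unity, achieve this. By Sard, for almost every small $\lambda$ the jet $j^2\gamma_\lambda$ is transverse to $\Sigma_2$ on $\{\chi=1\}$. Since $\dim S^1=1<2=\operatorname{codim}\Sigma_2$, transversality means the jet misses $\Sigma_2$ there.

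On the transition region $\{0<\chi<1\}$ we do not need transversality. That region is a compact set disjoint from $Z$, so $|\gamma_0'\times\gamma_0''|\ge c>0$ there. Taking $|\lambda|$ small makes $\|\gamma_\lambda-\gamma_0\|_{C^2}<\epsilon$ and keeps $\gamma_\lambda'\neq0$ and $\gamma_\lambda'\times\gamma_\lambda''\neq0$ on that region and off $\operatorname{supp}\chi$. Thus $\kappa>0$ everywhere. Moreover $\widetilde\gamma=\gamma_\lambda$ equals $\gamma_0$ off $\overline U$, in particular on a neighbourhood of $A$, which gives (5.1).

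The main obstacle is arranging the submersion property uniformly on $\{\chi=1\}$, and choosing $\lambda$ in the measure-full set while respecting the smallness bound. We handle the first by using enough localized quadratic bumps. The second is automatic because the good set has full measure in every ball around $0$. Embeddedness is not claimed here, but it would follow from the $C^0$-smallness if needed later.
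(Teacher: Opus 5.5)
Your proposal is correct and follows essentially the paper's route: you locate $Z$ inside the knotted arcs $J_i$, hence away from $A$, and perturb by a finite family of localised bumps supported in $U$; parametric jet transversality together with $\dim S^1=1<2=\operatorname{codim}\Sigma_2$ then makes $j^2\widetilde\gamma$ miss $\Sigma_2$ near $Z$, and $C^2$-smallness handles the compact remainder where $\kappa_{\gamma_0}$ is bounded below (your purely second-order bumps suffice here, since the $a$-directions already span a complement of $T\Sigma_2$). Two small slips: $\{a\in\mathbb Rv\}$ is a rank-$1$ (line) subbundle of the $a$-fibre, cut out by a rank-$2$ condition, and that is what gives codimension $2$; and embeddedness is preserved by $C^1$-smallness, which you have from the $C^2$ bound, but not by $C^0$-smallness alone.
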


Condition \eqref{eq:5.1} says that \textbf{the whole perturbation lives strictly inside the core arcs}; in
particular it is disjoint from $[c_i,c_i+\rho)$ and $(d_i-\rho,d_i]$, so those two pieces remain
exact $\Pi$-circular arcs. Every later perturbation will respect the same condition, and this is
what keeps property (P1) available at the very end. See Appendix~\ref{sec:A.2} for the proof.

\begin{lemma}[one common arclength label]
\label{lem:5.4}
Reparametrise $\widetilde\gamma$ by arclength \textbf{once},
obtaining a unit-speed curve of length $L$ with parameter $s\in\mathbb R/L\mathbb Z$.
Reparametrisation moves no point of the image, so the collars remain $\Pi$-arcs and $\kappa>0$ is
unchanged. Every local reflection $R$ is a Euclidean isometry and \textbf{preserves speed pointwise}, so
all reflected members constructed below are unit speed, of the same length $L$, and carry the \textbf{same
arclength label} $s$; consequently their data $(\kappa,|\tau|)$ may legitimately be compared at the
same $s$. \hfill$\square$
\end{lemma}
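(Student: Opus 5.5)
The plan is to check the assertions of Lemma~\ref{lem:5.4} one at a time, in the order stated. Start from the curve $\widetilde\gamma$ of Lemma~\ref{lem:5.3}: it is $C^\infty$, regular and closed, with $\kappa>0$. Regularity is inherited from $\gamma_0$, because a $C^2$-small perturbation of a regular curve stays regular once $\epsilon$ is small, and we fix such an $\epsilon$. Put $\sigma(t)=\int_0^t|\widetilde\gamma'(u)|\,du$ and $L=\sigma(\text{period})$. Since $|\widetilde\gamma'|>0$ is smooth and periodic, $\sigma$ descends to a $C^\infty$ orientation-preserving diffeomorphism between the parameter circle and $\mathbb R/L\mathbb Z$. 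The unit-speed curve is then $\widehat\gamma=\widetilde\gamma\circ\sigma^{-1}$.

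The image-level claims follow directly. $\operatorname{Im}\widehat\gamma=\operatorname{Im}\widetilde\gamma$, so the knot type and embeddedness are unchanged. On the collars, $\widetilde\gamma=\gamma_0$ on a neighbourhood of $A$ by Lemma~\ref{lem:5.3}, and these arcs are the $\sigma$-images of the original parameter arcs. They remain exact $\Pi$-circular arcs of curvature $\kappa_c$, now traversed at unit speed. Curvature is a geometric invariant, given by the parametrisation-free formula recalled in Proof~2 of Lemma~\ref{lem:2.8}. Hence $\kappa_{\widehat\gamma}(s)=\kappa_{\widetilde\gamma}(\sigma^{-1}(s))>0$, and likewise $\tau$ is transported pointwise. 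I will record that the collar arcs $[\alpha_i\pm2\rho]$ and the core arcs $\Gamma_i$ are renamed by $\sigma$. Since $\sigma$ is a homeomorphism, (P1)--(P5) survive with the new endpoints, and I will keep the old names.

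The speed claim is the key step. Let $S\subseteq\{1,\dots,m\}$ and let $\widehat\gamma_S$ equal $R\circ\widehat\gamma$ on $\widehat\gamma^{-1}(B_i)$ for $i\in S$, and $\widehat\gamma$ elsewhere. $R$ is linear and orthogonal, so $(R\circ\widehat\gamma)'=R\widehat\gamma'$ and $|R\widehat\gamma'|=|\widehat\gamma'|=1$. Thus $\widehat\gamma_S$ has unit speed on each piece and is defined on the same $\mathbb R/L\mathbb Z$. Its length is $L$ and its arclength parameter is the same $s$, with no further reparametrisation needed.

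The main obstacle is smoothness of $\widehat\gamma_S$ across the cut points $\alpha_i,\beta_i$. This is handled by the collars: near those points $\widehat\gamma$ lies in $\Pi$, where $R$ is the identity, so both definitions agree on a whole open neighbourhood and the glued map is $C^\infty$. Embeddedness follows from $R(B_i)=B_i$ and the disjointness of the $B_i$, as in (T5). Finally, $\kappa$ is preserved pointwise under an isometry, and $|\tau|$ is preserved by Lemma~\ref{lem:2.7}, applied locally on each piece since all the formulas are local in $s$. This justifies comparing the data $(\kappa,|\tau|)$ of all members at the same $s$.
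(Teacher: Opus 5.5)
Your proposal is correct and follows the paper's own justification, which is given inline in the statement: arclength reparametrisation fixes the image and hence the collars and $\kappa>0$, and because $R$ is a linear isometry the locally reflected curves remain unit speed on the same $\mathbb R/L\mathbb Z$. The additional checks you carry out (smooth gluing across the collars, embeddedness, and pointwise preservation of $(\kappa,|\tau|)$) are the ones the paper defers to Lemmas~\ref{lem:5.9}, \ref{lem:5.10} and \ref{lem:6.9}.
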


We write $\gamma_\flat$ for the curve produced by Lemmas~\ref{lem:5.1}--\ref{lem:5.4}. It satisfies (R1) and (R2) but
carries no information at all about (R3): its torsion vanishes identically on the entire planar part,
which is a set with $m$ complementary components, but nothing yet excludes further flat points inside
the cores, which would raise $c$ and destroy the equality. Removing that possibility is the subject
of the next subsection.

\subsection{Pinning the infinite-order zero set}
\label{sec:5.2}

For a $C^4$ regular curve $\eta$ put $D_\eta:=\det(\eta',\eta'',\eta''')$ and
$D'_\eta:=\det(\eta',\eta'',\eta'''')$; by Lemma~\ref{lem:6.3} below, $\tau_\eta=D_\eta h_\eta$ with
$h_\eta>0$, so every statement about the zeros of $D$ transfers verbatim to $\tau$. Two facts proved
in \S\ref{sec:genericity} are used in this section and in Appendix~\ref{app:A}: that lemma, and the
invariance of everything relevant under arclength reparametrisation (Lemma~\ref{lem:6.9}). Both are
elementary computations with the parametrisation-free formulas of Lemma~\ref{lem:2.8}, and
\textbf{\S\ref{sec:genericity} uses nothing from \S\ref{sec:flexibility}}, so there is no
circularity; they are placed there because that is where they are used most.

We must arrange $D\neq0$, or at worst simple zeros, at every point of every core arc $\Gamma_i$.
Two regions require different tools. On a compact subcore, jet transversality applies and yields
simple zeros for almost every small parameter. Near the two endpoints $c_i,d_i$ it does not: those
points sit at the boundary of the core, transversality on a compact set says nothing there, and the
curve must in addition remain exactly circular immediately outside. At those endpoints we therefore
use an identity instead of a perturbation argument; Figure~\ref{fig:4} shows what it delivers.

\begin{lemma}[collar witness identity]
\label{lem:5.5}
Let $\eta$ be, on a parameter interval $I$, a \textbf{unit-speed}
circular arc inside $\Pi$ of constant curvature $\kappa_c$, oriented so that $B=T\times N=+e_3$ with
$e_3=(0,0,1)$. Let $\phi\in C^\infty(I,\mathbb R)$ and $\eta_\phi:=\eta+\phi\,e_3$. Then, \textbf{exactly},
on $I$,
\[
D_{\eta_\phi}\;=\;\kappa_c^{3}\,\phi'+\kappa_c\,\phi'''\;=\;\kappa_c\bigl(\phi'''+\kappa_c^{2}\phi'\bigr).
\]
\end{lemma}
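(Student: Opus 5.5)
The identity is a direct multilinear expansion of the determinant. No reparametrisation is needed, because $D_{\eta_\phi}$ is defined with respect to the given parameter of $\eta$; the perturbed curve $\eta_\phi$ is not assumed to be unit speed.

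\textbf{Derivatives of $\eta$.} Since $\eta$ is a unit-speed circular arc in $\Pi$ of constant curvature $\kappa_c$, and its torsion vanishes, the Frenet equations of Definition~\ref{def:2.3} give $\eta'=T$, $\eta''=\kappa_cN$ and $\eta'''=\kappa_cN'=-\kappa_c^2T$. The vectors $T,N$ lie in the plane $\Pi$, and $B=T\times N=e_3$ by the chosen orientation.

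\textbf{Columns of the determinant.} Because $e_3$ is constant, $\eta_\phi^{(j)}=\eta^{(j)}+\phi^{(j)}e_3$ for $j=1,2,3$. So each column of $\det(\eta_\phi',\eta_\phi'',\eta_\phi''')$ is a vector in $\Pi$ plus a multiple of $e_3$. Expand by multilinearity in the columns, and discard two kinds of terms:
\begin{itemize}
\item the term with three planar columns, since three vectors of a $2$-plane are linearly dependent;
\item every term in which $e_3$ appears in two or more columns, since two columns are then equal.
\end{itemize}
What remains is exactly one term for each position of $e_3$:
\[
D_{\eta_\phi}=\phi'\det(e_3,\kappa_cN,-\kappa_c^2T)+\phi''\det(T,e_3,-\kappa_c^2T)+\phi'''\det(T,\kappa_cN,e_3).
\]

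\textbf{Evaluating the three terms.}
\begin{itemize}
\item The middle term vanishes, because $T$ appears twice.
\item The last term is $\kappa_c\phi'''\langle T\times N,e_3\rangle=\kappa_c\phi'''$.
\item For the first term, a cyclic permutation gives $\det(e_3,N,T)=\det(T,e_3,N)=\langle T\times e_3,N\rangle$. From $T\times N=e_3$ and the right-handedness of the orthonormal frame $(T,N,e_3)$ we get $T\times e_3=-N$, so $\det(e_3,N,T)=-1$. The first term is therefore $-\kappa_c^3\phi'\cdot(-1)=\kappa_c^3\phi'$.
\end{itemize}

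Adding the surviving terms gives $D_{\eta_\phi}=\kappa_c^3\phi'+\kappa_c\phi'''=\kappa_c(\phi'''+\kappa_c^2\phi')$ on $I$, which is the claim. The only step needing care is the sign bookkeeping in the first term, and the orientation hypothesis $B=+e_3$ fixes it. The identity is exact and linear in $\phi$ because at most one column can carry $e_3$.
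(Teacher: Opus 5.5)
Your proof is correct and follows the paper's argument essentially line by line. You expand by multilinearity in the columns, discard the all-planar term and the terms with a repeated $e_3$, and evaluate the three surviving terms using $\eta'=T$, $\eta''=\kappa_cN$, $\eta'''=-\kappa_c^2T$ and $T\times N=e_3$. Your explicit check that $\det(e_3,N,T)=\langle T\times e_3,N\rangle=-1$ spells out the sign bookkeeping for the first term, which the paper leaves to the reader.
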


\begin{proof}
The vectors $\eta',\eta'',\eta'''$ all lie in the direction plane
$\operatorname{span}(e_1,e_2)$. Expand $\det(\eta'+\phi'e_3,\ \eta''+\phi''e_3,\ \eta'''+\phi'''e_3)$
by multilinearity: the term without $e_3$ is $\det(\eta',\eta'',\eta''')=0$, three coplanar vectors;
terms with two or three factors $e_3$ have a repeated column and vanish; only the three terms with
exactly one $e_3$ survive. Using $\eta'=T$, $\eta''=\kappa_cN$, $\eta'''=-\kappa_c^2T$ (constant
curvature, $\tau\equiv0$) and $T\times N=e_3$, these are
$\phi'\det(e_3,\kappa_cN,-\kappa_c^2T)=\kappa_c^3\phi'$, $\phi''\det(T,e_3,-\kappa_c^2T)=0$ and
$\phi'''\det(T,\kappa_cN,e_3)=\kappa_c\phi'''$.
\end{proof}

The formula is an \textbf{exact identity}, not a linearisation, and is \textbf{strictly linear} in
$\phi$, so the amplitude of the displacement enters only as a non-zero scalar factor and may be taken
as small as the low-order gates require. Taking $\phi\equiv0$ also
records the baseline: an unperturbed collar has $D\equiv0$ and lies entirely inside $Z_\infty(\tau)$,
which is precisely why the flat set has the $m$ components we want.

\begin{lemma}[endpoint witness]
\label{lem:5.6}
For each $i$ set
\[
\theta_i(s):=\exp\Bigl(\frac{-1}{s-c_i}\Bigr)\exp\Bigl(\frac{-1}{d_i-s}\Bigr)\ \ (s\in\Gamma_i),
\qquad \theta_i(s):=0\ \ (s\notin\Gamma_i).
\]
Then $\theta_i\in C^\infty(S^1)$, its positivity set is exactly $\Gamma_i$, and all derivatives
vanish at $c_i$ and $d_i$. There is $\sigma\in(0,\rho]$ such that for \textbf{every} $\lambda\neq0$ the
curve $\gamma_\flat+\lambda\theta_ie_3$ satisfies
\[
D\neq0\quad\text{on}\quad(c_i,\ c_i+\sigma)\ \cup\ (d_i-\sigma,\ d_i).
\]
\end{lemma}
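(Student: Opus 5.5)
The plan is to reduce the claim, via the exact identity of Lemma~\ref{lem:5.5}, to a one-variable asymptotic statement about $\theta_i$ near the endpoints of $\Gamma_i$. That statement does not involve $\lambda$ at all.

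\textbf{Step 1: smoothness of $\theta_i$.} This is the standard argument for $e^{-1/u}$. On $\Gamma_i$, every derivative of $\theta_i$ has the form $P(1/(s-c_i),1/(d_i-s))\,\theta_i$ with $P$ a polynomial. Exponential decay beats any power, so these derivatives tend to $0$ at $c_i$ and at $d_i$. Lemma~\ref{lem:3.5}, applied inductively, then gives $\theta_i\in C^\infty(S^1)$ with all derivatives vanishing at $c_i,d_i$. Positivity exactly on $\Gamma_i$ is immediate from the definition.

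\textbf{Step 2: reduction to the collar identity.} By (P1) and \eqref{eq:5.1}, the perturbation of Lemma~\ref{lem:5.3} does not touch $[c_i,c_i+\rho)$ or $(d_i-\rho,d_i]$. So on these intervals $\gamma_\flat$ is an exact $\Pi$-circular arc of curvature $\kappa_c$. It is also unit speed there: $\gamma_0$ was already unit speed on the collars by (T1). Hence the arclength reparametrisation of Lemma~\ref{lem:5.4} acts on each collar as a translation of the parameter (Lemma~\ref{lem:6.9}). I read $c_i,d_i,\theta_i$ in the final label $s$ accordingly.

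Lemma~\ref{lem:5.5} with $\phi=\lambda\theta_i$ then gives, on $(c_i,c_i+\rho)$,
\[
D_{\gamma_\flat+\lambda\theta_ie_3}=\pm\lambda\kappa_c\bigl(\theta_i'''+\kappa_c^2\theta_i'\bigr).
\]
The sign $\pm$ covers the case where the collar is oriented with $B=-e_3$; the same multilinear computation with $T\times N=-e_3$ only changes the overall sign. The same formula holds on $(d_i-\rho,d_i)$. Since $\lambda\neq0$ and $\kappa_c>0$, it suffices to find $\sigma\in(0,\rho]$ with $\theta_i'''+\kappa_c^2\theta_i'\neq0$ on $(c_i,c_i+\sigma)\cup(d_i-\sigma,d_i)$. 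Such a $\sigma$ is automatically independent of $\lambda$.

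\textbf{Step 3: the asymptotic computation (the main obstacle).} Write $\theta_i=e^{-\psi}$ with $\psi=1/u+1/(d_i-s)$ and $u=s-c_i$. Then
\[
\theta_i'=-\psi'\theta_i,\qquad \theta_i'''=\bigl(-\psi'''+3\psi'\psi''-(\psi')^3\bigr)\theta_i .
\]
Near $c_i$ we have $\psi'=-u^{-2}+O(1)$, $\psi''=2u^{-3}+O(1)$ and $\psi'''=-6u^{-4}+O(1)$. Hence $-(\psi')^3=u^{-6}(1+O(u^2))$, while $3\psi'\psi''=O(u^{-5})$, $\psi'''=O(u^{-4})$ and $\kappa_c^2\psi'=O(u^{-2})$. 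This gives
\[
\theta_i'''+\kappa_c^2\theta_i'=u^{-6}\theta_i\,(1+O(u)),
\]
which is strictly positive for $0<u<\sigma_1$, with $\sigma_1$ depending only on $\kappa_c$ and $d_i-c_i$.

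Near $d_i$ the substitution $s\mapsto c_i+d_i-s$ maps $\theta_i$ to itself and changes the sign of both odd derivatives. So the same expression is strictly negative on $(d_i-\sigma_1,d_i)$, and in particular nonzero.

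Finally, take $\sigma:=\min(\rho,\sigma_1,\dots)$ over $i$. The only real work is checking that the constants in the $O(\cdot)$ terms are uniform; this is routine, since all error terms are explicit rational functions bounded on $(c_i,c_i+\rho)$ after factoring out the leading power of $u$.
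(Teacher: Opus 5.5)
Your proposal is correct and follows the paper's proof essentially step for step: you use the exact collar identity of Lemma~\ref{lem:5.5} to reduce the claim to the non-vanishing of $\theta_i'''+\kappa_c^2\theta_i'$ near $c_i$, you show that the leading term $u^{-6}\theta_i$ dominates, and you obtain the right endpoint from the symmetry $s\mapsto c_i+d_i-s$, with $\lambda$ entering only as a non-zero factor. The differences are cosmetic: the paper writes $\theta_i=e^{-1/x}g(x)$ with $g$ smooth and $g(0)>0$ rather than differentiating $e^{-\psi}$, and it leaves implicit two points you make explicit, namely the orientation $B=+e_3$ (your $\pm$ covers it) and the fact that the arclength reparametrisation acts as a translation on each collar.
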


\begin{proof}
Smoothness and flatness at the endpoints are standard bump facts. By Convention~\ref{conv:5.2}(P1) and
the support condition \eqref{eq:5.1}, $\gamma_\flat$ is on $[c_i,c_i+\rho)$ exactly a unit-speed
$\Pi$-circular arc of constant curvature $\kappa_c$, so Lemma~\ref{lem:5.5} applies and gives
$D=\lambda\kappa_c(\theta_i'''+\kappa_c^2\theta_i')$. Put $x:=s-c_i$ and $\Lambda:=d_i-c_i$; then
$\theta_i=e^{-1/x}g(x)$ with $g(x)=\exp(-1/(\Lambda-x))$ smooth and $g(0)>0$. Differentiating,
$\theta_i'=e^{-1/x}(x^{-2}g+g')$ and $\theta_i'''=e^{-1/x}\bigl(x^{-6}g+O(x^{-5})\bigr)$, whence
\[
x^{6}e^{1/x}\bigl(\theta_i'''+\kappa_c^{2}\theta_i'\bigr)\ \longrightarrow\ g(0)>0\qquad(x\to0^+).
\]
So there is $\sigma_1>0$ such that this quantity is positive on $(0,\sigma_1)$, i.e. $D$ has the sign
of $\lambda$ and is non-zero on $(c_i,c_i+\sigma_1)$. The right endpoint is symmetric: put
$x:=d_i-s$, each odd-order derivative acquires a factor $-1$, and $D$ has the sign of $-\lambda$.
Take $\sigma:=\min(\sigma_1,\sigma_2,\rho)$. Since the expression of Lemma~\ref{lem:5.5} is linear in $\phi$,
the amplitude $\lambda$ enters only as a non-zero factor, so \textbf{every} $\lambda\neq0$ works.
\end{proof}

The lemma asserts only the existence of $\sigma$, which depends on $\Lambda$ and $\kappa_c$; only
$\sigma\in(0,\rho]$ is used below. Fix now $\lambda\neq0$, small enough for the gates of Lemma~\ref{lem:A.2},
and put $\gamma_1:=\gamma_\flat+\lambda\sum_i\theta_ie_3$, which coincides pointwise with
$\gamma_\flat$ on $S^1\setminus\Gamma$.

\begin{figure}[tbp]
\centering
\includegraphics{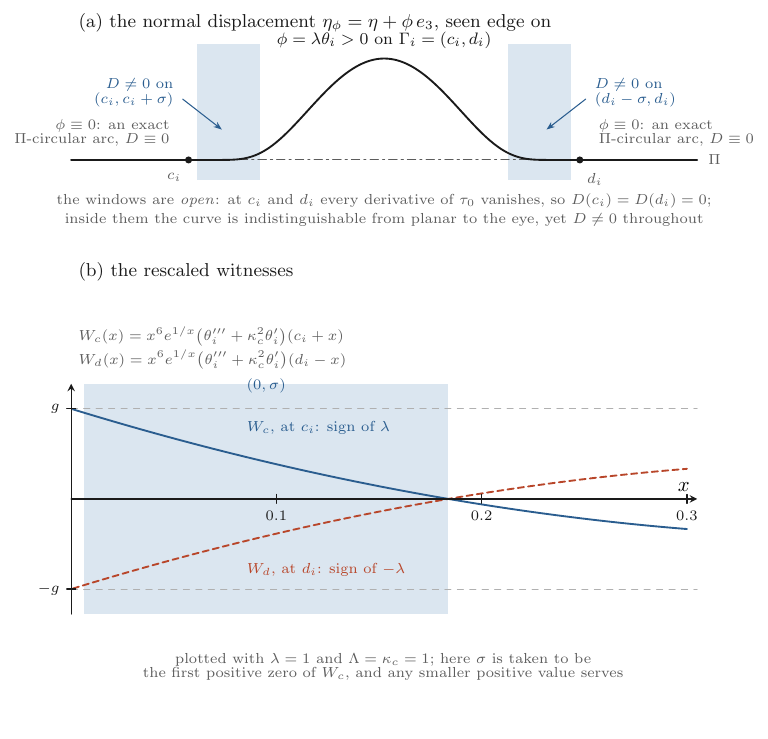}
\caption{\emph{Why the endpoints of a core arc remain infinitely flat while $D$ is already non-zero
inside them.} (a) The normal displacement $\eta_\phi=\eta+\phi\,e_3$ seen edge on, with the vertical
scale exaggerated: outside $\Gamma_i$ the curve is an exact $\Pi$-circular arc, and inside the two
shaded windows it is indistinguishable from planar to the eye. The windows are \textbf{open}: at $c_i$ and
$d_i$ every derivative of $\tau_0$ vanishes, so $D(c_i)=D(d_i)=0$ and both endpoints lie in
$Z_\infty(\tau_0)$. (b) The rescaled witnesses $W_c$ and $W_d$, each in its own local coordinate,
tend to $g(0)=e^{-1/\Lambda}>0$ and to $-g(0)$ respectively. By Lemma~\ref{lem:5.5}, $D\neq0$ on
$(c_i,c_i+\sigma)\cup(d_i-\sigma,d_i)$ for \textbf{every} $\lambda\neq0$. Both panels are exact for the explicit bump with
$\lambda=\Lambda=\kappa_c=1$; there $\sigma$ is taken to be the first positive zero of $W_c$, and
Lemma~\ref{lem:5.6} is content with any smaller positive value.}
\label{fig:4}
\end{figure}

\begin{proposition}[relative $4$-jet transversality on the core]
\label{prop:5.7}
In the open set
$U^{(4)}:=\{v\times a\neq0\}$ of $J^4(S^1,\mathbb R^3)$ put
\[
\Xi:=\{\det(v,a,j)=0\}\cap\{\det(v,a,q)=0\}\cap U^{(4)} .
\]
\textbf{(i)} $\Xi$ is a smooth submanifold of $U^{(4)}$ of \textbf{codimension $2$}.
\textbf{(ii)} Put $K:=\coprod_i[c_i+\tfrac\sigma2,\ d_i-\tfrac\sigma2]$, compact, and
$V:=\coprod_i(c_i+\tfrac\sigma4,\ d_i-\tfrac\sigma4)$, open with $\overline V\subset\Gamma$. Then
there is a finite family $\{\psi_k\}_{k=1}^N\subset C_c^\infty(V,\mathbb R^3)$ such that for \textbf{almost
every} sufficiently small $w\in\mathbb R^N$ the curve $\gamma_w:=\gamma_1+\sum_kw_k\psi_k$ satisfies
$j^4\gamma_w\pitchfork\Xi$ on $K$; equivalently $(D_{\gamma_w},D'_{\gamma_w})\neq(0,0)$ on $K$, that
is, all zeros of $D_{\gamma_w}$ on $K$ are \textbf{simple}.
\end{proposition}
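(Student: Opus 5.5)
For part (i) I would work in coordinates $(x,v,a,j,q)$ on $J^4(S^1,\mathbb R^3)=S^1\times\mathbb R^3\times(\mathbb R^3)^4$ and show that the map
\[
\Psi=(\det(v,a,j),\det(v,a,q)):U^{(4)}\to\mathbb R^2
\]
is a submersion at every point of $\Xi$. The partial derivative of $\det(v,a,j)$ in the $j$-variable is the linear form $j\mapsto\langle v\times a,j\rangle$, which is non-zero on $U^{(4)}$. Similarly, $\det(v,a,q)$ has $q$-derivative $q\mapsto\langle v\times a,q\rangle$. The first component does not depend on $q$ and the second does not depend on $j$. The differential therefore contains a block-diagonal $2\times 6$ minor with two non-zero rows, so it has rank $2$. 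This holds on all of $U^{(4)}$, not only on $\Xi$. Hence $\Xi=\Psi^{-1}(0)$ is a closed smooth submanifold of codimension $2$ by the regular value theorem.

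For part (ii) I would use the parametric transversality theorem in its finite-dimensional form. The first step is to check that $j^4\gamma_1(K)\subset U^{(4)}$. This holds because $\gamma_1$ has $\kappa>0$: the low-order gates of Lemma~\ref{lem:A.2} keep $\kappa>0$ under the small perturbation $\lambda\theta_i$. Since the condition is open, it persists for small $w$. The next step is to choose the finite family $\{\psi_k\}$ so that the evaluation map
\[
\mathcal J:W\times V'\to J^4,\qquad (w,s)\mapsto j^4\gamma_w(s),
\]
is a submersion onto the fibre directions at every $(0,s)$ with $s\in K$. Here $W$ is a small ball in $\mathbb R^N$ and $V'\supset K$ is open in $V$.

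To build the family, fix $s_0\in K$. Because $\operatorname{dist}(K,\partial V)\ge\sigma/4>0$, polynomial vector fields $(s-s_0)^\ell e_r$ with $\ell\le4$ and $r=1,2,3$, multiplied by a cutoff equal to $1$ near $s_0$ and supported in $V$, have $4$-jets at $s$ near $s_0$ that span the whole fibre $(\mathbb R^3)^5$. Compactness of $K$ then yields a finite cover and a finite family. Only the fibre directions matter: $\Psi$ is independent of $x$, so surjectivity onto the $(v,a,j,q)$-directions already makes $\mathcal J\pitchfork\Xi$, and this persists for $w$ in a small ball. The parametric transversality theorem (Sard applied to the projection $\mathcal J^{-1}(\Xi)\to W$) then gives $j^4\gamma_w\pitchfork\Xi$ on $K$ for almost every small $w$.

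Finally, I would translate transversality into the stated condition. The jet curve $j^4\gamma_w|_K$ is one-dimensional and $\Xi$ has codimension $2$, so transversality means the two do not meet. Since $\Psi(j^4\gamma_w(s))=(D_{\gamma_w}(s),D'_{\gamma_w}(s))$, this says $(D,D')\neq(0,0)$ on $K$. The derivative of $D$ is $D'_{\gamma_w}$, because $\frac{d}{ds}\det(\eta',\eta'',\eta''')$ has only one surviving term. So every zero of $D_{\gamma_w}$ on $K$ is simple.

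The main obstacle I expect is the bookkeeping between open and full-measure conditions. The set of admissible $w$ must be intersected with the low-order gates (embeddedness, $\kappa>0$, the tangle conditions), and $\operatorname{supp}\psi_k\subset V$ must keep the collars $[c_i,c_i+\rho)$ exactly circular. Both points are handled by the support condition and the smallness of $w$, because a full-measure set meets every non-empty open ball.
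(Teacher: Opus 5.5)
Your proposal is correct. Part (i) is the paper's argument. In part (ii) you take a genuinely different route to the submersion that feeds Sard, so a short comparison is useful.

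\textbf{How the paper does it.} At each $s_0\in K$ the paper uses only two targeted bumps, $\chi\,(s-s_0)^3w_0/6$ and $\chi\,(s-s_0)^4w_0/24$, where $w_0=(v\times a)(s_0)$. It computes directly that $\partial(D,D')/\partial(w_1,w_2)=|w_0|^2I_2$ at $(s_0,w=0)$, and then applies Sard to $F_l^{-1}(0)\to W$ with $F_l=(D_{\gamma_w},D'_{\gamma_w})$. That $2\times2$ block depends on $w$ through $v_w,a_w,j_w$, because $D$ is trilinear in the jet. This is why Appendix~B insists on the order of steps: fix the whole family first, and only then choose a single radius $\delta$ by uniform continuity in $(s,w)$.

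\textbf{How you do it.} You span the entire $4$-jet fibre, with fifteen bumps per chart. Since $\gamma_w$ is affine in $w$, $\partial_w\, j^4\gamma_w(s)=(j^4\psi_k(s))_k$ does not depend on $w$, so the evaluation map is fibre-submersive for every $w$. You then transfer this to $\Xi$ through part (i), and you rightly prove (i) on all of $U^{(4)}$, not just on $\Xi$. In effect this is the standard proof of Thom's theorem localised to $K$. Smallness of $w$ is then needed only to keep $j^4\gamma_w(K)\subset U^{(4)}$, i.e.\ $\kappa>0$, and to stay inside the low-order gates. The quantifier-order issue the paper labours over simply disappears.

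\textbf{Trade-off.} Your route costs more parameters and does not exhibit the explicit block. The paper's route is minimal ($N=2L$) and works with $(D,D')$ directly, without passing through jet-space transversality.

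\textbf{One tightening.} Do not state the submersion only at points $(0,s)$ with $s\in K$. Take $V'$ to be the union of the open sets on which the cutoffs are identically $1$; then the spanning holds at every point of $W\times V'$ for any $W$. This ensures $\mathcal J^{-1}(\Xi)$ is a submanifold of $W\times V'$, so Sard applies to its projection to $W$.
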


Part (i) is a two-line computation: $D=\langle v\times a,j\rangle$ is linear in $j$ and independent
of $q$, while $D'=\langle v\times a,q\rangle$ is linear in $q$ and independent of $j$, so the
$2\times6$ Jacobian in the $(j,q)$ directions is block diagonal with rows $(v\times a)^{\mathsf T}$
in complementary summands of $\mathbb R^6$. The rows are independent iff both are non-zero, i.e. iff
$v\times a\neq0$; degeneracy of the gradients can occur \textbf{only at $\kappa=0$}, which is excluded
from $U^{(4)}$. This is the structural role of the hypothesis $\kappa>0$, and it recurs verbatim in
Lemma~\ref{lem:6.4}.

Part (ii) is the delicate one, not because of the transversality itself but because of the order in
which the objects must be produced: local candidate perturbations are built first, a finite subcover
extracted, the complete family and hence the dimension $N$ fixed, and only afterwards is a single
radius in the $w$-variable chosen, on the complete variable $(s,w)$. No radius is available before
$N$ exists. The full argument is Appendix~\ref{app:B}.

\begin{theorem}[the reference curve; $c(\tau_0)=m$ exactly]
\label{thm:5.8}
Insert the perturbations of Lemmas~\ref{lem:5.6}
and Proposition~\ref{prop:5.7} into the chain of \S\ref{sec:5.1} after Lemma~\ref{lem:5.4}, and reparametrise by arclength as in
Lemma~\ref{lem:A.2}(6). The resulting curve $\gamma$ satisfies (R1) and (R2) --- it is a $C^\infty$ unit-speed
closed embedding of length $L$ with $\kappa>0$ everywhere, $m$ symmetric private balls centred on
$\Pi$, open planar collars, a tangle realising $K_i$ inside $B_i$, and $[\gamma]=\#_iK_i$ --- and its
torsion satisfies (R3):
\[
Z_\infty(\tau_0)=S^1\setminus\coprod_{i=1}^m\Gamma_i,\qquad
S^1\setminus Z_\infty(\tau_0)=\coprod_{i=1}^m\Gamma_i,\qquad c(\tau_0)=m,
\]
where $\Gamma_i$ has been transported by the reparametrisation. Moreover $\tau_0$ has \textbf{finitely many
zeros in each $\Gamma_i$, all of them simple}.
\end{theorem}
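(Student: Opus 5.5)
The proof splits into two independent parts: verifying the low-order properties (R1)--(R2) of the final curve, and computing $Z_\infty(\tau_0)$ exactly. For the first, I would fix $\lambda\neq0$ and then $w$ small enough that the total perturbation $\lambda\sum_i\theta_ie_3+\sum_kw_k\psi_k$ is $C^2$-small. Both pieces are supported in $\overline{\Gamma}$, and this smallness keeps every open low-order gate: $\kappa>0$ (an open $C^2$ condition on the compact circle), embeddedness, containment of $\gamma(\overline{\Gamma_i})$ in $\operatorname{int}B_i$ (by (P2) and compactness), and the tangle types (by isotopy inside $B_i$ rel $\partial B_i$, since the perturbation is $C^0$-small and compactly supported in the ball). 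This is the bookkeeping of Lemma~\ref{lem:A.2}. Off $\Gamma$ the curve still coincides with $\gamma_\flat$, so the collars are exact $\Pi$-circular arcs and (P5) holds. Then (T5) gives the reflected family and $[\gamma]=\#_iK_i$. Reparametrising once by arclength (Lemma~\ref{lem:5.4}, Lemma~\ref{lem:6.9}) moves no image point and transports $\Gamma_i$ by a diffeomorphism. Because $\tau=D h$ with $h>0$ (Lemma~\ref{lem:6.3}) and orders of zeros are invariant under reparametrisation, it suffices to argue with $D$ in the old parameter.

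For the inclusion $S^1\setminus\Gamma\subseteq Z_\infty$: on the open set $S^1\setminus\overline\Gamma$ the curve is planar, so $D\equiv0$ there. Each endpoint $c_i,d_i$ is a limit of such points, and $D$ is $C^\infty$, so every derivative of $D$ vanishes at the endpoints by continuity. Equivalently, one-sided planarity on $[c_i-\rho,c_i]$ forces infinite-order flatness at $c_i$. Hence $S^1\setminus\Gamma\subseteq Z_\infty$.

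For the reverse inclusion I must show $D$ has only finite-order zeros on $\Gamma_i=(c_i,d_i)$, and I would cover $\Gamma_i$ by three pieces.
\begin{itemize}
\item \emph{End windows.} On $(c_i,c_i+\sigma/4]$ and $[d_i-\sigma/4,d_i)$ the $\psi_k$ vanish, since they are supported in $V$. There the curve is exactly $\gamma_\flat+\lambda\theta_ie_3$ on an exact collar, so Lemma~\ref{lem:5.6} gives $D\neq0$.
\item \emph{Middle-to-end overlap.} On the windows $(c_i+\sigma/4,c_i+\sigma/2)$ and $(d_i-\sigma/2,d_i-\sigma/4)$ the $\psi_k$ may be nonzero, so Lemma~\ref{lem:5.6} no longer applies directly. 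This is the main obstacle. I would handle it by strengthening Proposition~\ref{prop:5.7} to hold on the larger compact set $\coprod_i[c_i+\sigma/8,\,d_i-\sigma/8]\subset V$. Transversality on any compact subset of $V$ is what Appendix~\ref{app:B} delivers, and the choice of $K$ is only a bookkeeping convenience. The alternative is to use that on the compact windows $[c_i+\sigma/4,c_i+\sigma/2]$ the function $D_{\gamma_1}$ is bounded away from zero, so a $C^3$-small $w$ preserves $D\neq0$; I would take this route.
\item \emph{Compact subcore $K$.} Proposition~\ref{prop:5.7} gives $(D,D')\neq(0,0)$, so all zeros on $K$ are simple.
\end{itemize}

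Simple zeros are isolated. The union of the two windows already shown free of zeros is a neighbourhood of the ends of $\overline\Gamma_i$ on which $D\neq0$, so the zeros in $\Gamma_i$ form a compact subset of $K$. Being a compact set of isolated points, it is finite, and all its zeros are simple. Therefore $Z_\infty\cap\Gamma=\varnothing$, which gives $Z_\infty(\tau_0)=S^1\setminus\coprod\Gamma_i$. By (P4) the complement has exactly the $m$ components $\Gamma_i$, so $c(\tau_0)=m$.
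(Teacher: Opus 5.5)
Your proposal is correct and follows the paper's proof of Theorem~\ref{thm:5.8} essentially step by step. It uses the same three-piece cover of each $\Gamma_i$: the perturbation-free end windows via Lemma~\ref{lem:5.6}, the overlap windows $[c_i+\sigma/4,c_i+\sigma/2]$ via $C^3$-stability of $D_{\gamma_1}\neq0$ for small $w$, and the subcore $K$ via Proposition~\ref{prop:5.7}; it then uses one-sided planarity for the flatness at $c_i,d_i$ and (P4) for the count.

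The only slip is in the alternative you discarded. First, $[c_i+\sigma/8,\,d_i-\sigma/8]$ is not contained in $V=\coprod_i(c_i+\sigma/4,\,d_i-\sigma/4)$. Second, enlarging $K$ cannot remove the need for the stability step: the bumps built around points of $\partial K$ must have support extending beyond $K$, so there is always a window where the $\psi_k$ act but transversality is not asserted. The route you actually chose is therefore the one the argument needs.
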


\begin{proof}
Write $\gamma_2:=\gamma_w$ as in Proposition~\ref{prop:5.7} and let $\gamma$ be its arclength
reparametrisation. By Lemma~\ref{lem:A.2} the curve $\gamma$ has all the listed structural properties, and by
Lemma~\ref{lem:A.2}(6) every conclusion below about $\gamma_2$ transports to $\gamma$; so we argue with
$\gamma_2$ and its torsion.

\textbf{(a) All zeros in the core are simple, and finitely many.} On $(c_i,c_i+\frac\sigma4]$ we have
$\gamma_2=\gamma_1$, the perturbation being supported in $V$, so $D\neq0$ by Lemma~\ref{lem:5.6}. On the
compact set $[c_i+\frac\sigma4,\ c_i+\frac\sigma2]\subset(c_i,c_i+\sigma)$ the quantity
$|D_{\gamma_1}|$ has a positive lower bound, while $\eta\mapsto D_\eta$ is continuous from $C^3$ to
$C^0$ and $\|\sum w_k\psi_k\|_{C^3}$ may be taken arbitrarily small, so $D_{\gamma_2}\neq0$ there as
well; the right endpoint is symmetric. Hence $D_{\gamma_2}\neq0$ on
$(c_i,c_i+\frac\sigma2)\cup(d_i-\frac\sigma2,d_i)$. On $K_i=[c_i+\frac\sigma2,d_i-\frac\sigma2]$ all
zeros are simple by Proposition~\ref{prop:5.7}; simple zeros are isolated and the zero set is closed in the
compact $K_i$, hence finite. The three pieces cover exactly $\Gamma_i$. By Lemma~\ref{lem:6.3} the statements
about $D$ transfer verbatim to $\tau$.

\textbf{(b) $S^1\setminus\Gamma\subseteq Z_\infty(\tau_0)$.} By Convention~\ref{conv:5.2}(P5) and Lemma~\ref{lem:A.2}(5),
$\gamma_2(S^1\setminus\Gamma)\subset\Pi$. Let $p\in S^1\setminus\Gamma$. If $p$ is an \textbf{interior}
point of that closed set, then a whole open neighbourhood of $p$ has image in the plane $\Pi$, so
$\gamma_2',\gamma_2'',\gamma_2'''$ are coplanar there, $D\equiv0$ and $\tau\equiv0$ on that
neighbourhood; hence all derivatives vanish at $p$ and $p\in Z_\infty$. If $p\in\{c_i,d_i\}$, then
$\tau$ vanishes identically on a \textbf{one-sided} neighbourhood of $p$ and $\tau\in C^\infty$, so
$\tau^{(k)}(p)=\lim_{t\to p^\mp}\tau^{(k)}(t)=0$ for every $k$, i.e. $p\in Z_\infty$. No information
about the other side of $p$ is used: one-sided planarity alone forces infinite-order flatness.

\textbf{(c) $\Gamma\cap Z_\infty(\tau_0)=\varnothing$.} Let $s\in\Gamma_i$. If $\tau_0(s)\neq0$ then
$s\notin Z$. If $\tau_0(s)=0$ then by (a) the zero is simple, so
$\operatorname{ord}_s\tau_0=1<\infty$ and $s\notin Z_\infty$.

\textbf{(d) Counting the components.} By (b) and (c), $S^1\setminus Z_\infty(\tau_0)=\coprod_i\Gamma_i$.
The $\Gamma_i$ are pairwise disjoint open arcs with pairwise disjoint closures by Convention~\ref{conv:5.2}(P4),
hence pairwise distinct connected components; and $\coprod\Gamma_i\neq S^1$, so no further component
joins them. The number of components is therefore exactly $m$.
\end{proof}

Three boundary cases are handled by different clauses: ``no zero in the core'' by the first
alternative in (c), ``finitely many simple zeros'' by the second, and ``core endpoints lying inside the
open collar'' by the second case of (b), where one-sided planarity suffices.

\subsection{The mirrored family}
\label{sec:5.3}

With the reference curve in hand, the family is produced by reflecting tangles independently. Two
short lemmas make this legitimate.

\begin{lemma}[the reflected curve is $C^\infty$]
\label{lem:5.9}
Let the tangle piece of the reference $\gamma$
lying in a symmetric private ball $B$ (so $R(B)=B$) occupy the arclength interval $[a,b]$, and define
the reflected curve to be $R\circ\gamma$ on $[a,b]$ and unchanged elsewhere. That curve is
$C^\infty$.
\end{lemma}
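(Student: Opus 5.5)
The plan is to use that the reflected curve agrees with $\gamma$ near the two junction points $a$ and $b$, so that smoothness there is automatic. Write $\gamma_R$ for the reflected curve, equal to $R\circ\gamma$ on $[a,b]$ and to $\gamma$ elsewhere. On the open arcs $(a,b)$ and $S^1\setminus[a,b]$ the curve $\gamma_R$ is either $\gamma$ or $R\circ\gamma$, a linear isometry composed with a $C^\infty$ map, so it is $C^\infty$ there. The only points needing an argument are therefore the junctions $a$ and $b$.

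At the junctions I will use the collar structure. By Lemma~\ref{lem:5.1}(T1)--(T2) and Convention~\ref{conv:5.2}, $a$ and $b$ correspond to the two points where the curve meets $\partial B$, i.e.\ to the transported $\alpha_i,\beta_i$. Each of these carries an open two-sided collar on which the curve lies exactly in $\Pi=\{z=0\}$. This persists for the final reference curve: every perturbation after Lemma~\ref{lem:5.1} is supported strictly inside $\Gamma$ (condition \eqref{eq:5.1}, the supports of $\theta_i$ in $\Gamma_i$, and $V\subset\Gamma$), and arclength reparametrisation moves no image point (Lemma~\ref{lem:5.4}). Hence there is $\epsilon>0$ with $\gamma(s)\in\Pi$ for $|s-a|<\epsilon$, and likewise at $b$. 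Since $R$ fixes $\Pi$ pointwise, $R\circ\gamma=\gamma$ on $(a-\epsilon,a+\epsilon)$. On that neighbourhood the two defining pieces coincide, so $\gamma_R=\gamma$ there, and $\gamma_R$ is $C^\infty$ near $a$. The same argument applies at $b$.

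Since smoothness is local, $\gamma_R\in C^\infty(S^1)$. As side remarks, $\gamma_R$ is again unit speed with the same arclength label (Lemma~\ref{lem:5.4}), and its injectivity follows from $R(B)=B$ together with (T5). The only real obstacle is bookkeeping: checking that the junction parameters $a,b$ still lie in the interior of exactly planar collars after all perturbations and the reparametrisation. That is exactly what the support condition \eqref{eq:5.1} and property (P1) were designed to guarantee.
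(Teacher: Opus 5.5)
Your proof is correct and is essentially the paper's argument: the collars at the two junctions lie in $\Pi$, which $R$ fixes pointwise, so the reflected curve coincides with $\gamma$ on open neighbourhoods of $a$ and $b$, and smoothness there is inherited from $\gamma$ rather than obtained by matching jets. Your extra check that the collars survive the later perturbations and the arclength reparametrisation is the bookkeeping the paper records separately, in Convention~\ref{conv:5.2}(P1), condition \eqref{eq:5.1} and Lemma~\ref{lem:A.2}(5).
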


\begin{proof}
The collar lies in $\Pi$ and $R|_\Pi=\mathrm{id}$, so on an open neighbourhood of $a$ and of
$b$ the reflected piece coincides \textbf{pointwise} with the original curve; both are the same $C^\infty$
function there. This pointwise coincidence --- rather than any matching of jets --- is the sole
justification of the gluing, and it is why the collars had to be \emph{exactly} circular arcs of $\Pi$
rather than merely close to such arcs.
\end{proof}

\begin{lemma}[properties of a reflected member]
\label{lem:5.10}
The reflected curve has $\kappa>0$ (Lemma~\ref{lem:2.7});
it is unit speed, of the same length $L$ and in the same arclength label (Lemma~\ref{lem:5.4}); it shares
\textbf{pointwise} the same $\kappa(s)$ and the same $|\tau(s)|$ with the reference, the signed torsion
changing sign on the reflected interval and being unchanged elsewhere; it is an embedding, since
$R(B)=B$ is a private ball, $R(\text{tangle})\subset B$, and the exterior is untouched; and its knot
type is obtained from that of $\gamma$ by replacing the factor inside $B$ with its mirror, the
sphere $\partial B$ --- decomposing whenever that factor is non-trivial --- being preserved. \hfill$\square$
\end{lemma}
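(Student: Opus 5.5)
The plan is to verify the listed properties one at a time. Each follows from a pointwise fact about the reflection $R$ combined with the private-ball structure of Lemma~\ref{lem:5.1} and Convention~\ref{conv:5.2}. Write $\gamma_R$ for the reflected curve, equal to $R\circ\gamma$ on $[a,b]$ and to $\gamma$ elsewhere. By Lemma~\ref{lem:5.9}, $\gamma_R$ is $C^\infty$. Moreover, on open neighbourhoods of $a$ and $b$ it coincides pointwise with $\gamma$, because the collars lie in $\Pi$ and $R|_\Pi=\mathrm{id}$.

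\textbf{Local geometric quantities.} I split $S^1$ into three sets: the open interval $(a,b)$, the open complement of $[a,b]$, and small neighbourhoods of $a$ and $b$. These cover the circle.
\begin{itemize}
\item On $(a,b)$, $\gamma_R$ is a constant orthogonal image of $\gamma$. So $|\gamma_R'|=|\gamma'|=1$, and Lemma~\ref{lem:2.7} gives $\kappa_{\gamma_R}=\kappa_\gamma>0$ and $\tau_{\gamma_R}=-\tau_\gamma$.
\item On the complement of $[a,b]$, $\gamma_R=\gamma$, so all Frenet quantities agree.
\item Near $a$ and $b$ the two curves agree pointwise. Hence $\kappa$ and $\tau$ agree there too, consistent with the sign flip because $\tau\equiv0$ on the collar.
\end{itemize}
Unit speed gives the same length $L$, and the parameter $s$ is itself arclength for $\gamma_R$. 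This is the content of Lemma~\ref{lem:5.4}: the two curves share the arclength label, and the data $(\kappa,|\tau|)$ agree pointwise in $s$.

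\textbf{Embedding.} Injectivity is checked separately on $[a,b]$ and on its complement, and then across the two pieces.
\begin{itemize}
\item On $[a,b]$, $R\circ\gamma$ is injective because $R$ is a bijection.
\item On the complement, $\gamma_R=\gamma$ is injective.
\item For cross collisions, use Convention~\ref{conv:5.2}: $\gamma^{-1}(B)=[a,b]$. So $\gamma(S^1\setminus[a,b])$ lies outside $B$, while $R\gamma([a,b])\subset R(B)=B$. The two images are therefore disjoint.
\end{itemize}
Since $S^1$ is compact, an injective continuous map is an embedding, and regularity comes from unit speed.

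\textbf{Knot type.} I expect this to be the only step needing care. Outside $B$ the pair (ball, curve) is unchanged. Inside $B$ the tangle $(B,\gamma\cap B)$ is replaced by $(B,R(\gamma\cap B))$. Because $R$ fixes $\partial B\cap\Pi$ and preserves $\partial B$ setwise, this is the mirror tangle with the same boundary points: $R$ swaps the two hemispheres of $\partial B$ but fixes the two puncture points, which lie in $\Pi$.

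Write $K=K_B\#K_{\mathrm{out}}$ using the sphere $\partial B$, as in (T4). Here $K_B$ is the closure of the $1$-string tangle. Its mirror closure gives $\overline{K_B}$, since the closure arc can be chosen in $\Pi$ and is therefore $R$-invariant, so $R$ maps the closed-up knot to its mirror. The new knot is then $\overline{K_B}\#K_{\mathrm{out}}$. This uses that the connected sum along a sphere meeting the knot in two points is well defined on unoriented knots (Definition~\ref{def:2.2}).

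The sphere $\partial B$ is preserved, and it still meets the curve transversally in two points. If $K_B$ is non-trivial, so is $\overline{K_B}$, and $\partial B$ remains a decomposing sphere. This is the claimed replacement of the factor by its mirror, matching (T5).
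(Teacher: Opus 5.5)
Your proposal is correct and follows the paper's own justification, which consists of the inline reasons in the lemma together with step (6) of the proof of Lemma~\ref{lem:5.1} in Appendix~\ref{sec:A.1}: Frenet quantities from Lemma~\ref{lem:2.7} plus pointwise agreement on the collars, injectivity from $\gamma^{-1}(B)=[a,b]$ and $R(B)=B$, and the mirror tangle because $R$ reverses orientation while fixing the two punctures in $\Pi$. Your $R$-invariant closing arc in $\partial B\cap\Pi$ makes explicit a detail the paper leaves implicit. One citation should be adjusted: for the final reference curve, $\gamma^{-1}(B)=[a,b]$ and the planarity of the collars come from Lemma~\ref{lem:A.2}(2),(5), not from Convention~\ref{conv:5.2} directly.
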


\begin{theorem}[the multi-factor family]
\label{thm:5.11}
For every $m\ge1$ and arbitrary knot types
$K_1,\dots,K_m$ there is a family \textbf{indexed by $\{\pm1\}^m$} of closed embeddings
$\{\gamma_\varepsilon\}$ of common length $L$, unit speed, of class $C^\infty$, with $\kappa>0$
everywhere, labelled by one arclength parameter, such that
\begin{itemize}
\item \textbf{(i)} each $\gamma_\varepsilon$ satisfies pointwise
  $\kappa_{\gamma_\varepsilon}(s)=\kappa_0(s)$ and
  $|\tau_{\gamma_\varepsilon}(s)|=|\tau_0(s)|$, the reference datum, independent of $\varepsilon$;
\item \textbf{(ii)} $[\gamma_\varepsilon]=\#_{i=1}^mK_i^{\varepsilon_i}$.
\end{itemize}
\end{theorem}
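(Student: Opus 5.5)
The plan is to take the reference curve $\gamma$ of Theorem~\ref{thm:5.8} and define $\gamma_\varepsilon$ by reflecting tangles independently. Concretely, for $\varepsilon\in\{\pm1\}^m$ let $S(\varepsilon)=\{i:\varepsilon_i=-1\}$ and set $\gamma_\varepsilon:=R\circ\gamma$ on each ball interval $[\alpha_i,\beta_i]$ with $i\in S(\varepsilon)$, leaving $\gamma$ unchanged elsewhere. Here $[\alpha_i,\beta_i]$ is the arclength interval $\gamma^{-1}(B_i)$, transported through the reparametrisation. Thus $\gamma_{(+1,\dots,+1)}=\gamma$. Since the balls $B_i$ are pairwise disjoint and each satisfies $R(B_i)=B_i$, the reflections act on disjoint parameter intervals. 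They can therefore be applied one at a time, and each single step is covered by Lemmas~\ref{lem:5.9} and \ref{lem:5.10}.

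\textbf{Regularity and embedding.} I would argue by induction on $|S(\varepsilon)|$. Suppose a curve has already been reflected in a subfamily of balls. Then, near the endpoints of every remaining ball interval, it still coincides pointwise with $\gamma$, and $\gamma$ is an exact $\Pi$-circular collar there. This holds because the earlier reflections were supported in other, disjoint balls, and because the collars are pointwise fixed by $R$ in any case. So Lemma~\ref{lem:5.9} applies again and gives a $C^\infty$ curve. For embeddedness: the reflected piece stays inside $B_i$, while every other piece of the curve meets $B_i$ only through the $i$-th ball arc, by (T2). Hence no new intersections arise, and the curve remains an embedding.

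\textbf{Part (i).} Lemma~\ref{lem:5.4} shows that each $\gamma_\varepsilon$ is unit speed, of length $L$, and carries the same label $s$. Curvature and torsion are computed locally in $s$. On a reflected interval, Lemma~\ref{lem:2.7} gives $\kappa_{\gamma_\varepsilon}=\kappa_0$ and $\tau_{\gamma_\varepsilon}=-\tau_0$. Off the reflected intervals the curve agrees with $\gamma$ on an open set, so the data agree there. At the interval endpoints $\tau_0=0$ and the curve is circular on both sides, so there is no conflict. Altogether, $\kappa_{\gamma_\varepsilon}=\kappa_0$ and $|\tau_{\gamma_\varepsilon}|=|\tau_0|$ pointwise.

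\textbf{Part (ii), the main obstacle.} This is the knot-type computation. My first approach is to avoid working with $\gamma$ directly and instead go back to the topological seed. Lemma~\ref{lem:5.1}(T5) already computes the knot type of the reflected seed $\gamma_0$ as $\#_iK_i^{\varepsilon_i}$. Lemma~\ref{lem:A.2} (whose statement I am assuming, as it is cited in the proof of Theorem~\ref{thm:5.8}) provides a perturbation chain $\gamma_0\to\widetilde\gamma\to\gamma_1\to\gamma_w$ with three properties: it is $C^0$-small, it is supported inside the balls, and it keeps each tangle inside its private ball, relative to its boundary sphere. I would then run that chain in parallel on the reflected seed. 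Each perturbation commutes with $R$ ball by ball: replace the perturbation $\psi$ on $B_i$ by $R\psi$. The result is that the reflected seed is isotopic to $\gamma_\varepsilon$ through embeddings fixed outside the balls.

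A cleaner alternative is to use the isotopy $\gamma_0\simeq\gamma$ supplied by the chain, which is supported in $\bigcup_i B_i$ and restricts to an isotopy of each tangle rel $\partial B_i$. Conjugating its $i$-th component by $R$ for $i\in S(\varepsilon)$ gives an isotopy from the reflected $\gamma_0$ to $\gamma_\varepsilon$. Combined with (T5), this yields $[\gamma_\varepsilon]=\#_iK_i^{\varepsilon_i}$. It is also consistent with Lemma~\ref{lem:5.10}: mirroring the tangle $(B_i,\gamma\cap B_i)$ of type $K_i$ replaces the summand $K_i$ by $\overline{K_i}$, and $\partial B_i$ remains a decomposing sphere. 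The delicate point to check is that every gate in the chain really is an isotopy rel the ball boundaries, so that tangle types are preserved, which is what the low-order gates of Lemma~\ref{lem:A.2} are meant to guarantee.
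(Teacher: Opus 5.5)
Your proposal is correct and follows the paper's proof: build $\gamma_\varepsilon$ from the reference curve of Theorem~\ref{thm:5.8} by reflecting the tangles in $B_i$ for $i\in S(\varepsilon)$, and get smoothness, embeddedness, unit speed and the pointwise datum from Lemmas~\ref{lem:5.9}--\ref{lem:5.10} applied ball by ball. For (ii) the paper reads the knot type directly off Lemma~\ref{lem:5.10}, while you conjugate the ball-supported isotopy of Lemmas~\ref{lem:A.1} and \ref{lem:A.2}(4) by $R$ to reduce to (T5); this is a slightly more explicit use of the same underlying fact, and either argument works.
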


\begin{proof}
Lemmas~\ref{lem:5.1}--\ref{lem:5.4}, together with Lemma~\ref{lem:5.6}, Proposition~\ref{prop:5.7} and Theorem~\ref{thm:5.8}, produce the reference member $\gamma$.
For each $\varepsilon$ put $S(\varepsilon)=\{i:\varepsilon_i=-1\}$ and reflect the tangles inside
$B_i$ for $i\in S(\varepsilon)$. The balls are pairwise disjoint and the collars lie in $\Pi$, which
$R$ fixes, so the reflections are independent and conflict-free; Lemmas~\ref{lem:5.9}--\ref{lem:5.10} applied ball by ball
give all stated properties.
\end{proof}

Injectivity of $\varepsilon\mapsto\gamma_\varepsilon$ is not part of this statement; it follows from
Theorem~\ref{thm:5.16}(E2), and injectivity of $\varepsilon\mapsto[\gamma_\varepsilon]$ from
Proposition~\ref{prop:5.13}(B1) under the chirality hypothesis below. The immediate consequence needs neither.

\begin{corollary}
\label{cor:5.12}
The unsigned Frenet datum $(\kappa,|\tau|)$ does not determine the knot type.
\hfill$\square$
\end{corollary}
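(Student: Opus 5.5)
The plan is to exhibit two members of the family of Theorem~\ref{thm:5.11} that share the unsigned datum but have knot types that are not equal. Take $m=1$ and $K_1=K$ a chiral knot type, for instance the trefoil, so that $K\neq\overline K$. Theorem~\ref{thm:5.11} then supplies two closed embeddings $\gamma_{+1}$ and $\gamma_{-1}$. They have a common length $L$, they are unit speed, $C^\infty$, and they have $\kappa>0$ everywhere. They are labelled by one arclength parameter. By part (i), the datum is pointwise the same: $\kappa_{\gamma_{\pm1}}=\kappa_0$ and $|\tau_{\gamma_{\pm1}}|=|\tau_0|$. So both curves lie in the same fibre $\mathrm{Fib}(d)$ with $d=(\kappa_0,|\tau_0|)$.

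By part (ii), $[\gamma_{+1}]=K$ and $[\gamma_{-1}]=\overline K$. The trefoil is not amphichiral, which is a classical fact (for example, its signature is nonzero, or its Jones polynomial is not symmetric under $t\mapsto t^{-1}$). Hence $|\mathcal F(d)|\ge2$, and the datum does not single out a knot type.

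One further point makes the statement more than ``up to mirror.'' If one reads ``determine'' as ``determine up to mirroring,'' then a single chiral factor is not enough, since $K$ and $\overline K$ are mirrors. To cover that reading, take $m=2$ with $K_1=K_2=K$ the trefoil. The family then contains $K\#K$ and $K\#\overline K$. These are not mirror images of each other: $K\#\overline K$ is amphichiral, while $K\#K$ is chiral, for instance because its signature is twice that of $K$ and so nonzero. By unique prime decomposition, $K\#K$ is also not equal to $K\#\overline K$. The cited knot-theoretic facts about the trefoil are the only external input.

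The only step that needs care is therefore the knot-theoretic one: certifying chirality through an invariant. All the geometric work has already been done in Theorem~\ref{thm:5.11}.
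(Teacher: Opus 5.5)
Your proposal is correct and is essentially the paper's argument: Corollary~\ref{cor:5.12} is recorded as an immediate consequence of Theorem~\ref{thm:5.11}, obtained by choosing factors for which two members of the family have different knot types. Your $m=2$ granny/square choice (compare Remark~\ref{rem:5.15}) is the right one for the stronger ``not even up to mirror'' reading, since the $m=1$ case only produces the mirror ambiguity that any reflection of a chiral curve already gives (Lemma~\ref{lem:2.7}).
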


\begin{assumptionH}
Each $K_i$ is prime and chiral ($K_i\neq\overline{K_i}$), and the $2m$ knots
$\{K_i,\overline{K_i}\}_{i=1}^m$ are pairwise distinct.
\end{assumptionH}

\begin{proposition}[the family realises $2^m$ types under (H)]
\label{prop:5.13}
Assume (H). Then
\begin{itemize}
\item \textbf{(B1)} the family of Theorem~\ref{thm:5.11} realises exactly $2^m$ distinct knot types;
\item \textbf{(B2)} consequently $\bigl|\mathcal F(\kappa_0,|\tau_0|)\bigr|\ge2^m$;
\item \textbf{(B3)} these $2^m$ types split into exactly $2^{m-1}$ mirror orbits.
\end{itemize}
\end{proposition}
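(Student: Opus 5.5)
The argument is purely knot-theoretic. It rests on unique prime decomposition for unoriented knots (Schubert) together with Theorem~\ref{thm:5.11}(ii), which gives $[\gamma_\varepsilon]=\#_iK_i^{\varepsilon_i}$. Under (H) each $K_i^{\varepsilon_i}$ is prime, since the mirror of a prime knot is prime: mirroring commutes with $\#$ and preserves triviality. So $\#_iK_i^{\varepsilon_i}$ is already a prime decomposition, and its unordered multiset of prime factors $\{K_1^{\varepsilon_1},\dots,K_m^{\varepsilon_m}\}$ is an invariant of the knot type.

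\textbf{(B1).} Suppose $[\gamma_\varepsilon]=[\gamma_{\varepsilon'}]$. By uniqueness of prime decomposition the multisets $\{K_i^{\varepsilon_i}\}$ and $\{K_i^{\varepsilon'_i}\}$ coincide. Hence for each $i$ there is some $j$ with $K_i^{\varepsilon_i}=K_j^{\varepsilon'_j}$. Because the $2m$ knots $K_1,\overline{K_1},\dots,K_m,\overline{K_m}$ are pairwise distinct, this equality forces $j=i$ and $\varepsilon'_i=\varepsilon_i$. Chirality, $K_i\ne\overline{K_i}$, is exactly what rules out $\varepsilon'_i=-\varepsilon_i$ with $j=i$. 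Thus $\varepsilon\mapsto[\gamma_\varepsilon]$ is injective on $\{\pm1\}^m$, and the family realises exactly $2^m$ knot types.

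\textbf{(B2).} This is immediate. Every $\gamma_\varepsilon$ lies in $\mathrm{Fib}(\kappa_0,|\tau_0|)$ by Theorem~\ref{thm:5.11}(i), since it is a $C^\infty$ unit-speed closed embedding with $\kappa>0$ in the common label. So $\mathcal F$ contains these $2^m$ distinct types.

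\textbf{(B3).} Mirroring sends $\#_iK_i^{\varepsilon_i}$ to $\#_iK_i^{-\varepsilon_i}$ (Definition~\ref{def:2.2}), so the set of realised types is closed under mirror, and mirror acts on it as $\varepsilon\mapsto-\varepsilon$. Since $m\ge1$ we have $-\varepsilon\ne\varepsilon$, and by (B1) the two types are then distinct. So the involution has no fixed points, every mirror orbit has exactly two elements, and there are $2^m/2=2^{m-1}$ orbits.

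The only substantive input is Schubert's uniqueness theorem in its unoriented form, which needs care at one point. With unoriented knots, a factor $K_i$ could a priori be confused with its reverse. This causes no trouble: the $K_i$ are themselves unoriented types, and the primes are compared as unoriented types. The genuine obstacle was therefore handled upstream, in showing that $[\gamma_\varepsilon]$ really equals $\#_iK_i^{\varepsilon_i}$ (Lemma~\ref{lem:5.1}(T5) and Lemma~\ref{lem:5.10}). Here we only cite that.
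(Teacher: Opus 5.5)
Your proposal is correct and is essentially the paper's proof: for (B1) you use Schubert's uniqueness of prime decomposition to read off each $\varepsilon_i$ slot by slot under (H), (B2) is the membership of every $\gamma_\varepsilon$ in the fibre, and (B3) is the fixed-point-free involution $\varepsilon\mapsto-\varepsilon$ induced by mirroring. Your remarks that mirrors of primes are prime, and that the multiset of unoriented prime factors is an invariant of the knot type, make explicit points the paper leaves implicit.
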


\begin{proof}
(B1) By uniqueness of Schubert prime decomposition \cite{ref4} the multiset of prime factors of
$\#_iK_i^{\varepsilon_i}$ is $\{K_i^{\varepsilon_i}\}$; under (H) the $2m$ types are pairwise
distinct, so each $\varepsilon_i$ can be read off slot by slot, the map
$\varepsilon\mapsto\{K_i^{\varepsilon_i}\}$ is injective, and $2^m$ distinct types result. (B2) All
these types lie in $\mathcal F$. (B3) $\mathrm{mirror}([\gamma_\varepsilon])=[\gamma_{-\varepsilon}]$
and $-\varepsilon\neq\varepsilon$ always, so each mirror orbit $\{\varepsilon,-\varepsilon\}$ has
exactly two elements, distinct by (H), giving $2^{m-1}$ orbits.
\end{proof}

\begin{example}[an exact ambiguous pair]
\label{ex:5.14}
Under (H) with $m=2$ take $\varepsilon=(+,+)$ and
$\varepsilon'=(+,-)$. Then $[\gamma_\varepsilon]=K_1\#K_2$ and
$[\gamma_{\varepsilon'}]=K_1\#\overline{K_2}$ are \textbf{distinct} by (B1) and \textbf{not mirror images of one
another}, since $\mathrm{mirror}[\gamma_\varepsilon]=[\gamma_{(-,-)}]=\overline{K_1}\#\overline{K_2}
\neq K_1\#\overline{K_2}$ because $K_1\neq\overline{K_1}$. Such a pair --- identical data, distinct
non-mirror knot types --- is called an \textbf{exact ambiguous pair}. It is the seed of every construction
in \S\ref{sec:dichotomy}.
\end{example}

\begin{remark}[degenerations]
\label{rem:5.15}
Amphichiral factors, mutually mirror factors, unknots, or tangles
symmetric with respect to $\Pi$ make the count of knot types collapse, while the construction itself
remains valid. For instance $m=2$ with $K_1=K_2=T$ the trefoil gives the granny $T\#T$, the square
$T\#\overline T$ and the mirror granny $\overline T\#\overline T$, i.e. three types rather than four.
The counts (E1)--(E3) of Theorem~\ref{thm:5.16} are unaffected by this, since they do not refer to knot types.
\end{remark}

\subsection{$|\mathrm{Fib}_{SE}(d_m)|=2^m$ exactly}
\label{sec:5.4}

Proposition~\ref{prop:5.13} supplies a lower bound; Corollary~\ref{cor:4.5} with $c(\tau_0)=m$ from Theorem~\ref{thm:5.8} supplies
the matching upper bound. The following theorem closes them and, more than that, identifies the
fibre completely: the $2^m$ constructed curves are not merely $2^m$ of its members, they are all of
them.

\begin{theorem}[exact fibre]
\label{thm:5.16}
Let $\gamma$ be the reference curve of Theorem~\ref{thm:5.8} with Frenet datum
$(\kappa_0,\tau_0)$, and let $\{\gamma_\varepsilon\}_{\varepsilon\in\{\pm1\}^m}$ be the family of
Theorem~\ref{thm:5.11}, and write $d_m:=(\kappa_0,|\tau_0|)$ for the common unsigned datum. Under the
common-arclength-label convention:
\begin{itemize}
\item \textbf{(E1)} $\bigl|\mathcal L(\tau_0)\bigr|=2^{c(\tau_0)}=2^m$;
\item \textbf{(E2)} $\tau_{\gamma_\varepsilon}=\Phi(\varepsilon)$, where $\mathcal E(\tau_0)$ is identified
  with $\{\pm1\}^m$ via $\varepsilon\mapsto(\varepsilon|_{\Gamma_i})_{i=1}^m$; hence
  $\varepsilon\mapsto\tau_{\gamma_\varepsilon}$ is a \textbf{bijection} $\{\pm1\}^m\to\mathcal L(\tau_0)$;
\item \textbf{(E3)} $\bigl|\mathrm{Fib}_{SE}(d_m)\bigr|=2^m$ and \textbf{exactly one
  member of the family lies in each class}. Equivalently:
  \textbf{every} lift in $\mathcal L(\tau_0)$ is realised by a closed embedding, uniquely up to
  $SE(3)$;
\item \textbf{(E3$'$)} $\bigl|\mathrm{Fib}_{E}(d_m)\bigr|=2^{m-1}$: the $2^m$ classes of (E3) are
  interchanged in pairs $\{\varepsilon,-\varepsilon\}$ by a reflection;
\item \textbf{(E4)} under (H), $\bigl|\mathcal F(d_m)\bigr|=2^m$ and
  $\mathcal F(d_m)=\bigl\{[\#_{i=1}^mK_i^{\varepsilon_i}]:\varepsilon\in\{\pm1\}^m\bigr\}$,
  forming exactly $2^{m-1}$ mirror orbits.
\end{itemize}
\end{theorem}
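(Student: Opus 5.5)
The plan is to close the chain of \eqref{eq:2.1} and Corollary~\ref{cor:4.5} from both sides, using the reference curve of Theorem~\ref{thm:5.8} and the family of Theorem~\ref{thm:5.11}.

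\textbf{(E1).} This is immediate from Corollary~\ref{cor:3.8} together with $c(\tau_0)=m$ from Theorem~\ref{thm:5.8}.

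\textbf{(E2).} By Lemma~\ref{lem:5.10}, $\tau_{\gamma_\varepsilon}$ equals $-\tau_0$ on the reflected intervals $[\alpha_i,\beta_i]$ with $\varepsilon_i=-1$, and equals $\tau_0$ elsewhere. Since $\tau_0\equiv0$ on $[\alpha_i,\beta_i]\setminus\Gamma_i$, we get $\tau_{\gamma_\varepsilon}=\varepsilon_i\tau_0$ on $\Gamma_i$ and $\tau_{\gamma_\varepsilon}=0$ on $Z_\infty(\tau_0)=S^1\setminus\coprod\Gamma_i$. This is exactly $\Phi(\varepsilon)$ for the locally constant sign taking the value $\varepsilon_i$ on $\Gamma_i$. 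The components of $S^1\setminus Z_\infty(\tau_0)$ are precisely the $\Gamma_i$, so $\mathcal E(\tau_0)\cong\{\pm1\}^m$, and bijectivity follows from Theorem~\ref{thm:3.7}.

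\textbf{(E3).} Distinct $\varepsilon$ give distinct signed torsions by (E2), since each $\Gamma_i$ carries a non-zero value of $\tau_0$. The signed torsion is an $SE(3)$-invariant: it is unchanged pointwise under $SE(3)$ in the common label, because Lemma~\ref{lem:2.7} gives a sign change only for $\det=-1$. Hence the $\gamma_\varepsilon$ lie in $2^m$ pairwise distinct $SE(3)$-classes, and $|\mathrm{Fib}_{SE}(d_m)|\ge2^m$. Corollary~\ref{cor:4.5} gives the matching upper bound $2^{c(\tau_0)}=2^m$. Hence every class contains exactly one member, and by Lemma~\ref{lem:2.10} each lift in $\mathcal L(\tau_0)$ is realised by exactly one $SE(3)$-class.

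\textbf{(E3$'$).} For a reflection $R$, the curve $R\gamma_\varepsilon$ has torsion $-\tau_{\gamma_\varepsilon}=\Phi(-\varepsilon)$. By (E3) it is therefore $SE(3)$-congruent to $\gamma_{-\varepsilon}$. Since $-\varepsilon\neq\varepsilon$, each $E(3)$-orbit consists of exactly the two classes $\{\varepsilon,-\varepsilon\}$, which gives $2^m/2=2^{m-1}$ orbits.

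\textbf{(E4).} Knot types are $SE(3)$-invariant, so by (E3) we have $\mathcal F(d_m)=\{[\gamma_\varepsilon]\}$. Theorem~\ref{thm:5.11}(ii) identifies these types, and Proposition~\ref{prop:5.13} gives the count $2^m$ and the $2^{m-1}$ mirror orbits.

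The only delicate point is the step in (E3) asserting that distinct $\varepsilon$ give non-congruent curves; this is where the signed torsion must be used as an $SE(3)$-invariant in the fixed label, rather than the knot type. It also needs each $\Gamma_i$ to carry a non-zero value of $\tau_0$, which holds because $Z_\infty\cap\Gamma_i=\varnothing$.
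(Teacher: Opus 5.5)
Your proposal is correct and follows the paper's proof essentially step by step. (E1), (E2) and (E3$'$) coincide with the paper's arguments, and (E4) reads the knot types off (E3) and Proposition~\ref{prop:5.13}. The one cosmetic difference is in (E3): you squeeze $|\mathrm{Fib}_{SE}(d_m)|$ between the $2^m$ pairwise distinct classes of the $\gamma_\varepsilon$ and the upper bound of Corollary~\ref{cor:4.5}, whereas the paper writes down the class-to-torsion map $\Theta$ and shows it is injective (Lemma~\ref{lem:2.10}) and surjective (by (E2)). This is the same content, since Corollary~\ref{cor:4.5} is itself proved by exactly that injectivity.
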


Statements (E1)--(E3) do not use (H); only (E4) does.

\begin{proof}
\textbf{(E1)} Corollary~\ref{cor:3.8} together with Theorem~\ref{thm:5.8}.

\textbf{(E2)} By Lemma~\ref{lem:5.10}, $\gamma_\varepsilon$ has the same pointwise $\kappa$ as $\gamma$, and
$\tau_{\gamma_\varepsilon}=-\tau_0$ on the reflected intervals
$\bigcup_{i\in S(\varepsilon)}[\alpha_i,\beta_i]$ and $=\tau_0$ elsewhere. By Convention~\ref{conv:5.2}(P2),
$\Gamma_i\subset(\alpha_i,\beta_i)$, so on $\Gamma_i$ we have
$\tau_{\gamma_\varepsilon}=\varepsilon_i\tau_0$; on $S^1\setminus\coprod\Gamma_i$ Theorem~\ref{thm:5.8} gives
$\tau_0=0$, so both sides vanish. Pointwise equality is exactly
$\tau_{\gamma_\varepsilon}=\Phi(\varepsilon)$, and bijectivity follows from Theorem~\ref{thm:3.7} and the
identification above.

\textbf{(E3)} Let $\widetilde\gamma$ be a curve of the class of Definition~\ref{def:2.1} sharing the arclength label
with $\gamma$ and satisfying $\widetilde\kappa=\kappa_0$, $|\widetilde\tau|=|\tau_0|$ pointwise. Then
$\widetilde\tau\in\mathcal L(\tau_0)$, and by Lemma~\ref{lem:2.10} the function $\widetilde\tau$ determines
$\widetilde\gamma$ up to $SE(3)$. Hence
\[
\Theta:\{SE(3)\text{-classes in the fibre}\}\longrightarrow\mathcal L(\tau_0),\qquad
[\widetilde\gamma]_{SE(3)}\longmapsto\widetilde\tau
\]
is well defined, since $SE(3)$ preserves $\tau$ pointwise, and injective. By (E2) the torsions of the
$\gamma_\varepsilon$ already exhaust all $2^m$ elements of $\mathcal L(\tau_0)$, so $\Theta$ is
surjective, hence bijective, and the number of classes is $2^m$. Distinct $\varepsilon$ give distinct
$\tau_{\gamma_\varepsilon}$, so the $\gamma_\varepsilon$ lie in pairwise distinct classes; $2^m$
members distributed among $2^m$ classes, pairwise distinct, means exactly one member per class.

\textbf{(E3$'$)} Let $R_0$ be any reflection. By Lemma~\ref{lem:2.7},
$\tau_{R_0\gamma_\varepsilon}=-\tau_{\gamma_\varepsilon}=-\Phi(\varepsilon)=\Phi(-\varepsilon)
=\tau_{\gamma_{-\varepsilon}}$, while the curvatures agree and $R_0$ preserves the arclength label;
so Lemma~\ref{lem:2.10} places $R_0\gamma_\varepsilon$ in the $SE(3)$-class of $\gamma_{-\varepsilon}$.
Since $-\varepsilon\neq\varepsilon$ always, the $2^m$ classes of (E3) are permuted by the reflection
in $2^{m-1}$ orbits of size two, and each $E(3)$-class is the union of one such pair.

\textbf{(E4)} Lower bound: Proposition~\ref{prop:5.13}(B1),(B2). Upper bound: the knot type is an $SE(3)$-invariant,
so $|\mathcal F|\le\#\{SE(3)\text{-classes}\}=2^m$. Equality of the two bounds forces the fibre to
contain no knot type outside the listed set, and Proposition~\ref{prop:5.13}(B3) counts the mirror orbits.
\end{proof}

\begin{remark}[{what the exact collars of Lemma~\ref{lem:5.1} bought}]
\label{rem:5.17}
In the proof of (E2) the reflected interval
$[\alpha_i,\beta_i]$ is strictly \textbf{larger} than the core $\Gamma_i$, so the two sets on which the
sign is flipped and on which $\tau_0$ is allowed to be non-zero do not coincide. The identity at the
level of $\tau$ survives only because their difference lies in the collar, where $\tau_0\equiv0$.
That in turn is exactly what (T1) of Lemma~\ref{lem:5.1} provides: $\gamma_0$ \emph{equals} the base
circle on $B_i\setminus\operatorname{int}B_i'$, so the collar is planar and circular on the nose. Had
the knotted arc merely been isotopic to a circular arc near $\partial B_i$, the collar would have
been only approximately planar and the sign flip would have polluted a region where
$\tau_0\neq0$.
\end{remark}

\begin{remark}[the counts do not imply one another]
\label{rem:5.18}
\leavevmode
\begin{enumerate}
\item ``Exactly one member per class'' in (E3) means that the \textbf{constructed family} provides exactly
   one representative of each $SE(3)$ class; an $SE(3)$ class is a whole orbit and of course contains
   infinitely many curves. Likewise $|\mathcal F(d_m)|=2^m$ in (E4) is an equality of \textbf{knot
   types}, not of curves, and the injective direction of $\Theta$ runs from $SE(3)$ classes to lifts,
   so nothing may be inferred backwards from equality of knot types.
\item All the counts equal $2^m$ or $2^{m-1}$ here, but for different reasons. $|\mathcal L(\tau_0)|$,
   $|\mathrm{Fib}_{SE}|$ and $|\mathrm{Fib}_{E}|$ are independent of (H); $|\mathcal F|$ is not.
   Without (H) one still has $|\mathrm{Fib}_{SE}(d_m)|=2^m$, while $|\mathcal F(d_m)|$ may be
   strictly smaller --- for instance if some $K_i$ is amphichiral, so that $K_i^{+1}=K_i^{-1}$
   (Remark~\ref{rem:5.15}).
\item Theorem~\ref{thm:5.16} is a statement about the construction of this section. Corollary~\ref{cor:4.5} bounds the fibre
   for \textbf{every} reference curve; computing $c(\tau)$ for a general $\tau$ is a different question,
   raised in \S\ref{sec:questions}.
\end{enumerate}
\end{remark}

\begin{remark}[the closing and embeddedness constraints discard no lift here]
\label{rem:5.19}
In general, given
periodic signed data $(\kappa,g)$, the curve obtained by integrating the Frenet system need not close
up, and a closed one need not be embedded. Grinevich--Schmidt \cite{ref13} give a necessary and sufficient
condition, in terms of periodic $(\kappa,\tau)$, for the reconstructed curve to close, and that
condition can rule out individual sign choices, pushing the bound of Corollary~\ref{cor:4.5} strictly below
$2^{c(\tau)}$ in some examples. Theorem~\ref{thm:5.16}(E3) shows that \textbf{no such reduction occurs for the
construction of this section}: every lift is realised by an explicit locally mirrored member, so the
second inequality of Corollary~\ref{cor:4.5} is an equality here.
\end{remark}

\begin{proposition}[one knot type, two representatives]
\label{prop:5.20}
Let $K=K_1\#K_2$ with $K_1,K_2$ prime and
chiral and $\{K_1,\overline{K_1}\},\{K_2,\overline{K_2}\}$ pairwise distinct, i.e. (H) with $m=2$.
Then $K$ has \textbf{both}
\begin{itemize}
\item \textbf{(i)} a rigid representative, by Corollary~\ref{cor:4.6}; and
\item \textbf{(ii)} a flexible representative, namely the member $\gamma_{(+,+)}$ of Theorem~\ref{thm:5.11}, whose knot
  type is exactly $K$, while $\gamma_{(+,-)}$ shares its pointwise datum and has knot type
  $K_1\#\overline{K_2}$, distinct from $K$ and not its mirror by Example~\ref{ex:5.14}. \hfill$\square$
\end{itemize}
\end{proposition}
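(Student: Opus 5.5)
The plan is to assemble the two parts of Proposition~\ref{prop:5.20} from results already proved, since each is a direct specialisation.

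For (i), I would apply Corollary~\ref{cor:4.6} to the knot type $K=K_1\#K_2$. This produces a $C^\infty$ unit-speed embedding $\gamma_K$ with $\kappa>0$, $[\gamma_K]=K$, and constant torsion $c\neq0$. Then $Z_\infty(\tau)=\varnothing$, so $c(\tau)=1$, and Theorem~\ref{thm:4.1} places every competitor sharing the label and the datum in the $E(3)$-orbit of $\gamma_K$. Its knot type therefore lies in $\{K,\overline K\}$. The only point to record is that $K$ is non-trivial (a sum of two non-trivial primes), so the non-trivial branch of that corollary is the one used; the trivial branch would also work.

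For (ii), I would invoke Theorem~\ref{thm:5.11} with $m=2$ and the given $K_1,K_2$. This gives the family $\{\gamma_\varepsilon\}_{\varepsilon\in\{\pm1\}^2}$, all sharing pointwise the datum $(\kappa_0,|\tau_0|)$ in one arclength label, with $[\gamma_\varepsilon]=K_1^{\varepsilon_1}\#K_2^{\varepsilon_2}$. In particular $[\gamma_{(+,+)}]=K$ and $[\gamma_{(+,-)}]=K_1\#\overline{K_2}$.

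Because (H) holds with $m=2$, Example~\ref{ex:5.14} (resting on Proposition~\ref{prop:5.13}(B1) and Schubert uniqueness) shows that $K_1\#\overline{K_2}\neq K$ and $K_1\#\overline{K_2}\neq\overline K=\overline{K_1}\#\overline{K_2}$. So the datum of $\gamma_{(+,+)}$ does not determine its knot type even up to mirroring. This is exactly what flexibility means here, and it contrasts with the conclusion of (i).

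There is no real obstacle; the only care needed is bookkeeping. Both representatives must be of the class of Definition~\ref{def:2.1}, which both cited results guarantee. The notion of ``rigid'' in (i) is the $E(3)$-determination of Corollary~\ref{cor:4.6}, and ``flexible'' in (ii) is the existence of a non-congruent, non-mirror competitor with the same pointwise datum.
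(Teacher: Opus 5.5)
Your proposal is correct and matches the paper's argument: part (i) is Corollary~\ref{cor:4.6} applied to $K=K_1\#K_2$, and part (ii) is Theorem~\ref{thm:5.11} with $m=2$ read at $\varepsilon=(+,+)$ and $(+,-)$, with Example~\ref{ex:5.14} showing under (H) that the two knot types are distinct and not mirror images. The paper gives no argument beyond these citations, so the checks you list (non-triviality of $K$, and both curves lying in the class of Definition~\ref{def:2.1}) are all that is needed.
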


So whether the unsigned datum determines the knot type is a property of the chosen representative and
not of the knot type --- at least for composite types of this form. The analogous question for prime
knot types is raised in \S\ref{sec:questions}.

%% file: sections/06-genericity.tex
\section{Genericity: simple torsion zeros, and rigidity as the rule}
\label{sec:genericity}

Sections~\ref{sec:rigidity} and \ref{sec:flexibility} exhibited both extremes of the branch dichotomy.
This section shows that the rigid extreme is the typical one: having only simple torsion zeros is an
open and dense condition on the space of parametrised embeddings. For a $C^r$ curve, simple torsion
zeros already imply the finite-regularity rigidity proved in Corollary~\ref{cor:6.10} below; when the
curve is in addition $C^\infty$, they imply $Z_\infty(\tau)=\varnothing$ and hence $c(\tau)=1$, so
that Theorem~\ref{thm:4.1} applies as well. The distinction matters, because $Z_\infty$ and the
branch invariant are defined only in the smooth category, whereas $\mathcal E^r$ is not.
The argument is $3$-jet transversality, with two points requiring care.
Density is obtained from the $C^\infty$ transversality theorem together with mollification, so that no
finite-regularity version is needed; and openness has to be proved separately rather than quoted,
because the relevant jet stratum is \textbf{not closed}.

\begin{definition}
\label{def:6.1}
Fix an integer $r\ge4$ and set
\[
\mathcal E^r:=\{\gamma\in C^r(S^1,\mathbb R^3):\ \gamma'\neq0,\ \gamma\ \text{injective},\
\gamma'\times\gamma''\neq0\},
\]
with $\|\gamma\|_{C^r}=\max_{0\le i\le r}\sup|\gamma^{(i)}|$. Since $S^1$ is compact,
$C^r(S^1,\mathbb R^3)$ is a Banach space, in particular a Baire space, and the strong and weak
Whitney topologies coincide. Curves in $\mathcal E^r$ are \textbf{in general not unit speed}. Put
\[
D_\gamma(t):=\det\bigl(\gamma'(t),\gamma''(t),\gamma'''(t)\bigr),\qquad
\mathcal G^r:=\{\gamma\in\mathcal E^r:\ \text{all zeros of }D_\gamma\text{ are simple}\}.
\]
\end{definition}

The threshold $r\ge4$ is imposed for three independent reasons, none of them the regularity needed in
the jet transversality theorem: $D_\gamma\in C^{r-3}$ must be at least $C^1$ for ``simple zero'' to
mean anything; the openness proof needs continuity of $\gamma\mapsto D_\gamma$ from $C^r$ to $C^1$;
and the competitor-side bookkeeping of Remark~\ref{rem:4.2} with $k=1$ needs $C^4$.

\begin{lemma}
\label{lem:6.2}
$\mathcal E^r$ is open in $C^r(S^1,\mathbb R^3)$.
\end{lemma}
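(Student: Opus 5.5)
The plan is to write $\mathcal E^r$ as the intersection of three subsets of the Banach space $C^r(S^1,\mathbb R^3)$, namely the regular curves $\{\gamma'\neq0\}$, the curves with $\gamma'\times\gamma''\neq0$, and the injective curves, and to show that each is open. A finite intersection of open sets is open. The first two conditions are handled uniformly. The maps $\gamma\mapsto\gamma'$ and $\gamma\mapsto\gamma'\times\gamma''$ are continuous from $C^r$ (indeed from $C^2$) to $C^0(S^1,\mathbb R^3)$; the second map is bilinear and bounded in the sup norms of $\gamma',\gamma''$. Since $S^1$ is compact, a nowhere-vanishing continuous function $v$ satisfies $\min|v|=:m>0$. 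Any $w$ with $\|w-v\|_{C^0}<m$ is then nowhere zero, so both sets are preimages of an open set under a continuous map. Note also that $\gamma'\times\gamma''\neq0$ already forces $\gamma'\neq0$, so the first condition is in fact redundant.

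The main step is openness of injectivity among regular curves. Fix $\gamma\in\mathcal E^r$ and set $v_0:=\min|\gamma'|>0$. The argument splits according to whether the two parameters are near or far from each other.

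\textbf{Local injectivity.} Take $\tilde\gamma$ with $\|\tilde\gamma-\gamma\|_{C^1}<\epsilon$. Because $\gamma'$ is uniformly continuous, there is $\delta>0$ with $|\gamma'(t)-\gamma'(s)|<v_0/4$ whenever $|t-s|\le\delta$. Then for $0<|t-s|\le\delta$ we have
\[
\langle\tilde\gamma(t)-\tilde\gamma(s),\gamma'(s)\rangle=\int_s^t\langle\tilde\gamma'(u),\gamma'(s)\rangle\,du\ \ge\ |t-s|\,v_0\bigl(v_0-v_0/4-\epsilon\bigr)>0
\]
as soon as $\epsilon<v_0/2$, with the inequality oriented so that $t>s$ locally. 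Hence $\tilde\gamma(t)\neq\tilde\gamma(s)$ for such pairs.

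\textbf{Separation of far pairs.} The set $\{(s,t):|t-s|\ge\delta\}\subset S^1\times S^1$ is compact. On it the continuous function $|\gamma(t)-\gamma(s)|$ is positive by injectivity of $\gamma$, so it has a positive minimum $\mu$. If $\|\tilde\gamma-\gamma\|_{C^0}<\mu/2$, the triangle inequality keeps $\tilde\gamma(t)\neq\tilde\gamma(s)$ on this set.

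Taking $\epsilon<\min(v_0/2,\mu/2)$ makes the whole $C^1$-ball, and a fortiori the $C^r$-ball, injective. The only delicate point is that $\delta$, and then $\mu$, must be chosen from $\gamma$ alone, before $\epsilon$. The order of quantifiers above respects this, so I expect no real obstacle beyond this bookkeeping.
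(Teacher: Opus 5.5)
Your proof is correct and follows the same route as the paper. The two non-vanishing conditions are open by compactness of $S^1$ together with continuity of $\gamma\mapsto\gamma'$ and $\gamma\mapsto\gamma'\times\gamma''$ into $C^0$, and injectivity is open because embeddings of a compact domain form a $C^1$-open set. The only difference is that the paper quotes this last fact, whereas you prove it directly by the standard near-pair/far-pair argument, and you carry that argument out correctly, choosing $\delta$ and $\mu$ from $\gamma$ before $\epsilon$.
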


\begin{proof}
$t\mapsto|\gamma'\times\gamma''|$ is continuous with a positive lower bound $c_0$ on the
compact $S^1$, and $\gamma\mapsto\gamma'\times\gamma''$ is continuous from $C^2$ to $C^0$, so small
$C^r$ perturbations keep the bound $c_0/2>0$; similarly for $\gamma'\neq0$; and embeddings form a
$C^1$-open set on a compact domain.
\end{proof}

\begin{lemma}[torsion and $D_\gamma$ differ by a positive factor]
\label{lem:6.3}
On $\mathcal E^r$ one has $\tau_\gamma=D_\gamma h_\gamma$ with
$h_\gamma=|\gamma'\times\gamma''|^{-2}\in C^{r-2}$ and $h_\gamma>0$. Hence
$\tau_\gamma^{-1}(0)=D_\gamma^{-1}(0)$, and at a zero $\tau_\gamma'=D_\gamma'h_\gamma$: \textbf{a zero
of $\tau_\gamma$ is simple if and only if the corresponding zero of $D_\gamma$ is simple}.
\hfill$\square$
\end{lemma}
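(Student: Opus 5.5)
The plan is to start from the parametrisation-free formula recalled in the second proof of Lemma~\ref{lem:2.8},
\[
\tau_\gamma=\frac{\langle\gamma'\times\gamma'',\gamma'''\rangle}{|\gamma'\times\gamma''|^2},
\]
valid for every regular curve with $\gamma'\times\gamma''\neq0$, which is exactly the standing condition on $\mathcal E^r$. The numerator is the scalar triple product, that is $D_\gamma=\det(\gamma',\gamma'',\gamma''')$. Hence $\tau_\gamma=D_\gamma h_\gamma$ with $h_\gamma:=|\gamma'\times\gamma''|^{-2}$.

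For the regularity claims, since $\gamma\in C^r$ we have $\gamma'\times\gamma''\in C^{r-2}$. Its squared norm is therefore $C^{r-2}$ and, by Definition~\ref{def:6.1}, never vanishes. Composing with $x\mapsto x^{-1}$, which is smooth on $(0,\infty)$, gives $h_\gamma\in C^{r-2}$ and $h_\gamma>0$. In the same way $D_\gamma\in C^{r-3}$, which is at least $C^1$ because $r\ge4$, so $\tau_\gamma=D_\gamma h_\gamma\in C^{r-3}$ is $C^1$ as well. Positivity of $h_\gamma$ then immediately gives $\tau_\gamma^{-1}(0)=D_\gamma^{-1}(0)$.

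For simplicity of zeros, I would differentiate the product: $\tau_\gamma'=D_\gamma'h_\gamma+D_\gamma h_\gamma'$. This is legitimate since both factors are $C^1$ ($r-2\ge2$). At a zero, the second term vanishes, leaving $\tau_\gamma'=D_\gamma'h_\gamma$. Because $h_\gamma>0$, we get $\tau_\gamma'\neq0$ if and only if $D_\gamma'\neq0$, so a zero of $\tau_\gamma$ is simple exactly when the corresponding zero of $D_\gamma$ is.

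I do not expect a real obstacle here. The only point needing care is that the curves are not unit speed, so the argument must use the parametrisation-free formula rather than the Frenet equations, and must track the regularity counts $C^{r-2}$ and $C^{r-3}$. It is also worth noting that "simple zero" is defined through $D_\gamma$ in Definition~\ref{def:6.1}, so the lemma is precisely what makes $\mathcal G^r$ a statement about torsion.
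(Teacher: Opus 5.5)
Your proposal is correct and is essentially the argument the paper intends: Lemma~\ref{lem:6.3} is stated without proof as an immediate consequence of the parametrisation-free torsion formula in Proof~2 of Lemma~\ref{lem:2.8}. Your additional bookkeeping is also right: the triple product equals $D_\gamma$, $h_\gamma\in C^{r-2}$ with $h_\gamma>0$, $D_\gamma\in C^{r-3}\subseteq C^1$, and the product rule at a zero gives $\tau_\gamma'=D_\gamma'h_\gamma$.
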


This is the lemma that allows the whole section --- and \S\ref{sec:5.2} --- to work with the polynomial
quantity $D_\gamma$, which is a function of the jet, rather than with $\tau$, which is not.

\begin{lemma}[the $3$-jet zero set is a smooth hypersurface]
\label{lem:6.4}
In
$J^3(S^1,\mathbb R^3)\cong S^1\times\mathbb R^3_x\times\mathbb R^3_v\times\mathbb R^3_a\times\mathbb R^3_j$
put $U:=\{v\times a\neq0\}$, $\Delta(t,x,v,a,j):=\det(v,a,j)$ and $\Sigma:=\Delta^{-1}(0)\cap U$.
Then $\Sigma$ is a smooth codimension-$1$ submanifold of $U$.
\end{lemma}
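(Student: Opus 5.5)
The plan is to apply the regular value theorem to $\Delta$ on the open set $U$. First, $U$ is open in $J^3(S^1,\mathbb R^3)$, because $v\times a$ is continuous in the jet coordinates. Second, $\Delta(t,x,v,a,j)=\det(v,a,j)=\langle v\times a,j\rangle$ is a polynomial in the fibre coordinates and independent of $(t,x)$, so it is smooth on $U$. It therefore suffices to show that $0$ is a regular value of $\Delta|_U$, i.e. that $d\Delta\neq0$ at every point of $\Sigma$. We will in fact show $d\Delta\neq0$ at every point of $U$.

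For this, we differentiate only in the $j$-directions. Since $\Delta$ is linear in $j$, we have $\partial_j\Delta=v\times a$. This vector is non-zero precisely on $U$, by definition of $U$. Hence the differential of $\Delta$ restricted to the $\mathbb R^3_j$ summand is already surjective onto $\mathbb R$ at every point of $U$, and in particular at every point of $\Sigma$.

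The regular value theorem then gives that $\Sigma=\Delta^{-1}(0)\cap U$ is a smooth embedded submanifold of $U$ of codimension $1$. It is non-empty, for example any $j\in\operatorname{span}(v,a)$ lies in it, though non-emptiness is not needed. This is the $3$-jet analogue of the computation in Proposition~\ref{prop:5.7}(i): there are now only a single row $(v\times a)^{\mathsf T}$, and degeneracy could occur only at $\kappa=0$, which $U$ excludes.

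There is no real obstacle here. The only point needing care is to note explicitly why the restriction to $U$ is necessary. Without it, at points where $v\times a=0$ the gradient of $\Delta$ in $j$ vanishes. The gradients in $v$ and $a$, which are $a\times j$ and $j\times v$, can vanish simultaneously there as well (for example at $v=a=0$). So $\Delta^{-1}(0)$ is genuinely singular off $U$. This is also why the stratum is not closed in the full jet space, the point flagged before Definition~\ref{def:6.1} as forcing a separate openness argument.
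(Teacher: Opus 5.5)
Your proof is correct and is the paper's argument: $\Delta=\langle v\times a,j\rangle$ is linear in $j$, so $\nabla_j\Delta=v\times a\neq0$ on $U$, and $0$ is a regular value of $\Delta|_U$. Your closing remark on why the restriction to $U$ is needed also agrees with the paper's observation that $\Sigma$ is not closed in $J^3$, which is why openness of $\mathcal G^r$ is proved separately in Proposition~\ref{prop:6.6}.
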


\begin{proof}
$\Delta=\langle v\times a,\ j\rangle$ is linear in $j$, so $\nabla_j\Delta=v\times a\neq0$
on $U$ and $0$ is a regular value of $\Delta|_U$.
\end{proof}

Degeneracy of the gradient can occur only where $v\times a=0$, i.e. $\kappa=0$, and that locus is
excluded from $U$: this is the structural role of the standing hypothesis $\kappa>0$, exactly as in
Proposition~\ref{prop:5.7}(i). Note also that $\Sigma$ is \textbf{not closed} in $J^3$, since its
closure may meet $\{v\times a=0\}$. We therefore do not invoke the version of the transversality
theorem that concludes openness from closedness of the stratum; openness is proved separately in
Proposition~\ref{prop:6.6}.

\begin{lemma}[transversality $\iff$ simple zeros]
\label{lem:6.5}
For $\gamma\in\mathcal E^r$: $j^3\gamma\pitchfork\Sigma$ if and only if $0$ is a regular value of
$D_\gamma$.
\end{lemma}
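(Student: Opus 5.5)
The plan is to unwind the definition of transversality of the section $j^3\gamma\colon S^1\to J^3$ to the hypersurface $\Sigma$ of Lemma~\ref{lem:6.4}, and to compare the resulting condition with simplicity of the zeros of $D_\gamma$. Since $\gamma\in\mathcal E^r$ has $\gamma'\times\gamma''\neq0$ everywhere, the image of $j^3\gamma$ lies entirely in the open set $U$, and $(j^3\gamma)^{-1}(\Sigma)=\{t:\det(\gamma'(t),\gamma''(t),\gamma'''(t))=0\}=D_\gamma^{-1}(0)$. Thus both conditions concern the same finite-or-not set of parameters, and at parameters off this set both hold vacuously: transversality because the curve does not meet $\Sigma$, regularity of $0$ because $t$ is not in the preimage.

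At a point $t_0$ with $D_\gamma(t_0)=0$, I will use that $\Sigma=\Delta^{-1}(0)\cap U$ with $0$ a regular value of $\Delta|_U$ (Lemma~\ref{lem:6.4}). Hence $T_{j^3\gamma(t_0)}\Sigma=\ker d\Delta$, a hyperplane. Transversality at $t_0$ means $d(j^3\gamma)(T_{t_0}S^1)+\ker d\Delta$ is the whole tangent space. Because $\ker d\Delta$ has codimension one, this holds iff $d\Delta\bigl(d(j^3\gamma)(\partial_t)\bigr)\neq0$. By the chain rule this quantity is $(\Delta\circ j^3\gamma)'(t_0)=D_\gamma'(t_0)$, since $\Delta\circ j^3\gamma=D_\gamma$ identically. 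Therefore transversality at $t_0$ is equivalent to $D_\gamma'(t_0)\neq0$, i.e.\ to $t_0$ being a regular point of $D_\gamma$.

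The only point needing care is regularity. $j^3\gamma$ is merely $C^{r-3}$, but $r\ge4$ makes it $C^1$, so its differential and the chain rule make sense. Likewise $D_\gamma\in C^{r-3}\subset C^1$, so ``regular value'' is meaningful. Transversality is taken in the $C^1$ sense, which is all the definition requires. I expect no real obstacle here. The one thing to state explicitly is the identity $\Delta\circ j^3\gamma=D_\gamma$ together with the fact that the image of $j^3\gamma$ stays in $U$, so that Lemma~\ref{lem:6.4} applies along the whole curve. Combined with Lemma~\ref{lem:6.3}, this also gives the equivalence with simplicity of the torsion zeros, which is the form used for $\mathcal G^r$.
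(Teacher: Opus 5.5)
Your proposal is correct and follows essentially the same route as the paper: the image of $j^3\gamma$ lies in $U$ and $\Delta\circ j^3\gamma=D_\gamma$, and since $T\Sigma=\ker d\Delta$ is a hyperplane, the chain rule reduces transversality at a point of the preimage to $D_\gamma'(t)\neq0$. Your extra remarks, on the vacuous case off $D_\gamma^{-1}(0)$ and on $j^3\gamma\in C^{r-3}\subset C^1$, only make explicit what the paper's one-line argument leaves implicit.
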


\begin{proof}
For $\gamma\in\mathcal E^r$ we have $j^3\gamma(S^1)\subset U$ and
$\Delta\circ j^3\gamma=D_\gamma$. If $0$ is a regular value of $\Delta|_U$, then $g\pitchfork\Sigma$
at $t$ iff $dg_t(T_tS^1)+T\Sigma=TU$ iff $d\Delta\bigl(dg_t(T_tS^1)\bigr)\neq0$ iff
$(\Delta\circ g)'(t)\neq0$, using $T\Sigma=\ker d\Delta$.
\end{proof}

\begin{proposition}[openness, including the zero-free branch]
\label{prop:6.6}
Let $r\ge4$. Then $\mathcal G^r$ is open in $\mathcal E^r$.
\end{proposition}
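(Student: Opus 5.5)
Fix $\gamma\in\mathcal G^r$. We show that a $C^r$-neighbourhood of $\gamma$ in $\mathcal E^r$ lies in $\mathcal G^r$. The whole argument runs on the scalar function $D_\gamma$ and not on the jet stratum $\Sigma$, because $\Sigma$ is not closed.

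\textbf{Reduction.} By Definition~\ref{def:6.1}, $\gamma\in\mathcal G^r$ means that every zero of $D_\gamma$ is simple. Equivalently, $D_\gamma^2+(D_\gamma')^2>0$ everywhere on $S^1$. Since $r\ge4$, we have $D_\gamma\in C^{r-3}\subset C^1$, so this function is continuous on the compact circle. Hence it has a positive minimum, say
\[
D_\gamma^2+(D_\gamma')^2\ \ge\ 2\mu^2\quad\text{on }S^1 .
\]
The zero-free branch $Z(D_\gamma)=\varnothing$ is covered by the same inequality: there $|D_\gamma|\ge\sqrt2\,\mu$ throughout. So no case split is needed.

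\textbf{Continuity of $\gamma\mapsto D_\gamma$.} Next we show that this map is continuous from $C^r$ to $C^1$, and in fact locally Lipschitz. We have $D_\gamma=\det(\gamma',\gamma'',\gamma''')$ and
\[
D_\gamma'=\det(\gamma',\gamma'',\gamma'''')+\det(\gamma',\gamma''',\gamma''').
\]
The second term vanishes because it has a repeated column, so $D_\gamma'=\det(\gamma',\gamma'',\gamma'''')$. Both determinants are trilinear in derivatives of order at most $4\le r$. Writing $\widetilde\gamma=\gamma+h$ and expanding by multilinearity gives
\[
\|D_{\widetilde\gamma}-D_\gamma\|_{C^1}\ \le\ C\bigl(\|\gamma\|_{C^4}+\|h\|_{C^4}\bigr)^2\,\|h\|_{C^4}\ \le\ C'\|h\|_{C^r}
\]
for $\|h\|_{C^r}\le1$.

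\textbf{Conclusion.} Choose $\epsilon>0$ so small that $\|h\|_{C^r}<\epsilon$ forces $\widetilde\gamma\in\mathcal E^r$ (Lemma~\ref{lem:6.2}) and
\[
\|D_{\widetilde\gamma}-D_\gamma\|_{C^0}+\|D'_{\widetilde\gamma}-D'_\gamma\|_{C^0}<\mu(\sqrt2-1).
\]
Then at every $t$,
\[
\sqrt{D_{\widetilde\gamma}^2+(D_{\widetilde\gamma}')^2}\ \ge\ \sqrt{D_\gamma^2+(D'_\gamma)^2}-\bigl|(D_{\widetilde\gamma}-D_\gamma,\ D'_{\widetilde\gamma}-D'_\gamma)\bigr|\ >\ \sqrt2\,\mu-\mu(\sqrt2-1)=\mu>0,
\]
by the triangle inequality in $\mathbb R^2$. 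Hence every zero of $D_{\widetilde\gamma}$ is simple, and $\widetilde\gamma\in\mathcal G^r$.

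If one prefers the torsion formulation, Lemma~\ref{lem:6.3} transfers the statement from $D$ to $\tau$. This step is optional, since $\mathcal G^r$ is defined through $D$.

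\textbf{Main care point.} The only subtle step is avoiding the closedness of the stratum. We never use $\Sigma$. The compactness of $S^1$ applied to the scalar quantity $D^2+(D')^2$ replaces the transversality openness theorem. The only other thing to verify is the formula for $D'$, which is where the threshold $r\ge4$ enters.
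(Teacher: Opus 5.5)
Your proof is correct, but it is organised differently from the paper's. The paper splits into two cases. In Case A, $D_{\gamma_0}$ has no zeros and a $C^0$-small perturbation suffices. In Case B, the finitely many zeros $t_1,\dots,t_n$ are enclosed in disjoint closed arcs $I_i$ with $|D_{\gamma_0}'|\ge c$ there, and $|D_{\gamma_0}|\ge\delta$ on the complement $K$. A $C^1$-small perturbation then keeps $|D_\gamma|\ge\delta/2$ on $K$ and makes $D_\gamma$ strictly monotone on each $I_i$, so each arc carries at most one zero, necessarily simple.

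You instead note that ``all zeros simple'' is the pointwise inequality $D_\gamma^2+(D_\gamma')^2>0$. You take its positive minimum on the compact circle and propagate it using the $C^1$-continuity of $\gamma\mapsto D_\gamma$. This removes the case distinction entirely, so the paper's remark that Case B ``needs at least one zero to anchor its arcs'' becomes unnecessary. It also gives an explicit uniform margin $\mu$ on the whole neighbourhood. Your argument is exactly the qualitative form of the reasoning the paper later applies to $\tau$ through $\Delta$, in Lemma~\ref{lem:8.19} and Theorem~\ref{thm:8.20}.

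What the paper's localisation gives in return is positional information: every zero of the perturbed $D_\gamma$ lies in some $I_i$, with at most one per arc. With one further line (the sign of $D_{\gamma_0}$ at the endpoints of $I_i$ persists), this shows the number of zeros is locally constant. Your single inequality does not record that.

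One small slip: the derivative of $\det(\gamma',\gamma'',\gamma''')$ has three terms, and you omitted $\det(\gamma'',\gamma'',\gamma''')$. It also has a repeated column, so your conclusion $D_\gamma'=\det(\gamma',\gamma'',\gamma'''')$ stands.
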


\begin{proof}
Let $\gamma_0\in\mathcal G^r$, so $D_{\gamma_0}\in C^{r-3}\subseteq C^1$.

\textbf{Case A ($D_{\gamma_0}$ has no zero).} By compactness $m:=\min_{S^1}|D_{\gamma_0}|>0$; by
continuity of $\gamma\mapsto D_\gamma$ from $C^r$ to $C^0$, a small perturbation gives
$\|D_\gamma-D_{\gamma_0}\|_{C^0}<m/2$, hence $|D_\gamma|>m/2>0$: no zeros, and ``all zeros are simple''
holds vacuously. Case B needs at least one zero to anchor its arcs, so this branch is listed
separately.

\textbf{Case B (the number of zeros is $n\ge1$).} The zeros are simple, hence isolated, and the zero set
is closed in the compact $S^1$, so a finite subcover gives $Z(D_{\gamma_0})=\{t_1,\dots,t_n\}$.
Choose pairwise disjoint closed arcs $I_i\ni t_i$ and $c>0$ with $|D_{\gamma_0}'|\ge c$ on $I_i$; put
$K=S^1\setminus\bigcup_i\operatorname{int}I_i$, which is compact and non-empty when the $I_i$ are
small enough and carries no zero, so $\delta:=\min_K|D_{\gamma_0}|>0$. By continuity of
$\gamma\mapsto D_\gamma$ from $C^r$ to $C^1$, which is where $r\ge4$ enters, a small perturbation
gives $\|D_\gamma-D_{\gamma_0}\|_{C^1}<\min\{c/2,\delta/2\}$. Then $|D_\gamma|\ge\delta/2>0$ on $K$,
while $|D_\gamma'|\ge c/2>0$ on each $I_i$, so $D_\gamma$ is strictly monotone there and has at most
one zero, necessarily simple.
\end{proof}

\begin{proposition}[density, using only $C^\infty$ transversality and smoothing]
\label{prop:6.7}
Let $r\ge4$, $\gamma\in\mathcal E^r$ and $\varepsilon>0$. Then there is
$\widetilde\gamma\in C^\infty(S^1,\mathbb R^3)\cap\mathcal G^r$ with
$\|\widetilde\gamma-\gamma\|_{C^r}<\varepsilon$.
\end{proposition}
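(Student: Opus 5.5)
The plan has two stages: smoothing, then a finite-dimensional parametric transversality argument. Nothing beyond the $C^\infty$ Thom transversality theorem and Propositions~\ref{prop:6.6} and Lemmas~\ref{lem:6.2}--\ref{lem:6.5} should be needed.

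\textbf{Step 1 (mollification).} Convolve $\gamma$ on $S^1=\mathbb R/L\mathbb Z$ with a smooth approximate identity to obtain $\gamma_1\in C^\infty$ with $\|\gamma_1-\gamma\|_{C^r}<\varepsilon/2$. Convolution commutes with differentiation, and $\gamma^{(r)}$ is uniformly continuous on the compact circle, so this approximation holds in $C^r$. By Lemma~\ref{lem:6.2}, shrinking the mollification scale if necessary, $\gamma_1\in\mathcal E^r$. Hence $j^3\gamma_1(S^1)\subset U$, and $D_{\gamma_1}=\Delta\circ j^3\gamma_1$ is smooth.

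\textbf{Step 2 (parametric perturbation).} Take the finite-dimensional family
\[
\gamma_w(t)=\gamma_1(t)+\sum_{k} w_k\psi_k(t),\qquad w\in\mathbb R^N,
\]
where the $\psi_k$ are trigonometric polynomials in $t$ with values in $\mathbb R^3$. The goal is that the map $\Psi(t,w)=j^3\gamma_w(t)$ be a submersion onto the fibre directions $(x,v,a,j)$ at every point. This can be arranged with finitely many terms: the twelve functions $e_\ell\cos(2\pi nt/L)$ and $e_\ell\sin(2\pi nt/L)$, for $n=1,2$ and $\ell=1,2,3$, together with the constants $e_\ell$, suffice. Their $3$-jets at each $t$ span $\mathbb R^{12}$, since the Wronskian-type matrix of $\{1,\cos,\sin,\cos2,\sin2\}$ up to order three has full rank. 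Surjectivity in the $j$-direction alone would already suffice, because $\nabla\Delta$ has a nonzero $j$-component on $U$; spanning all fibre directions is simply the safe choice.

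Restrict $w$ to a small ball $W$, chosen so that every $\gamma_w$ is in $\mathcal E^r$ and satisfies $\|\gamma_w-\gamma_1\|_{C^r}<\varepsilon/2$. Then $\Psi:S^1\times W\to U$ is smooth and transverse to $\Sigma$, since it is a submersion in the fibre directions and $\Sigma$ is the regular level set of Lemma~\ref{lem:6.4}. Equivalently, $0$ is a regular value of $(t,w)\mapsto D_{\gamma_w}(t)$, because $\partial_w D_{\gamma_w}(t)\neq0$ at every point.

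\textbf{Step 3 (Sard).} By the parametric transversality theorem, $\mathcal M=\{(t,w):D_{\gamma_w}(t)=0\}$ is a smooth manifold of dimension $N$. Applying Sard's theorem to the projection $\mathcal M\to W$, almost every $w\in W$ is a regular value, and this is exactly the condition $j^3\gamma_w\pitchfork\Sigma$. By Lemma~\ref{lem:6.5}, all zeros of $D_{\gamma_w}$ are then simple. Setting $\widetilde\gamma=\gamma_w$ gives $\widetilde\gamma\in C^\infty\cap\mathcal G^r$ and $\|\widetilde\gamma-\gamma\|_{C^r}<\varepsilon$.

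\textbf{Main obstacle.} The delicate point is the order of choices. The family $\{\psi_k\}$, and hence $N$, must be fixed before the radius of $W$ is chosen, and that radius must simultaneously respect the $C^r$ gate and the openness of $\mathcal E^r$. Because the family is global on the compact $S^1$, both constraints come down to a single $C^r$-norm bound on $\sum_k w_k\psi_k$, so no partition-of-unity bookkeeping is required.
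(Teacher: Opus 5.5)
Your proof is correct, but it takes a different route from the paper's at the key step.

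\textbf{How the paper argues.} After the same mollification, it notes that $\{h\in C^\infty:\|h-\gamma_0\|_{C^r}<\varepsilon/2\}$ is open in the Fr\'echet space $C^\infty(S^1,\mathbb R^3)$. It then invokes Thom's jet transversality theorem in its $C^\infty$ form: the set $\{h: j^3h\pitchfork\Sigma\}$ is residual, hence dense by Baire, so it meets that open set. Lemma~\ref{lem:6.5} then finishes.

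\textbf{How you argue.} You prove the needed special case of that theorem by hand, using one fixed $15$-parameter trigonometric family.
\begin{itemize}
\item Your spanning claim is right. The $4\times5$ matrix of derivatives of orders $0$ to $3$ of $e^{ik\omega t}$, $|k|\le2$, is a Vandermonde block with distinct nodes times a nonvanishing diagonal, so it has rank $4$ at every $t$. Hence the $3$-jets of your $\psi_k$ span the fibre $\mathbb R^{12}$ everywhere.
\item Since $d\Delta\ne0$ on $U$, this gives $\partial_wD_{\gamma_w}(t)\ne0$ for all $(t,w)\in S^1\times W$. So $0$ is a regular value of $(t,w)\mapsto D_{\gamma_w}(t)$.
\item Sard applied to $\mathcal M\to W$ then yields simple zeros for almost every small $w$.
\end{itemize}

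\textbf{What each route buys.} The paper's route is shorter and cites a standard theorem. Yours is self-contained: it needs only Sard, with no Baire category or Fr\'echet topology. It also gives a slightly stronger conclusion, namely transversality for almost every parameter in an explicit finite-dimensional family rather than mere existence.

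It is in fact the same mechanism the paper carries out by hand in Appendix~\ref{app:B} for Proposition~\ref{prop:5.7}(ii). There, local bumps and a finite subcover are unavoidable because the perturbation must be supported in the core. Here there is no support constraint, so your globally defined family removes that bookkeeping, as you observe.

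\textbf{Minor points.}
\begin{itemize}
\item Neither Thom's theorem nor Proposition~\ref{prop:6.6} is actually used in your argument, despite your opening sentence.
\item Your side remark that ``surjectivity in the $j$-direction alone'' suffices is correct only if it means that the span of the $j^3\psi_k(t)$ contains $\{0\}\oplus\mathbb R^3_j$. Surjectivity of its projection onto $\mathbb R^3_j$ is not enough, because the terms $\det(\psi',a,j)+\det(v,\psi'',j)$ also contribute. This does not affect your main argument, which uses the full span.
\end{itemize}
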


\begin{proof}
(a) $\mathcal E^r$ is open by Lemma~\ref{lem:6.2}, so shrink $\varepsilon$ until the $C^r$-ball
$B(\gamma,\varepsilon)\subseteq\mathcal E^r$. (b) $C^\infty$ is dense in $C^r$ by mollification on
the circle, so pick $\gamma_0\in C^\infty$ with $\|\gamma_0-\gamma\|_{C^r}<\varepsilon/2$. (c) Put
$W:=\{h\in C^\infty:\|h-\gamma_0\|_{C^r}<\varepsilon/2\}$; the $C^\infty$ topology is generated by
all the seminorms $\|\cdot\|_{C^m}$, so $W$ is a non-empty open set. (d) $S^1$ is compact, so
$C^\infty(S^1,\mathbb R^3)$ is a Fréchet, hence Baire, space; by Thom's jet transversality theorem in
its $C^\infty$ version \cite[Thm.~3.2.8]{ref5} the set $\{h\in C^\infty:j^3h\pitchfork\Sigma\}$ is residual,
hence dense, so it meets $W$. (e) The triangle inequality gives
$\|\widetilde\gamma-\gamma\|_{C^r}<\varepsilon$, the whole $\varepsilon$-ball lies in $\mathcal E^r$,
and Lemma~\ref{lem:6.5} places $\widetilde\gamma$ in $\mathcal G^r$.
\end{proof}

Only the $C^\infty$ version of the transversality theorem is used, and the conclusion obtained is
correspondingly that $\mathcal G^r$ is \textbf{open and dense} in $\mathcal E^r$. A by-product worth
noting for \S\ref{sec:dichotomy} is that the approximating curve may always be taken $C^\infty$.

\begin{theorem}[generic rigidity, parametrised version]
\label{thm:6.8}
For every integer $r\ge4$, the set $\mathcal G^r$ of curves all of whose torsion zeros are simple is
\textbf{open and dense} in $\mathcal E^r$. \hfill$\square$
\end{theorem}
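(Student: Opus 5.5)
Theorem~\ref{thm:6.8} combines Propositions~\ref{prop:6.6} and \ref{prop:6.7}, so the proof reduces to checking that their hypotheses and conclusions match the statement. I will also confirm that ``all of whose torsion zeros are simple'' really describes the set $\mathcal G^r$ of Definition~\ref{def:6.1}, which is phrased through $D_\gamma$.

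\textbf{Identification.} For $\gamma\in\mathcal E^r$, Lemma~\ref{lem:6.3} gives $\tau_\gamma=D_\gamma h_\gamma$ with $h_\gamma>0$. Hence $\tau_\gamma$ and $D_\gamma$ have the same zero set, and at each common zero $\tau_\gamma'=D_\gamma'h_\gamma$. Since $r\ge4$, $D_\gamma\in C^{r-3}\subseteq C^1$ and $h_\gamma\in C^{r-2}$, so both derivatives exist and simplicity is meaningful for both functions. Therefore ``all torsion zeros simple'' is exactly the membership condition defining $\mathcal G^r$. No unit-speed assumption is needed, because $\tau_\gamma$ is computed from the parametrisation-free formula of Lemma~\ref{lem:2.8}, Proof~2.

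\textbf{Openness.} This is Proposition~\ref{prop:6.6}. It splits into the zero-free case, handled by $C^0$-continuity of $\gamma\mapsto D_\gamma$, and the case of finitely many simple zeros, handled by $C^1$-continuity and monotonicity on small arcs around each zero. It yields a $C^r$-neighbourhood of each $\gamma_0\in\mathcal G^r$ that lies in $\mathcal G^r$. Intersecting with the open set $\mathcal E^r$ (Lemma~\ref{lem:6.2}) gives openness relative to $\mathcal E^r$.

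\textbf{Density.} This is Proposition~\ref{prop:6.7}. Every $C^r$-ball around $\gamma\in\mathcal E^r$ contains a $C^\infty$ curve $\widetilde\gamma$ with $j^3\widetilde\gamma\pitchfork\Sigma$. By Lemma~\ref{lem:6.5}, $0$ is then a regular value of $D_{\widetilde\gamma}$, so $\widetilde\gamma\in\mathcal G^r$.

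The argument has no real obstacle, since all substantive work is in the two propositions. The only point needing care is that the residual set in Proposition~\ref{prop:6.7} is residual in $C^\infty$, not in $C^r$. This causes no difficulty: the conclusion required is density in $\mathcal E^r$, which follows because $C^\infty$ is dense in $C^r$ and the $C^\infty$-open set $W$ used there meets the residual set. One could add that an open dense subset of the Baire space $\mathcal E^r$ is residual, but residuality is not part of the statement.
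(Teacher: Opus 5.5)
Your proposal is correct and matches the paper, which records Theorem~\ref{thm:6.8} without a separate proof as the conjunction of Proposition~\ref{prop:6.6} (openness) and Proposition~\ref{prop:6.7} (density). In both, Lemma~\ref{lem:6.3} identifies simple zeros of $\tau_\gamma$ with simple zeros of $D_\gamma$, and your side remarks on $C^\infty$ transversality and on residuality agree with the paper's treatment.
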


\begin{lemma}[arclength reparametrisation preserves everything]
\label{lem:6.9}
Let $\gamma\in\mathcal G^r$, $s_\gamma(t)=\int_0^t|\gamma'|$, $L=L(\gamma)$ and
$\widehat\gamma=\gamma\circ s_\gamma^{-1}$. Then $\widehat\gamma\in C^r$ is unit speed, the image and
knot type are unchanged, $\widehat\kappa>0$, and $\widehat\tau(s)=\tau_\gamma(s_\gamma^{-1}(s))$ with
$\widehat\tau'(s)=\tau_\gamma'(t)/|\gamma'(t)|$ --- a \textbf{positive factor} --- so torsion zeros
correspond bijectively and \textbf{simplicity is preserved}. The simple zeros of
$\widehat\tau\in C^{r-3}$ are isolated and the zero set is closed in the compact $\mathbb R/L\mathbb Z$,
so their \textbf{number is finite}. \hfill$\square$
\end{lemma}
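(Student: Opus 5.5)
The proof is a chain of elementary facts about the reparametrisation $s=s_\gamma(t)$, taken in the order the statement lists them. I will use only the parametrisation-free formulas of Lemma~\ref{lem:2.8}(Proof 2) and Lemma~\ref{lem:6.3}.

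\textbf{Regularity and unit speed.} Since $\gamma\in C^r$ and $|\gamma'|>0$ is $C^{r-1}$, the function $s_\gamma$ is a $C^r$ diffeomorphism of $\mathbb R$ with derivative $|\gamma'|>0$. It commutes with translation by the period (the period in $t$ goes to $L$ in $s$), so it descends to a $C^r$ diffeomorphism of circles. Its inverse is $C^r$ by the inverse function theorem, so $\widehat\gamma=\gamma\circ s_\gamma^{-1}$ is $C^r$. The chain rule gives $\widehat\gamma'=\gamma'/|\gamma'|$, hence unit speed.

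\textbf{Image, knot type and curvature.} The image is literally the same set, so embeddedness and the knot type are unchanged. For curvature and torsion I use that the formulas $\kappa=|\gamma'\times\gamma''|/|\gamma'|^3$ and $\tau=\langle\gamma'\times\gamma'',\gamma'''\rangle/|\gamma'\times\gamma''|^2$ are invariant under orientation-preserving reparametrisation. Write $\widehat\gamma'=\phi\gamma'$ and $\widehat\gamma''=\phi^2\gamma''+(\cdot)\gamma'$ with $\phi=1/|\gamma'|$ evaluated at $t$. Then $\widehat\gamma'\times\widehat\gamma''=\phi^3\gamma'\times\gamma''$, and $\widehat\gamma'''=\phi^3\gamma'''+(\text{span of }\gamma',\gamma'')$. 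Hence $\widehat\kappa(s)=\kappa_\gamma(t)>0$ and $\widehat\tau(s)=\tau_\gamma(t)$ with $t=s_\gamma^{-1}(s)$; the factors $\phi^6$ cancel in the torsion formula.

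\textbf{Derivative and zeros.} The chain rule applied to $\widehat\tau=\tau_\gamma\circ s_\gamma^{-1}$ gives $\widehat\tau'(s)=\tau_\gamma'(t)\,(s_\gamma^{-1})'(s)=\tau_\gamma'(t)/|\gamma'(t)|$. This uses $\tau_\gamma\in C^{r-3}\subseteq C^1$, which holds by Lemma~\ref{lem:6.3} since $D_\gamma\in C^{r-3}$ and $h_\gamma\in C^{r-2}$. Zeros of $\widehat\tau$ and $\tau_\gamma$ correspond bijectively under the bijection $s_\gamma$. At corresponding zeros the derivatives differ by the positive factor $1/|\gamma'|$, so simplicity is preserved. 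Since $\gamma\in\mathcal G^r$, every zero of $D_\gamma$, and hence by Lemma~\ref{lem:6.3} every zero of $\tau_\gamma$, is simple; therefore every zero of $\widehat\tau$ is simple.

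\textbf{Finiteness.} Each simple zero is isolated, because $\widehat\tau'\neq0$ there and $\widehat\tau$ is $C^1$. The zero set is closed in the compact circle $\mathbb R/L\mathbb Z$, so covering it by neighbourhoods that each contain one zero and extracting a finite subcover shows it is finite, exactly as in Proposition~\ref{prop:3.6}(iii).

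I expect no real obstacle. The one point needing care is the regularity bookkeeping: the chain rule for $\widehat\tau'$ requires $\widehat\tau\in C^1$, which is where $r\ge4$ is used. The invariance of $\tau$ must be argued at the level of the explicit formula rather than the Frenet frame, because $\gamma$ is not unit speed.
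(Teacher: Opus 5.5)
Your proposal is correct and follows the route the paper intends. The paper leaves this lemma as an elementary check based on the parametrisation-free formulas of Lemma~\ref{lem:2.8}, the same chain-rule cancellation $D_{\gamma\circ\varphi}=(\varphi')^{6}(D_\gamma\circ\varphi)$ used in Corollary~\ref{cor:6.11}, Lemma~\ref{lem:6.3} for simplicity, and the compactness argument of Proposition~\ref{prop:3.6}(iii) for finiteness, which is exactly what you do, including the regularity bookkeeping $\tau_\gamma\in C^{r-3}\subseteq C^1$.
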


For a general $\gamma\in\mathcal G^r$ one has only $\widehat\tau\in C^{r-3}$, which for $r=4$ is
merely $C^1$; there ``zero of infinite order'' and the symbol $Z_\infty(\widehat\tau)$ are not defined,
and the lemma asserts only that all zeros are simple and finite in number. When $\gamma$ is in
addition $C^\infty$ one may write $Z_\infty(\widehat\tau)=\varnothing$ as well. The next corollary is
stated so as to need only the former.

\begin{corollary}[generic curves are determined up to $E(3)$]
\label{cor:6.10}
Let $r\ge4$, $\gamma\in\mathcal G^r$ and let $\widehat\gamma$ be its arclength reparametrisation, of
length $L$. Let $\widetilde\gamma:\mathbb R/L\mathbb Z\to\mathbb R^3$ be $C^4$, unit speed, with
$\widetilde\kappa>0$, sharing \textbf{the same arclength label} with $\widehat\gamma$ and satisfying
$\widetilde\kappa=\widehat\kappa$ and $|\widetilde\tau|=|\widehat\tau|$ pointwise. Then
$\widetilde\gamma=Q\widehat\gamma+a$ for some $Q\in O(3)$ and $a\in\mathbb R^3$.
\end{corollary}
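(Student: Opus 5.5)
The plan is to run the finite-regularity version of the argument of Theorem~\ref{thm:4.1}, replacing Theorem~\ref{thm:3.9} (which needs $C^\infty$ and $Z_\infty$) by the $C^1$ variants of Lemmas~\ref{lem:3.1}--\ref{lem:3.3} recorded in Remark~\ref{rem:3.4}(iv). By Lemma~\ref{lem:6.9}, $\widehat\gamma$ is $C^r$, unit speed, with $\widehat\kappa>0$, and $\widehat\tau\in C^{r-3}\subseteq C^1$ has finitely many zeros $s_1,\dots,s_n$, all simple. On the competitor side, $\widetilde\gamma\in C^4$ with $\widetilde\kappa>0$ gives $\widetilde\tau\in C^1$ by the bookkeeping of Remark~\ref{rem:4.2} with $k=1$: the parametrisation-free formula from Lemma~\ref{lem:2.8} has a $C^1$ numerator and a $C^2$, non-vanishing denominator.

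Next I show $\widetilde\tau\equiv\eta\widehat\tau$ for a single global sign $\eta\in\{\pm1\}$. If $n=0$, then $\widehat\tau$ never vanishes, and the ratio $\widetilde\tau/\widehat\tau$ is continuous with values in $\{\pm1\}$ on the connected circle, so it is constant (Lemma~\ref{lem:3.1}). If $n\ge1$, I apply Lemma~\ref{lem:3.1} on each complementary open arc of $\{s_1,\dots,s_n\}$, which gives a constant sign on each arc. At each simple zero $s_j$, Lemma~\ref{lem:3.3} with $k=1$ applies, in its $C^1$ form from Remark~\ref{rem:3.4}(iv), since $\ord_{s_j}\widehat\tau=1$ and both functions are $C^1$. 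It shows that the signs on the two adjacent arcs agree. The arcs form a cycle around $S^1$ linked through the $s_j$, so all the signs coincide. At the zeros both sides vanish, so $\widetilde\tau=\eta\widehat\tau$ everywhere.

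To conclude, I follow the two cases of Theorem~\ref{thm:4.1}. If $\eta=+1$, Lemma~\ref{lem:2.10} applies directly, since it needs only $C^3$ regularity, a common label and equal signed data. This gives $\widetilde\gamma=Q\widehat\gamma+a$ with $Q\in SO(3)$. If $\eta=-1$, pick a reflection $R$; by Lemma~\ref{lem:2.7}, $R\widetilde\gamma$ has data $(\widehat\kappa,\widehat\tau)$. Lemma~\ref{lem:2.10} then gives $R\widetilde\gamma=Q_0\widehat\gamma+a_0$, hence $\widetilde\gamma=R^{-1}Q_0\widehat\gamma+R^{-1}a_0$ with $\det(R^{-1}Q_0)=-1$.

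The only delicate step is the regularity bookkeeping. I must check that Lemma~\ref{lem:3.2} at order $k=1$ survives for merely $C^1$ functions, with $h$ continuous and $h(s_j)\neq0$, so that each zero is isolated. I must also check that Lemma~\ref{lem:3.3}'s jump argument uses only continuity of $\widetilde\tau'$ at $s_j$. Both are exactly what Remark~\ref{rem:3.4}(iv) asserts, so I expect no real obstacle beyond citing it carefully.
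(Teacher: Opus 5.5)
Your proposal is correct and follows the paper's proof essentially step for step: the $C^1$ versions of Lemmas~\ref{lem:3.1} and \ref{lem:3.3} (Remark~\ref{rem:3.4}(iv)) force a single global sign, and Lemma~\ref{lem:2.10}, preceded by a reflection via Lemma~\ref{lem:2.7} in the negative case, finishes the argument. The only cosmetic difference is that you treat $n=0$ separately and chain the signs cyclically around the circle. The paper instead extends $\widetilde\tau/\widehat\tau$ to a locally constant function on all of $\mathbb R/L\mathbb Z$ and invokes connectedness once.
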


\begin{proof}
We argue directly, without the $C^\infty$ classification of \S\ref{sec:3.2}, which is unavailable at
this regularity. By Lemma~\ref{lem:6.9} the zeros of $\widehat\tau\in C^{r-3}\subseteq C^1$ are all
simple and finite in number, say $Z=\{s_1,\dots,s_n\}$ with $n=0$ allowed; by Remark~\ref{rem:4.2},
$\widetilde\gamma\in C^4$ gives $\widetilde\tau\in C^1$.

\begin{enumerate}
\item On $(\mathbb R/L\mathbb Z)\setminus Z$ we have $\widehat\tau\neq0$, so
   $\varepsilon:=\widetilde\tau/\widehat\tau$ is continuous with values in $\{\pm1\}$, hence locally
   constant by Lemma~\ref{lem:3.1}, which needs only continuity.
\item Each $s_i$ is a simple zero and $\widehat\tau,\widetilde\tau\in C^1$, so the case $k=1$ of
   Lemma~\ref{lem:3.3} applies --- its proof uses only $C^k=C^1$, as recorded in
   Remark~\ref{rem:3.4}(iv) --- giving a neighbourhood $W_i$ of $s_i$ and $\eta_i\in\{\pm1\}$ with
   $\widetilde\tau\equiv\eta_i\widehat\tau$ on $W_i$. Hence $\varepsilon$ extends to a locally
   constant function on all of $\mathbb R/L\mathbb Z$.
\item $\mathbb R/L\mathbb Z$ is \textbf{connected}, so $\varepsilon$ is a constant $\pm1$, i.e.
   $\widetilde\tau\equiv\pm\widehat\tau$.
\item The $+$ case is Lemma~\ref{lem:2.10}; in the $-$ case apply Lemma~\ref{lem:2.7} to
   $R\widetilde\gamma$ and then Lemma~\ref{lem:2.10}, obtaining $\widetilde\gamma=Q\widehat\gamma+a$
   with $\det Q=-1$. Lemma~\ref{lem:2.10} requires both curves to be at least $C^3$: here
   $\widehat\gamma\in C^r$ with $r\ge4$ and $\widetilde\gamma\in C^4$, so both qualify, and accordingly
   $\widehat\kappa,\widetilde\kappa\in C^{r-2},C^2$ and $\widehat\tau,\widetilde\tau\in C^{r-3},C^1$
   are continuous.
\end{enumerate}
\end{proof}

\begin{corollary}[residuality, and genericity in the reparametrisation quotient]
\label{cor:6.11}
Let $r\ge4$.
\begin{enumerate}
\item $\mathcal G^r$ is residual in $\mathcal E^r$.
\item Let $q:\mathcal E^r\to\mathcal E^r/\mathrm{Diff}^+(S^1)$ be the quotient map for the action by
   orientation-preserving $C^r$ reparametrisation, the target carrying the quotient topology. Then
   $q(\mathcal G^r)$ is open and dense, hence residual, in $\mathcal E^r/\mathrm{Diff}^+(S^1)$.
\end{enumerate}
\end{corollary}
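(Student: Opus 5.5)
Part 1 follows from Theorem~\ref{thm:6.8}, which gives that $\mathcal G^r$ is open and dense in $\mathcal E^r$. An open dense set is residual: it is itself a countable (indeed single) intersection of open dense sets. The plan is to note this and, for completeness, that $\mathcal E^r$ is Baire. By Lemma~\ref{lem:6.2}, $\mathcal E^r$ is an open subset of the Banach space $C^r(S^1,\mathbb R^3)$, hence completely metrisable, so ``residual'' is a meaningful non-vacuous notion there.

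For part 2 the plan is to use two general facts about quotients by group actions, applied to $\mathcal G^r$.

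\textbf{First,} $q$ is an open map. For $U\subseteq\mathcal E^r$ open, $q^{-1}(q(U))=\bigcup_{\phi\in\mathrm{Diff}^+(S^1)}U\circ\phi$. Each map $\gamma\mapsto\gamma\circ\phi$ is a homeomorphism of $\mathcal E^r$: it is a bounded linear map on $C^r$ by the chain rule, since $\phi\in C^r$, and it has inverse given by composition with $\phi^{-1}$. It also preserves regularity, injectivity and $\gamma'\times\gamma''\neq0$, because $(\gamma\circ\phi)'\times(\gamma\circ\phi)''=(\phi')^3\,(\gamma'\times\gamma'')\circ\phi$. Hence $q^{-1}(q(U))$ is a union of open sets, so $q(U)$ is open in the quotient topology. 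Applying this with $U=\mathcal G^r$ shows that $q(\mathcal G^r)$ is open.

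\textbf{Second,} the image of a dense set under a continuous surjection is dense. Take a non-empty open set $O$ in the quotient. Then $q^{-1}(O)$ is non-empty and open, so it meets $\mathcal G^r$, and therefore $O$ meets $q(\mathcal G^r)$.

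Open plus dense then gives residual, exactly as in part 1. Note that in the quotient we do not claim a Baire property; residuality here means only containing an open dense set.

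The only point needing care is that $\mathcal G^r$ need not be saturated, so one cannot argue through preimages directly. One could check saturation: by the computation of Lemma~\ref{lem:6.9}, $D_{\gamma\circ\phi}=(\phi')^6D_\gamma\circ\phi$ up to terms vanishing at zeros of $D_\gamma$, so simplicity of zeros is in fact preserved. The openness-of-$q$ argument sidesteps this check, and that is the route I would take. I expect the main (mild) obstacle to be verifying that reparametrisation by a $C^r$ diffeomorphism is a homeomorphism of $C^r(S^1,\mathbb R^3)$, which needs $\phi\in C^r$ with $\phi'>0$ and the Faà di Bruno continuity estimate.
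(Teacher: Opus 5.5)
Your proof is correct. Part 1 matches the paper, which adds that no Baire property is used, so your remark on complete metrisability is harmless but not needed.

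For the openness claim in part 2 you take a genuinely different route. You show that $q$ is an open map: each $\gamma\mapsto\gamma\circ\phi$ is a bounded linear map of $C^r(S^1,\mathbb R^3)$, it has bounded inverse $\gamma\mapsto\gamma\circ\phi^{-1}$, and it preserves $\mathcal E^r$. Hence $q^{-1}(q(U))$ is a union of open sets for every open $U$.

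The paper instead proves that $\mathcal G^r$ is saturated. It uses the exact identity $D_{\gamma\circ\varphi}=(\varphi')^{6}(D_\gamma\circ\varphi)$, since every other term of the determinant repeats a column. Then $q^{-1}(q(\mathcal G^r))=\mathcal G^r$ is open directly by the definition of the quotient topology, with no continuity property of the action required.

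What each route buys:
\begin{itemize}
\item Your route is more general, since it works for any open set, and it avoids the determinant computation.
\item The paper's route gives the extra information that simple torsion zeros are a property of the orbit: every reparametrisation of a member of $\mathcal G^r$ stays in $\mathcal G^r$. So $q(\mathcal G^r)$ consists of unparametrised curves \emph{all} of whose parametrisations are generic. Your argument alone only shows that these orbits contain at least one generic member.
\end{itemize}

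Two small corrections to your side remark. First, the identity is exact, not merely ``up to terms vanishing at zeros of $D_\gamma$'', and $\mathcal G^r$ \emph{is} saturated; the phrase ``need not be saturated'' is misleading. Second, this identity is a direct chain-rule computation for a general $\varphi\in\mathrm{Diff}^+(S^1)$. It is not the computation of Lemma~\ref{lem:6.9}, which concerns only the arclength reparametrisation.
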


\begin{proof}
(1) An open dense subset of any topological space is residual, its complement being closed with
empty interior, hence nowhere dense; no Baire property is used.

(2) The property ``all zeros of $D_\gamma$ are simple'' is invariant under orientation-preserving
$C^r$ reparametrisation. Indeed, for $\varphi\in\mathrm{Diff}^+(S^1)$ the chain rule gives
$(\gamma\circ\varphi)'=\varphi'\,\gamma'\circ\varphi$,
$(\gamma\circ\varphi)''=(\varphi')^2\gamma''\circ\varphi+\varphi''\,\gamma'\circ\varphi$ and
$(\gamma\circ\varphi)'''=(\varphi')^3\gamma'''\circ\varphi+3\varphi'\varphi''\,\gamma''\circ\varphi
+\varphi'''\,\gamma'\circ\varphi$; every term of the determinant other than the leading one repeats a
column, so
\[
D_{\gamma\circ\varphi}=(\varphi')^{6}\,\bigl(D_\gamma\circ\varphi\bigr),\qquad \varphi'>0,
\]
and zeros correspond bijectively with their simplicity preserved. Hence $\mathcal G^r$ is saturated:
$q^{-1}\bigl(q(\mathcal G^r)\bigr)=\mathcal G^r$. Since $\mathcal G^r$ is open, the definition of the
quotient topology makes $q(\mathcal G^r)$ open. For density, let $U\neq\varnothing$ be open in the
quotient; then $q^{-1}(U)$ is a non-empty open subset of $\mathcal E^r$, so it meets the dense set
$\mathcal G^r$, and therefore $U\cap q(\mathcal G^r)\neq\varnothing$. Being open and dense,
$q(\mathcal G^r)$ is residual by (1).
\end{proof}

\begin{remark}[the exact scope of the genericity statement]
\label{rem:6.12}
Corollary~\ref{cor:6.11} is a statement about the \textbf{space of maps} $\mathcal E^r$ and its
reparametrisation quotient, and about nothing else. It does not assert genericity inside a fixed
isotopy class, nor in any space of images or of knots. Density is asserted relative to $\mathcal E^r$, which is the space in which the problem is posed,
since $\kappa>0$, regularity and embeddedness are standing hypotheses (S1); the position of
$\mathcal G^r$ inside the larger space $C^r(S^1,\mathbb R^3)$ is not considered.
\end{remark}

%% file: sections/07-engine.tex
\section{The one-dimensional engine: inverse stability of the signed square root}
\label{sec:engine}

Everything quantitative in this paper rests on one estimate about functions of one variable, which we
now prove and show to be of optimal order. The question it answers is the quantitative form of
\S\ref{sec:branch}: there, knowing $|f|$ exactly, we listed the smooth $g$ with $|g|=|f|$; here we
know $f^2$ only approximately, and ask how well $f$ is then determined, modulo the one global sign
that no unsigned datum can ever see.

Throughout this section and the next, all norms are \textbf{sums},
$\|u\|_{C^k}=\sum_{j\le k}\|u^{(j)}\|_{C^0}$, with $\|u\|_{L^1}=\int_{\mathbb R/L_0\mathbb Z}|u|$
unnormalised, and
\[
\log_+\tfrac1t:=\max\{0,\ln\tfrac1t\},\qquad \rho(0):=0,\qquad
\rho(t):=t\bigl(1+\log_+\tfrac1t\bigr).
\]
Then $\rho$ is continuous and strictly increasing on $[0,\infty)$ and satisfies $t\le\rho(t)$:
for $t\ge1$ one has $\rho(t)=t$; for $0<t<1$, $\rho(t)=t-t\ln t$ with $\rho'(t)=-\ln t>0$; and the
two branches match at $t=1$.

\begin{theorem}[uniform log-Lipschitz bound]
\label{thm:7.1}
Fix $B,\delta,L_0>0$ with $\delta\le B$, and let
$f,g\in C^2(\mathbb R/L_0\mathbb Z)$ satisfy
\[
\|f\|_{C^2}+\|g\|_{C^2}\le B,\qquad f^2+(f')^2\ge\delta^2,\qquad g^2+(g')^2\ge\delta^2 .
\]
Put $\eta:=\|f^2-g^2\|_{C^0}$. Then there are a constant $C_A$ depending only on $(B,\delta,L_0)$ and
a \textbf{global} sign $\varepsilon\in\{\pm1\}$, independent of $s$, with
\[
\|f-\varepsilon g\|_{L^1}\ \le\ C_A(B,\delta,L_0)\,\rho(\eta);
\]
consequently, $\{\pm1\}$ having two elements,
\[
\inf_{\varepsilon'=\pm1}\|f-\varepsilon'g\|_{L^1}
=\min_{\varepsilon'=\pm1}\|f-\varepsilon'g\|_{L^1}\ \le\ C_A(B,\delta,L_0)\,\rho(\eta).
\]
One may take the explicit constant
\[
C_A=\frac{2BL_0}{\eta_0}+\frac{L_0}{2\delta}+\frac{2BL_0}{\delta^2}+\frac{L_0}{a_0}
+N_*\Bigl(\frac{32B}{\delta^2}+4Bc_1^2+\frac2\delta\bigl(1+\bigl|\ln(r\sqrt{2B\delta})\bigr|\bigr)\Bigr),
\]
with $r=\frac{\delta}{4B}$, $a_0=\frac{\delta^2}{16B}$, $N_*=\frac{2BL_0}{\delta}$,
$c_1=\max\{\frac4\delta,\frac1{\sqrt{2B\delta}}\}$ and $\eta_0=\frac{a_0^2}2=\frac{\delta^4}{512B^2}$.
In the range in which it is used below --- namely
\[
B\ge1,\qquad 0<\delta\le\min\{1,B\}
\]
--- this constant obeys the \textbf{non-asymptotic} bound
\begin{equation}\label{eq:7.3}\tag{7.3}
C_A(B,\delta,L_0)\ \le\ 1300\,B^3L_0\,\delta^{-4},
\end{equation}
verified term by term in Appendix~\ref{sec:C.3}. Equivalently, in asymptotic notation with the
quantifiers displayed, for each fixed $B\ge1$ and $L_0>0$ one has
$C_A(B,\delta,L_0)=O_{B,L_0}\bigl(\delta^{-4}\bigr)$ as $\delta\downarrow0$. No claim is made that
the exponent $4$ is optimal.
\end{theorem}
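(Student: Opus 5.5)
Two regimes by the size of $\eta$. If $\eta\ge\eta_0=\delta^4/(512B^2)$, the trivial bound $\|f-\varepsilon g\|_{L^1}\le L_0(\|f\|_{C^0}+\|g\|_{C^0})\le BL_0\le (BL_0/\eta_0)\,\eta\le (2BL_0/\eta_0)\rho(\eta)$ disposes of it; this accounts for the first term of $C_A$. For $\eta<\eta_0$ the argument has three steps: a structural description of the zeros of $f$ and $g$, a cyclic matching of these zeros with sign alignment, and an $L^1$ estimate split into a near-zero window and an outer region.

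\textbf{Step 1: structure of the zeros.} From $f^2+(f')^2\ge\delta^2$ and $\|f\|_{C^2}\le B$ we get the following. Wherever $|f|<\delta/2$ we have $|f'|\ge\delta/2$ (roughly). Hence the zeros of $f$ are simple and separated by at least $\delta/B$, so their number is at most $N_*=2BL_0/\delta$. Moreover, on an interval of radius $r=\delta/(4B)$ around each zero, $f$ is strictly monotone with $|f'|\ge\delta/2$, and hence $|f(s)|\ge(\delta/2)|s-p|$ there. Outside these intervals, $|f|\ge a_0=\delta^2/(16B)$ (up to constants). The same holds for $g$.

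\textbf{Step 2: matching and sign alignment.} Where $|f|\ge a_0$, we have $|f^2-g^2|\le\eta<a_0^2/2$, which forces $g^2\ge a_0^2/2$. So $g$ has no zero there, and the sign ratio $g/f$ is constant on each component of $\{|f|\ge a_0\}$. Inside an $r$-window around a zero $p$ of $f$, the relation $|f^2-g^2|\le\eta$ together with the linear growth of $|f|$ shows that $|g|$ is small only within distance $d\lesssim\sqrt{\eta}/\delta\cdot c_1$ of $p$. By monotonicity, $g$ has exactly one zero $q$ there, and $|p-q|\le d$. This gives a cyclic bijection between the zeros of $f$ and those of $g$. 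On either side of the window $f$ and $g$ both change sign, so the ratio $g/f$ takes the same constant value on both adjacent outer components. Since $S^1$ is covered cyclically, a single global sign $\varepsilon$ results. This is the quantitative analogue of Lemma~\ref{lem:3.3}; when there are no zeros, the ratio is constant trivially.

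\textbf{Step 3: $L^1$ estimate.} Using $f-\varepsilon g=(f^2-g^2)/(f+\varepsilon g)$ wherever $f$ and $\varepsilon g$ have the same sign, we bound the contributions as follows.
\begin{itemize}
\item On the outer set $|f|\ge a_0$: $|f-\varepsilon g|\le\eta/|f|\le\eta/a_0$, contributing $L_0\eta/a_0$.
\item On the misalignment interval between $p$ and $q$ (and its $d$-neighbourhood), use width times height: $|f-\varepsilon g|\le 2B\cdot$ distance, which gives $O(Bc_1^2\eta)$ per zero.
\item On the rest of the window, $d\le|s-p|\le r$, we have $|f-\varepsilon g|\le\eta/|f|\le 2\eta/(\delta|s-p|)$. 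Integrating from $\sigma\sim d$ to $r$ gives $(2\eta/\delta)\ln(r/\sigma)$ per side.
\end{itemize}
Since $\sigma\sim c_1\sqrt\eta$, the logarithm is $\tfrac12\ln(1/\eta)+|\ln(r\sqrt{2B\delta})|$, which is bounded by $(1+\log_+\tfrac1\eta)$ plus constants. Summing over at most $N_*$ zeros produces exactly the displayed $C_A$. The remaining terms $L_0/(2\delta)$ and $2BL_0/\delta^2$ absorb the transition zones where $a_0\le|f|$ fails only marginally.

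The main obstacle is Step 2's bookkeeping. One must choose $d$ so that $g$'s zero lies in $f$'s monotonicity window and both signs agree on the overlap, consistently with $\eta<\eta_0$, and that constraint is what fixes $\eta_0$ and $c_1$. The numerical bound (7.3) is then term-by-term arithmetic with $B\ge1$, $\delta\le1$, using $|\ln(r\sqrt{2B\delta})|\lesssim\ln(1/\delta)\le\delta^{-1}$; the dominant term is $L_0B/a_0\cdot$ and $N_*\cdot 32B/\delta^2$, both $O(B^2L_0\delta^{-3})$, and the bound $B^3L_0\delta^{-4}$ is safely generous.
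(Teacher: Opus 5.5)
Your argument follows the paper's proof essentially step for step. Both proofs dispose of $\eta\ge\eta_0$ trivially, then use the structure of the zero sets: quantitative simplicity, separation, and $|u|\ge a_0$ away from the zeros. Both then match the zeros at scale $\sqrt\eta/\delta$, use the parity of the crossings to force a single global sign, bound the outer region by $\eta/a_0$, use width $\times$ height on the misalignment window, and get the only logarithm from integrating $\eta/(\delta t)$. Folding the zero-free and one-zero cases into the main argument is legitimate. For $\eta<\eta_0$, a zero-free $f$ has $|f|\ge a_0$ everywhere, so $g^2\ge a_0^2/2$ and $g$ is zero-free too. The paper treats these instead as the separate branches (P2)(2),(3).

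Where the proposal goes wrong is the bookkeeping of the constants, which the statement makes part of the claim.

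First, the terms $L_0/(2\delta)$ and $2BL_0/\delta^2$ do not absorb any ``transition zones''. Your $r$-windows and the set $\{|f|\ge a_0\}$ already cover the circle. In the paper these are the constants of the two branches you folded away: $|f|,|g|\ge\delta$ when neither function vanishes, and $\eta\ge\delta^2$ when exactly one does. In your scheme they are simply unused slack.

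Second, and more seriously, your account of \eqref{eq:7.3} misidentifies the dominant term. It is the first term, $2BL_0/\eta_0=1024\,B^3L_0\delta^{-4}$, which comes from the very branch you open with. By contrast $L_0/a_0=16BL_0\delta^{-2}$ and $N_*\cdot 32B/\delta^2=64B^2L_0\delta^{-3}$ are of lower order. That first term is what produces the shape $B^3\delta^{-4}$ at all, and it uses $1024$ of the $1300$, so the bound is not ``safely generous''. You must add the remaining coefficients explicitly: $0.5+2+16+64+128$, plus about $16.2$ for the logarithmic group, to see that the total stays below $1300$. In that logarithmic group, $|\ln(r\sqrt{2B\delta})|=\ln\frac{4}{\sqrt2}+\frac32\ln\frac1\delta+\frac12\ln B$, so you need $\ln B\le B$ as well, not only $\ln\frac1\delta\le\delta^{-1}$.

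Finally, bounding $|f-\varepsilon g|\le\eta/|f|$ in the window loses a factor $2$. The paper instead uses $\eta/(|f|+|g|)\le\eta/(\delta t)$, with $t$ measured from the farther of the two matched zeros. This changes nothing qualitatively. But the displayed $C_A$ then does not come out verbatim, and you would have to check term by term that your contributions fit under it.
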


\begin{proof}
Let $u$ stand for $f$ or $g$ and let $Z(u)$ be its zero set. We may assume $\delta\le B$,
since replacing $\delta$ by $\min(\delta,B)$ only weakens the hypothesis; note that the
non-degeneracy condition already forces $\delta\le\sqrt2\,B$. We use the convention
$0\cdot\log_+\frac10:=0$, so $\rho(0)=0$.

\textbf{(P1) Structure of the zero set.}
(a) If $u(z)=0$, substituting into $u^2+(u')^2\ge\delta^2$ gives $|u'(z)|\ge\delta$: the
non-degeneracy hypothesis is exactly a quantitative simplicity of zeros.
(b) For $|s-z|\le2r$ we get $|u'(s)|\ge|u'(z)|-\|u''\|_{C^0}\cdot2r\ge\delta-2Br=\frac\delta2$; in
particular $u'$ does not change sign on $[z-2r,z+2r]$.
(c) By (b) and the mean value theorem, $|u(s)|\ge\frac\delta2|s-z|$ for $|s-z|\le2r$. Hence two
distinct zeros are more than $2r$ apart, so that
\[
N(u):=\#Z(u)\ \le\ \frac{L_0}{2r}\ =\ \frac{2BL_0}{\delta}\ =\ N_* .
\]

(c$'$) \textbf{The circle is long compared with $r$, so the arcs used below are ordinary intervals.}
Suppose $Z(u)\ne\varnothing$. By (a) each zero is a sign change, and a continuous function on the
circle has an even number of sign changes, so $N(u)$ is \textbf{even}, hence $N(u)\ge2$. Listing $Z(u)$
cyclically as $z_1<\dots<z_{N}$ with $z_{N+1}:=z_1+L_0$, the $N\ge2$ adjacent gaps sum to exactly
$L_0$ and each exceeds $2r$ by (c), so
\[
L_0=\sum_{j=1}^{N}(z_{j+1}-z_j)\ >\ N\cdot2r\ \ge\ 4r .
\]
Consequently, for each $z\in Z(u)$ the periodic lift centred at $z$ realises
$[z-2r,z+2r]\subset(z-\frac{L_0}2,z+\frac{L_0}2)$ \textbf{injectively} as an ordinary closed interval of
$\mathbb R$, with no wrap-around; monotonicity arguments, the mean value theorem and integrations
performed on $[z\pm r]$ and $[z\pm2r]$ may therefore be handled as on a real interval. In (b) and
(c) themselves, $|s-z|$ always denotes distance on the circle and the estimates are made along the
shorter arc, so those two steps do not use the present conclusion. Every later step that treats such
an arc as an ordinary interval --- namely (d), (P3), (P5) and (P6) --- is invoked only in the branch of
(P2) in which $Z(f)$ and $Z(g)$ are both non-empty, so (c$'$) applies there to both functions.

(d) If $\operatorname{dist}(s_0,Z(u))>r$ then $|u(s_0)|\ge a_0$. Indeed, suppose $|u(s_0)|<a_0$.
Since $\delta\le B$ we have $a_0=\frac{\delta^2}{16B}\le\frac{\delta}{16}<\frac\delta2$, so
non-degeneracy gives $|u'(s_0)|\ge\sqrt{\delta^2-\frac{\delta^2}{4}}=\frac{\sqrt3}{2}\delta>\frac\delta2$;
hence on $J:=[s_0-r,s_0+r]$ one has $|u'|\ge\frac\delta2-Br=\frac\delta4$ without change of sign, so
$u$ is strictly monotone on $J$. Moving in the direction in which $|u|$ decreases, $u$ reaches $0$
after a distance at most $\frac{|u(s_0)|}{\delta/4}<\frac{4a_0}{\delta}=r$, and that zero lies in
$J$, contradicting $\operatorname{dist}(s_0,Z(u))>r$.

\textbf{(P0) The degenerate case $\eta=0$.} Here $f^2\equiv g^2$, so $|f|\equiv|g|$ and $Z(f)=Z(g)=:Z$;
moreover $f\not\equiv0$, else $f^2+(f')^2\equiv0<\delta^2$. If $Z=\varnothing$ then $g/f$ is
continuous on the connected $S^1$ with values in $\{\pm1\}$, hence constant. If $Z\ne\varnothing$
then by (P1)(a),(c) the set $Z$ consists of finitely many simple zeros, and on each connected
component of $S^1\setminus Z$ the quotient $g/f$ equals some $\varepsilon_j\in\{\pm1\}$. Let $z\in Z$
be the common endpoint of two adjacent components $A^-$ and $A^+$ with $\varepsilon_-\ne\varepsilon_+$.
Since $f,g\in C^1$ and $f(z)=g(z)=0$, computing the derivative from both sides gives
\[
g'(z)=\lim_{s\uparrow z}\frac{g(s)}{s-z}=\varepsilon_-f'(z),\qquad
g'(z)=\lim_{s\downarrow z}\frac{g(s)}{s-z}=-\varepsilon_-f'(z),
\]
so $f'(z)=0$, contradicting (P1)(a). Hence adjacent components carry the same sign, and going once
around the circle all of them agree. Thus $f\equiv\varepsilon g$ and both sides of the asserted
inequality vanish. From now on $\eta>0$.

\textbf{(P2) The three trivial branches.}
\begin{enumerate}
\item \textbf{$\eta\ge\eta_0$}: $\inf_{\varepsilon}\|f-\varepsilon g\|_{L^1}\le\|f\|_{L^1}+\|g\|_{L^1}\le2BL_0
   \le\frac{2BL_0}{\eta_0}\,\eta$.
\item \textbf{$Z(f)=Z(g)=\varnothing$}: at a minimum point of $|u|$ on the compact circle one has $u'=0$,
   since $u$ does not change sign and $|u|$ is differentiable there, whence $\min|u|\ge\delta$ by
   non-degeneracy and neither $f$ nor $g$ changes sign. Taking
   $\varepsilon:=\operatorname{sign}(f)\operatorname{sign}(g)$ gives $|f+\varepsilon g|=|f|+|g|\ge2\delta$
   and pointwise $|f-\varepsilon g|=\frac{|f^2-g^2|}{|f+\varepsilon g|}\le\frac{\eta}{2\delta}$;
   integrating gives $\le\frac{L_0}{2\delta}\eta$.
\item \textbf{Exactly one of them has a zero}, say $f(z)=0$ and $Z(g)=\varnothing$: then
   $\eta\ge|f(z)^2-g(z)^2|=g(z)^2\ge\delta^2$, which reduces to branch 1 and gives
   $\le\frac{2BL_0}{\delta^2}\eta$.
\end{enumerate}

All three are already \textbf{linear} estimates: no logarithm arises in any of them. From now on
$0<\eta<\eta_0$ and both $Z(f)$ and $Z(g)$ are non-empty; put
\[
d:=\frac{4\sqrt\eta}{\delta}\ <\ r\qquad\text{since }\sqrt\eta<\sqrt{\eta_0}<a_0=\tfrac{\delta r}{4}.
\]
The number $d$ is the scale at which the two zero sets can fail to coincide, and it is the source of
everything that follows.

\textbf{(P3) The zeros are matched bijectively and cyclically.} Let $z\in Z(f)$. From $f(z)=0$ we get
$g(z)^2=|g(z)^2-f(z)^2|\le\eta$, i.e. $|g(z)|\le\sqrt\eta<a_0$. Running the \textbf{proof} of (P1)(d)
rather than its statement: $|g(z)|<a_0<\frac\delta2$ gives $|g'(z)|>\frac\delta2$, so $g$ is strictly
monotone on $[z-r,z+r]$ with $|g'|\ge\frac\delta4$, and there is $w\in Z(g)$ with
\[
|w-z|\ \le\ \frac{|g(z)|}{\delta/4}\ \le\ \frac{4\sqrt\eta}{\delta}\ =\ d .
\]
Symmetrically every $w\in Z(g)$ lies within $d$ of some point of $Z(f)$. \emph{Injectivity}: if
$z\ne z'\in Z(f)$ were matched to the same $w$ then $|z-z'|\le2d<2r$, contradicting the separation in
(P1)(c); the same applies in the other direction, and the two maps are mutually inverse, since
$|w-z|\le d$ and $w$ matched back to $z''$ force $|z''-z|\le2d<2r$ and hence $z''=z$. \emph{Order
preservation}: both maps move points by at most $d$ while the gaps within each set exceed $2r>2d$, so
they cannot cross. \emph{Evenness}: each of $N(f)$ and $N(g)$ is even by (P1)(c$'$), and the bijection
just established makes them equal; write $N(f)=N(g)=:2k$.

\textbf{(P4) The global sign is unique.} List $Z(f)$ cyclically as $z_1<\dots<z_{2k}$ with matched points
$w_j=w(z_j)$, and set
\[
A_j:=(z_j+d,\ z_{j+1}-d),
\]
non-empty since $z_{j+1}-z_j>2r>2d$. Now $f$ has no zero in $(z_j,z_{j+1})$, and all zeros of $g$ lie
in $\bigcup_i[z_i-d,z_i+d]$ by the reverse part of (P3), which is disjoint from $A_j$; hence neither
$f$ nor $g$ changes sign on $A_j$, and we may put $\varepsilon_j:=\operatorname{sign}(fg)|_{A_j}$.
Between $A_j$ and $A_{j+1}$ the function $f$ changes sign at the simple zero $z_{j+1}$ and $g$
changes sign at $w_{j+1}\in[z_{j+1}-d,z_{j+1}+d]$, and there is no other sign change in that stretch,
so $\varepsilon_{j+1}=(-1)(-1)\varepsilon_j=\varepsilon_j$. Going once around the circle,
$\varepsilon_1=\dots=\varepsilon_{2k}=:\varepsilon$.

This step is the quantitative counterpart of the criterion $c\le1$ of Theorem~\ref{thm:3.9}. There, a sign flip
was forbidden because it would have to be invisible to every derivative; here it is forbidden because
the two functions cross zero in matched pairs and the parity of the crossings is rigid. Replacing $g$
by $\varepsilon g$, which changes neither the hypotheses nor $\eta$, it remains to estimate
$\|f-g\|_{L^1}$.

\textbf{(P5) Away from the zeros.} Let $\Omega:=\{s:\operatorname{dist}(s,Z(f))>r\}$. By (P1)(d),
$|f|\ge a_0$; moreover $g^2\ge f^2-\eta\ge a_0^2-\eta\ge\frac{a_0^2}{2}$ since
$\eta<\eta_0=\frac{a_0^2}{2}$, so $|g|\ge\frac{a_0}{\sqrt2}$. As $r>d$ we have
$\Omega\subset\bigcup_jA_j$, hence $fg>0$ and $|f+g|=|f|+|g|\ge a_0$, so
\[
|f-g|=\frac{|f^2-g^2|}{|f+g|}\le\frac{\eta}{a_0}\ \Longrightarrow\
\int_\Omega|f-g|\le\frac{L_0}{a_0}\,\eta .
\]

\textbf{(P6) Near a single zero: the misalignment window and the split at $\sigma$.} Fix $z\in Z(f)$,
$w=w(z)$ and $I:=[z-r,z+r]$. For $s\in I$ we have $|s-w|\le r+d\le2r$, so (P1)(b) gives
$|f'|,|g'|\ge\frac\delta2$ on $I$, each without change of sign; since $fg>0$ on
$A_j\cap I\ne\varnothing$ and both functions cross their zeros monotonically, $f'$ and $g'$ have the
\textbf{same} sign on $I$, and we may assume both positive. Write $m:=\max(z,w)$ and $m_-:=\min(z,w)$, so
$m-m_-\le d$.

\begin{itemize}
\item \textbf{(i) The misalignment window $[m_-,m]$}, of length $\le d$. Here
  $|f(s)|=|f(s)-f(z)|\le B|s-z|\le Bd$ and likewise $|g(s)|\le Bd$, so
  \[
  \int_{m_-}^{m}|f-g|\ \le\ 2Bd\cdot d\ =\ 2Bd^2\ =\ \frac{32B}{\delta^2}\,\eta .
  \]
  This is the stretch where width $\times$ height returns to the linear order, since
  $d\sim\sqrt\eta$ and hence $d^2\sim\eta$: the misalignment of the zeros, though large compared with
  $\eta$, occupies a correspondingly short interval.
\item \textbf{(ii) The one-sided region $[m,z+r]$.} Put $t:=s-m\ge0$. By (P1)(c),
  $f(s)\ge\frac\delta2(s-z)\ge\frac\delta2t$ and $g(s)\ge\frac\delta2(s-w)\ge\frac\delta2t$, so
  $f+g\ge\delta t$ and
  \begin{equation}\label{eq:7.1}\tag{7.1}
  |f-g|=\frac{|f^2-g^2|}{f+g}\ \le\ \frac{\eta}{\delta t}.
  \end{equation}
  On the other hand one of the two functions vanishes at $m$ while the other has absolute value
  $\le Bd$, so $|f(m)-g(m)|\le Bd$; with $|(f-g)'|\le2B$ this gives
  \begin{equation}\label{eq:7.2}\tag{7.2}
  |f-g|\ \le\ Bd+2Bt.
  \end{equation}
  Let $\tau:=\sqrt{\eta/(2B\delta)}$ be the scale at which \eqref{eq:7.1} meets the linear part
  of \eqref{eq:7.2}, and put
  \[
  \sigma:=\max(d,\tau)\ \le\ c_1\sqrt\eta,\qquad c_1=\max\Bigl\{\frac4\delta,\frac1{\sqrt{2B\delta}}\Bigr\}.
  \]
  Using \eqref{eq:7.2} on $[0,\sigma]$ and \eqref{eq:7.1} on $[\sigma,r]$,
  \[
  \int_0^{r}|f-g|\,dt\ \le\ \bigl(Bd\sigma+B\sigma^2\bigr)+\frac{\eta}{\delta}\ln\frac{r}{\sigma}
  \ \le\ 2Bc_1^2\eta+\frac{\eta}{\delta}\Bigl(\ln\bigl(r\sqrt{2B\delta}\bigr)+\tfrac12\log_+\tfrac1\eta\Bigr),
  \]
  the last step using $d\le\sigma\le c_1\sqrt\eta$ and $\sigma\ge\tau=\sqrt{\eta/(2B\delta)}$.
\item \textbf{(iii) The other side $[z-r,m_-]$} is completely symmetric and gives the same bound.
\end{itemize}

Adding the three pieces,
\[
\int_I|f-g|\ \le\ \frac{32B}{\delta^2}\eta+4Bc_1^2\eta
+\frac{2\eta}{\delta}\Bigl(\bigl|\ln\bigl(r\sqrt{2B\delta}\bigr)\bigr|+\tfrac12\log_+\tfrac1\eta\Bigr).
\]

\textbf{(P7) Assembly.} The $r$-neighbourhoods of the points of $Z(f)$ are pairwise disjoint by (P1)(c),
their complement is contained in $\Omega$, and there are $N(f)\le N_*$ of them. Adding (P5) and
$N(f)$ copies of (P6), and taking the largest of the constants together with the three branches of
(P2), gives $\|f-\varepsilon g\|_{L^1}\le C_A\,\eta\bigl(1+\log_+\frac1\eta\bigr)=C_A\,\rho(\eta)$
with exactly the $C_A$ displayed above, $\varepsilon$ being the sign constructed in (P4).

Finally, $\{\pm1\}$ being finite,
$\inf_{\varepsilon'=\pm1}\|f-\varepsilon'g\|_{L^1}\le\|f-\varepsilon g\|_{L^1}\le C_A\,\rho(\eta)$,
which is the infimum form.
\end{proof}

The whole estimate is linear in $\eta$ except at one point. The only source of a logarithm in the
entire proof is the integral of \eqref{eq:7.1} over $t\in[\sigma,r]$ in (P6)(ii), namely
$\int_\sigma^r\frac{\eta}{\delta t}\,dt$; Figure~\ref{fig:5} places the two regions of (P6) side by
side. That estimate is \textbf{saturable}: as soon as $f-g$ follows the
profile $\eta/(2\delta t)$, the difference $f^2-g^2$ is flattened to the constant $\eta$ across the
whole range of scales, and the integral genuinely accumulates a logarithm. The $C^2$ regularity
available here does not remove it: a $C^2$ bound only lifts the inner cut-off from $\sqrt\eta$ to
$\eta^{1/3}$, since
$(\eta/t)''\sim\eta/t^3\le B$ requires $t\gtrsim\eta^{1/3}$, which lowers the logarithmic coefficient
from $\tfrac12$ to $\tfrac13$ without removing it. The next theorem confirms that this is a genuine
feature by exhibiting a family that realises the profile.

\begin{figure}[tbp]
\centering
\includegraphics{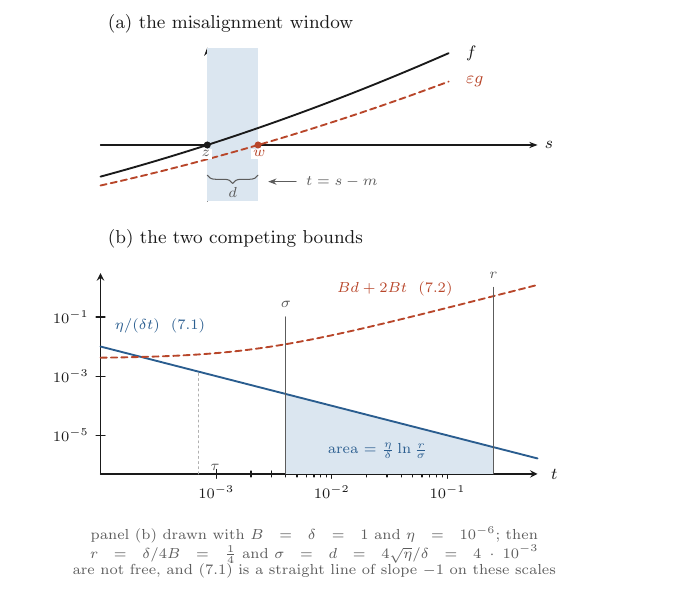}
\caption{\emph{Where the logarithm comes from.} Step (P6) of the proof of
Theorem~\ref{thm:7.1}, near one matched pair of zeros $z\in Z(f)$, $w=w(z)\in Z(g)$, with
$m_-=\min(z,w)$, $m=\max(z,w)$, $t=s-m$. (a) On the misalignment window $f$ and $\varepsilon g$
have opposite signs, but the window is only $d=|z-w|$ wide while $|f|,|\varepsilon g|\le Bd$ on it,
so width $\times$ height returns the linear order $2Bd^2=32B\eta/\delta^2$. (b) To the right of $m$
the two bounds \eqref{eq:7.1} and \eqref{eq:7.2} compete; the proof uses \eqref{eq:7.2} on
$[0,\sigma]$ and \eqref{eq:7.1} on $[\sigma,r]$, where the shaded area is
$\frac\eta\delta\ln\frac r\sigma=\frac\eta\delta(\tfrac12\log_+\tfrac1\eta+O(1))$ --- the only
logarithm in the paper. As $\eta\downarrow0$ the left edge $\sigma\asymp\sqrt\eta$ slides left and
the shaded region gains $\tfrac12\log\tfrac1\eta$ of width; that widening \emph{is} the logarithm.
Both curves are schematic. The dotted mark is $\tau=\sqrt{\eta/(2B\delta)}$; the split nevertheless
always sits at $\sigma=\max(d,\tau)=d$, since $\delta\le B$ forces $d/\tau\ge4\sqrt2$.}
\label{fig:5}
\end{figure}

\begin{theorem}[the logarithm is not removable]
\label{thm:7.2}
Let $L_0=2\pi$ and $\lambda=\frac14$. Let
$q(x)=10x^3-15x^4+6x^5$ and define the piecewise extension
\[
\psi(x)=\begin{cases}0,&x\le0,\\ q(x),&0<x<1,\\ 1,&x\ge1,\end{cases}\qquad
\chi(s)=\psi\bigl(2(1-|s|)\bigr),
\]
so that $\chi\in C^2$, $\chi=1$ for $|s|\le\frac12$ and $\chi=0$ for $|s|\ge1$. For
$0<\epsilon\le\epsilon_0:=10^{-3}$ put
\[
T=\epsilon^{1/3},\qquad h_\epsilon(s)=\frac{\lambda\epsilon\,\chi(s)}{\sqrt{s^2+T^2}},\qquad
f(s)=\sin s,\qquad g_\epsilon=f-h_\epsilon .
\]
Then $f,g_\epsilon\in C^2(\mathbb R/2\pi\mathbb Z)$ and, with $B,\delta,L_0$ \textbf{independent of
$\epsilon$},
\[
\|f\|_{C^2}+\|g_\epsilon\|_{C^2}\le7=:B,\qquad
f^2+(f')^2\equiv1,\qquad g_\epsilon^2+(g_\epsilon')^2\ge0.97>0.81=(0.9)^2=:\delta^2 .
\]
Moreover $1.87\lambda\epsilon\le\eta_\epsilon:=\|f^2-g_\epsilon^2\|_{C^0}\le2.1\lambda\epsilon\to0$
and
\[
\inf_{\varepsilon=\pm1}\|f-\varepsilon g_\epsilon\|_{L^1}=\|h_\epsilon\|_{L^1}
\ge\frac{2\lambda\epsilon}3\ln\frac1\epsilon,
\]
\[
\text{so}\qquad
\frac{\inf_{\varepsilon}\|f-\varepsilon g_\epsilon\|_{L^1}}{\eta_\epsilon}
\ \ge\ 0.3174\log_+\tfrac1{\eta_\epsilon}-0.2415\ \longrightarrow\ \infty .
\]
In particular no constant $C$ depending only on $(B,\delta,L_0)$ satisfies
$\inf_\varepsilon\|f-\varepsilon g\|_{L^1}\le C\eta$ on this family, so $\rho$ cannot be replaced by
the identity in Theorem~\ref{thm:7.1}.
\end{theorem}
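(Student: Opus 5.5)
The proof is a sequence of explicit calculus estimates, with one structural step: showing that the infimum over signs is attained at $\varepsilon=+1$.

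\textbf{Step 1: regularity and the $C^2$ bound.} The polynomial $q$ satisfies $q(0)=0$, $q(1)=1$, and $q'$, $q''$ vanish at both endpoints, since $q'=30x^2(1-x)^2$ and $q''=60x(1-x)(1-2x)$. Hence $\psi\in C^2(\mathbb R)$. The map $s\mapsto 2(1-|s|)$ is non-smooth only at $s=0$, where $\psi\equiv1$, so $\chi\in C^2$. Since $\chi$ is supported in $[-1,1]\subset(-\pi,\pi)$, $h_\epsilon$ descends to $\mathbb R/2\pi\mathbb Z$.

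For the bounds, use $|q'|\le 15/8$ and $|q''|\le 10/\sqrt3$, so that $|\chi'|\le 15/4$ and $|\chi''|\le 40/\sqrt3$. The derivatives of $(s^2+T^2)^{-1/2}$ are bounded by $T^{-1}$, $T^{-2}$ and $2T^{-3}$. Multiplying by $\lambda\epsilon=\lambda T^3$ gives
\[
\|h_\epsilon\|_{C^2}\lesssim \lambda(T^2+T+1)\cdot\text{const}.
\]
This is small compared with the slack $7-2\|\sin\|_{C^2}=1$, once the constants are tracked with $T\le 0.1$.

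\textbf{Step 2: non-degeneracy.} Write
\[
g^2+g'^2=1-2(h\sin s+h'\cos s)+h^2+h'^2 \ge 1-2(|h|+|h'|).
\]
We have $|h|\le\lambda\epsilon/T=\lambda T^2$. Also $|h'|\le\lambda\epsilon\,(|\chi'|/T+|s|(s^2+T^2)^{-3/2})\le\lambda(15/4\,T^2+cT)$ with $c=2/(3\sqrt3)$. For $T\le0.1$ this gives a bound of at least $0.97$.

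\textbf{Step 3: the size of $\eta_\epsilon$.} We have $f^2-g^2=2h\sin s-h^2$. On $|s|\le1/2$, where $\chi=1$, we get $2h\sin s=2\lambda\epsilon\,\sin s/\sqrt{s^2+T^2}$. This tends to $2\lambda\epsilon$ as $s/T$ grows, and is evaluated at a point such as $s=1/2$. Elsewhere it is at most $2\lambda\epsilon\,|s|/\sqrt{s^2+T^2}\le2\lambda\epsilon$. The correction $h^2\le\lambda^2T^4$ is $O(\epsilon^{4/3})$, which is absorbed by $\epsilon\le10^{-3}$. Concretely, the lower bound comes from $\sin(1/2)/\sqrt{1/4+T^2}\ge0.95$, and the upper bound $2.1$ comes from the slack. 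These yield the stated bounds $1.87$ and $2.1$.

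\textbf{Step 4: the infimum is attained at $\varepsilon=+1$, and the logarithm.} $\|f-g\|_{L^1}=\|h\|_{L^1}$. For the other sign,
\[
\|f+g\|_{L^1}=\|2\sin s-h\|_{L^1}\ge 2\|\sin\|_{L^1}-\|h\|_{L^1}=8-\|h\|_{L^1},
\]
which exceeds $\|h\|_{L^1}$ because $\|h\|_{L^1}$ is tiny. For the lower bound,
\[
\|h\|_{L^1}\ge 2\lambda\epsilon\int_0^{1/2}\frac{ds}{\sqrt{s^2+T^2}}=2\lambda\epsilon\,\operatorname{arsinh}\frac{1}{2T}\ge2\lambda\epsilon\ln\frac1T=\frac{2\lambda\epsilon}{3}\ln\frac1\epsilon,
\]
using $\operatorname{arsinh}x\ge\ln(2x)$.

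\textbf{Step 5: the ratio bound.} Divide by $\eta_\epsilon\le2.1\lambda\epsilon$. Then express $\ln(1/\epsilon)$ through $\ln(1/\eta_\epsilon)$, using $\epsilon\le\eta_\epsilon/(1.87\lambda)$, that is, $\ln\frac1\epsilon\ge\ln\frac1{\eta_\epsilon}+\ln(1.87\lambda)$. This gives $\frac{2}{3\cdot2.1}=0.3174$ and an additive constant near $0.3174\ln(1/0.4675)\approx0.2415$. Since $\eta_\epsilon<1$, $\log_+$ equals $\ln$. Divergence, and hence the failure of any linear bound, is then immediate.

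The main obstacle is the bookkeeping of numerical constants in Steps 1–3, especially the $C^2$ bound on $h_\epsilon$. There the $T^{-3}$ growth of the second derivative is exactly cancelled by $\epsilon=T^3$. This cancellation is the reason for the choice $T=\epsilon^{1/3}$, echoing the $\eta^{1/3}$ remark after Theorem~\ref{thm:7.1}.
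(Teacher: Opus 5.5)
Your route is the paper's route. The $C^2$ splice, the cancellation $\epsilon T^{-3}=1$, the bound $1-2(\|h\|_{C^0}+\|h'\|_{C^0})$, evaluating $f^2-g^2$ at $s=\tfrac12$, the $\operatorname{arcsinh}$ lower bound, disposing of $\varepsilon=-1$ via $8-\|h_\epsilon\|_{L^1}$, and the change of variables using both bounds on $\eta_\epsilon$ are exactly Appendix~\ref{sec:C.2} and steps (e)--(f). Step~1 closes even with your cruder derivative bounds, which give $\|h_\epsilon\|_{C^2}\le\lambda(23.1T^2+7.5T+2)+\lambda(3.75T^2+T)+\lambda T^2<0.8$. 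However, two of the explicit constants the statement asserts do not follow from what you wrote.

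\textbf{Step 2 does not reach $0.97$.} With $|h|\le\lambda T^2$ and $|h'|\le\lambda(\tfrac{15}{4}T^2+cT)$, at $T=0.1$ you get only $1-2(0.0025+0.0190)\approx0.957$. This still exceeds $\delta^2=0.81$, so the counterexample survives, but the stated $0.97$ is not proved. The fix is to use that $\chi'$ is supported in $\{\tfrac12\le|s|\le1\}$, where $(s^2+T^2)^{-1/2}\le2$. The $\chi'$-term of $h'$ is then at most $7.5\lambda\epsilon$ rather than $\tfrac{15}{4}\lambda T^2$. Hence $|h'|\le\lambda(7.5\epsilon+0.385\,\epsilon^{1/3})\le0.0115$, and the bound becomes $0.972$.

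\textbf{Step 3 uses a false inequality.} The claim $\sin(\tfrac12)/\sqrt{1/4+T^2}\ge0.95$ fails at $T=0.1$, where the value is $0.4794/0.5099\approx0.9402$. With the true value, $2h\sin s$ at $s=\tfrac12$ is only $\ge1.8804\,\lambda\epsilon$. Your global correction $h^2\le\lambda^2T^4=\lambda\epsilon\cdot\lambda\epsilon^{1/3}$ can be as large as $0.025\,\lambda\epsilon$. Subtracting it leaves $1.855\,\lambda\epsilon<1.87\,\lambda\epsilon$, which would also break the additive constant $0.2415$ in Step~5. Your argument reaches $1.87$ only because of the wrong $0.95$. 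The correction must be estimated at the same point: $h(\tfrac12)\le2\lambda\epsilon$ gives $h(\tfrac12)^2\le4(\lambda\epsilon)^2\le10^{-3}\lambda\epsilon$. Hence $\eta_\epsilon\ge1.879\,\lambda\epsilon\ge1.87\,\lambda\epsilon$. This is the paper's computation $h(s_*)\bigl(2f(s_*)-h(s_*)\bigr)\ge1.9611\times0.95835\,\lambda\epsilon$.
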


\begin{proof}
The design of the family is the point: $h_\epsilon$ follows the saturating profile
$\eta/t$ down to an inner cut-off $T$, and $T=\epsilon^{1/3}$ is chosen as the largest cut-off
compatible with a \textbf{fixed} $C^2$ bound, since $\epsilon T^{-3}=1$. The verification that
$f,g_\epsilon$ meet the hypotheses of Theorem~\ref{thm:7.1} with the stated $\epsilon$-independent constants ---
the $C^2$ splice and periodic extension, the uniform $C^2$ bound $B=7$, the non-degeneracy bound
$0.972>\delta^2$, and the two-sided bounds for $\eta_\epsilon$ --- is elementary and is carried out in
Appendix~\ref{sec:C.2}. We give here the two steps that produce the logarithm.

\textbf{(e) The $L^1$ lower bound, and the other global sign.} Since $\chi\equiv1$ on $|s|\le\frac12$ and
$h_\epsilon\ge0$,
\[
\|h_\epsilon\|_{L^1}\ \ge\ 2\int_0^{1/2}\frac{\lambda\epsilon}{\sqrt{s^2+T^2}}\,ds
=2\lambda\epsilon\operatorname{arcsinh}\frac1{2T}\ \ge\ 2\lambda\epsilon\ln\frac1T
=\frac{2\lambda\epsilon}3\ln\frac1\epsilon,
\]
using $\operatorname{arcsinh}x=\ln(x+\sqrt{x^2+1})\ge\ln(2x)$. The \textbf{other} sign must be checked
separately, since the infimum in Theorem~\ref{thm:7.1} is over both:
\[
\|f+g_\epsilon\|_{L^1}=\|2f-h_\epsilon\|_{L^1}\ \ge\ 2\int_0^{2\pi}|\sin|-\|h_\epsilon\|_{L^1}
=8-\|h_\epsilon\|_{L^1}\ \ge\ 7,
\]
because $\operatorname{supp}h_\epsilon\subset[-1,1]$ gives
$\|h_\epsilon\|_{L^1}\le2\|h_\epsilon\|_{C^0}\le2\lambda\epsilon^{2/3}\le5\times10^{-3}$. The same
estimate shows $\|h_\epsilon\|_{L^1}\to0$, so for small $\epsilon$ the infimum is attained at
$\varepsilon=+1$ and equals $\|h_\epsilon\|_{L^1}$, the branch $\varepsilon=-1$ remaining at distance
$\ge7$.

\textbf{(f) Change of variables.} By the upper bound $\eta_\epsilon\le2.1\lambda\epsilon$ of Appendix~\ref{sec:C.2}
and by (e),
\[
\frac{\inf_{\varepsilon=\pm1}\|f-\varepsilon g_\epsilon\|_{L^1}}{\eta_\epsilon}
\ \ge\ \frac{(2\lambda\epsilon/3)\ln(1/\epsilon)}{2.1\,\lambda\epsilon}=\frac2{6.3}\ln\frac1\epsilon
\ \ge\ 0.3174\ln\frac1\epsilon .
\]
Next, the \textbf{lower} bound $\eta_\epsilon\ge1.87\lambda\epsilon=0.4675\,\epsilon$ gives
$\epsilon\le\eta_\epsilon/0.4675$, hence
$\ln\frac1\epsilon\ge\ln\frac{0.4675}{\eta_\epsilon}=\ln\frac1{\eta_\epsilon}-0.7605$; the lower
bound on $\eta_\epsilon$ is what this step needs. Combining, and
noting that $\eta_\epsilon\le1$ for small $\epsilon$ so that
$\ln\frac1{\eta_\epsilon}=\log_+\frac1{\eta_\epsilon}$, together with
$0.3174\times0.7605\le0.2415$,
\[
\frac{\inf_{\varepsilon=\pm1}\|f-\varepsilon g_\epsilon\|_{L^1}}{\eta_\epsilon}
\ \ge\ 0.3174\log_+\tfrac1{\eta_\epsilon}-0.2415\ \xrightarrow[\epsilon\to0]{}\ \infty .
\]
Were there a constant $C=C(B,\delta,L_0)$ with $\inf_\varepsilon\|f-\varepsilon g\|_{L^1}\le C\eta$,
the left-hand side would be bounded along this family.
\end{proof}

\begin{remark}[the exact form of the sharpness, and a Hölder corollary]
\label{rem:7.3}
Theorems~\ref{thm:7.1} and \ref{thm:7.2}
together determine the \textbf{order of the function}: $\eta\log\frac1\eta$ cannot be improved to
$O(\eta)$. The logarithmic \textbf{coefficients} of the two bounds are not matched --- assembling
(P6)--(P7) shows that the coefficient of $\eta\log_+\frac1\eta$ delivered by the proof above is
exactly $N_*/\delta=2BL_0\delta^{-2}$, while the family of Theorem~\ref{thm:7.2} gives $0.3174$ ---
and closing that gap is a separate
question (\S\ref{sec:questions}). Finally, since
$\sup_{0<t\le1}t^\vartheta\ln\frac1t=\frac1{e\vartheta}$, Theorem~\ref{thm:7.1} yields a Hölder form: for every
$\vartheta\in(0,1)$,
\[
\inf_\varepsilon\|f-\varepsilon g\|_{L^1}\le\bigl(1+\tfrac1{e\vartheta}\bigr)C_A\,\eta^{1-\vartheta},
\]
so \textbf{every} Hölder exponent below $1$ is admissible, and only the exponent $1$ is excluded by
Theorem~\ref{thm:7.2}.
\end{remark}

%% file: sections/08-dichotomy.tex
\section{Global instability and conditional stability on the non-degenerate strata}
\label{sec:dichotomy}

Corollary~\ref{cor:6.10} says that a generic curve is determined by its unsigned datum. It says nothing about
how the curve depends on the datum. On the whole of $\mathcal G^r$ there is no such dependence in the
sense of a modulus of continuity: \S\ref{sec:8.2} constructs two sequences
of curves with fixed, distinct, non-mirror knot types whose data converge while their orbits stay
apart. What makes this more than a negative statement is that the failure can be localised. The
single quantity
\[
\Delta(\alpha):=\inf_{s}\sqrt{\tau_\alpha(s)^2+\tau_\alpha'(s)^2}
\]
--- positive precisely when all torsion zeros are simple, and measuring how simple --- is a
\textbf{quantitative non-degeneracy parameter}, and once the competing curves are held under uniform
$C^5$ and positive-curvature bounds it is the parameter in which the failure is confined.
Sections~\ref{sec:8.3}--\ref{sec:8.6} prove that stability holds, with the modulus $\rho$ of \S\ref{sec:engine},
uniformly on every stratum $\Delta\ge\delta$ and on a neighbourhood of every fixed generic curve;
\S\ref{sec:8.7} proves that the optimal constant on those strata diverges as $\delta\downarrow0$, and
that \emph{every} uniformly bounded near-collision sequence must satisfy $\Delta\to0$
(Proposition~\ref{prop:8.23}). Figure~\ref{fig:6} summarises the resulting picture.

``Globally unstable, yet conditionally stable under a priori bounds'' is a familiar pattern in inverse
problems; Koch--Rüland--Salo \cite{ref14} give a systematic account of several instability mechanisms, all
based on compression properties of a forward operator --- singular value or entropy number bounds
obtained from strong, weak or microlocal smoothing. We cite \cite{ref14} for the pattern only. The mechanism
here is of a different kind: it is not smoothing-induced compression, but the collapse of the
branches of the unsigned quotient $\tau\mapsto\tau^2$ at the torsion-degenerate locus, which is
exactly what makes it localisable in $\Delta$.

\begin{figure}[tbp]
\centering
\includegraphics{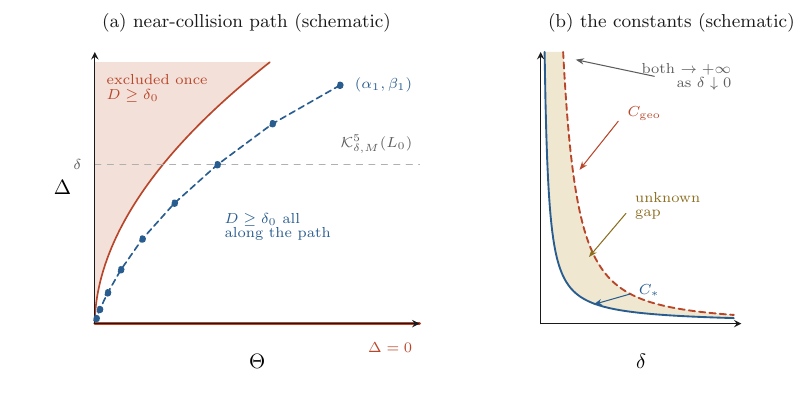}
\caption{\emph{Global instability and conditional stability across the torsion-degenerate locus.} Both panels are
drawn for one fixed exact ambiguous pair, so that the orbit distance stays at least $\delta_0$, with
$M\ge M_0(\gamma_+,\gamma_-,L_0)$ held fixed; $D$ and the constants refer to the reflection-inclusive
group $E(3)$, and $\Delta$ of a pair means $\min\{\Delta(\alpha),\Delta(\beta)\}$, as in
Proposition~\ref{prop:8.23}. (a) On the stratum $\Delta\ge\delta$, Theorem~\ref{thm:8.17} forces
$\rho(\Theta)\ge\delta_0/C_{\mathrm{geo}}(M,\delta,L_0)$ for any pair with $D\ge\delta_0$, which
excludes the shaded region; since $C_{\mathrm{geo}}$ blows up as $\delta\downarrow0$, that region
pinches to the origin, and such a sequence can reach $\Theta\to0$ only by descending to $\Delta=0$.
This is Proposition~\ref{prop:8.23}. (b) What is proved about the two constants: finiteness for every $\delta>0$,
with $C_*\le C_{\mathrm{geo}}$ bounded explicitly by \eqref{eq:8.1} (Theorem~\ref{thm:8.17}), and divergence as
$\delta\downarrow0$ (Theorem~\ref{thm:8.22}). \textbf{Both panels are schematic}: only finiteness for $\delta>0$, the monotonicity of $C_*$ in
$\delta$ and in $M$ (Definition~\ref{def:8.21}) and divergence as $\delta\downarrow0$ are asserted.}
\label{fig:6}
\end{figure}

\subsection{The normalised setting}
\label{sec:8.1}

\begin{definition}[normalisation, data, orbit distance]
\label{def:8.1}
Fix $L_0>0$. For $\gamma\in\mathcal E^r$
let $N_{L_0}(\gamma)$ be the composition of three steps: (1) arclength reparametrisation, with base
point the image of $\gamma(0)$; (2) the homothety $\lambda=L_0/L(\gamma)$; (3) the translation making
the centroid $0$. The result is a curve on $\mathbb R/L_0\mathbb Z$ of unit speed, length $L_0$ and
centroid $0$, and $N_{L_0}$ is idempotent exactly on curves that are unit speed, of length $L_0$ and
centroid $0$, whose parameter origin is the arclength origin and whose orientation agrees. Write
\[
\mathcal G^r_{L_0}:=\bigl\{\alpha\in\mathcal G^r:\ \text{unit speed, length }L_0,\
\text{centroid }0,
\]
\[
\text{parameter origin at the arclength origin, orientation fixed}\bigr\}
\]
for the normalised slice, and put
\[
\mathcal D(\gamma):=\bigl(\kappa_c,\ \tau_c^2\bigr)\in C^{r-3}\bigl(\mathbb R/L_0\mathbb Z,\mathbb R^2\bigr),
\qquad c=N_{L_0}(\gamma),
\]
\[
D(\alpha,\beta):=\inf_{g\in E(3)}d_H\bigl(g(\operatorname{Im}\alpha),\operatorname{Im}\beta\bigr),
\qquad \bar D(\alpha,\beta):=D\bigl(N_{L_0}(\alpha),N_{L_0}(\beta)\bigr),
\]
with $d_H$ the Hausdorff distance of non-empty compact sets. Since $D$ is defined on \textbf{images}, it
automatically quotients by $\mathrm{Diff}(S^1)$, by the base point and by translations; the infimum
runs over \textbf{all of $E(3)$, including the branch $\det=-1$}. For curves defined on one and the
same $\mathbb R/L_0\mathbb Z$ we also use the finer \textbf{labelled orbit distance}
\[
d_{\mathrm{lab}}(\alpha,\beta):=\inf_{g\in E(3)}\|\alpha-g\beta\|_{C^0(\mathbb R/L_0\mathbb Z)} ,
\]
which compares the two \emph{parametrisations} in the common label instead of the two images.
\end{definition}

\begin{lemma}[the two distances, and why both are recorded]
\label{lem:8.2}
On curves defined on the common circle $\mathbb R/L_0\mathbb Z$, $d_{\mathrm{lab}}$ is a
pseudo-metric, invariant under independent $E(3)$ actions, and
\[
D(\alpha,\beta)\ \le\ d_{\mathrm{lab}}(\alpha,\beta).
\]
\end{lemma}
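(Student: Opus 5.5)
The proof splits into two parts: the pseudo-metric axioms for $d_{\mathrm{lab}}$, and the comparison with $D$. Throughout I use that $E(3)$ is a group of isometries of $\mathbb R^3$, so $\|g\alpha-g\beta\|_{C^0}=\|\alpha-\beta\|_{C^0}$ for every $g\in E(3)$.

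\emph{Pseudo-metric.} Non-negativity is clear, and $d_{\mathrm{lab}}(\alpha,\alpha)=0$ follows by taking $g=\mathrm{id}$. For symmetry, apply the isometry $g^{-1}$ to get $\|\alpha-g\beta\|_{C^0}=\|g^{-1}\alpha-\beta\|_{C^0}$; since $g\mapsto g^{-1}$ is a bijection of $E(3)$, the two infima coincide. For the triangle inequality, fix $g,h\in E(3)$. Then
\[
\|\alpha-gh\gamma\|_{C^0}\le\|\alpha-g\beta\|_{C^0}+\|g\beta-gh\gamma\|_{C^0}=\|\alpha-g\beta\|_{C^0}+\|\beta-h\gamma\|_{C^0}.
\]
Because $gh\in E(3)$, the left side is at least $d_{\mathrm{lab}}(\alpha,\gamma)$. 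Taking the infimum over $g$ and $h$ separately gives $d_{\mathrm{lab}}(\alpha,\gamma)\le d_{\mathrm{lab}}(\alpha,\beta)+d_{\mathrm{lab}}(\beta,\gamma)$. For invariance under independent actions, take $g_1,g_2\in E(3)$. We have $\|g_1\alpha-g\,g_2\beta\|_{C^0}=\|\alpha-g_1^{-1}gg_2\beta\|_{C^0}$, and $g\mapsto g_1^{-1}gg_2$ is a bijection of $E(3)$. Hence $d_{\mathrm{lab}}(g_1\alpha,g_2\beta)=d_{\mathrm{lab}}(\alpha,\beta)$.

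\emph{Comparison.} The key fact is the elementary bound
\[
d_H(\operatorname{Im}u,\operatorname{Im}v)\le\|u-v\|_{C^0}
\]
for continuous $u,v$ on the common circle. To see it, note that every point $u(s)$ of $\operatorname{Im}u$ lies within $\|u-v\|_{C^0}$ of the point $v(s)\in\operatorname{Im}v$, and symmetrically. Now apply this with $u=g\beta$ and $v=\alpha$; since $g(\operatorname{Im}\beta)=\operatorname{Im}(g\beta)$, this gives
\[
d_H\bigl(g(\operatorname{Im}\beta),\operatorname{Im}\alpha\bigr)\le\|\alpha-g\beta\|_{C^0}.
\]
Definition~\ref{def:8.1} puts $g$ on the first argument, i.e. it uses $d_H(g(\operatorname{Im}\alpha),\operatorname{Im}\beta)$. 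Applying the isometry $g^{-1}$, this equals $d_H(\operatorname{Im}\alpha,g^{-1}(\operatorname{Im}\beta))$, so $D$ is symmetric in its two arguments and the displayed bound applies. Taking the infimum over $g$ on both sides yields $D(\alpha,\beta)\le d_{\mathrm{lab}}(\alpha,\beta)$.

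The only point requiring care is this bookkeeping of which argument $g$ acts on; everything else is routine. The "why both are recorded" part of the title is expository and needs no proof.
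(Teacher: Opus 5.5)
Your proof is correct and follows essentially the same route as the paper. Symmetry and invariance under independent actions come from the reindexings $g\mapsto g^{-1}$ and $g\mapsto g_1^{-1}gg_2$ of $E(3)$, the triangle inequality from composing isometries, and the comparison from the labelwise bound $d_H(\operatorname{Im}u,\operatorname{Im}v)\le\|u-v\|_{C^0}$ followed by moving $g$ to the other argument with $g^{-1}$; the only cosmetic difference is that you phrase that last step as a symmetry of $D$, whereas the paper applies $g^{-1}$ directly to reach the form $d_H(g^{-1}\operatorname{Im}\alpha,\operatorname{Im}\beta)$ of Definition~\ref{def:8.1}.
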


\begin{proof}
Symmetry follows because $g$ is an isometry, so
$\|\alpha-g\beta\|_{C^0}=\|g^{-1}\alpha-\beta\|_{C^0}$ and $g\mapsto g^{-1}$ is a bijection of $E(3)$;
the triangle inequality follows by composing the two near-optimal isometries. Invariance under
independent actions, $d_{\mathrm{lab}}(g_1\alpha,g_2\beta)=d_{\mathrm{lab}}(\alpha,\beta)$, holds
because $\|g_1\alpha-gg_2\beta\|_{C^0}=\|\alpha-(g_1^{-1}gg_2)\beta\|_{C^0}$ and
$h=g_1^{-1}gg_2$ ranges over all of $E(3)$ as $g$ does. For the inequality, a pointwise bound
$\sup_s|\alpha(s)-g\beta(s)|\le c$ gives
$d_H\bigl(\operatorname{Im}\alpha,\operatorname{Im}(g\beta)\bigr)\le c$, because each point of either
image lies within $c$ of the point of the other carrying the same label; applying the isometry
$g^{-1}$ turns this into
$d_H\bigl(g^{-1}\operatorname{Im}\alpha,\operatorname{Im}\beta\bigr)\le c$ with $g^{-1}\in E(3)$, so
taking infima gives $D(\alpha,\beta)\le d_{\mathrm{lab}}(\alpha,\beta)$.
\end{proof}

The hierarchy this produces is the one the statements below follow. The \textbf{input} is a pair of
data read in the common arclength label; the \textbf{primary output} is a bound on
$d_{\mathrm{lab}}$, that is, on the parametrised curves modulo $E(3)$ but \emph{not} modulo
relabelling; and the \textbf{secondary output}, obtained from Lemma~\ref{lem:8.2}, is the bound on the
label-free Hausdorff orbit distance $D$. The negative results are stated for $D$ and therefore hold a
fortiori for $d_{\mathrm{lab}}$.

A chiral curve and its mirror image have \emph{identical} unsigned data, so restricting the infimum
to $SE(3)$ would leave a pair with zero data difference and positive distance, and
Theorem~\ref{thm:8.17} would fail. Lemma~\ref{lem:8.15} is where the reflection re-enters the proof.

\begin{remark}[scale, and the argument of the logarithm]
\label{rem:8.3}
Everything in \S\S\ref{sec:engine}--\ref{sec:dichotomy} is stated on the normalised slice, where the
length is the fixed number $L_0$ and the label is a fixed arclength label. All quantities occurring
are therefore real numbers computed in one fixed unit of length, and expressions such as
$\rho(\Theta)=\Theta(1+\log_+\frac1\Theta)$ are well defined. The dependence on that unit is explicit: writing
$\widehat\gamma(u):=\gamma(L_0u)/L_0$ for the length-one normalisation, $u\in\mathbb R/\mathbb Z$, one
has $\widehat\kappa=L_0\kappa$, $\widehat\tau=L_0\tau$ and $\widehat\tau\,'=L_0^2\tau'$ pointwise, so
the length-one datum $(L_0\kappa,L_0^2\tau^2)$ is dimensionless and distances are divided by $L_0$.
Both $\Theta$ and $\Delta$ are then comparable to their length-one counterparts within a factor
$\max\{L_0,L_0^2\}/\min\{L_0,L_0^2\}$, so every statement below may be read on the length-one slice at
the cost of an explicit $L_0$-dependent constant. Note that $\Delta$, which adds $\tau$ to $\tau'$, is
\textbf{not} scale invariant and is meaningful only relative to the fixed $L_0$; we keep $L_0$
explicit because the sharpness family of Theorem~\ref{thm:7.2} lives on a circle of circumference
$2\pi$. A reader who prefers a dimensionless formulation may therefore simply set $L_0=1$
throughout, read $\mathcal D=(\widehat\kappa,\widehat\tau^{\,2})$ and
$\Delta=\inf_u\sqrt{\widehat\tau^{\,2}+(\partial_u\widehat\tau)^2}$ on
$\mathbb R/\mathbb Z$, and recover the general case from the displayed scaling; the constants
\eqref{eq:8.1} and \eqref{eq:7.3} are stated with their $L_0$-dependence explicit precisely so that
this passage is mechanical.
\end{remark}

\begin{remark}[why the squared datum $\tau^2$ and not $|\tau|$]
\label{rem:8.4}
The two are equivalent \textbf{at the
level of values}, since $|\tau|=\sqrt{\tau^2}$, but not at the level of topologies. Let $s_0$ be a
simple zero of $\tau$, so $\tau(s_0)=0$ and $\tau'(s_0)=c\neq0$. For a general $C^r$ curve with
$r\ge4$ one has only $\tau\in C^{r-3}$, and for $r=4$ merely $\tau\in C^1$, so one can only write
$\tau(s)=c(s-s_0)+o(|s-s_0|)$ while $\tau^2\in C^{r-3}$. Then $|\tau|$ is \textbf{not differentiable} at
$s_0$: the difference quotient $|\tau(s)|/(s-s_0)$ has right limit $|c|$ and left limit $-|c|$, which
differ. Square roots do not preserve $C^k$ regularity of the same order at the boundary of the
non-negative cone, so a $C^{r-3}$ statement cannot be transported unconditionally from $\tau^2$ to
$|\tau|$. Sections~\ref{sec:dichotomy} and \ref{sec:relation} are therefore stated for $\tau^2$; Lemma~\ref{lem:8.11} supplies what is available
for $|\tau|$, namely the $C^0$ statement.
\end{remark}

\begin{lemma}[$D$ is invariant under independent $E(3)$ actions]
\label{lem:8.5}
For all $g_1,g_2\in E(3)$,
$D(g_1\alpha,g_2\beta)=D(\alpha,\beta)$.
\end{lemma}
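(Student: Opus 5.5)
The plan is to reduce the statement to two elementary facts: every $g\in E(3)$ is an isometry of $\mathbb R^3$, and $E(3)$ is a group. The first fact means $g$ preserves Hausdorff distance between compact sets, $d_H(gA,gB)=d_H(A,B)$. The second means left or right multiplication by a fixed element is a bijection of $E(3)$. The argument has the same shape as the invariance part of Lemma~\ref{lem:8.2}, carried out on images instead of parametrisations.

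First I record that $\operatorname{Im}(g_i\alpha)=g_i(\operatorname{Im}\alpha)$, since $g_i$ acts pointwise on the curve. Both images are non-empty and compact, so $d_H$ is defined on them. Next I fix $g\in E(3)$ and rewrite the quantity inside the infimum defining $D(g_1\alpha,g_2\beta)$. By isometry invariance of $d_H$ under $g_2^{-1}$,
\[
d_H\bigl(g\,g_1(\operatorname{Im}\alpha),\ g_2(\operatorname{Im}\beta)\bigr)
=d_H\bigl(g_2^{-1}g\,g_1(\operatorname{Im}\alpha),\ \operatorname{Im}\beta\bigr).
\]
The map $g\mapsto h:=g_2^{-1}gg_1$ is a bijection of $E(3)$ onto itself, with inverse $h\mapsto g_2hg_1^{-1}$. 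Taking the infimum over $g$ is therefore the same as taking it over $h$, and that infimum is $D(\alpha,\beta)$.

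I also want to confirm that invariance of $d_H$ under an isometry $\phi$ is immediate from the definition. One has $\operatorname{dist}(\phi x,\phi B)=\operatorname{dist}(x,B)$, so both one-sided suprema in $d_H$ are unchanged. I expect no real obstacle. The only point needing care is that the infimum runs over the \emph{whole} group $E(3)$, reflections included. This is what guarantees that $h$ ranges over the full index set; if the infimum were restricted to $SE(3)$ while $g_1$ or $g_2$ were reflections, the reindexing would fail.
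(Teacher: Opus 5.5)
Your proof is correct and is essentially the paper's argument: you use isometry invariance of $d_H$ to move $g_2^{-1}$ across, then reindex the infimum by $h=g_2^{-1}gg_1$, which ranges over all of $E(3)$. One minor correction concerns a side remark, not the proof: an $SE(3)$-restricted infimum would break the reindexing only when exactly one of $g_1,g_2$ has determinant $-1$, not whenever either is a reflection.
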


\begin{proof}
Isometries preserve $d_H$, so
$d_H(g\,g_1\operatorname{Im}\alpha,g_2\operatorname{Im}\beta)
=d_H(g_2^{-1}g\,g_1\operatorname{Im}\alpha,\operatorname{Im}\beta)$; as $g$ ranges over $E(3)$ so
does $h=g_2^{-1}gg_1$, and taking infima gives the claim.
\end{proof}

\begin{lemma}[continuity of $N_{L_0}$ and $\mathcal D$]
\label{lem:8.6}
Let $r\ge4$. Then
$N_{L_0}:\mathcal E^r\to C^r(\mathbb R/L_0\mathbb Z,\mathbb R^3)$ and
$\mathcal D:\mathcal E^r\to C^{r-3}$ are continuous.
\end{lemma}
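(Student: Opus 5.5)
The plan is to write $N_{L_0}$ as a composition of three maps and check continuity of each. After that, $\mathcal D$ is obtained by composing with the parametrisation-free Frenet formulas.

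\textbf{Step 1 (arclength reparametrisation).} Fix $\gamma_0\in\mathcal E^r$ and a $C^r$-neighbourhood on which $|\gamma'|\ge c_0>0$ (Lemma~\ref{lem:6.2}).

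\begin{itemize}
\item The speed $v_\gamma=|\gamma'|$ lies in $C^{r-1}$ and depends continuously on $\gamma$ in $C^{r-1}$, because $x\mapsto|x|$ is smooth away from $0$.
\item Hence $s_\gamma(t)=\int_0^tv_\gamma$ and the length $L(\gamma)$ depend continuously on $\gamma$, in $C^r$ and in $\mathbb R$ respectively.
\item Rather than taking the inverse $s_\gamma^{-1}$ directly, work with the rescaled map $\sigma_\gamma(u):=s_\gamma^{-1}(uL(\gamma)/L_0)$ from $\mathbb R/L_0\mathbb Z$ to $S^1$. It satisfies $\sigma_\gamma'=\frac{L(\gamma)}{L_0}\,\frac1{v_\gamma\circ\sigma_\gamma}$.
\item The $C^0$-continuity of $\gamma\mapsto\sigma_\gamma$ follows from the uniform lower bound on $s_\gamma'$: an inverse of a strictly increasing function with derivative $\ge c_0$ is $1/c_0$-Lipschitz stable under $C^0$ perturbation.
\item Higher derivatives of $\sigma_\gamma$ follow by induction from the displayed ODE. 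Each derivative $\sigma^{(k)}$ is a polynomial in $1/v\circ\sigma$ and $(v^{(j)}\circ\sigma)$ for $j<k$, multiplied by lower $\sigma^{(i)}$, so $\sigma\in C^r$ and $\gamma\mapsto\sigma_\gamma$ is continuous into $C^r$.
\item Finally $N$'s first step is $\gamma\mapsto\gamma\circ\sigma_\gamma$.
\end{itemize}

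\textbf{Step 2 (the main obstacle).} The delicate point is that composition $(\gamma,\sigma)\mapsto\gamma\circ\sigma$ is continuous $C^r\times C^r\to C^r$ but \emph{not} uniformly in a Lipschitz sense. This is because the top derivative $\gamma^{(r)}\circ\sigma$ is only continuous in $\sigma$.

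To handle it I would use the Faà di Bruno expansion. Every term except $\gamma^{(r)}\circ\sigma\cdot(\sigma')^r$ involves $\gamma^{(j)}$ with $j<r$, which is Lipschitz, so those terms are controlled by $\|\sigma_1-\sigma_2\|_{C^0}$ times a $C^{r}$ bound. For the top term I would write
\[
\gamma_1^{(r)}\circ\sigma_1-\gamma_2^{(r)}\circ\sigma_2=(\gamma_1^{(r)}-\gamma_2^{(r)})\circ\sigma_1+\bigl(\gamma_2^{(r)}\circ\sigma_1-\gamma_2^{(r)}\circ\sigma_2\bigr).
\]
The second difference is small by \emph{uniform continuity} of the fixed-near $\gamma_2^{(r)}$ on the compact circle. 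To make this sound, fix the base point $\gamma_0$ and replace $\gamma_2$ by $\gamma_0$ using one more triangle inequality; this gives continuity at $\gamma_0$, which is all that is claimed.

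\textbf{Step 3 (remaining steps and the datum).}

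\begin{itemize}
\item The homothety by $L_0/L(\gamma)$ is continuous, since $L$ is continuous and positive.
\item The centroid $\frac1{L_0}\int c$ is a continuous linear functional in $C^0$, so the translation step is continuous. This gives continuity of $N_{L_0}$.
\item For $\mathcal D$, the curve $c=N_{L_0}(\gamma)$ has $|c'\times c''|\ge c_1>0$ near $\gamma_0$ by continuity.
\item The formulas of Lemma~\ref{lem:2.8} express $\kappa_c=|c'\times c''|$ (unit speed) and $\tau_c=D_c\,|c'\times c''|^{-2}$ (Lemma~\ref{lem:6.3}) as smooth functions of the jet $(c',c'',c''')$ on the open set $\{v\times a\neq0\}$.
\item A smooth map applied pointwise to $C^{r-3}$ functions with values in a compact subset of its domain is continuous $C^{r-3}\to C^{r-3}$, by the chain rule and the same uniform-continuity argument at top order.
\item Since $c\mapsto(c',c'',c''')$ is continuous $C^r\to C^{r-3}$, both $\kappa_c$ and $\tau_c^2$ depend continuously on $\gamma$ in $C^{r-3}$.

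\end{itemize}
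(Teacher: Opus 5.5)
Your proposal is correct and follows essentially the same route as the paper. Both arguments fix the parameter domain, obtain continuity of the inverse arclength map from its recursive derivative formula with a denominator bounded below, and invoke continuity of composition $C^r\times C^r\to C^r$. Both then handle the homothety and the centring, and finish with the Frenet formulas as rational functions of $(c',c'',c''')$ whose denominators are bounded below. Your split of the top-order term via uniform continuity of $\gamma_0^{(r)}$ supplies the detail the paper only asserts in its step (3). The same split is also what makes $v^{(r-1)}\circ\sigma_\gamma$, and hence $\sigma_\gamma^{(r)}$, continuous in your Step 1.
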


\begin{proof}
To avoid a domain varying with $\gamma$, work first on the \textbf{fixed} parameter circle
$\mathbb R/\mathbb Z$. (1) $\gamma\mapsto|\gamma'|$ is continuous from $C^r$ to $C^{r-1}$ and bounded
below on $\mathcal E^r$; put $L(\gamma)=\int|\gamma'|$, continuous, and
$\sigma_\gamma(t)=\frac1{L(\gamma)}\int_0^t|\gamma'|\in\mathbb R/\mathbb Z$, so that
$\sigma_\gamma'>0$, $\sigma_\gamma$ is a $C^r$ diffeomorphism of the fixed circle, and
$\gamma\mapsto\sigma_\gamma$ is continuous. (2) \textbf{Inversion is continuous} on
$\{\varphi\in C^r(\mathbb R/\mathbb Z):\varphi'>0\}$: from $(\varphi^{-1})'=1/(\varphi'\circ\varphi^{-1})$
one gets recursively that $(\varphi^{-1})^{(k)}$ is a polynomial in $\varphi',\dots,\varphi^{(k)}$
and $\varphi^{-1}$ divided by $(\varphi'\circ\varphi^{-1})^{2k-1}$, with denominator uniformly
bounded below on the compact circle. Only continuity is claimed here, not differentiability or norm
preservation. (3) Composition $(\gamma,\psi)\mapsto\gamma\circ\psi$ is continuous from
$C^r\times C^r$ to $C^r$. (4) With $\widetilde\gamma=\gamma\circ\sigma_\gamma^{-1}$ pull back:
$c(u)=\frac{L_0}{L(\gamma)}\widetilde\gamma(u/L_0)-\text{centroid}$, each step continuous.
(5) $\kappa_c,\tau_c$ are rational expressions in the first three derivatives of $c$ with
denominators bounded below, so $c\mapsto(\kappa_c,\tau_c^2)$ is continuous from $C^r$ to $C^{r-3}$.
\end{proof}

\subsection{No global modulus of continuity}
\label{sec:8.2}

The counterexample is built from an exact ambiguous pair (Example~\ref{ex:5.14}): two curves with \emph{identical}
data and distinct non-mirror knot types. Such a pair does not itself contradict anything --- its
members have $c(\tau)=2$ and lie outside $\mathcal G^r$ --- but perturbing each member into
$\mathcal G^r$ produces genuinely rigid curves whose data are merely close. The knot types are
preserved because they are locally constant, and the orbit separation is preserved because it is
positive at the pair, which is the content of the following lemma.

\begin{lemma}[positive orbit separation of an exact ambiguous pair]
\label{lem:8.7}
Let $\gamma_\pm$ be the exact
ambiguous pair of Example~\ref{ex:5.14}. Then $D(\gamma_+,\gamma_-)>0$.
\end{lemma}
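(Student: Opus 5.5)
We argue by contradiction, using compactness of $E(3)$ modulo translations together with the fact that Hausdorff-close images of embedded closed curves are isotopic only under extra control, which we replace here by a cleaner rigidity argument. Suppose $D(\gamma_+,\gamma_-)=0$. Both images are compact, so the infimum over $E(3)$ may be restricted to $g=(Q,a)$ with $Q\in O(3)$ and $|a|$ bounded by the sum of the diameters plus the distance between the two centroids. Hence the infimum is over a compact set. The map $g\mapsto d_H(g\operatorname{Im}\gamma_+,\operatorname{Im}\gamma_-)$ is continuous, indeed $1$-Lipschitz in the natural metric on bounded translations and $O(3)$, so it attains its minimum. We therefore obtain $g_*\in E(3)$ with $d_H(g_*\operatorname{Im}\gamma_+,\operatorname{Im}\gamma_-)=0$. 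Since both images are closed, this gives $g_*(\operatorname{Im}\gamma_+)=\operatorname{Im}\gamma_-$ as sets.

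The second step converts equality of images into a contradiction with the knot types. The map $g_*$ is an isometry of $\mathbb R^3$. If $\det Q_*=+1$, then $g_*$ lies in the path-connected group $SE(3)$ and yields an ambient isotopy carrying $\operatorname{Im}\gamma_+$ onto $\operatorname{Im}\gamma_-$, as in the last line of the proof of Lemma~\ref{lem:2.10}. Then $[\gamma_-]=[\gamma_+]$, so $K_1\#K_2=K_1\#\overline{K_2}$. If $\det Q_*=-1$, write $g_*=R\circ h$ with $R$ a fixed reflection and $h\in SE(3)$. Then $\operatorname{Im}\gamma_-$ is the mirror of a curve ambient isotopic to $\gamma_+$, so $[\gamma_-]=\mathrm{mirror}[\gamma_+]=\overline{K_1}\#\overline{K_2}$. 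Both alternatives are excluded by Example~\ref{ex:5.14}: by Proposition~\ref{prop:5.13}(B1) and (H), the types $K_1\#K_2$, $K_1\#\overline{K_2}$ and $\overline{K_1}\#\overline{K_2}$ are pairwise distinct. Knot types are unoriented (S4), so equality of images is exactly what these statements compare, and no parametrisation issue arises.

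The only step requiring care is the attainment of the infimum, that is, the reduction to a compact set of group elements. The rotational part ranges over the compact $O(3)$. For the translational part we observe that if $|a|$ exceeds the sum of the two diameters plus the distance between the two centroids, then $d_H$ is at least $|a|$ minus a fixed constant, which exceeds the value at $a=$ the difference of centroids. So minimising sequences stay bounded, and we extract a convergent subsequence. Continuity of $d_H$ under convergence of isometries on a fixed compact set then finishes the argument. The proof uses no quantitative information about the curves beyond compactness of their images and the knot-type distinctions already established in Example~\ref{ex:5.14}.
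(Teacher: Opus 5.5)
Your proof is correct and essentially the same as the paper's. You bound the translations, use compactness of $O(3)$ to turn $D=0$ into an isometry $g$ with $g(\operatorname{Im}\gamma_+)=\operatorname{Im}\gamma_-$, and then split on $\det g$, with path-connectedness of $SE(3)$ in one case, mirroring in the other, and the contradiction coming from (B1) under (H). The one loose point is cosmetic: the translation threshold must be uniform in $Q$, because the rotation moves the centroid, so ``the distance between the two centroids'' is not the right quantity. Any bound $d_H\ge|a|-C$ with $C$ independent of $Q$ suffices, exactly as in the paper's $3R+1$ estimate.
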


\begin{proof}
Suppose $D=0$ and take $g_k=(Q_k,a_k)\in E(3)$ with
$d_H(g_k\operatorname{Im}\gamma_-,\operatorname{Im}\gamma_+)\to0$. If
$\operatorname{Im}\gamma_\pm\subset B(0,R)$ and $|a_k|>3R+1$ then $d_H>1$, so the $|a_k|$ are
bounded; $O(3)$ is compact, so a subsequence converges, $g_k\to g$ uniformly on compact sets, and
$d_H(g\operatorname{Im}\gamma_-,\operatorname{Im}\gamma_+)=0$. Both sets being compact,
$g(\operatorname{Im}\gamma_-)=\operatorname{Im}\gamma_+$. If $\det g=+1$ then $g\in SE(3)$ is path
connected to the identity, giving an ambient isotopy and $[\gamma_+]=[\gamma_-]$, contradicting
Example~\ref{ex:5.14}; if $\det g=-1$ then $[\gamma_+]=\mathrm{mirror}[\gamma_-]$, again a contradiction.
\end{proof}

The infimum runs over $E(3)$ \textbf{including reflections}, so the mirror orbit is already accounted for.
This is precisely why the ``non-mirror'' clause of Assumption (H) is indispensable here: with
distinctness alone the lemma would fail.

\begin{theorem}[near-collision]
\label{thm:8.8}
Let $r\ge4$. Under (H) with $m=2$ take the exact ambiguous pair
$\gamma_\pm$, of common length $L_0$, in a common arclength label, with
$\mathcal D(\gamma_+)=\mathcal D(\gamma_-)$ exactly, $[\gamma_+]=K_1\#K_2$ and
$[\gamma_-]=K_1\#\overline{K_2}$. Put $\bar\gamma_\pm:=N_{L_0}(\gamma_\pm)$ and
$3\delta_0:=D(\gamma_+,\gamma_-)>0$. Then there are $\alpha_n,\beta_n\in\mathcal G^r\cap C^\infty$,
defined on the common domain $\mathbb R/L_0\mathbb Z$, unit speed, of length $L_0$ and centroid $0$,
such that for all large $n$:
\begin{enumerate}
\item $[\alpha_n]=K_1\#K_2$ and $[\beta_n]=K_1\#\overline{K_2}$: \textbf{fixed, distinct and non-mirror};
\item all torsion zeros are simple and finite in number, so each curve is determined up to $E(3)$ by its
   own unsigned datum (Corollary~\ref{cor:6.10}); being moreover $C^\infty$, they satisfy
   $Z_\infty=\varnothing$;
\item $\bigl\|(\kappa_{\alpha_n},\tau_{\alpha_n}^2)-(\kappa_{\beta_n},\tau_{\beta_n}^2)\bigr\|_{C^{r-3}}\to0$;
\item $D(\alpha_n,\beta_n)\ge\delta_0>0$.
\end{enumerate}
\end{theorem}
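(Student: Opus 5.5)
The plan is to perturb each member of the exact ambiguous pair into $\mathcal G^r$ by a $C^r$-small smooth perturbation, normalise, and control the four properties by continuity. Fix the pair $\gamma_\pm$ and put $\bar\gamma_\pm=N_{L_0}(\gamma_\pm)$; since $\gamma_\pm$ are already unit speed of length $L_0$ in a common label, normalisation only translates them (and fixes the base point), so $\mathcal D(\gamma_+)=\mathcal D(\gamma_-)$ persists and $D(\bar\gamma_+,\bar\gamma_-)=3\delta_0$ by Lemma~\ref{lem:8.5}. By Proposition~\ref{prop:6.7} choose $\alpha_n^0,\beta_n^0\in C^\infty\cap\mathcal G^r$ with $\|\alpha_n^0-\bar\gamma_+\|_{C^r}<1/n$ and $\|\beta_n^0-\bar\gamma_-\|_{C^r}<1/n$, and set $\alpha_n:=N_{L_0}(\alpha_n^0)$, $\beta_n:=N_{L_0}(\beta_n^0)$. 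These are $C^\infty$, unit speed, of length $L_0$, centroid $0$, on the common domain $\mathbb R/L_0\mathbb Z$.

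Membership in $\mathcal G^r$ survives normalisation: arclength reparametrisation preserves simplicity and finiteness of torsion zeros by Lemma~\ref{lem:6.9}, while homothety multiplies $\tau$ by a positive constant and reparametrises linearly, and translation changes nothing. Hence item 2 follows from Lemma~\ref{lem:6.9} and Corollary~\ref{cor:6.10}; and for $C^\infty$ curves simple zeros give $Z_\infty=\varnothing$. Item 3 is continuity: by Lemma~\ref{lem:8.6}, $\mathcal D$ is continuous $\mathcal E^r\to C^{r-3}$, and $\mathcal D(\alpha_n)=\mathcal D(\alpha_n^0)\to\mathcal D(\bar\gamma_+)=\mathcal D(\bar\gamma_-)\leftarrow\mathcal D(\beta_n)$. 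Here I must check that $\mathcal D$ is insensitive to the extra normalisation, i.e.\ $\mathcal D(\alpha_n^0)$ is by definition computed on $N_{L_0}(\alpha_n^0)=\alpha_n$, so this is immediate.

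Item 1: knot type is locally constant in the $C^1$ topology on embeddings (a $C^1$-small perturbation of an embedding is isotopic to it, e.g.\ via a tubular neighbourhood argument), and $N_{L_0}$ does not change the knot type (reparametrisation, a positive homothety and a translation are orientation-preserving on $\mathbb R^3$). So for large $n$, $[\alpha_n]=K_1\#K_2$ and $[\beta_n]=K_1\#\overline{K_2}$, distinct and non-mirror by Example~\ref{ex:5.14}. Item 4: $D$ is a pseudo-metric on images modulo $E(3)$, bounded above by $C^0$ distance (Lemma~\ref{lem:8.2} with $g=\mathrm{id}$ applied after reparametrisation to a common label). Hence $D(\alpha_n,\beta_n)\ge D(\bar\gamma_+,\bar\gamma_-)-D(\alpha_n,\bar\gamma_+)-D(\beta_n,\bar\gamma_-)\ge3\delta_0-d_H(\operatorname{Im}\alpha_n,\operatorname{Im}\bar\gamma_+)-d_H(\operatorname{Im}\beta_n,\operatorname{Im}\bar\gamma_-)$, and both Hausdorff terms tend to $0$ since $\alpha_n\to\bar\gamma_+$ in $C^0$ by continuity of $N_{L_0}$ (Lemma~\ref{lem:8.6}). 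Thus $D\ge\delta_0$ eventually.

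The main obstacle I expect is the triangle inequality for $D$ over $E(3)$ on images, which I would prove directly: compose near-optimal isometries using that $d_H$ is a metric invariant under isometries. A secondary point is ensuring the $C^\infty$ approximants remain embeddings with $\kappa>0$, which Proposition~\ref{prop:6.7} guarantees by working inside $\mathcal E^r$ (Lemma~\ref{lem:6.2}).
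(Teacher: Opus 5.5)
Your proposal is correct and follows the paper's proof essentially step for step. The shared steps are: density via Proposition~\ref{prop:6.7}; normalisation by $N_{L_0}$, which preserves $\mathcal G^r$ and the knot type; continuity of $N_{L_0}$ and $\mathcal D$ (Lemma~\ref{lem:8.6}) combined with the exact equality $\mathcal D(\gamma_+)=\mathcal D(\gamma_-)$; and a Hausdorff triangle inequality together with Lemma~\ref{lem:8.5} for the separation. The only differences are cosmetic: you approximate the translates $\bar\gamma_\pm$ rather than $\gamma_\pm$, and you apply the triangle inequality to the pseudo-metric $D$ itself rather than to $d_H$ before taking the infimum over $E(3)$.
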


\begin{proof}
Take $\varepsilon_n\downarrow0$.

\textbf{(1) Generic approximation.} By Proposition~\ref{prop:6.7} there is $a_n\in\mathcal G^r\cap C^\infty$ with
$\|a_n-\gamma_+\|_{C^r}<\varepsilon_n$, and similarly $b_n$ for $\gamma_-$. Density is used once for each branch separately.

\textbf{(2) Knot types are preserved.} $\mathcal E^r$ is open, and for small $\varepsilon_n$ the
straight-line homotopy $(1-t)\gamma_++ta_n$ consists of embeddings, embeddings being $C^1$-open on a
compact domain, so the Isotopy Extension Theorem gives $[a_n]=[\gamma_+]$; similarly
$[b_n]=[\gamma_-]$.

\textbf{(3) Equalising the length.} Put $\alpha_n:=N_{L_0}(a_n)$ and $\beta_n:=N_{L_0}(b_n)$. Of the three
steps of $N_{L_0}$, the first preserves $\mathcal G^r$ and the knot type by Lemma~\ref{lem:6.9}; the homothety
sends $\tau\mapsto\tau/\lambda$ and the parameter to $\lambda s$, both positive factors, so zeros and
their simplicity survive, and a homothety is homotopic to the identity so the knot type is unchanged;
the translation is an isometry. Hence $\alpha_n,\beta_n\in\mathcal G^r\cap C^\infty$ have length
exactly $L_0$ and live on \textbf{the same} $\mathbb R/L_0\mathbb Z$, which gives 1 and 2.

Independent generic perturbations change the length, so the two arclength domains differ and the data cannot be subtracted pointwise at all; rescaling restores the exact
common length $L_0$ at the cost of a homothety factor $\lambda_n$ with
$|\lambda_n-1|\le C\varepsilon_n$ for a constant $C$ depending only on $\gamma_\pm$, hence an error of
the same size in the data, which is harmless.

\textbf{(4) The data become close.} $N_{L_0}$ is idempotent on $\alpha_n,\beta_n$, so
$\mathcal D(\alpha_n)=(\kappa_{\alpha_n},\tau_{\alpha_n}^2)$. By Lemma~\ref{lem:8.6},
$\|\mathcal D(a_n)-\mathcal D(\gamma_+)\|_{C^{r-3}}\to0$ and
$\|\mathcal D(b_n)-\mathcal D(\gamma_-)\|_{C^{r-3}}\to0$, while
$\mathcal D(\gamma_+)=\mathcal D(\gamma_-)$ \textbf{exactly}; the triangle inequality gives 3.

\textbf{(5) The separation persists.} By Lemma~\ref{lem:8.6},
$\|\alpha_n-\bar\gamma_+\|_{C^0}=\|N_{L_0}(a_n)-N_{L_0}(\gamma_+)\|_{C^0}\to0$, hence
$d_H(\operatorname{Im}\alpha_n,\operatorname{Im}\bar\gamma_+)\to0$, and similarly for $\beta_n$; and
$D(\bar\gamma_+,\bar\gamma_-)=D(\gamma_+,\gamma_-)=3\delta_0$ by Lemma~\ref{lem:8.5}. For any $g\in E(3)$,
\[
d_H(g\operatorname{Im}\alpha_n,\operatorname{Im}\beta_n)\ \ge\
d_H(g\operatorname{Im}\bar\gamma_+,\operatorname{Im}\bar\gamma_-)
-d_H(\operatorname{Im}\alpha_n,\operatorname{Im}\bar\gamma_+)
-d_H(\operatorname{Im}\bar\gamma_-,\operatorname{Im}\beta_n),
\]
and taking the infimum over $g$ yields $D(\alpha_n,\beta_n)\ge3\delta_0-o(1)\ge\delta_0$.
\end{proof}

The limits in step (5) must be $\bar\gamma_\pm$ and not $\gamma_\pm$: $N_{L_0}$ contains the centering
step, so $N_{L_0}(a_n)\to N_{L_0}(\gamma_+)=\bar\gamma_+$, and using $\gamma_\pm$ would ignore a fixed
translation that does not shrink with $n$. Lemma~\ref{lem:8.5} is what guarantees that this normalisation does
not change the orbit distance.

\begin{corollary}[no global modulus]
\label{cor:8.9}
Let $r\ge4$. There is \textbf{no} function
$\omega:[0,\infty)\to[0,\infty]$ with $\lim_{x\to0^+}\omega(x)=0$ such that
\[
\bar D(\alpha,\beta)\ \le\ \omega\Bigl(\bigl\|\mathcal D(\alpha)-\mathcal D(\beta)\bigr\|_{C^{r-3}(\mathbb R/L_0\mathbb Z)}\Bigr)
\qquad\text{for all }\alpha,\beta\in\mathcal G^r .
\]
\end{corollary}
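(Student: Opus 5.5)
The plan is to argue by contradiction from Theorem~\ref{thm:8.8}. Suppose such an $\omega$ existed, with $\omega(x)\to0$ as $x\to0^+$. Take the exact ambiguous pair $\gamma_\pm$ of Example~\ref{ex:5.14} (under (H) with $m=2$), of common length $L_0$, and let $\alpha_n,\beta_n\in\mathcal G^r\cap C^\infty$ be the sequences supplied by Theorem~\ref{thm:8.8}. These lie in $\mathcal G^r$, so the hypothetical inequality applies to every pair $(\alpha_n,\beta_n)$.

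\textbf{Step 1: identify both sides on the normalised curves.} By construction, $\alpha_n$ and $\beta_n$ are unit speed, of length $L_0$, centroid $0$, and their parameter origin and orientation are those produced by $N_{L_0}$. They are images of $N_{L_0}$, so idempotence of $N_{L_0}$ (Definition~\ref{def:8.1}) gives $N_{L_0}(\alpha_n)=\alpha_n$, and likewise for $\beta_n$. Consequently
\[
\bar D(\alpha_n,\beta_n)=D(\alpha_n,\beta_n)
\quad\text{and}\quad
\mathcal D(\alpha_n)=(\kappa_{\alpha_n},\tau_{\alpha_n}^2).
\]
To be safe against any mismatch of base point or orientation conventions, I would instead invoke Lemma~\ref{lem:8.5}. $N_{L_0}$ changes a curve only by reparametrisation, homothety and translation, and $D$ is defined on images modulo $E(3)$, so $\bar D$ agrees with $D$ here.

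\textbf{Step 2: pass to the limit.} Theorem~\ref{thm:8.8}(3) gives
\[
x_n:=\bigl\|\mathcal D(\alpha_n)-\mathcal D(\beta_n)\bigr\|_{C^{r-3}}\to0.
\]
Theorem~\ref{thm:8.8}(4) gives $D(\alpha_n,\beta_n)\ge\delta_0>0$ for all large $n$. Then $\delta_0\le\omega(x_n)$ for all large $n$.

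\textbf{Step 3: reach the contradiction.} If $x_n>0$ for infinitely many $n$, then $\omega(x_n)\to0$ along those $n$, which contradicts $\delta_0>0$. If instead $x_n=0$ for all large $n$, the hypothesis only constrains $\omega(0)$, which may be $+\infty$, so Step 2 alone gives nothing.

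This degenerate case $x_n=0$ is the only real obstacle, and I would handle it by forcing $x_n>0$. The argument does not need exact equality of data for the generic pair; it only needs the data difference to be small, and a small positive difference can always be arranged. Concretely, I would replace $b_n$ by a further $C^r$-small perturbation inside the open set $\mathcal G^r$ (Proposition~\ref{prop:6.6}) that changes the curvature at some point, for instance by a small homothety-free bump. This keeps properties (1), (2) and (4) of Theorem~\ref{thm:8.8} by openness, and keeps (3) by Lemma~\ref{lem:8.6}, while making the data difference nonzero.

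Alternatively, if $x_n=0$ held exactly, then $\alpha_n$ and $\beta_n$ would share the unsigned datum. Corollary~\ref{cor:6.10} would then place $\beta_n$ in the $E(3)$-orbit of $\alpha_n$, giving $D(\alpha_n,\beta_n)=0$ and contradicting Theorem~\ref{thm:8.8}(4). So $x_n>0$ holds automatically. This is the cleaner route and is the one I would use.
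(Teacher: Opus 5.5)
Your proof is correct and follows the paper's route: you apply the hypothetical inequality to the sequences of Theorem~\ref{thm:8.8}, use idempotence of $N_{L_0}$ to identify $\bar D$ and $\mathcal D$ with the quantities on the normalised curves, and let the data difference tend to zero. Your extra step ruling out $x_n=0$ via Corollary~\ref{cor:6.10} is a worthwhile refinement, because $\omega(0)$ is unconstrained and may be $+\infty$, and the paper's one-line proof tacitly assumes $x_n>0$ when it says the right-hand side tends to $\omega(0^+)$.
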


\begin{proof}
The curves of Theorem~\ref{thm:8.8} satisfy the idempotence conditions of $N_{L_0}$, so
$\bar D(\alpha_n,\beta_n)=D(\alpha_n,\beta_n)\ge\delta_0$ while the right-hand side tends to
$\omega(0^+)=0$.
\end{proof}

Both sides must be normalised: if the data side used
$\mathcal D$ while the separation side used the unnormalised $D$, then $\beta=2\alpha$ would give
$\mathcal D(\alpha)=\mathcal D(\beta)$ and $D(\alpha,\beta)>0$, making the statement true for the
trivial reason of a change of scale. Equivalently one may restrict all quantifiers to the normalised
slice $\mathcal G^r_{L_0}$, where $\mathcal D$ and $\bar D$ reduce to $(\kappa,\tau^2)$ and $D$.

\begin{corollary}
\label{cor:8.10}
For every $\varepsilon>0$ there are $\alpha,\beta\in\mathcal G^r_{L_0}$ with
$\|(\kappa_\alpha,\tau_\alpha^2)-(\kappa_\beta,\tau_\beta^2)\|_{C^{r-3}}<\varepsilon$ while
$[\alpha]\neq[\beta]$ and $[\alpha]\neq\mathrm{mirror}[\beta]$. \hfill$\square$
\end{corollary}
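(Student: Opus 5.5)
This is a direct consequence of Theorem~\ref{thm:8.8}, once its curves are checked to lie in the normalised slice $\mathcal G^r_{L_0}$ and the index $n$ is chosen large.

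Fix $r\ge4$ and $\varepsilon>0$. Choose $K_1,K_2$ satisfying (H) with $m=2$, for instance two distinct chiral prime knots that are not mirrors of each other, such as the trefoil and the figure-eight's chiral neighbour $5_1$. Take the exact ambiguous pair $\gamma_\pm$ of Example~\ref{ex:5.14}. After a homothety its common length is $L_0$. A homothety changes $\kappa,\tau$ by the same factor on both members, so it preserves the exact equality of the data in the common label. It also preserves the knot types.

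Next apply Theorem~\ref{thm:8.8} to $\gamma_\pm$. This produces $\alpha_n,\beta_n\in\mathcal G^r\cap C^\infty$ with the following properties. They are unit speed, of length $L_0$ and centroid $0$, and they are defined on the common circle $\mathbb R/L_0\mathbb Z$. Their knot types are $[\alpha_n]=K_1\#K_2$ and $[\beta_n]=K_1\#\overline{K_2}$. Their data satisfy $\|(\kappa_{\alpha_n},\tau_{\alpha_n}^2)-(\kappa_{\beta_n},\tau_{\beta_n}^2)\|_{C^{r-3}}\to0$. Pick $n$ large enough that this norm is $<\varepsilon$.

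The only point needing care is membership in the slice $\mathcal G^r_{L_0}$. By construction $\alpha_n=N_{L_0}(a_n)$ and $\beta_n=N_{L_0}(b_n)$. Hence the parameter origin is the arclength origin based at the image of the original base point, and the orientation is the one inherited. Both curves therefore satisfy the idempotence conditions of Definition~\ref{def:8.1}, as already noted in the proof of Corollary~\ref{cor:8.9}. So $\mathcal D$ reduces to $(\kappa,\tau^2)$ on them, and they lie in $\mathcal G^r_{L_0}$.

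It remains to see that the two knot types are neither equal nor mirror images. By Example~\ref{ex:5.14}, under (H) the type $K_1\#\overline{K_2}$ differs from $K_1\#K_2$. Its mirror $\overline{K_1}\#K_2$ also differs from $K_1\#K_2$, by Schubert uniqueness (Proposition~\ref{prop:5.13}(B1)). This gives both $[\alpha]\neq[\beta]$ and $[\alpha]\neq\mathrm{mirror}[\beta]$. No step is a real obstacle; the slice-membership check is the only one that needs attention.
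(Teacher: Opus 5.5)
Your proposal is correct and follows the paper's own route. The corollary is read off from Theorem~\ref{thm:8.8} by taking $n$ large. Membership of $\alpha_n,\beta_n$ in $\mathcal G^r_{L_0}$ comes from the idempotence of $N_{L_0}$, as the paper notes in the proof of Corollary~\ref{cor:8.9} and the remark after it. The knot-type separation is part~1 of that theorem, which you re-derive correctly from Proposition~\ref{prop:5.13}(B1).
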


\begin{lemma}[the $|\tau|$ version, in $C^0$]
\label{lem:8.11}
For all real $a,b$ one has
$\bigl||a|-|b|\bigr|\le\sqrt{|a^2-b^2|}$, since
$|a^2-b^2|=\bigl||a|-|b|\bigr|\cdot(|a|+|b|)\ge\bigl||a|-|b|\bigr|^2$. Consequently the sequences of
Theorem~\ref{thm:8.8} also satisfy
$\bigl\||\tau_{\alpha_n}|-|\tau_{\beta_n}|\bigr\|_{C^0}
\le\bigl\|\tau_{\alpha_n}^2-\tau_{\beta_n}^2\bigr\|_{C^0}^{1/2}\to0$. \hfill$\square$
\end{lemma}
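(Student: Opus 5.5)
The inequality itself is elementary and needs only one line. For reals $a,b$ we factor $a^2-b^2=(|a|-|b|)(|a|+|b|)$. Since $|a|+|b|\ge\bigl||a|-|b|\bigr|\ge0$, this gives $|a^2-b^2|\ge\bigl||a|-|b|\bigr|^2$, and taking square roots gives the claim. The degenerate case $a=b=0$ is trivially included.

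Next we apply this pointwise with $a=\tau_{\alpha_n}(s)$ and $b=\tau_{\beta_n}(s)$, at each label $s\in\mathbb R/L_0\mathbb Z$. This gives
\[
\bigl||\tau_{\alpha_n}(s)|-|\tau_{\beta_n}(s)|\bigr|\le|\tau_{\alpha_n}^2(s)-\tau_{\beta_n}^2(s)|^{1/2}\le\bigl\|\tau_{\alpha_n}^2-\tau_{\beta_n}^2\bigr\|_{C^0}^{1/2}.
\]
Taking the supremum over $s$ yields the $C^0$ bound. This is legitimate because, by Theorem~\ref{thm:8.8}, both curves live on the same circle $\mathbb R/L_0\mathbb Z$ in a common arclength label, so pointwise comparison makes sense.

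For the convergence we use Theorem~\ref{thm:8.8}(3). With $r\ge4$ we have $C^{r-3}\supseteq C^1$, and the sum norm dominates the $C^0$ norm. Hence $\|\tau_{\alpha_n}^2-\tau_{\beta_n}^2\|_{C^0}\le\|(\kappa_{\alpha_n},\tau_{\alpha_n}^2)-(\kappa_{\beta_n},\tau_{\beta_n}^2)\|_{C^{r-3}}\to0$. Continuity of $t\mapsto t^{1/2}$ at $0$ then finishes the argument. None of these steps is a real obstacle; the only point requiring care is that the comparison is made in the common label. The loss of a square root reflects Remark~\ref{rem:8.4}: only $C^0$ control is claimed for $|\tau|$.
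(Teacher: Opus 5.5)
Your proof is correct and follows the paper's argument: the factorisation $|a^2-b^2|=\bigl||a|-|b|\bigr|(|a|+|b|)\ge\bigl||a|-|b|\bigr|^2$, applied pointwise in the common label, with Theorem~\ref{thm:8.8}(3) giving the convergence. (One harmless slip: for $r\ge4$ the inclusion is $C^{r-3}\subseteq C^1$, not $\supseteq$; the argument only uses that the $C^{r-3}$ norm dominates the $C^0$ norm of the $\tau^2$ component.)
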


By Remark~\ref{rem:8.4} this is available at the $C^0$ level and there is no $C^{r-3}$ analogue for $|\tau|$.

\begin{remark}[the counterexample must escape, and where]
\label{rem:8.12}
Provided the normalised quotient space
and the data space are metric, a continuous injective map restricted to a compact set has a uniformly
continuous inverse. Hence \textbf{any} counterexample to a uniform modulus necessarily exploits
non-compactness, and Theorem~\ref{thm:8.8} cannot be strengthened to compact subsets --- the escape is required
by the conclusion. The torsion of
$\bar\gamma_\pm$ vanishes identically on the planar collars, so $Z_\infty\neq\varnothing$ and
$\bar\gamma_\pm\notin\mathcal G^r$: the sequences run out along the \textbf{torsion-degenerate locus}
$\{\Delta=0\}$. Two consequences follow. First, quantitative stability on a uniformly non-degenerate
subset is in no conflict with Corollary~\ref{cor:8.9}, and \S\S\ref{sec:8.3}--\ref{sec:8.6} turn that
observation into theorems. Second, one may ask whether a near-collision can avoid that locus;
Proposition~\ref{prop:8.23} shows that, under uniform $C^5$ and curvature bounds, it cannot.
Note that $\Delta(\alpha)=0$ says only that some zero of the torsion fails to be simple: that zero may
be double, of higher finite order, or of infinite order, and only in the last case is the curve
locally planar. For the sequences constructed here the degeneration does occur along genuinely planar
arcs.
\end{remark}

\subsection{The non-degenerate strata}
\label{sec:8.3}

From here on $r=5$. The reason is visible in Lemma~\ref{lem:8.14}: the torsion is a ratio whose second
derivative involves $\alpha^{(5)}$ and no more, so $C^5$ is exactly the regularity at which
$\tau\in C^{r-3}=C^2$, which is what Theorem~\ref{thm:7.1} requires of its arguments.

\begin{definition}[non-degeneracy and the normalised strata]
\label{def:8.13}
For $\alpha\in\mathcal G^5_{L_0}$ put
\[
\Delta(\alpha):=\inf_{s}\sqrt{\tau_\alpha(s)^2+\tau_\alpha'(s)^2},
\]
\[
\mathcal K^5_{\delta,M}(L_0):=\bigl\{\alpha\in\mathcal G^5_{L_0}:\ \|\alpha\|_{C^5}\le M,\
\kappa_\alpha\ge M^{-1},\ \Delta(\alpha)\ge\delta\bigr\}.
\]
\end{definition}

The condition $\Delta(\alpha)\ge\delta$ is the pointwise inequality
$\tau_\alpha^2+(\tau_\alpha')^2\ge\delta^2$. Unit speed gives $\|\alpha'\|_{C^0}=1$, hence
$\|\alpha\|_{C^5}\ge1$, so the stratum is empty for $M<1$; and Fenchel's theorem gives
$\int_0^{L_0}\kappa\ge2\pi$, whence also $M\ge2\pi/L_0$.

The condition is a \textbf{quantitative} non-degeneracy: it is weaker than ``the torsion never vanishes'', since a sign change is permitted as
long as $|\tau'|\ge\delta$ there, and stronger in demanding a uniform bound. Frenet non-degeneracy
has an independent qualitative life in the literature: Shapiro \cite{ref15} studies a question of Agrachev on
how many times a plane circle must be traversed before arbitrarily small perturbations make its
Frenet frame nowhere degenerate --- in $\mathbb R^3$ exactly the non-vanishing of the torsion --- and
obtains turn counts depending on the topology used. No result of \cite{ref15} is used here; the reference
records that the same notion appears elsewhere as a \textbf{qualitative accessibility} question, whereas
what \S\ref{sec:dichotomy} needs is a \textbf{quantitative uniform lower bound}.

\begin{lemma}[explicit bound on $\|\tau\|_{C^2}$]
\label{lem:8.14}
Let $\alpha\in\mathcal K^5_{\delta,M}(L_0)$.
Then $\tau_\alpha\in C^2$, $M^{-1}\le\kappa_\alpha\le M$, and
\[
\|\tau_\alpha\|_{C^2}\ \le\ 4M^4+10M^8+8M^{12}\ \le\ 22M^{12}=:B(M).
\]
\end{lemma}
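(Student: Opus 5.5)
Since $\alpha$ is unit speed, the plan is to use the arclength Frenet formulas rather than the general quotient formula. Write $T=\alpha'$, $\kappa=|\alpha''|$, and
\[
\tau=\frac{\det(\alpha',\alpha'',\alpha''')}{\kappa^2}=D\,\kappa^{-2},
\]
the unit-speed form of Lemma~\ref{lem:6.3}, where $|\alpha'\times\alpha''|=\kappa$ because $\alpha''\perp\alpha'$ for a unit-speed curve.

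\textbf{Curvature bounds.} The lower bound $\kappa\ge M^{-1}$ is a hypothesis of the stratum. The upper bound is immediate: $\kappa=|\alpha''|\le\|\alpha''\|_{C^0}\le\|\alpha\|_{C^5}\le M$, using the sum-norm convention of \S\ref{sec:engine}.

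\textbf{Regularity.} Since $\alpha\in C^5$, we have $D\in C^2$. Moreover $\kappa^2=\langle\alpha'',\alpha''\rangle\in C^3$ and is bounded below by $M^{-2}$, so $\kappa^{-2}\in C^3$. Hence $\tau\in C^2$.

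\textbf{Building blocks.} Put $a_j:=\|\alpha^{(j)}\|_{C^0}$, so that $a_1=1$ and $\sum_{j\le5}a_j\le M$.
\begin{itemize}
\item Differentiating the determinant column by column gives $D'=\det(\alpha',\alpha'',\alpha^{(4)})$, because the other two terms repeat a column. Differentiating again gives $D''=\det(\alpha'',\alpha'',\cdot)+\det(\alpha',\alpha''',\alpha^{(4)})+\det(\alpha',\alpha'',\alpha^{(5)})$, whose first term vanishes. By Hadamard's inequality each surviving determinant is at most a product of three of the $a_j$. Hence $|D|,|D'|\le M^2$ and $|D''|\le 2M^2$; one can even keep track of $a_1=1$ if sharper constants are wanted.
\item For $u:=\kappa^{-2}=(\kappa^2)^{-1}$ we have $(\kappa^2)'=2\langle\alpha'',\alpha'''\rangle$ and $(\kappa^2)''=2|\alpha'''|^2+2\langle\alpha'',\alpha^{(4)}\rangle$. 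Then $u'=-(\kappa^2)'u^2$ and $u''=2((\kappa^2)')^2u^3-(\kappa^2)''u^2$. With $u\le M^2$ this yields $|u|\le M^2$, $|u'|\le 2M^2\cdot M^4=2M^6$, and $|u''|\le 8M^4M^6+4M^2M^4$. The largest term is $\lesssim M^{10}$.
\end{itemize}

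\textbf{Leibniz and collection.} Apply Leibniz to $\tau=Du$:
\[
\tau=Du,\qquad \tau'=D'u+Du',\qquad \tau''=D''u+2D'u'+Du''.
\]
Sum the bounds on $\tau,\tau',\tau''$ and collect the powers of $M$. Since $M\ge1$ on a non-empty stratum, every monomial $M^k$ with $k\le12$ is dominated by $M^{12}$. This gives the cruder form $22M^{12}$ once the three displayed coefficients $4,10,8$ are obtained.

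\textbf{Expected obstacle.} The only real difficulty is bookkeeping: reproducing exactly the stated grouping $4M^4+10M^8+8M^{12}$. I expect this grouping to arise from the paper's own way of organising the derivatives of $\kappa^{-2}$, for instance by writing $u'$ and $u''$ with the cruder bound $|(\kappa^2)'|\le 2M^2$ and $u\le M^2$ throughout, and powers such as $M^{12}$ come from $u^3\cdot M^4\cdot M^2$. If my Leibniz count gives smaller exponents, I will still conclude, because any bound of the form $\sum c_kM^k$ with $k\le12$ and total coefficient at most $22$ implies the $22M^{12}$ form. I would present the estimate with deliberately generous bounds so that it matches the displayed coefficients term by term.
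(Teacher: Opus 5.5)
Your proposal is correct and follows the paper's proof essentially verbatim: you write $\tau=D\kappa^{-2}$, bound $|D|,|D'|\le M^2$ and $|D''|\le 2M^2$ by Hadamard, and obtain the same bounds $M^2$, $2M^6$ and $4M^6+8M^{10}$ on $\kappa^{-2}$ and its first two derivatives before applying Leibniz. The bookkeeping you flag as the expected obstacle is in fact already complete: your bounds give $|\tau|\le M^4$, $|\tau'|\le M^4+2M^8$ and $|\tau''|\le 2M^4+8M^8+8M^{12}$, which sum to exactly $4M^4+10M^8+8M^{12}$, and $M\ge1$ then yields the $22M^{12}$ form.
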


The computation, which also shows that no derivative beyond $\alpha^{(5)}$ occurs, is in
Appendix~\ref{sec:C.1}.

\subsection{Two structural lemmas: the sign is realised by a reflection, and Duhamel propagates
without exponential loss}
\label{sec:8.4}

The passage from Theorem~\ref{thm:7.1} to geometry needs exactly two things: a way to convert the \emph{abstract}
global sign $\varepsilon$ supplied by Theorem~\ref{thm:7.1} into an actual isometry of $\mathbb R^3$, and a way
to convert an $L^1$ bound on the Frenet coefficients into a $C^0$ bound on the curves. The first is
Lemma~\ref{lem:8.15}, and it is where reflections enter; the second is Lemma~\ref{lem:8.16}, and it is where the
orthogonality of the Frenet frame pays for itself.

\begin{lemma}[reflection flips the torsion, stays in the stratum, and aligns frame and position]
\label{lem:8.15}
Let $\beta\in\mathcal K^5_{\delta,M}(L_0)$, $Q\in O(3)$ with $\det Q=-1$, and
$\widetilde\beta=Q\beta$. Then pointwise
\[
T_{\widetilde\beta}=QT_\beta,\quad\kappa_{\widetilde\beta}=\kappa_\beta,\quad
N_{\widetilde\beta}=QN_\beta,\quad B_{\widetilde\beta}=-QB_\beta,\quad
\tau_{\widetilde\beta}=-\tau_\beta,
\]
the frame matrix $F=(T|N|B)$ satisfies $F_{\widetilde\beta}=QF_\beta J\in SO(3)$ with
$J=\operatorname{diag}(1,1,-1)$, and $\widetilde\beta\in\mathcal K^5_{\delta,M}(L_0)$ with
$\mathcal D(\widetilde\beta)=\mathcal D(\beta)$. Moreover, for any
$\alpha\in\mathcal K^5_{\delta,M}(L_0)$ and $\varepsilon\in\{\pm1\}$, taking $Q_\varepsilon=I$ or
$\operatorname{diag}(-1,-1,-1)$, the latter of determinant $-1$,
$Q_0:=F_\alpha(0)F_{Q_\varepsilon\beta}(0)^{\mathsf T}\in SO(3)$ and a translation, one obtains
$\widehat\beta=g_0\beta$ with $g_0\in E(3)$ whose linear part has determinant $\varepsilon$ and
\[
\widehat\beta(0)=\alpha(0),\qquad F_{\widehat\beta}(0)=F_\alpha(0),\qquad
\kappa_{\widehat\beta}=\kappa_\beta,\qquad \tau_{\widehat\beta}=\varepsilon\tau_\beta .
\]
\end{lemma}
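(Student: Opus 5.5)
The proof has three steps: the pointwise frame identities for $\widetilde\beta=Q\beta$, membership of $\widetilde\beta$ in the stratum, and the construction of the aligning isometry $g_0$.

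\textbf{Step 1: frame identities.} I would repeat the computation of Lemma~\ref{lem:2.7} at the level of the frame. Since $Q$ is a constant linear isometry,
\[
T_{\widetilde\beta}=QT_\beta,\qquad T_{\widetilde\beta}'=\kappa_\beta QN_\beta ,
\]
so $\kappa_{\widetilde\beta}=\kappa_\beta$ and $N_{\widetilde\beta}=QN_\beta$. The identity $(Qa)\times(Qb)=\det(Q)\,Q(a\times b)$ then gives $B_{\widetilde\beta}=-QB_\beta$, and comparing $B'=-\tau N$ yields $\tau_{\widetilde\beta}=-\tau_\beta$. In matrix form this reads $F_{\widetilde\beta}=QF_\beta J$. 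Its determinant is
\[
\det Q\cdot\det F_\beta\cdot\det J=(-1)(1)(-1)=1,
\]
and it is a product of orthogonal matrices, so $F_{\widetilde\beta}\in SO(3)$.

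\textbf{Step 2: $\widetilde\beta$ stays in the stratum.} I would check each defining condition of $\mathcal K^5_{\delta,M}(L_0)$ in turn.
\begin{enumerate}
\item Unit speed, length $L_0$ and the parameter origin are preserved pointwise by a linear isometry.
\item The centroid stays $0$ because $Q$ is linear.
\item $\|Q\beta\|_{C^5}=\|\beta\|_{C^5}$, since $|Qv|=|v|$ for every derivative.
\item The curvature lower bound is unchanged because $\kappa_{\widetilde\beta}=\kappa_\beta$.
\item $\Delta$ is unchanged because $\tau^2$ and $(\tau')^2$ are invariant under $\tau\mapsto-\tau$.
\item Simple zeros stay simple, so $\widetilde\beta\in\mathcal G^5$.
\end{enumerate}
The datum $\mathcal D=(\kappa,\tau^2)$ is therefore unchanged. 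The only point needing thought is the ``orientation fixed'' clause of the normalised slice. I would read it as the orientation of the parameter circle, which a spatial isometry does not alter, so idempotence of $N_{L_0}$ persists.

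\textbf{Step 3: the aligning isometry.} Put $\beta_1:=Q_\varepsilon\beta$.
\begin{itemize}
\item If $\varepsilon=+1$, then $Q_\varepsilon=I$, so $\beta_1=\beta$ and $F_{\beta_1}=F_\beta$.
\item If $\varepsilon=-1$, then $Q_\varepsilon=-I$ has determinant $-1$. Step 1 gives $F_{\beta_1}\in SO(3)$ and $\tau_{\beta_1}=-\tau_\beta$.
\end{itemize}
In both cases $F_{\beta_1}(0)\in SO(3)$, so $Q_0:=F_\alpha(0)F_{\beta_1}(0)^{\mathsf T}\in SO(3)$. Now set $\widehat\beta:=Q_0\beta_1+(\alpha(0)-Q_0\beta_1(0))$. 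This gives $\widehat\beta(0)=\alpha(0)$ directly. Since $\det Q_0=+1$, applying Step 1 with the determinant $+1$ sign gives
\[
F_{\widehat\beta}=Q_0F_{\beta_1},\qquad\text{hence}\qquad F_{\widehat\beta}(0)=F_\alpha(0).
\]
Also $\kappa_{\widehat\beta}=\kappa_\beta$ and $\tau_{\widehat\beta}=\tau_{\beta_1}=\varepsilon\tau_\beta$. The linear part $Q_0Q_\varepsilon$ has determinant $\varepsilon$.

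I expect no step to be a genuine obstacle. The one place needing care is the bookkeeping of the factor $J$, since it is what makes $F_{Q_\varepsilon\beta}(0)$ lie in $SO(3)$ and hence $Q_0\in SO(3)$.
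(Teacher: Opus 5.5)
Your proposal is correct and follows essentially the same route as the paper. Both arguments establish the frame identities via $(Qa)\times(Qb)=\det(Q)\,Q(a\times b)$ and $\det F_{\widetilde\beta}=(-1)(1)(-1)=1$, check each condition showing that the stratum and $\mathcal D$ are preserved, and align by $Q_0\in SO(3)$ followed by a translation; the only difference is that the paper justifies preservation of simple zeros explicitly through $D_{\widetilde\beta}=-D_\beta$, a step you leave implicit.
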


\begin{proof}
The first three identities follow from $\widetilde\beta^{(j)}=Q\beta^{(j)}$ and the fact
that $Q$ is an isometry; $(Qa)\times(Qb)=\det(Q)\,Q(a\times b)$ gives $B_{\widetilde\beta}=-QB_\beta$;
and $B_{\widetilde\beta}'=-QB_\beta'=\tau_\beta QN_\beta=\tau_\beta N_{\widetilde\beta}$, compared
with $B'=-\tau N$, gives $\tau_{\widetilde\beta}=-\tau_\beta$ --- this is Lemma~\ref{lem:2.7}. Also
$\det F_{\widetilde\beta}=(-1)(1)(-1)=+1$. The stratum is preserved: a constant isometry preserves
the norms of all derivatives, the centroid, unit speed, length, parameter origin and orientation;
$\kappa$ is unchanged; $\tau_{\widetilde\beta}=-\tau_\beta$ gives
$\tau_{\widetilde\beta}'=-\tau_\beta'$, so $\tau^2+(\tau')^2$ and hence $\Delta$ are unchanged;
$D_{\widetilde\beta}=-D_\beta$, so the zero set and the simplicity of its points are unchanged; and
$\tau^2,\kappa$ unchanged gives $\mathcal D$ unchanged. For the alignment, $F_\alpha(0)$ and
$F_{Q_\varepsilon\beta}(0)$ lie in $SO(3)$, so $Q_0\in SO(3)$, and neither the action of $SO(3)$ nor
a translation changes $\kappa$ or $\tau$.
\end{proof}

Only a reflection can perform this: the factor $\det Q$ is the sole source of the sign flip. This is
Lemma~\ref{lem:2.7}, used constructively.

\begin{lemma}[Duhamel estimate for the Frenet system; no exponential factor]
\label{lem:8.16}
Let
$F_1,F_2:[0,L_0]\to SO(3)$ be $C^1$ with $F_i'=F_i\Omega_i$, for $\Omega$ as in Lemma~\ref{lem:2.10}, and
$F_1(0)=F_2(0)$. Write $E=F_1-F_2$ and $\Delta\Omega=\Omega_1-\Omega_2$. Then
\[
E(s)=\int_0^sF_2(t)\,\Delta\Omega(t)\,F_1(t)^{\mathsf T}F_1(s)\,dt,
\]
\[
\max_{s}\|E(s)\|_F\ \le\ \int_0^{L_0}\|\Delta\Omega\|_F\,dt\ \le\
\sqrt2\bigl(\|\Delta\kappa\|_{L^1}+\|\Delta\tau\|_{L^1}\bigr).
\]
\end{lemma}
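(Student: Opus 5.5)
I would derive the Duhamel formula by checking that both sides vanish at $s=0$ and satisfy the same first-order equation, and then read off the bound from orthogonal invariance of the Frobenius norm. Since $F_1\in SO(3)$ we have $F_1^{\mathsf T}F_1=I$, so $F_1(s)=F_1(t)F_1(t)^{\mathsf T}F_1(s)$. This suggests looking at the product
\[
P(t):=F_2(t)F_1(t)^{\mathsf T}F_1(s)
\]
for fixed $s$ and differentiating it in $t$.

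Because $\Omega_1$ is antisymmetric, $(F_1^{\mathsf T})'=\Omega_1^{\mathsf T}F_1^{\mathsf T}=-\Omega_1F_1^{\mathsf T}$. Hence
\[
\frac{d}{dt}\bigl(F_2F_1^{\mathsf T}\bigr)=F_2\Omega_2F_1^{\mathsf T}-F_2\Omega_1F_1^{\mathsf T}=-F_2\,\Delta\Omega\,F_1^{\mathsf T}.
\]
Integrating from $0$ to $s$ and multiplying on the right by $F_1(s)$ gives
\[
F_2(s)F_1(s)^{\mathsf T}F_1(s)-F_2(0)F_1(0)^{\mathsf T}F_1(s)=-\int_0^sF_2\,\Delta\Omega\,F_1^{\mathsf T}F_1(s)\,dt .
\]
On the left, the first term is $F_2(s)$. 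The second term is $F_1(s)$, because $F_2(0)F_1(0)^{\mathsf T}=I$ by the common initial value. So the left side equals $F_2(s)-F_1(s)=-E(s)$, which is the stated integral formula. Only $C^1$ regularity of the frames and the fundamental theorem of calculus are needed here.

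For the norm bound, I would use that $\|UXV\|_F=\|X\|_F$ for $U,V\in O(3)$. Both $F_2(t)$ and $F_1(t)^{\mathsf T}F_1(s)$ lie in $SO(3)$. Therefore
\[
\|E(s)\|_F\le\int_0^s\|\Delta\Omega(t)\|_F\,dt\le\int_0^{L_0}\|\Delta\Omega\|_F\,dt ,
\]
and the maximum over $s$ obeys the same bound. For the last inequality, $\Delta\Omega$ contains $\pm\Delta\kappa$ and $\pm\Delta\tau$, each appearing twice. So $\|\Delta\Omega\|_F=\sqrt2\sqrt{\Delta\kappa^2+\Delta\tau^2}\le\sqrt2\bigl(|\Delta\kappa|+|\Delta\tau|\bigr)$, and integrating finishes the proof.

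I see no real obstacle. The only points needing care are the sign convention in $(F_1^{\mathsf T})'$, which uses antisymmetry of $\Omega_1$, and the fact that the kernel is a product of orthogonal matrices, which is what removes the Grönwall exponential.
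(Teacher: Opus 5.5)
Your proposal is correct and follows the paper's argument: the same Duhamel representation, then two-sided orthogonal invariance of the Frobenius norm, then the computation $\|\Delta\Omega\|_F=\sqrt{2(\Delta\kappa)^2+2(\Delta\tau)^2}$. The only difference is in how the formula is derived. The paper writes $E'=E\Omega_1+F_2\Delta\Omega$ and identifies $E$ with $ZF_1$ by uniqueness for linear ODEs, whereas you differentiate $F_2F_1^{\mathsf T}$ directly and need only the product rule and the fundamental theorem of calculus, so no uniqueness theorem is required.
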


\begin{proof}
We have $E(0)=0$ and $E'=E\Omega_1+F_2\Delta\Omega$. The homogeneous fundamental solution is
$F_1$, so $Z(s):=\int_0^sF_2\Delta\Omega F_1^{-1}$ makes $ZF_1$ satisfy the same equation with the
same zero initial value; the coefficient $\Omega_1$ is continuous, since $\kappa\in C^3$ and
$\tau\in C^2$, so uniqueness gives $E=ZF_1$, which is the stated formula because $F_1^{-1}=F_1^{\mathsf T}$.
The Frobenius norm is invariant under multiplication by orthogonal matrices \textbf{on either side}, and
the integrand is $F_2(t)\,\Delta\Omega(t)\,\bigl(F_1(t)^{\mathsf T}F_1(s)\bigr)$ with both outer
factors in $SO(3)$; hence its Frobenius norm is exactly
$\|\Delta\Omega(t)\|_F=\sqrt{2(\Delta\kappa)^2+2(\Delta\tau)^2}\le\sqrt2(|\Delta\kappa|+|\Delta\tau|)$.
\end{proof}

Because $\Omega$ is antisymmetric the Duhamel kernel is an isometry, so no Grönwall factor
$e^{cL_0}$ appears (\S\ref{sec:1.3}(iii)), and the right-hand side involves only the \textbf{$L^1$}
norm of the coefficient difference --- exactly the norm that Theorem~\ref{thm:7.1} delivers.

\subsection{Uniform conditional stability on the strata}
\label{sec:8.5}

\begin{theorem}[uniform log-Lipschitz stability on $\mathcal K^5_{\delta,M}(L_0)$]
\label{thm:8.17}
Fix $L_0>0$,
$M\ge1$ and $\delta>0$ with $\mathcal K^5_{\delta,M}(L_0)\ne\varnothing$. Put
$\mathsf B:=2B(M)=44M^{12}$ and
\[
C_{\mathrm{geo}}(M,\delta,L_0):=\sqrt2\,L_0\Bigl(L_0+C_A(\mathsf B,\delta,L_0)\Bigr).
\]
Then for all $\alpha,\beta\in\mathcal K^5_{\delta,M}(L_0)$, the data being compared pointwise in the
\textbf{given common arclength label}, and with
$\Theta:=\bigl\|(\kappa_\alpha,\tau_\alpha^2)-(\kappa_\beta,\tau_\beta^2)\bigr\|_{C^0}$, there are
$\varepsilon\in\{\pm1\}$ and $g_0\in E(3)$, the linear part of $g_0$ having determinant $\varepsilon$,
such that $\widehat\beta:=g_0\beta$ satisfies
\[
\|\alpha-\widehat\beta\|_{C^0}\ \le\ C_{\mathrm{geo}}(M,\delta,L_0)\,\rho(\Theta).
\]
Consequently, in the notation of Definition~\ref{def:8.1} and Lemma~\ref{lem:8.2},
\[
D(\alpha,\beta)\ \le\ d_{\mathrm{lab}}(\alpha,\beta)\ \le\ C_{\mathrm{geo}}(M,\delta,L_0)\,\rho(\Theta).
\]
\end{theorem}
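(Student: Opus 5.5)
The plan is to reduce to the one-dimensional engine and then integrate. Set $f:=\tau_\alpha$ and $g:=\tau_\beta$. By Lemma~\ref{lem:8.14} both lie in $C^2(\mathbb R/L_0\mathbb Z)$ with $\|f\|_{C^2}+\|g\|_{C^2}\le2B(M)=\mathsf B$. The stratum condition $\Delta\ge\delta$ is exactly $f^2+(f')^2\ge\delta^2$ and likewise for $g$. Moreover $\|f^2-g^2\|_{C^0}\le\Theta$, since $\Theta$ dominates the $C^0$ norm of the second component of the data difference. Theorem~\ref{thm:7.1} therefore supplies a global sign $\varepsilon$ with
\[
\|\tau_\alpha-\varepsilon\tau_\beta\|_{L^1}\le C_A(\mathsf B,\delta,L_0)\,\rho(\|f^2-g^2\|_{C^0})\le C_A(\mathsf B,\delta,L_0)\,\rho(\Theta),
\]
using that $\rho$ is increasing. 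If the hypothesis $\delta\le\mathsf B$ of Theorem~\ref{thm:7.1} should fail, I would replace $\delta$ by $\min(\delta,\mathsf B)$, as the proof there already allows. In the same way, $\|\kappa_\alpha-\kappa_\beta\|_{L^1}\le L_0\Theta\le L_0\rho(\Theta)$, because $t\le\rho(t)$.

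Next I would realise the abstract sign geometrically with Lemma~\ref{lem:8.15}. This gives $g_0\in E(3)$, with $\det$ of its linear part equal to $\varepsilon$, such that $\widehat\beta=g_0\beta$ has $\widehat\beta(0)=\alpha(0)$, $F_{\widehat\beta}(0)=F_\alpha(0)$, $\kappa_{\widehat\beta}=\kappa_\beta$ and $\tau_{\widehat\beta}=\varepsilon\tau_\beta$. The two frames $F_\alpha$ and $F_{\widehat\beta}$ then solve Frenet systems with equal initial values, so Lemma~\ref{lem:8.16} applies on $[0,L_0]$ and yields
\[
\max_s\|F_\alpha(s)-F_{\widehat\beta}(s)\|_F\le\sqrt2\bigl(L_0+C_A(\mathsf B,\delta,L_0)\bigr)\rho(\Theta).
\]
There is no exponential loss here, which is the whole point of that lemma.

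Finally, I would integrate the tangent. For $s\in[0,L_0]$,
\[
\alpha(s)-\widehat\beta(s)=\int_0^s(T_\alpha-T_{\widehat\beta}),
\]
and $|T_\alpha-T_{\widehat\beta}|\le\|F_\alpha-F_{\widehat\beta}\|_F$, since $T$ is the first column of $F$. This gives $\|\alpha-\widehat\beta\|_{C^0}\le L_0\max_s\|E(s)\|_F\le C_{\mathrm{geo}}\rho(\Theta)$, which is precisely the stated constant. The curves are $L_0$-periodic, so the sup over $[0,L_0]$ is the full $C^0$ norm. The consequence $D\le d_{\mathrm{lab}}\le C_{\mathrm{geo}}\rho(\Theta)$ then follows from the definition of $d_{\mathrm{lab}}$ as an infimum over $E(3)$ together with Lemma~\ref{lem:8.2}.

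The substantive step is the first one, and its work is already done by Theorem~\ref{thm:7.1]}. The only points needing care are bookkeeping:
\begin{itemize}
\item the $C^2$ bound must be the \emph{sum} $\|f\|_{C^2}+\|g\|_{C^2}$, which is why $\mathsf B=2B(M)$;
\item the data are compared in the common label, which is what makes $f^2-g^2$ meaningful pointwise;
\item the Frenet coefficients must be continuous (indeed $\kappa\in C^3$ and $\tau\in C^2$ at $r=5$), so that the uniqueness step in Lemma~\ref{lem:8.16} holds;
\item the sign from Theorem~\ref{thm:7.1} must be a single global sign, not a piecewise one, since only a global sign can be realised by one reflection in Lemma~\ref{lem:8.15}.
\end{itemize}
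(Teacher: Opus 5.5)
Your proposal is correct and follows the paper's proof step for step: Lemma~\ref{lem:8.14} to verify the hypotheses of Theorem~\ref{thm:7.1} with $\mathsf B=2B(M)$, the global sign from that theorem, its realisation and frame alignment by Lemma~\ref{lem:8.15}, the exponential-free Duhamel bound of Lemma~\ref{lem:8.16}, and one integration of the tangent. The only point to tidy is your fallback $\delta\mapsto\min(\delta,\mathsf B)$: it would replace $C_A(\mathsf B,\delta,L_0)$ by $C_A(\mathsf B,\min(\delta,\mathsf B),L_0)$ and so would not directly give the stated constant, but it is never needed, because $\delta\le\Delta(\alpha)\le\|\tau_\alpha\|_{C^0}+\|\tau_\alpha'\|_{C^0}\le B(M)\le\mathsf B$ holds automatically, which is exactly how the paper disposes of it.
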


The estimate is therefore proved first for the \textbf{parametrised} curves in the common label,
modulo $E(3)$ only, and the label-free Hausdorff bound is a corollary of it. Explicitly, for
$M\ge1$ and $0<\delta\le1$ the constant obeys the non-asymptotic bound
\begin{equation}\label{eq:8.1}\tag{8.1}
C_{\mathrm{geo}}(M,\delta,L_0)\ \le\ 1.6\times10^{8}\;L_0^2\,M^{36}\,\delta^{-4},
\end{equation}
by \eqref{eq:7.3} applied with $\mathsf B=44M^{12}\ge1$; in asymptotic notation, for each fixed
$M\ge1$ and $L_0>0$, $C_{\mathrm{geo}}(M,\delta,L_0)=O_{M,L_0}(\delta^{-4})$ as $\delta\downarrow0$.

\begin{proof}
The proof is four moves: check the one-dimensional hypotheses, extract the sign, realise it
geometrically, and integrate twice.

\textbf{(1) The one-dimensional hypotheses hold.} By Lemma~\ref{lem:8.14}, $f:=\tau_\alpha$ and $g:=\tau_\beta$ lie
in $C^2$ and are defined on \textbf{one and the same} $\mathbb R/L_0\mathbb Z$ --- this is what the
normalisation built into the stratum provides --- with $\|f\|_{C^2}+\|g\|_{C^2}\le2B(M)=\mathsf B$,
pointwise $f^2+(f')^2\ge\delta^2$ and $g^2+(g')^2\ge\delta^2$, and
$\delta\le\Delta(\alpha)\le\|\tau_\alpha\|_{C^0}+\|\tau_\alpha'\|_{C^0}\le B(M)\le\mathsf B$.

\textbf{(2) The global sign.} Theorem~\ref{thm:7.1} supplies a global $\varepsilon\in\{\pm1\}$ with
$\|\tau_\alpha-\varepsilon\tau_\beta\|_{L^1}\le C_A(\mathsf B,\delta,L_0)\,\rho(\eta)$, where
$\eta=\|\tau_\alpha^2-\tau_\beta^2\|_{C^0}$.

\textbf{(3) Realising that sign by a reflection, and aligning.} Lemma~\ref{lem:8.15} gives $\widehat\beta=g_0\beta$
with $\kappa_{\widehat\beta}=\kappa_\beta$, $\tau_{\widehat\beta}=\varepsilon\tau_\beta$,
$\widehat\beta(0)=\alpha(0)$ and $F_{\widehat\beta}(0)=F_\alpha(0)$.

\textbf{(4) Duhamel, then integration of the position.} The two frames satisfy their Frenet systems with
the same initial value, so Lemma~\ref{lem:8.16} gives
\[
\max_s\|F_\alpha-F_{\widehat\beta}\|_F\le\sqrt2\bigl(\|\Delta\kappa\|_{L^1}+\|\Delta\tau\|_{L^1}\bigr),
\qquad \Delta\kappa=\kappa_\alpha-\kappa_\beta,\quad\Delta\tau=\tau_\alpha-\varepsilon\tau_\beta .
\]
The first column of $E$ is $T_\alpha-T_{\widehat\beta}$ and
$\alpha(s)-\widehat\beta(s)=\int_0^s(T_\alpha-T_{\widehat\beta})$, so
\[
\|\alpha-\widehat\beta\|_{C^0}\le L_0\max_s\|E\|_F\le\sqrt2L_0\bigl(\|\Delta\kappa\|_{L^1}+\|\Delta\tau\|_{L^1}\bigr).
\]
\textbf{(5) Both terms in terms of $\rho(\Theta)$.} We have
$\|\Delta\kappa\|_{L^1}\le L_0\|\kappa_\alpha-\kappa_\beta\|_{C^0}\le L_0\Theta\le L_0\rho(\Theta)$,
using $t\le\rho(t)$, while $\eta\le\Theta$ and the monotonicity of $\rho$ give
$\|\Delta\tau\|_{L^1}\le C_A\rho(\eta)\le C_A\rho(\Theta)$.

\textbf{(6) Passing to the two orbit distances.} Since $\widehat\beta=g_0\beta$ with $g_0\in E(3)$, the
bound of step (5) is a bound on one admissible competitor in the infimum defining
$d_{\mathrm{lab}}$, whence $d_{\mathrm{lab}}(\alpha,\beta)\le\|\alpha-\widehat\beta\|_{C^0}$;
Lemma~\ref{lem:8.2} then gives $D(\alpha,\beta)\le d_{\mathrm{lab}}(\alpha,\beta)$.
\end{proof}

The only $\delta$-dependence enters through $C_A$, that is, through the one-dimensional theorem: the
geometric constant is an explicit product of the one-dimensional constant with two factors of $L_0$
from the two integrations.

\begin{corollary}[$\mathcal D$ determines a curve of the stratum up to $E(3)$]
\label{cor:8.18}
If $\alpha,\beta\in\mathcal K^5_{\delta,M}(L_0)$ and $\mathcal D(\alpha)=\mathcal D(\beta)$, then
$\Theta=0$ and $\rho(0)=0$, so the curve $\widehat\beta=g_0\beta$ produced by
Theorem~\ref{thm:8.17} satisfies $\|\alpha-\widehat\beta\|_{C^0}=0$: that is, $\alpha=g_0\beta$
\textbf{as parametrised curves}, for an explicit $g_0\in E(3)$, and in particular
$d_{\mathrm{lab}}(\alpha,\beta)=D(\alpha,\beta)=0$. This is the limiting case $\Theta\to0$ of
Corollary~\ref{cor:6.10}, and shows that Theorem~\ref{thm:8.17} is a genuine quantitative refinement
of it rather than a separate statement. \hfill$\square$
\end{corollary}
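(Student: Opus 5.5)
The statement is Corollary~\ref{cor:8.18}, and I would derive it directly from Theorem~\ref{thm:8.17} by specialising to $\Theta=0$.

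\textbf{Step 1.} The hypothesis $\mathcal D(\alpha)=\mathcal D(\beta)$ on the normalised slice means that $(\kappa_\alpha,\tau_\alpha^2)=(\kappa_\beta,\tau_\beta^2)$ pointwise in the common label. Indeed, $N_{L_0}$ is idempotent on $\mathcal K^5_{\delta,M}(L_0)\subset\mathcal G^5_{L_0}$, so $\mathcal D$ reduces to the raw datum. Hence $\Theta=0$.

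\textbf{Step 2.} By the convention $\rho(0)=0$ of \S\ref{sec:engine}, Theorem~\ref{thm:8.17} supplies $\varepsilon$ and $g_0\in E(3)$ with $\det(\text{linear part})=\varepsilon$ such that
\[
\|\alpha-g_0\beta\|_{C^0}\le C_{\mathrm{geo}}\cdot0=0 .
\]
I would check that $\Theta=0$ is admissible throughout that proof. In step (2) it invokes Theorem~\ref{thm:7.1}, whose branch (P0) handles $\eta=0$ explicitly. That branch yields $\tau_\alpha\equiv\varepsilon\tau_\beta$, so $\|\Delta\tau\|_{L^1}=0$. Likewise $\|\Delta\kappa\|_{L^1}=0$. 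The Duhamel bound of Lemma~\ref{lem:8.16} then gives $F_\alpha\equiv F_{\widehat\beta}$. Integrating the tangent from the common initial point $\alpha(0)=\widehat\beta(0)$ gives $\alpha\equiv\widehat\beta=g_0\beta$ as parametrised curves.

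\textbf{Step 3.} Equality as maps makes $g_0$ an admissible competitor in $d_{\mathrm{lab}}$, so $d_{\mathrm{lab}}(\alpha,\beta)=0$. Lemma~\ref{lem:8.2} then gives $0\le D\le d_{\mathrm{lab}}=0$.

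\textbf{Step 4.} For the remark relating the result to Corollary~\ref{cor:6.10}, note that the stratum lies in $\mathcal G^5$ with simple torsion zeros. The conclusion therefore coincides with that corollary's, now with an explicit $g_0$ built from the frames at $0$ via Lemma~\ref{lem:8.15}.

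\textbf{Main obstacle.} The only point needing care is that the degenerate case $\eta=0$ is genuinely covered by the one-dimensional engine rather than excluded. Branch (P0) settles this, and it is exactly the parity argument at simple zeros used in Corollary~\ref{cor:6.10}.
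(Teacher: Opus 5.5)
Your proposal is correct and is essentially the paper's own argument: the corollary is obtained by specialising Theorem~\ref{thm:8.17} to $\Theta=0$, using $\rho(0)=0$, and then applying Lemma~\ref{lem:8.2}. Your additional check that branch (P0) and the Duhamel step handle $\eta=0$ is harmless but not needed, since Theorem~\ref{thm:8.17} is already stated for all pairs in the stratum. One small point of wording: (P0) is the one-sided derivative-jump argument of Lemma~\ref{lem:3.3} with $k=1$, not a parity argument; the parity argument is (P4), which applies only when $\eta>0$.
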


\subsection{Local stability at a fixed generic curve}
\label{sec:8.6}

The stratum version is uniform but requires membership of a stratum. The following localisation
removes that requirement at a fixed curve: every curve with simple torsion zeros has a \textbf{relative}
$C^5$ neighbourhood inside the normalised slice $\mathcal G^5_{L_0}$ on which the same modulus holds,
because $\Delta$ is bounded below near it.

\begin{lemma}
\label{lem:8.19}
$\Delta(\alpha)>0$ for every $\alpha\in\mathcal G^5_{L_0}$.
\end{lemma}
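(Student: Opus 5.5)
The plan is a compactness argument on the circle. Let $\alpha\in\mathcal G^5_{L_0}$. Then $\alpha$ is a unit-speed $C^5$ curve with $\kappa>0$, so by Lemma~\ref{lem:6.3} and Lemma~\ref{lem:6.9} its torsion $\tau_\alpha$ lies in $C^{2}\subseteq C^1$. Every zero of $\tau_\alpha$ is simple, so at such a point $\tau_\alpha'\neq0$. Away from the zeros, $\tau_\alpha$ itself is non-zero.

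First I would show that the function $\phi:=\tau_\alpha^2+(\tau_\alpha')^2$ is continuous on $\mathbb R/L_0\mathbb Z$. This follows from $\tau_\alpha\in C^1$. Next I would check that $\phi$ is strictly positive at every point $s$, splitting into two cases. If $\tau_\alpha(s)\neq0$, the first summand is positive. If $\tau_\alpha(s)=0$, the zero is simple by the definition of $\mathcal G^5$ together with Lemma~\ref{lem:6.3}, which transfers simplicity from $D_\alpha$ to $\tau_\alpha$. Since $\alpha$ is already unit speed, no reparametrisation factor intervenes; if one prefers, Lemma~\ref{lem:6.9} covers this. In that case $\tau_\alpha'(s)\neq0$ and the second summand is positive.

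Finally, $\mathbb R/L_0\mathbb Z$ is compact, so the continuous positive function $\phi$ attains a positive minimum. Hence $\Delta(\alpha)=\min_s\sqrt{\phi(s)}>0$, and the infimum in Definition~\ref{def:8.13} is attained.

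The only step needing care is the regularity bookkeeping: we need $\tau_\alpha'$ to exist and be continuous, so that $\phi$ is continuous, and we need the notion of "simple zero" used in $\mathcal G^5$ (phrased for $D_\alpha$) to coincide with $\tau_\alpha'\neq0$. Both come from Lemma~\ref{lem:6.3}, applied with $h_\alpha=|\alpha'\times\alpha''|^{-2}>0$ and $h_\alpha\in C^{3}$. At a zero, $\tau_\alpha'=D_\alpha'h_\alpha\neq0$. I expect no genuine obstacle beyond this.
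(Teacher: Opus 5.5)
Your proposal is correct and follows essentially the same route as the paper. Both arguments use the continuity of $\phi=\tau_\alpha^2+(\tau_\alpha')^2$ (from $\tau_\alpha\in C^2$), a minimum attained on the compact circle, and the transfer of simplicity from $D_\alpha$ to $\tau_\alpha$ via $\tau_\alpha=D_\alpha h_\alpha$ with $h_\alpha=\kappa_\alpha^{-2}>0$ (the paper writes $\tau_\alpha=D/u$ with $u=\kappa^2$). The only cosmetic difference is that you establish pointwise positivity first, whereas the paper argues by contradiction at a minimum point.
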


\begin{proof}
The function $\phi:=\tau_\alpha^2+(\tau_\alpha')^2$ is continuous on the compact circle,
since $\tau_\alpha\in C^2$, and therefore attains its minimum. Were that minimum $0$, we would have
$\tau_\alpha=\tau_\alpha'=0$ at some point. But $\tau_\alpha=D/u$ with $u=\kappa^2>0$ and $u\in C^3$,
so $Z(\tau_\alpha)=Z(D)$, and at a zero $\tau_\alpha'=D'/u$; hence $\tau_\alpha'=0$ iff $D'=0$, and
the point would be a non-simple zero of $D_\alpha$, contradicting $\alpha\in\mathcal G^5$.
\end{proof}

\begin{theorem}[local log-Lipschitz stability]
\label{thm:8.20}
Fix $\alpha\in\mathcal G^5_{L_0}$ and put
$\kappa_0:=\min\kappa_\alpha>0$, $m:=\|\alpha\|_{C^5}+\frac{\kappa_0}2$ (note the first power of
$\kappa_0$ here, against the square in $\mu$), $\mu:=\frac{\kappa_0^2}4$,
$\Lambda:=\frac{28m^8}{\mu^4}=\frac{7168\,m^8}{\kappa_0^8}$, and
\[
\rho_0(\alpha):=\min\Bigl\{\frac{\kappa_0}2,\ \frac{\Delta(\alpha)}{2\Lambda}\Bigr\}>0,\qquad
\delta_\alpha:=\frac{\Delta(\alpha)}2,\qquad M_\alpha:=\max\Bigl\{m,\ \frac2{\kappa_0}\Bigr\}.
\]
Then $U_\alpha:=\{\beta\in\mathcal G^5_{L_0}:\|\beta-\alpha\|_{C^5}<\rho_0(\alpha)\}$, a relative ball
in the normalised slice containing $\alpha$, satisfies $U_\alpha
\subseteq\mathcal K^5_{\delta_\alpha,M_\alpha}(L_0)$, and hence, by Theorem~\ref{thm:8.17}, for all
$\beta,\gamma\in U_\alpha$
\[
D(\beta,\gamma)\ \le\ d_{\mathrm{lab}}(\beta,\gamma)\ \le\ C_{\mathrm{geo}}(M_\alpha,\delta_\alpha,L_0)\
\rho\bigl(\|\mathcal D(\beta)-\mathcal D(\gamma)\|_{C^0}\bigr).
\]
\end{theorem}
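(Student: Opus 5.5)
The whole task is to verify the inclusion $U_\alpha\subseteq\mathcal K^5_{\delta_\alpha,M_\alpha}(L_0)$. The displayed estimate then follows verbatim from Theorem~\ref{thm:8.17}, applied with $(M,\delta)=(M_\alpha,\delta_\alpha)$. That theorem needs the stratum to be non-empty, and this holds because it contains $\alpha$. It also needs $M_\alpha\ge1$, and the remark after Definition~\ref{def:8.13} gives $\|\alpha\|_{C^5}\ge1$, so $m\ge1$. For $\beta\in U_\alpha$ there are three conditions to check. Membership in $\mathcal G^5_{L_0}$ is part of the definition of $U_\alpha$.

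\textbf{The $C^5$ and curvature bounds.} The $C^5$ bound is immediate: $\|\beta\|_{C^5}<\|\alpha\|_{C^5}+\rho_0\le\|\alpha\|_{C^5}+\kappa_0/2=m\le M_\alpha$. For the curvature, both curves are unit speed, so $\kappa=|\gamma''|$. Hence $|\kappa_\beta-\kappa_\alpha|\le\|\beta-\alpha\|_{C^2}\le\|\beta-\alpha\|_{C^5}<\kappa_0/2$, using the sum norm. This gives $\kappa_\beta>\kappa_0/2\ge M_\alpha^{-1}$.

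\textbf{The non-degeneracy bound $\Delta(\beta)\ge\Delta(\alpha)/2$.} This is the main step. I would aim for a Lipschitz estimate
\[
\|\tau_\beta-\tau_\alpha\|_{C^1}\le\Lambda\,\|\beta-\alpha\|_{C^5}
\]
on the segment between the two curves. Granting it, $\sqrt{\tau^2+\tau'^2}$ is $1$-Lipschitz in the pair $(\tau,\tau')$ for the Euclidean norm, and the Euclidean norm is at most the sum norm. So pointwise $\sqrt{\tau_\beta^2+\tau_\beta'^2}\ge\Delta(\alpha)-\Lambda\rho_0\ge\Delta(\alpha)/2$.

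To prove the Lipschitz bound, I would write $\tau_\gamma=D_\gamma/u_\gamma$ with $u_\gamma=|\gamma'\times\gamma''|^2=\kappa_\gamma^2$ (unit speed), exactly as in Lemma~\ref{lem:8.19}. Along $\gamma_t=\alpha+t(\beta-\alpha)$ I would differentiate $\tau_{\gamma_t}$ and $\tau_{\gamma_t}'$ in $t$ and bound them by the mean value theorem.

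\textbf{Controlling the straight-line path.} The path need not be unit speed. However, $|\gamma_t'\times\gamma_t''|$ stays within a controlled distance of $\kappa_\alpha$, and the choice $\rho_0\le\kappa_0/2$ should keep $u_{\gamma_t}\ge\mu=\kappa_0^2/4$. The difficulty is that $|\gamma_t'|$ may deviate from $1$, so the bound $u\ge\kappa_0^2/4$ needs care. I would try first the estimate $|\gamma_t'\times\gamma_t''|\ge|\alpha'\times\alpha''|-\|\beta-\alpha\|_{C^2}(|\alpha'|+|\alpha''|+\dots)$. If this does not close with the stated constants, I would instead interpolate only through unit-speed curves by comparing $\alpha,\beta$ directly: write $\tau_\beta-\tau_\alpha=(D_\beta-D_\alpha)/u_\beta+D_\alpha(u_\alpha-u_\beta)/(u_\alpha u_\beta)$, and similarly for the derivative. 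Both $u_\alpha,u_\beta$ are then genuine squared curvatures, bounded below by $\kappa_0^2/4=\mu$.

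\textbf{The bookkeeping for $\Lambda$.} Every numerator is a multilinear expression in derivatives up to order $5$, bounded by powers of $m$, since $\|\alpha\|_{C^5},\|\beta\|_{C^5}\le m$. The denominators contain at most $u^2$ for $\tau$ and $u^4$ after the quotient rule for $\tau'$, giving $\mu^{-4}$. Summing the crude bounds should give $28m^8/\mu^4=\Lambda$, following the pattern of the computation for Lemma~\ref{lem:8.14} in Appendix~\ref{sec:C.1}. I expect this constant bookkeeping, together with keeping the denominators uniformly positive, to be the only real obstacle. The rest is the direct invocation of Theorem~\ref{thm:8.17}.
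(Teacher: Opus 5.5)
Your fallback route is exactly the paper's proof: compare $\alpha$ and $\beta$ directly, using $u=\kappa^2\ge\mu$ for both unit-speed curves, split $\tau_\beta-\tau_\alpha$ and $\tau_\beta'-\tau_\alpha'$ into terms with denominators up to $u^4$, and compare $\sqrt{\tau^2+(\tau')^2}$ pointwise; the crude bounds do sum to $(6+8+6+8)\,m^8/\mu^4=28m^8/\mu^4$ once the lower-order terms are absorbed using $\mu\le m^2$ and $m\ge1$. The straight-line path is unnecessary, and as you suspected it would not close with these constants, since the intermediate curves are not unit speed and lose the lower bound $u\ge\mu$; your checks that $M_\alpha\ge1$ and that the stratum is non-empty are details the paper leaves implicit.
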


\begin{proof}
It suffices to show that $\Delta$ keeps a positive lower bound on the small ball. Let
$\epsilon:=\|\beta-\alpha\|_{C^5}\le\frac{\kappa_0}2$, the single bound from which the three
constraints are read off separately. First, $\|\beta''-\alpha''\|_{C^0}\le\epsilon$ and
$\kappa=|\gamma''|$ on unit-speed curves give $\kappa_\beta\ge\kappa_0-\epsilon\ge\frac{\kappa_0}2$,
whence $u_\beta=\kappa_\beta^2\ge\frac{\kappa_0^2}4=\mu$. Second, and independently,
$\|\beta\|_{C^5}\le\|\alpha\|_{C^5}+\epsilon\le\|\alpha\|_{C^5}+\frac{\kappa_0}2=m$; this is why $m$
carries the first power of $\kappa_0$ and not its square, and no smallness of $\kappa_0$ is needed
anywhere. Note also $m\ge\|\alpha'\|_{C^0}=1$. On
$\{u\ge\mu,\ \text{all derivatives}\le m\}$ the quantities $D,D',u,u'$ are multilinear, respectively
quadratic, in the derivatives, so
\[
|D_\beta-D_\alpha|,\ |D_\beta'-D_\alpha'|\le3m^2\epsilon,\qquad
|u_\beta-u_\alpha|\le2m\epsilon,\qquad|u_\beta'-u_\alpha'|\le4m\epsilon;
\]
substituting into $\tau=\frac Du$ and $\tau'=\frac{D'}u-\frac{Du'}{u^2}$ and enlarging term by term,
using $|D|,|D'|\le m^2$, $|u'|\le2m^2$, $|u_\beta^2-u_\alpha^2|\le2m^2|u_\beta-u_\alpha|$,
$\mu\le m^2$ and $m\ge1$, gives $\|\tau_\beta-\tau_\alpha\|_{C^1}\le\Lambda\epsilon$. Comparing
pointwise by $\bigl|\sqrt{a^2+b^2}-\sqrt{c^2+d^2}\bigr|\le|a-c|+|b-d|$ and taking the infimum,
$\Delta(\beta)\ge\Delta(\alpha)-\Lambda\epsilon\ge\frac{\Delta(\alpha)}2=\delta_\alpha$. The other
two constraints hold because $\|\beta\|_{C^5}\le m\le M_\alpha$ and
$\kappa_\beta\ge\frac{\kappa_0}2\ge M_\alpha^{-1}$; and $\alpha$ itself satisfies all three.
\end{proof}

Theorem~\ref{thm:8.20} is the precise sense in which the inverse map is continuous at every fixed generic
curve, with modulus $\rho$. In particular the instability of \S\ref{sec:8.2} is not the discontinuity of the
inverse at any single curve --- that would contradict this theorem --- but the absence of a modulus valid
simultaneously at all of them.

\subsection{Divergence of the uniform constants, and forced torsion degeneracy}
\label{sec:8.7}

The two preceding subsections and \S\ref{sec:8.2} are compatible for any number of trivial reasons. The
following theorem shows what actually reconciles them: the constants of
Theorem~\ref{thm:8.17} must blow up as $\delta\downarrow0$, and no uniformly bounded near-collision
sequence can keep $\Delta$ away from $0$.

\begin{definition}[the optimal constant]
\label{def:8.21}
\[
C_*(\delta,M,L_0):=\sup\Bigl\{\frac{d_{\mathrm{lab}}(\alpha,\beta)}{\rho\bigl(\|\mathcal D(\alpha)-\mathcal D(\beta)\|_{C^0}\bigr)}:
\ \alpha,\beta\in\mathcal K^5_{\delta,M}(L_0),\ \mathcal D(\alpha)\ne\mathcal D(\beta)\Bigr\}\in[0,+\infty],
\]
with the convention $\sup\varnothing:=0$. By Theorem~\ref{thm:8.17}, $C_*\le C_{\mathrm{geo}}<\infty$ for every
$\delta>0$, so $C_*$ is the \textbf{least admissible constant} for an inequality of the form of
Theorem~\ref{thm:8.17} on that stratum; the pairs excluded by $\mathcal D(\alpha)=\mathcal D(\beta)$
have $d_{\mathrm{lab}}=0$ by Corollary~\ref{cor:8.18}. Since $\mathcal K^5_{\delta,M}$ decreases in $\delta$ and increases in $M$, the
quantity $C_*$ is non-increasing in $\delta$ and non-decreasing in $M$; consequently
$\lim_{\delta\downarrow0}C_*(\delta,M,L_0)$ exists in $[0,+\infty]$ as a monotone limit.
\end{definition}

\begin{theorem}[divergence of the uniform stability constants as $\delta\downarrow0$]
\label{thm:8.22}
Assume (H) with $m=2$. \textbf{First fix} an exact ambiguous
pair $(\gamma_+,\gamma_-)$ satisfying (H) with $m=2$, jointly normalised to the common length $L_0$
and the common arclength label --- that is, the input of Theorem~\ref{thm:8.8}. Then there exists
\[
M_0=M_0(\gamma_+,\gamma_-,L_0)<\infty
\]
such that for \textbf{every fixed} $M\ge M_0$,
\[
\lim_{\delta\downarrow0}C_*(\delta,M,L_0)=+\infty .
\]
\end{theorem}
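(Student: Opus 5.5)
The proof is by contradiction, using the near-collision sequences of Theorem~\ref{thm:8.8}. Apply that theorem with $r=5$ to the fixed pair $(\gamma_+,\gamma_-)$. This yields $\alpha_n,\beta_n\in\mathcal G^5_{L_0}\cap C^\infty$ with $\Theta_n:=\|\mathcal D(\alpha_n)-\mathcal D(\beta_n)\|_{C^0}\to0$ (the $C^{2}$ convergence dominates the $C^0$ one) and $D(\alpha_n,\beta_n)\ge\delta_0>0$. Since $\alpha_n$ and $\beta_n$ have distinct, non-mirror knot types, Corollary~\ref{cor:8.18} shows that their data differ: were $\mathcal D(\alpha_n)=\mathcal D(\beta_n)$, then $\alpha_n=g_0\beta_n$, which is impossible. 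Hence $\Theta_n>0$.

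Next I would fix $M_0$. By construction $\alpha_n\to\bar\gamma_+$ and $\beta_n\to\bar\gamma_-$ in $C^5$ (Lemma~\ref{lem:8.6} with $r=5$), and the limits are $C^\infty$ with $\kappa>0$. So for all large $n$,
\[
\|\alpha_n\|_{C^5},\ \|\beta_n\|_{C^5}\le \max\bigl(\|\bar\gamma_\pm\|_{C^5}\bigr)+1 \quad\text{and}\quad \kappa_{\alpha_n},\kappa_{\beta_n}\ge \tfrac12\min\kappa_{\bar\gamma_\pm}.
\]
I set $M_0$ to be the maximum of the first bound and the reciprocal of the second. It depends only on $\gamma_\pm$ and $L_0$, because the approximating sequences are chosen from the pair alone; discarding finitely many $n$, the bounds hold for every $n$. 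Put $\delta_n:=\min\{\Delta(\alpha_n),\Delta(\beta_n)\}$, which is positive by Lemma~\ref{lem:8.19}. Then for every $M\ge M_0$ we have $\alpha_n,\beta_n\in\mathcal K^5_{\delta_n,M}(L_0)$.

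The core estimate follows directly. By Definition~\ref{def:8.21} and Lemma~\ref{lem:8.2},
\[
C_*(\delta_n,M,L_0)\ \ge\ \frac{d_{\mathrm{lab}}(\alpha_n,\beta_n)}{\rho(\Theta_n)}\ \ge\ \frac{D(\alpha_n,\beta_n)}{\rho(\Theta_n)}\ \ge\ \frac{\delta_0}{\rho(\Theta_n)}\ \longrightarrow\ \infty,
\]
because $\rho$ is continuous and vanishes at $0$. Since $C_*(\delta_n,M,L_0)<\infty$ for every $n$ by Theorem~\ref{thm:8.17}, this forces $\delta_n\to0$. Indeed, if some subsequence had $\delta_n\ge\bar\delta>0$, monotonicity in $\delta$ would give $C_*(\bar\delta,M,L_0)\ge C_*(\delta_n,M,L_0)\to\infty$, contradicting finiteness. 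Now fix any $\delta>0$. For every $n$ with $\delta_n\le\delta$, monotonicity gives $C_*(\delta_n,M,L_0)\ge C_*(\delta,M,L_0)$, but that is the wrong direction. The correct argument is different: given $K$, choose $n$ with $\delta_0/\rho(\Theta_n)>K$. Then for every $\delta\le\delta_n$ the pair lies in $\mathcal K^5_{\delta,M}$, so $C_*(\delta,M,L_0)>K$. This proves $\lim_{\delta\downarrow0}C_*=+\infty$.

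The main obstacle is verifying that the approximants satisfy the uniform bounds with an $M_0$ independent of $n$. This requires the $C^5$ continuity of $N_{L_0}$ and the positivity of the curvature limit, both of which I would take from Lemma~\ref{lem:8.6} and the compactness of the circle. The rest is bookkeeping with the monotonicity of $C_*$.
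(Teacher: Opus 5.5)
Your proof is correct and is essentially the paper's argument. You place the near-collision pairs of Theorem~\ref{thm:8.8} (with $r=5$) in $\mathcal K^5_{\delta,M}(L_0)$ for every $\delta\le\min\{\Delta(\alpha_n),\Delta(\beta_n)\}$ and every $M\ge M_0$, so that $C_*(\delta,M,L_0)\ge\delta_0/\rho(\Theta_n)$ on that whole range, which is step (vii) of the paper; your $M_0$ read off directly from $\bar\gamma_\pm$, and your proof of $\Theta_n>0$ via Corollary~\ref{cor:8.18} rather than Theorem~\ref{thm:8.17}, are harmless variants. Two presentational points: Corollary~\ref{cor:8.18} presupposes the stratum membership you only establish in the next paragraph (Corollary~\ref{cor:6.10} would avoid that forward reference), and the detour proving $\delta_n\to0$, together with the abandoned ``wrong direction'' step, is not needed and should be cut.
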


\begin{proof}
Take the sequences $\alpha_n,\beta_n\in\mathcal G^5\cap C^\infty$ produced by Theorem~\ref{thm:8.8}
with $r=5$; the construction of $N_{L_0}$ makes them unit speed of length $L_0$ with centroid $0$ and
parameter origin at the arclength origin, so $\alpha_n,\beta_n\in\mathcal G^5_{L_0}$, and
\[
\Theta_n:=\|\mathcal D(\alpha_n)-\mathcal D(\beta_n)\|_{C^0}\le\|\cdot\|_{C^2}\to0,\qquad
D(\alpha_n,\beta_n)\ge\delta_0>0\quad(3\delta_0=D(\gamma_+,\gamma_-)).
\]
\textbf{(i) A uniform $C^5$ bound.} By Lemma~\ref{lem:8.6} and $\|a_n-\gamma_+\|_{C^5}\to0$ we get
$\|\alpha_n-\bar\gamma_+\|_{C^5}\to0$, where $\bar\gamma_\pm=N_{L_0}(\gamma_\pm)$; since
$\gamma_\pm$ are already unit speed of length $L_0$ with arclength origin $\gamma_\pm(0)$, the map
$N_{L_0}$ acts on them merely as the translation to centroid $0$, so $\bar\gamma_\pm\in C^\infty$ and
$\|\bar\gamma_\pm\|_{C^5}<\infty$; hence $\sup_n\|\alpha_n\|_{C^5}<\infty$, and likewise for
$\beta_n$.

\textbf{(ii) A uniform lower bound on the curvature.} $\gamma_\pm$ have $\kappa>0$ everywhere, hence a
positive minimum on the compact circle, and translation does not change $\kappa$; by the $C^2$
convergence from (i), for all large $n$ we have $\kappa_{\alpha_n},\kappa_{\beta_n}\ge\kappa_\flat>0$
with $\kappa_\flat=\frac12\min\{\min\kappa_{\bar\gamma_+},\min\kappa_{\bar\gamma_-}\}$. Discard
finitely many $n$.

\textbf{(iii) $\Delta_n>0$ for every $n$.} By Lemma~\ref{lem:8.19},
$\Delta_n:=\min\{\Delta(\alpha_n),\Delta(\beta_n)\}>0$.

\textbf{(iv) $M_0$.} Put
\[
M_0(\gamma_+,\gamma_-,L_0):=\max\Bigl\{\sup_n\|\alpha_n\|_{C^5},\ \sup_n\|\beta_n\|_{C^5},\
\kappa_\flat^{-1},\ 1\Bigr\}<\infty .
\]
It does \textbf{not depend on $\delta$}, which is what the conclusion requires; it does depend on the pair
fixed in the first step, since the supremum in (i) is controlled by $\|\bar\gamma_\pm\|_{C^5}$ and
$\kappa_\flat$ by $\min\kappa_{\bar\gamma_\pm}$, and both vary with $K_1,K_2$ and with the particular
construction. Thus $\alpha_n,\beta_n\in\mathcal K^5_{\Delta_n,M_0}(L_0)$ for every retained $n$.

\textbf{(v) $\Theta_n>0$.} If some $\Theta_n$ vanished, Theorem~\ref{thm:8.17} would give
$D(\alpha_n,\beta_n)\le C_{\mathrm{geo}}\rho(0)=0$, contradicting $D\ge\delta_0>0$.

\textbf{(vi) The ratios diverge.} By Lemma~\ref{lem:8.2},
$d_{\mathrm{lab}}(\alpha_n,\beta_n)\ge D(\alpha_n,\beta_n)\ge\delta_0$, so
\[
R_n:=\frac{d_{\mathrm{lab}}(\alpha_n,\beta_n)}{\rho(\Theta_n)}\ \ge\ \frac{\delta_0}{\rho(\Theta_n)}
\ \longrightarrow\ +\infty .
\]

\textbf{(vii) Conclusion.} Fix $n$. For \textbf{every} $\delta\in(0,\Delta_n]$ we have
$\Delta(\alpha_n),\Delta(\beta_n)\ge\delta$, so this very pair lies in
$\mathcal K^5_{\delta,M_0}(L_0)$ and is a legitimate witness at that $\delta$; hence
$C_*(\delta,M_0,L_0)\ge R_n$ for all $0<\delta\le\Delta_n$. The limit exists by monotonicity, so
$\lim_{\delta\downarrow0}C_*(\delta,M_0,L_0)\ge R_n$ for every $n$, and $R_n\to\infty$ makes that
limit $+\infty$. Monotonicity in $M$ extends the conclusion to every $M\ge M_0$.
\end{proof}

The next statement is the converse half, and it is proved for \emph{arbitrary} uniformly bounded
near-collision sequences rather than only for the ones constructed in \S\ref{sec:8.2}.

\begin{proposition}[near-collisions force torsion degeneracy]
\label{prop:8.23}
Fix $L_0>0$ and $M\ge1$. Let $\alpha_n,\beta_n\in\mathcal G^5_{L_0}$ satisfy
\[
\|\alpha_n\|_{C^5},\ \|\beta_n\|_{C^5}\le M,\qquad \kappa_{\alpha_n},\kappa_{\beta_n}\ge M^{-1},
\qquad \Theta_n:=\|\mathcal D(\alpha_n)-\mathcal D(\beta_n)\|_{C^0}\longrightarrow0,
\]
while the orbit distances stay away from $0$, say $D(\alpha_n,\beta_n)\ge\delta_0>0$ for all $n$.
Then
\[
\Delta_n:=\min\bigl\{\Delta(\alpha_n),\Delta(\beta_n)\bigr\}\ \longrightarrow\ 0 .
\]
\end{proposition}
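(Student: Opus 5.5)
The plan is to argue by contradiction, using Theorem~\ref{thm:8.17} as the only substantive input. Suppose $\Delta_n\not\to0$. Then there are $\delta_1>0$ and a subsequence, still written $(\alpha_n,\beta_n)$, with $\Delta(\alpha_n)\ge\delta_1$ and $\Delta(\beta_n)\ge\delta_1$ for every $n$. We may assume $\delta_1\le1$, which is harmless because the strata decrease in $\delta$. Together with the standing hypotheses $\|\alpha_n\|_{C^5},\|\beta_n\|_{C^5}\le M$ and $\kappa\ge M^{-1}$, and the fact that all curves lie in $\mathcal G^5_{L_0}$, this places every pair of the subsequence in the single stratum $\mathcal K^5_{\delta_1,M}(L_0)$. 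That stratum is non-empty, since it contains these pairs, so Theorem~\ref{thm:8.17} applies with $M$ and $\delta_1$ fixed.

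Next, since all curves lie in the normalised slice, $\mathcal D(\alpha_n)=(\kappa_{\alpha_n},\tau_{\alpha_n}^2)$ and likewise for $\beta_n$. Hence the $\Theta$ of Theorem~\ref{thm:8.17} for the pair $(\alpha_n,\beta_n)$ is exactly $\Theta_n$. The theorem then gives
\[
\delta_0\ \le\ D(\alpha_n,\beta_n)\ \le\ d_{\mathrm{lab}}(\alpha_n,\beta_n)\ \le\ C_{\mathrm{geo}}(M,\delta_1,L_0)\,\rho(\Theta_n),
\]
where the constant is finite and independent of $n$; by \eqref{eq:8.1} it is bounded by $1.6\times10^8L_0^2M^{36}\delta_1^{-4}$.

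Finally, $\rho$ is continuous and increasing with $\rho(0)=0$, so $\Theta_n\to0$ forces $\rho(\Theta_n)\to0$. The right-hand side therefore tends to $0$, contradicting $\delta_0>0$. Hence every subsequence has $\liminf\Delta_n=0$, so $\Delta_n\to0$. If one prefers to avoid subsequences, the same display gives the explicit form directly: $\min\{\delta_1,\Delta_n\}^{4}\le 1.6\times10^8L_0^2M^{36}\rho(\Theta_n)/\delta_0$ for large $n$.

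There is no real obstacle; the points needing care are bookkeeping. First, the stratum definition requires the normalisation conditions of $\mathcal G^5_{L_0}$ (unit speed, length $L_0$, centroid $0$, arclength origin), and these are built into the hypothesis. Second, $\Theta_n$ must be the $C^0$ distance of the data in the common label, which is exactly how Theorem~\ref{thm:8.17} reads its input. Third, $M\ge1$ and $\delta_1\le1$ are needed only for the explicit constant, not for the finiteness that the argument actually uses.
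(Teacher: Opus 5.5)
Your argument is correct and is the paper's own proof: pass to a subsequence with $\Delta\ge\delta_1>0$, place it in the single stratum $\mathcal K^5_{\delta_1,M}(L_0)$, and let Theorem~\ref{thm:8.17} force $\delta_0\le C_{\mathrm{geo}}(M,\delta_1,L_0)\,\rho(\Theta_n)\to0$, a contradiction. In your optional explicit form, the $\delta_1$ inside the minimum should be $1$: apply the theorem directly at $\delta=\min\{1,\Delta_n\}$, which is positive by Lemma~\ref{lem:8.19}.
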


\begin{proof}
If not, then $\Delta_{n_k}\ge\delta_1>0$ along a subsequence, so
$\alpha_{n_k},\beta_{n_k}\in\mathcal K^5_{\delta_1,M}(L_0)$ and Theorem~\ref{thm:8.17} gives
$\delta_0\le D(\alpha_{n_k},\beta_{n_k})\le C_{\mathrm{geo}}(M,\delta_1,L_0)\,\rho(\Theta_{n_k})\to0$,
a contradiction.
\end{proof}

\begin{corollary}
\label{cor:8.24}
The sequences of Theorem~\ref{thm:8.8}, taken with $r=5$ as in Theorem~\ref{thm:8.22}, necessarily
satisfy $\Delta_n\to0$.
\end{corollary}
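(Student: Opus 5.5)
This is a direct application of Proposition~\ref{prop:8.23} to the sequences built in Theorem~\ref{thm:8.8}. The plan is to verify the hypotheses of that proposition for these sequences and then read off the conclusion; no new estimate is needed.

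\textbf{Step 1 (the slice).} Take $\alpha_n,\beta_n$ from Theorem~\ref{thm:8.8} with $r=5$. As in step (i) of the proof of Theorem~\ref{thm:8.22}, they lie in $\mathcal G^5_{L_0}$, since the construction applies $N_{L_0}$.

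\textbf{Step 2 (uniform bounds).} Steps (i), (ii) and (iv) of that same proof supply a finite $M_0$. For all sufficiently large $n$ it gives $\|\alpha_n\|_{C^5},\|\beta_n\|_{C^5}\le M_0$ and $\kappa_{\alpha_n},\kappa_{\beta_n}\ge M_0^{-1}$. The finitely many discarded indices do not affect a limit statement.

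\textbf{Step 3 (remaining hypotheses).} From Theorem~\ref{thm:8.8}(3) we have $\Theta_n\le\|\mathcal D(\alpha_n)-\mathcal D(\beta_n)\|_{C^2}\to0$. From Theorem~\ref{thm:8.8}(4) we have $D(\alpha_n,\beta_n)\ge\delta_0>0$.

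\textbf{Step 4 (conclusion).} Proposition~\ref{prop:8.23}, applied with $M=M_0$, now gives $\Delta_n\to0$.

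The only point needing care is the uniform $C^5$ and curvature bound, since Theorem~\ref{thm:8.8} itself does not state it. I would cite the proof of Theorem~\ref{thm:8.22} for this bound rather than re-derive it. Its content is the $C^5$ convergence $\alpha_n\to\bar\gamma_+$ and $\beta_n\to\bar\gamma_-$, which comes from the continuity of $N_{L_0}$ in Lemma~\ref{lem:8.6}.
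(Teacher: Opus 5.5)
Your proposal is correct and is essentially the paper's own proof. The paper's argument is the same: the uniform $C^5$ and curvature bounds with the single constant $M_0$ come from the proof of Theorem~\ref{thm:8.22}, $\Theta_n\to0$ and $D(\alpha_n,\beta_n)\ge\delta_0$ come from parts 3 and 4 of Theorem~\ref{thm:8.8}, and then Proposition~\ref{prop:8.23} is applied with $M=M_0$. One small citation detail: membership in $\mathcal G^5_{L_0}$ is stated in the opening lines of that proof, not in its step (i), which is harmless.
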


\begin{proof}
Steps (i)--(ii) of the proof of Theorem~\ref{thm:8.22} supply the uniform $C^5$ bound and the uniform
curvature lower bound, both with the single constant $M_0$; part 3 of Theorem~\ref{thm:8.8} with
$r=5$ gives $\|\mathcal D(\alpha_n)-\mathcal D(\beta_n)\|_{C^2}\to0$ and hence $\Theta_n\to0$, and
part 4 gives $D(\alpha_n,\beta_n)\ge\delta_0>0$. Apply Proposition~\ref{prop:8.23} with $M=M_0$.
\end{proof}

This turns the observation of Remark~\ref{rem:8.12} into a necessity. There we merely checked that the
torsion of $\bar\gamma_\pm$ vanishes on the planar collars, so that the particular sequence escapes
along the torsion-degenerate locus. Proposition~\ref{prop:8.23} shows that \textbf{no near-collision
sequence obeying a uniform $C^5$ bound and a uniform lower bound on the curvature can avoid
degenerating in $\Delta$}. It says nothing about sequences violating those bounds, and it is not a
classification of instability mechanisms; it is a statement about the class in which
Theorem~\ref{thm:8.17} holds.

\begin{remark}[what the sharpness of \S\ref{sec:engine} does and does not transport]
\label{rem:8.25}
Theorem~\ref{thm:7.2} is a statement
about a pair of one-dimensional functions. Turning it into a \emph{geometric} lower bound, and thereby
proving that the logarithmic order of Theorem~\ref{thm:8.17} is optimal, would require two further ingredients,
each of independent interest and neither supplied here. First, every step of Theorem~\ref{thm:8.17} --- the
triangle inequality inside Duhamel, the passage $\int_0^s\to\int_0^{L_0}$, and
$D\le d_{\mathrm{lab}}\le\|\alpha-\widehat\beta\|_{C^0}$ --- is one-directional, so the chain cannot
simply be reversed. Second,
realising $(f,g_\epsilon)$ as the torsions of two \textbf{closed, embedded, unit-speed} $C^5$ curves of the
same length $L_0$ and with the \textbf{same} curvature requires the closing condition
$\int_0^{L_0}T=0$, three nonlinear constraints, together with embeddedness. There is also a genuine
mechanism working against such a transport: a large $\|\tau_\alpha-\varepsilon\tau_\beta\|_{L^1}$
does not force a large $D$, since a torsion difference concentrated on a short arc can have its
accumulated frame rotation cancelled later. Theorem~\ref{thm:8.17} is therefore an upper bound, and the
question of geometric sharpness is posed in \S\ref{sec:questions}.
\end{remark}

The order of quantifiers matters and may not be rearranged: fix the pair $\to$ obtain
$M_0(\gamma_+,\gamma_-,L_0)$ $\to$ fix $M\ge M_0$ $\to$ let $\delta\downarrow0$. Step (iv) shows how
$M_0$ is built, namely from the $C^5$ bound and the curvature lower bound of the sequence generated by
that pair; whether a bound determined by $L_0$ alone and valid simultaneously for all pairs satisfying
(H) exists is raised in \S\ref{sec:questions}.

%% file: sections/09-relation.tex
\section{Relation to the lifting and root-regularity literature}
\label{sec:relation}

Viewing $\tau^2$ as the coefficient datum of the monic hyperbolic polynomial $P(t)=X^2-\tau(t)^2$ of
degree $2$, the search for a smooth $\tau$ is a lifting problem over the orbit map of the basic
invariant $\sigma(x)=x^2$ for $G=\mathbb Z_2$ acting on $\mathbb R$. That problem has a substantial
literature, whose main goal is the \textbf{existence and regularity of lifts}:
Alekseevsky--Kriegl--Losik--Michor \cite{ref7}; Kriegl--Losik--Michor--Rainer \cite{ref16}, \cite{ref17};
Losik--Michor--Rainer \cite{ref18};
Losik--Rainer \cite{ref19}; Rainer \cite{ref9}; see the survey Parusiński--Rainer \cite{ref8}. This section places the present
results against that background, item by item.

\subsection{The rigidity criterion in the lifting literature, and how it compares}
\label{sec:9.1}

AKLM \cite[\S4.2]{ref7} prove, under a \textbf{normal non-flatness} condition, that any two smooth parametrisations
of the roots differ by a constant permutation. For $d=2$ with $\tau\not\equiv0$ that conclusion
coincides, over an interval base, with the rigidity criterion of Theorem~\ref{thm:3.9} --- but the hypotheses do
not coincide, and the comparison is exact.

\begin{proposition}[normal non-flatness for $d=2$]
\label{prop:9.1}
Let $J$ be a \textbf{connected} one-dimensional base
(an interval or $S^1$) and $\tau\in C^\infty(J)$. Then $P=X^2-\tau^2$ is normally non-flat \textbf{iff}
$Z_\infty(\tau)=\varnothing$ or $\tau\equiv0$.
\end{proposition}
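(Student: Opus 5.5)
The plan is to unwind the definition of normal non-flatness from \cite[\S4.2]{ref7} in the quadratic case. The condition then reduces to a pointwise statement about the order of vanishing of $\tau^2$, and Definition~\ref{def:2.6} translates that order into one for $\tau$. First I would write $P=X^2-\tau^2$. It is already in Tschirnhausen form, since the $X$-coefficient vanishes, so its only coefficient is $a_2=-\tau^2$.

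I recall the definition as follows, in the form I would use it. At a point $t_0$ where the roots $\pm\tau(t_0)$ are distinct, $P$ splits locally into smooth linear factors. Each factor has a single root and is trivially normally non-flat, so $t_0$ imposes no condition. At a point where all roots coincide, i.e. $\tau(t_0)=0$, normal non-flatness requires the Tschirnhausen coefficient $a_2$ not to be infinitely flat at $t_0$. The exception is the identically degenerate case $a_2\equiv0$ (all roots equal throughout), which the definition admits. Writing the dictionary down precisely, including this exceptional clause, is the step I expect to be the main obstacle. The ``or $\tau\equiv0$'' in the statement exists only to match that clause. I would quote \cite{ref7} verbatim and check that for $d=2$ the recursive splitting terminates after one step.

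Next comes the order computation. If $\ord_{t_0}\tau=k<\infty$, then Lemma~\ref{lem:3.2} gives $\tau=(t-t_0)^kh$ with $h(t_0)\ne0$. Hence $\tau^2=(t-t_0)^{2k}h^2$ with $h(t_0)^2\ne0$, so $\ord_{t_0}\tau^2=2k<\infty$. Conversely, if $\ord_{t_0}\tau=\infty$, then every derivative of $\tau^2$ at $t_0$ vanishes by Leibniz. Therefore $a_2$ is infinitely flat at $t_0$ exactly when $t_0\in Z_\infty(\tau)$, and the pointwise condition at zeros of $\tau$ reads $t_0\notin Z_\infty(\tau)$. Points with $\tau(t_0)\ne0$ never lie in $Z_\infty$, so together the two cases show: away from the exceptional clause, $P$ is normally non-flat at every point iff $Z_\infty(\tau)=\varnothing$.

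Finally I would handle the exceptional clause and use connectedness of $J$. If $\tau\equiv0$, then $a_2\equiv0$ and the clause applies. Conversely, suppose $P$ qualifies through the clause, meaning $a_2\equiv0$ on the whole base; then $\tau\equiv0$. The clause cannot hold only on part of $J$ unless $\tau$ vanishes on an open subset $W$. In that case, since $J$ is connected and $\tau\not\equiv0$, $W\ne J$, so the open set $W$ has a boundary point $p\in J$. As $\tau$ vanishes on an open subset of $J$ accumulating at $p$, continuity of all derivatives gives $p\in Z_\infty(\tau)$, so $a_2$ is flat at $p$ while $P$ is not identically degenerate near $p$, and non-flatness fails there. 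This is exactly the argument of case (b) in Proposition~\ref{prop:4.3}'s companion (Theorem~\ref{thm:5.8}(b)). Combining the two directions yields: normally non-flat iff $Z_\infty(\tau)=\varnothing$ or $\tau\equiv0$.
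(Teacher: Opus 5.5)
Your argument is essentially the paper's. The order computation $\ord_{t_0}\tau^2=2\ord_{t_0}\tau$ reduces normal non-flatness to ``every $t_0\in Z_\infty(\tau)$ has a neighbourhood on which $\tau\equiv0$'', and connectedness of $J$ finishes the proof. The paper phrases that last step as $Z_\infty$ being both open and closed; you phrase it as the existence of a boundary point of the vanishing set.

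Two points need tightening.

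First, the exceptional clause must be read \emph{locally}: at $t_0$, $a_2\equiv0$ on a neighbourhood of $t_0$. That is how the paper uses it. Under the global reading you first write down ($a_2\equiv0$ on all of $J$), connectedness plays no role and the equivalence is immediate.

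Second, $W$ must be taken maximal, namely $W=\operatorname{int}Z(\tau)$ (or one of its components). For an arbitrary open set on which $\tau$ vanishes, a boundary point $p$ may lie inside a larger zero interval. There $\tau\equiv0$ near $p$, the local clause holds, and your claim that ``$P$ is not identically degenerate near $p$'' fails.
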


\begin{proof}
For $d=2$ the condition is equivalent to: every $t_0\in Z_\infty(\tau)$ has a neighbourhood
on which $\tau\equiv0$. That makes $Z_\infty$ \textbf{open}, since $t_0\in Z_\infty$ gives $\tau\equiv0$
on a neighbourhood $N$ and hence all derivatives vanish at every point of $N$, so
$N\subseteq Z_\infty$; whereas $Z_\infty=\bigcap_j\{\tau^{(j)}=0\}$ is always \textbf{closed}. As $J$ is
connected and $Z_\infty$ is both open and closed, $Z_\infty=\varnothing$ or $Z_\infty=J$, the latter
meaning $\tau\equiv0$.
\end{proof}

\begin{corollary}[the criterion $c\le1$ is strictly weaker]
\label{cor:9.2}
In the connected case with
$\tau\not\equiv0$, normal non-flatness $\Rightarrow Z_\infty=\varnothing\Rightarrow c=1\Rightarrow$
the lift rigidity of Theorem~\ref{thm:3.9}. The \textbf{converse fails}: on $S^1$ the function
$g(\theta)=\exp\bigl(-1/\sin^2(\theta/2)\bigr)$, with $g(0)=0$, has $Z_\infty=\{0\}\neq\varnothing$
and $g\not\equiv0$, hence is not normally non-flat; yet $S^1\setminus\{0\}$ is connected, so $c=1$
and rigidity holds. \hfill$\square$
\end{corollary}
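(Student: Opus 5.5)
The corollary makes two claims, and I would prove them in order: first the chain of implications, then the counterexample.

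For the chain, I would assume $\tau\not\equiv0$ on the connected base $S^1$ and argue as follows.
\begin{enumerate}
\item Proposition~\ref{prop:9.1} says that normal non-flatness is equivalent to $Z_\infty(\tau)=\varnothing$ or $\tau\equiv0$. Since $\tau\not\equiv0$, it gives $Z_\infty(\tau)=\varnothing$.
\item Then $S^1\setminus Z_\infty=S^1$ is connected, so $c=1$, as in Proposition~\ref{prop:3.6}(iv).
\item Theorem~\ref{thm:3.9} then gives $\mathcal L(\tau)\subseteq\{\tau,-\tau\}$.
\end{enumerate}
These steps are pure bookkeeping.

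For the failure of the converse, I would verify the properties of $g(\theta)=\exp(-1/\sin^2(\theta/2))$ directly.
\begin{enumerate}
\item \emph{$g$ is a smooth function on $S^1$.} Write $u=\sin^2(\theta/2)=\tfrac{1-\cos\theta}{2}$, which is $2\pi$-periodic and smooth. Then $g=\phi\circ u$ with $\phi(x)=e^{-1/x}$ for $x>0$ and $\phi(x)=0$ for $x\le0$. Since $\phi$ is the standard $C^\infty$ flat function and $u\ge0$, the composition is $C^\infty$ on $S^1$.
\item \emph{The zero set is $\{0\}$.} On $S^1$, $u$ vanishes only at $\theta=0$, so $g>0$ elsewhere. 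In particular $g\not\equiv0$ and $Z(g)=\{0\}$.
\item \emph{The zero at $0$ has infinite order.} All derivatives of $\phi$ vanish at $0$. By the chain rule (Faà di Bruno), every derivative of $\phi\circ u$ at $\theta=0$ is a sum of products each containing some $\phi^{(j)}(u(0))=\phi^{(j)}(0)=0$. Hence $Z_\infty(g)=\{0\}\neq\varnothing$.
\item \emph{$g$ is not normally non-flat.} Proposition~\ref{prop:9.1} applies, since $Z_\infty\neq\varnothing$ and $g\not\equiv0$.
\item \emph{Rigidity still holds.} $S^1\setminus\{0\}$ is an open arc, hence connected, so $c(g)=1$ and Theorem~\ref{thm:3.9} applies. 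This is exactly case (iii) of Example~\ref{ex:3.10}.
\end{enumerate}

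I expect no real obstacle. The only point needing care is the smoothness and infinite flatness of $g$ at $\theta=0$. I would handle it by the composition argument above rather than by estimating derivatives directly: $u$ is smooth and vanishes to second order, and $\phi$ is flat at $0$.
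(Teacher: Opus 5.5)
Your proposal is correct and follows the same route as the paper. The chain is read off from Proposition~\ref{prop:9.1} (using $\tau\not\equiv0$), Proposition~\ref{prop:3.6}(iv) and Theorem~\ref{thm:3.9}, and the counterexample is checked exactly as stated; your composition argument $g=\phi\circ u$, with $u=\sin^2(\theta/2)$ and $\phi$ the standard flat function, supplies the smoothness and infinite flatness at $\theta=0$ that the paper leaves implicit.
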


In particular there is \textbf{no} normally non-flat case consisting of ``several proper zero intervals'':
the endpoint of such an interval lies in $Z_\infty$ and has no neighbourhood on which $\tau\equiv0$.
It is exactly those endpoints that \S\ref{sec:flexibility} manufactures.

Two further points of comparison. Losik--Rainer \cite{ref19} explicitly restate the conclusion of \cite{ref7}, and
Rainer \cite[Lem.~5.1]{ref9} obtains the same conclusion when $E^{(\infty)}(P)=\varnothing$, citing its
source; the three references thus form a \textbf{single chain of source and restatement} rather than three
independent results. Separately, AKLM \cite[\S5.2]{ref7} observe that at a point of infinite flatness of
$x^2=f(t)^2$ the roots may change sign and that a constant permutation cannot absorb this freedom;
this is a \textbf{qualitative} precedent for the local sign freedom in $d=2$, but contains no statement
about independence across components and no count.

\begin{remark}[interval versus circle]
\label{rem:9.3}
The branch mechanism also exists on an interval, and is in
fact \textbf{easier} to trigger there: an isolated infinite-order flat point $p$ in the interior of an
interval $I$ makes $I\setminus\{p\}$ have two components, so $c=2$ and there are four lifts, whereas
$S^1\setminus\{p\}$ is connected and $c=1$. What is specific to the circle is only the purely
topological fact that a single point does not disconnect it. In particular the classification
\textbf{does not use monodromy}; it uses only the number of connected components.
\end{remark}

\subsection{Classification and counting}
\label{sec:9.2}

For $d=2$ over a connected one-dimensional base, the classification of \textbf{all} smooth lifts of a
given smooth non-negative coefficient datum is due to Bony--Colombini--Pernazza \cite[\S1]{ref20}, and $2^{c}$
is the immediate cardinality consequence of that classification. Section~\ref{sec:branch} is a self-contained
restatement of that result on $S^1$, and the attribution note opening \S\ref{sec:branch} maps its
statements to the source clause by clause.
The contribution of the present paper on this side lies in \S\S\ref{sec:rigidity}--\ref{sec:flexibility}: the translation of the sign-level
classification into rigidity theorems, fibre bounds and an exact fibre for space curves.

\subsection{Regularity of roots versus two-datum inverse stability}
\label{sec:9.3}

The branch of the literature closest to \S\ref{sec:engine} is the \textbf{regularity theory of roots and lifts}. The
higher-order Glaeser inequality of Ghisi--Gobbino \cite{ref23} controls the derivative of a \textbf{single radical}:
if $g\in C^{k,\alpha}(\overline I)$ and $|f|^{k+\alpha}=|g|$, then $f'\in L^p_w(I)$ with
$\frac1p+\frac1{k+\alpha}=1$. Bronshtein's theorem, in the proof of Parusiński--Rainer \cite{ref24}, states
that for a $C^{p-1,1}$ curve of hyperbolic polynomials any continuous root \textbf{of that curve} is
locally Lipschitz, with constant bounded by the $C^{n-1,1}$ norms of \textbf{that one} coefficient datum.
In \cite{ref25} a single continuous root is shown to lie in $W^{1,p}$ for every $1\le p<n/(n-1)$, and this
range is optimal; \cite{ref26} gives SBV selections and BV bounds in the multi-parameter case. The works
\cite{ref16}, \cite{ref17}, \cite{ref18}, \cite{ref19}, \cite{ref9} are of the same type, treating Lipschitz, Sobolev and SBV regularity of
roots and local uniqueness up to a group action over the regular orbit locus.

\textbf{The common shape of these conclusions is the regularity of $a\mapsto\operatorname{lift}(a)$, with
bounds using the norms of that single datum $a$.} Theorem~\ref{thm:7.1} has a different shape. It compares
\textbf{two} nearby data $f^2,g^2$ through \textbf{two given smooth signed lifts} of theirs and, after
quotienting by the \textbf{global} $\mathbb Z_2$ sign, provides an \textbf{inverse} stability modulus from
$C^0$ to $L^1$,

\[
\inf_{\varepsilon=\pm1}\|f-\varepsilon g\|_{L^1}\le C_A(B,\delta,L_0)\,
\rho\bigl(\|f^2-g^2\|_{C^0}\bigr),
\]

under the uniform non-degeneracy $f^2+(f')^2\ge\delta^2$ and likewise for $g$.

\textbf{The current state of the two-datum direction.} The survey \cite{ref8} listed the
case $d\ge3$ as \cite[Open Problem 3.8]{ref8}, its $d=2$ qualitative version being \cite[Remark 3.7]{ref8}, and the
continuity of the push-forward in the general complex-coefficient case as \cite[Open Problem 4.8]{ref8}.
Since then Parusiński--Rainer \cite{ref27} proved that in the \textbf{$C^d$ coefficient topology} the \textbf{ordered}
root solution map $\mathsf a\mapsto\lambda^\uparrow\circ\mathsf a$ for hyperbolic polynomials is
continuous into $W^{1,q}_{\mathrm{loc}}$ for every $1\le q<\infty$, failing for $q=\infty$, and state
explicitly that this \textbf{solves} \cite[Open Problem 3.8]{ref8}; \cite[Thm.~1.6]{ref27} gives in addition local
Lipschitz continuity on the interior region $\mathrm{Hyp}^\circ(d)$ where all roots are simple.
Parusiński--Rainer \cite{ref28} proved that the \textbf{unordered} $d$-valued root map of a general, non-hyperbolic
polynomial is continuous from the $C^d$ coefficient topology into $W^{1,q}_{\mathrm{loc}}$ for
$1\le q<\frac d{d-1}$, that exponent being optimal, and state explicitly that this \textbf{solves}
\cite[Open Problem 4.8]{ref8}. Both of those open problems are therefore solved. On the other hand,
\cite[Question 1.5]{ref27} explicitly asks whether these continuity statements are \textbf{uniform} and whether an
\textbf{effective modulus of continuity} exists; that remains unknown.

\textbf{How Theorem~\ref{thm:7.1} is situated.} It differs from those results in four respects, each
checkable against the sources.

\begin{enumerate}
\item \textbf{A different coefficient topology.} The continuity in \cite{ref27}, \cite{ref28} presupposes convergence of the
   coefficients in $C^d$, i.e. $C^2$ when $d=2$, whereas Theorem~\ref{thm:7.1} uses only the $C^0$ norm of the
   coefficient difference, $\eta=\|f^2-g^2\|_{C^0}$, with $C^2$ entering solely as a uniform bound
   $B$.
\item \textbf{Quantitative versus qualitative.} \cite{ref27}, \cite{ref28} give qualitative continuity, and an effective
   modulus is exactly the open \cite[Question 1.5]{ref27}; Theorem~\ref{thm:7.1} supplies an explicit log-Lipschitz
   modulus with an explicit $C_A(B,\delta,L_0)$.
\item \textbf{A different quotient.} \cite{ref27} uses the pointwise increasing rearrangement $\lambda^\uparrow$ and
   \cite{ref28} an unordered $d$-valued target, whereas Theorem~\ref{thm:7.1} quotients by the \textbf{single global} sign
   $\varepsilon\in\{\pm1\}$ relating two \textbf{given} smooth signed lifts.
\item \textbf{A different degenerate locus.} The condition $f^2+(f')^2\ge\delta^2$ \textbf{allows} $f$ to cross
   zero transversally at finitely many points, where $f^2$ has a double root and thus lies on the
   \textbf{boundary} of $\mathrm{Hyp}(2)$; the simple-root Lipschitz result \cite[Thm.~1.6]{ref27} does not apply
   there.
\end{enumerate}

Two further items are close without being priors, and the boundary is worth marking. \cite[Remark 3.7]{ref8}
gives the trivial estimate $\|\lambda-\lambda_n\|_{L^\infty}\le\|f-f_n\|_{C^0}^{1/2}$ for $d=2$ and
\textbf{non-negative} square roots, with no non-degeneracy hypothesis, no $L^1$ bound and no statement
modulo $\mathbb Z_2$; and \cite[Example 3.6]{ref8} shows that in the \textbf{Lipschitz norm} such an estimate
fails, taking $f(t)=t^2$ and $f_n(t)=t^2+n^{-2}$, while \cite[Example 1.12]{ref27}, with $f=|x|$ and
$f_n=\sqrt{x^2+n^{-2}}$, is likewise a counterexample to continuity into the $C^{0,1}$ target
topology. Neither supplies an upper bound theorem. What makes Theorem~\ref{thm:7.1} possible is precisely the
passage to $L^1$ together with a non-degeneracy condition of type $\Delta\ge\delta$.

\subsection{Neighbouring appearances of unsigned data, torsion and knotting}
\label{sec:9.4}

The information operator of this paper reads the \textbf{unsigned} pair $(\kappa,|\tau|)$. Several
works touch the same unsigned quantity, or the same combination of torsion with knot theory, from
other directions; all are complementary to rather than overlapping with the present results.

\begin{itemize}
\item \textbf{The geometric cost of a sign convention.} The crudest way to remove the sign freedom is to
  prescribe a global sign for the torsion. Bray--Jauregui \cite{ref10} show that this is a strong restriction
  in a natural class: if $\gamma\subset\mathbb R^3$ is a graph over a simple closed plane curve of
  positive curvature and has everywhere non-negative, or everywhere non-positive, torsion, then its
  torsion vanishes identically and $\gamma$ is planar; they also prove that a simple closed plane
  curve cannot be perturbed into a closed space curve of constant non-zero torsion. Adding a global
  sign convention therefore does not cheaply remove the branch freedom of \S\ref{sec:flexibility}.
\item \textbf{Naturality at low regularity.} Mucci--Saracco \cite{ref11} construct a weak binormal and a weak normal
  for non-smooth curves of finite total curvature and finite total absolute torsion, and prove that
  the length of the weak binormal equals the \textbf{total absolute torsion}. What survives naturally in
  the weak framework is thus $|\tau|$, not $\tau$.
\item \textbf{The integral-invariant side.} Milnor's curvature--torsion invariant, extended to links by
  Honma--Saeki \cite{ref12}, takes the infimum over representatives of the sum of the total curvature and the
  \textbf{total absolute torsion}, producing a knot and link invariant.
\item \textbf{Torsion and knotting on a surface.} Ghomi--Raffaelli \cite{ref29}, motivated by Nirenberg's
  problem on the isometric rigidity of tight surfaces, study closed asymptotic curves on negatively
  curved surfaces in $\mathbb R^3$ and compute the linking number of such a curve with the surface
  normal by Călugăreanu's theorem, obtaining restrictions on the planar projections of the curve;
  their results apply equally to closed curves of nowhere vanishing torsion and their binormal fields.
  That paper constrains the \textbf{topology} of a closed space curve by a differential-geometric
  hypothesis on its torsion; the present paper asks, in the opposite direction, how much of the curve
  --- and of its knot type --- is recovered from the \textbf{unsigned} torsion function itself, and
  how stably.
\end{itemize}

What separates the present paper from all four is the object being compared: here the unsigned datum
is read \textbf{pointwise along a common arclength label}. The first item above concerns the cost of a
sign convention, the second the naturality of that datum in a weak regularity class, the third
compresses the same unsigned ingredients into a \textbf{single number}, and the fourth uses a torsion
hypothesis as input to a topological conclusion rather than reading the torsion as data.

\subsection{Attribution}
\label{sec:9.5}

The classification of \S\ref{sec:branch} and the count $2^{c(\tau)}$ are known background, due to
Bony--Colombini--Pernazza \cite[\S1]{ref20}. What the present paper adds is listed in \S\ref{sec:1.4}
and proved in \S\S\ref{sec:rigidity}--\ref{sec:dichotomy}.

%% file: sections/10-questions.tex
\section{Questions raised}
\label{sec:questions}

The branch invariant $c(\tau)$ settles the structure of the signed lifts and bounds the curve fibre
sharply, and $\Delta$ locates the quantitative degeneration. Five questions are opened rather than
closed by this, and we list them in the order of the sections that raise them.

\textbf{1. Flexible representatives of prime knots.} Proposition~\ref{prop:5.20} shows that a composite
knot type satisfying (H) with $m=2$ has both a rigid and a flexible representative. The construction
of \S\ref{sec:flexibility} produces its flexibility from a connected sum, so it says nothing about
prime types. \emph{Does a prime knot type admit a flexible representative?} A related and apparently
easier question is which cardinals $c\in\mathbb N_0\cup\{\aleph_0\}$ arise as $c(\tau)$ for a curve of
a prescribed knot type, and how $c(\tau)$ may be computed for a curve given by other means; the bound
of Corollary~\ref{cor:4.5} is vacuous when $c=\aleph_0$.

\textbf{2. Geometric sharpness of the logarithm.} Theorem~\ref{thm:8.17} is an upper bound with modulus
$\rho(t)=t(1+\log_+\frac1t)$, and Theorem~\ref{thm:7.2} shows that this order is optimal
\emph{in one dimension}. \emph{Is it optimal geometrically?} Remark~\ref{rem:8.25} identifies what a
proof would need: reversing the one-directional steps of Theorem~\ref{thm:8.17}, and realising the
family of Theorem~\ref{thm:7.2} as the torsions of two closed embedded unit-speed $C^5$ curves of
equal length with equal curvature --- three nonlinear closing constraints plus embeddedness. The
companion question is whether \emph{local Lipschitz} stability holds: Remark~\ref{rem:7.3} shows every
Hölder exponent $\vartheta<1$ is admissible in one dimension, and the geometric statement at exponent
$1$ is undecided here. Nothing in this paper decides the true order of $C_*(\delta,M,L_0)$ in
$\delta$ either; the upper bound \eqref{eq:8.1} gives $O_{M,L_0}(\delta^{-4})$ as $\delta\downarrow0$, and no lower
bound on the rate is proved.

\textbf{3. Stronger a priori bounds, and uniformity in the pair.} The logarithm in
Theorem~\ref{thm:7.1} is not removed by the $C^2$ bound available here, and Theorem~\ref{thm:7.2}
exhibits a family that saturates it within that class --- the cut-off $\chi$ of that family is itself
only $C^2$. \emph{Does a uniform $C^k$ bound with $k>2$, or an analytic bound, improve the modulus?}
The heuristic after Theorem~\ref{thm:7.1} suggests only that each extra derivative lowers the
logarithmic coefficient rather than removing the logarithm. In the same direction: the constant $M_0$
of Theorem~\ref{thm:8.22} is built from the $C^5$ bound and the curvature lower bound of the sequence
generated by \textbf{one fixed} exact ambiguous pair, and the quantifier order matters; whether a bound determined by $L_0$ alone, valid simultaneously for all
pairs satisfying (H), exists is not addressed here.

\textbf{4. Weakening the comparison, the regularity, and the degree.} Three hypotheses invite
relaxation. The data are compared in a \emph{given} common arclength label (S2); taking an infimum
over labels on the left of Theorem~\ref{thm:8.17}, so that the conclusion becomes intrinsic to the
unparametrised curves, would be a stronger statement. Section~\ref{sec:8.3}
onwards fixes $r=5$ because $\tau\in C^2$ is what Theorem~\ref{thm:7.1} consumes; quantitative
stability at $r=4$, where $\tau$ is only $C^1$, would require a different one-dimensional input. And
Theorem~\ref{thm:7.1} is a $d=2$ statement whose proof uses that the sign group is $\mathbb Z_2$ and
that non-degeneracy makes zeros simple and matchable in pairs; whether an effective two-datum modulus
of the same shape holds for $d\ge3$ --- the effectiveness question of \cite[Question 1.5]{ref27} ---
is open, and the matching-and-parity argument of (P3)--(P4) is the part that would have to be
replaced.

\textbf{5. Zero curvature.} Everything here assumes $\kappa>0$ (S1), which is where the Frenet frame
lives. Points of vanishing curvature are a genuinely different problem, requiring a substitute for
the frame before the question can even be posed.

%% file: sections/A-construction-chain.tex
\section{The construction chain of \S\ref{sec:flexibility}}
\label{app:A}

This appendix contains the bookkeeping of \S\ref{sec:5.1}: the proofs of Lemmas~\ref{lem:5.1}
and \ref{lem:5.3}, and the two auxiliary lemmas that guarantee that every perturbation used in
\S\ref{sec:flexibility} preserves embeddedness, the tangle types relative to their boundary spheres,
the planar collars and the exteriors of the balls.

\subsection{Proof of Lemma~\ref{lem:5.1} (relative local knot insertion)}
\label{sec:A.1}

The construction inserts each factor inside a ball strictly smaller than its decomposing ball and
lets the curve \textbf{equal} the base circle --- as a map, not merely as a set --- outside finitely
many small open parameter arcs, so the collars are exactly circular by construction and no
normalisation of the tangle ends is needed. One standard input is used.

\begin{quote}
\textbf{Long knots.} Every knot type $K$ is realised by a \textbf{long knot}: a smooth proper embedding
$u:\mathbb R\to\mathbb R^3$ with $u(s)=(s,0,0)$ for $|s|\ge1$, whose closure in
$S^3=\mathbb R^3\cup\{\infty\}$ is a smooth knot of type $K$. This is the standard correspondence
between knots and local knots \cite[\S2.7, \S7]{ref30}: represent $K$ by a smooth knot $k\subset S^3$,
choose a point $p\in k$ and an open ball $V\ni p$ meeting $k$ in a single unknotted subarc with
$\partial V\pitchfork k$, and read the properly embedded arc $k\setminus V$ in the ball
$S^3\setminus V$ through an orientation-preserving diffeomorphism of $\operatorname{int}
(S^3\setminus V)$ with $\mathbb R^3$ that carries the arc to a properly embedded copy of $\mathbb R$
agreeing with the $x$-axis outside a compact set; a final diffeomorphism supported near that compact
set normalises it to agree with the $x$-axis for $|s|\ge1$. Here $u$ is \emph{not} assumed to be a graph over the $x$-axis, and for a non-trivial $K$ it cannot
be: a properly embedded graph $s\mapsto(s,f(s),g(s))$ with $f,g$ compactly supported is carried to the
$x$-axis by the isotopy $(1-\theta)(f,g)$ through embeddings, hence unknotted. Put
\[
\varrho_u:=\max\bigl\{\,|u(s)|:\ |s|\le1\,\bigr\}\ \ge\ 1 ,
\]
and rescale: $u_\lambda(s):=\lambda\,u(s/\lambda)$ is again a long knot of type $K$, with
$u_\lambda(s)=(s,0,0)$ for $|s|\ge\lambda$ and $|u_\lambda(s)|\le\lambda\varrho_u$ for $|s|\le\lambda$.
Thus the knotted portion may be confined to an arbitrarily small neighbourhood of the origin.
\end{quote}

\begin{proof}[Proof of Lemma~\ref{lem:5.1}]
\textbf{(1) The base circle, the balls and the tube.} Let $C_0\subset\Pi$ be the circle of curvature
$\kappa_c>0$, parametrised by arclength as $c:\mathbb R/L_c\mathbb Z\to\Pi$, and let
\[
n(t):=\kappa_c^{-1}c''(t)
\]
be its \textbf{principal} normal, which points to the centre. Then $(c'(t),n(t),e_3)$ is an orthonormal
frame with $\det\bigl(c',n,e_3\bigr)=+1$: for the counterclockwise circle
$c(t)=\kappa_c^{-1}(\cos\kappa_ct,\sin\kappa_ct,0)$ one has $c'=(-\sin,\cos,0)$,
$n=(-\cos,-\sin,0)$ and $c'\times n=e_3$. The orientation matters and is checked here because an
orientation-reversing chart would insert $\overline{K_i}$ in place of $K_i$. For
$0<\varrho<\kappa_c^{-1}$ the map
\[
\Psi:\ (\mathbb R/L_c\mathbb Z)\times \overline D^2_\varrho\longrightarrow\mathbb R^3,\qquad
\Psi\bigl(t,(y,z)\bigr):=c(t)+y\,n(t)+z\,e_3
\]
is an orientation-preserving diffeomorphism onto a closed solid-torus neighbourhood of $C_0$, with
$\Psi(t,0)=c(t)$; this is the tubular neighbourhood theorem for the embedded circle $C_0$, and here
$\Psi$ is explicit.

Fix $m$ pairwise disjoint closed parameter arcs $I_1,\dots,I_m$ and let $p_i:=c(t_i)$ be the image of
the midpoint $t_i$ of $I_i$. Choose $r_i>0$ so small that
\begin{enumerate}
\item[(a)] the closed balls $B_i:=\overline B(p_i,r_i)$ are pairwise disjoint;
\item[(b)] $c^{-1}(B_i)\subset\operatorname{int}I_i$;
\item[(c)] $\partial B_i$ meets $C_0$ transversally in exactly two points.
\end{enumerate}
Each of the three is an open condition satisfied for all small $r_i$: (a) because the $p_i$ are
distinct; (b) because $c^{-1}\bigl(\overline B(p_i,r)\bigr)$ shrinks to $\{t_i\}$ as $r\downarrow0$
and $\operatorname{int}I_i$ is a neighbourhood of $t_i$; (c) because for $0<r<2\kappa_c^{-1}$ a
sphere of radius $r$ centred at a point of a circle of radius $\kappa_c^{-1}$ meets that circle in
exactly two points, and the intersection is transversal since the two points are not antipodal on
$\partial B_i$ in the direction of $c'$. Each $B_i$ is round with centre on $\Pi$, so $R(B_i)=B_i$.
Put $B_i':=\overline B(p_i,r_i/2)$, so that $B_i'\Subset\operatorname{int}B_i$.

\textbf{(2) Inserting the factors.} Fix $i$, a long knot $u^{(i)}$ of type $K_i$ with constant
$\varrho_{u^{(i)}}$ as above, and read the local model in the tube coordinates $(t,y,z)$ of $\Psi$, the
$x$-axis of the model corresponding to the tube axis. Choose $\lambda_i>0$ so small that
\begin{equation}\label{eq:A.1}\tag{A.1}
\begin{gathered}
2\lambda_i\varrho_{u^{(i)}}<\varrho,\qquad
2\lambda_i<\tfrac12r_i,\qquad
[t_i-2\lambda_i,\ t_i+2\lambda_i]\subset\operatorname{int}I_i,\\[2pt]
\Psi\Bigl(\{|t-t_i|\le\lambda_i\varrho_{u^{(i)}}\}
\times\overline D^2_{\lambda_i\varrho_{u^{(i)}}}\Bigr)\subset\operatorname{int}B_i' ;
\end{gathered}
\end{equation}
the last inclusion holds for all small $\lambda_i$ because $\Psi$ is continuous and
$\Psi(t_i,0)=p_i\in\operatorname{int}B_i'$. Write $u^{(i)}_{\lambda_i}=(u_1,u_2,u_3)$ for the
rescaled long knot --- \textbf{no assumption is made on $u_1$ beyond $u_1(s)=s$ for
$|s|\ge\lambda_i$} --- and define $\gamma_0:\mathbb R/L_c\mathbb Z\to\mathbb R^3$ by
\[
\gamma_0(t):=\begin{cases}
\Psi\bigl(t_i+u_1(s),\ u_2(s),\ u_3(s)\bigr), & t=t_i+s\ \text{with}\ |s|\le2\lambda_i,\\[2pt]
c(t), & |t-t_i|\ge2\lambda_i\ \text{for every }i .
\end{cases}
\]
Put $J_i:=(t_i-\lambda_i,\ t_i+\lambda_i)$.

\textbf{(3) $\gamma_0$ is a $C^\infty$ embedding, and (T1).} On the \textbf{open} set
$\lambda_i<|s|<2\lambda_i$ one has $u^{(i)}_{\lambda_i}(s)=(s,0,0)$, so the first formula reads
$\Psi(t_i+s,0)=c(t_i+s)$ and coincides with the second together with all derivatives; the two
branches therefore glue to a $C^\infty$ map, and $\gamma_0=c$ \textbf{as maps} on
$(\mathbb R/L_c\mathbb Z)\setminus\bigcup_iJ_i$. By \eqref{eq:A.1}, $\gamma_0(J_i)$ and indeed the
whole block $\gamma_0([t_i-2\lambda_i,t_i+2\lambda_i])$ lie in $\operatorname{int}B_i'$: the
non-standard part by the last inclusion of \eqref{eq:A.1}, the two standard ends because
$|c(t_i\pm s)-p_i|\le|s|\le2\lambda_i<r_i/2$. This is (T1), in the stronger map-level form.

Regularity and injectivity. Write $P_i:=\{t:|t-t_i|\le2\lambda_i\}$ for the $i$-th block of
parameters and $P_0$ for the complement of $\bigcup_iP_i$; the $P_i$ are pairwise disjoint because
$P_i\subset\operatorname{int}I_i$ by \eqref{eq:A.1} and the $I_i$ are, and $\gamma_0=c$ on $P_0$.
Since $\varrho_{u^{(i)}}\ge1$, for $|s|\le2\lambda_i$ one has
$|u_1(s)|\le\max\{\lambda_i\varrho_{u^{(i)}},2\lambda_i\}\le2\lambda_i\varrho_{u^{(i)}}<\varrho
<\kappa_c^{-1}=\tfrac{L_c}{2\pi}<\tfrac{L_c}2$ by \eqref{eq:A.1}, so the first argument of $\Psi$ in
the block formula stays in an arc of length $<L_c$ about $t_i$ and the \emph{value} of $u_1(s)$, not
merely its residue modulo $L_c$, is determined by the block point. Hence on $P_i$ the map $\gamma_0$
is the diffeomorphism $\Psi$ composed with a translate of the embedding $u^{(i)}_{\lambda_i}$, so it
is injective there, while on $P_0$ it is the embedding $c$.

Now suppose $\gamma_0(t)=\gamma_0(t^\ast)$. If $t,t^\ast$ lie in one and the same $P_i$, or both in
$P_0$, injectivity of that single branch gives $t=t^\ast$. If they lie in two distinct blocks, their
images lie in the pairwise disjoint sets $\operatorname{int}B_i'$ by (T1), which is impossible.
There remains the mixed case $t=t_i+s\in P_i$ and $t^\ast\in P_0$. Then $\gamma_0(t)=c(t^\ast)$ lies
on $C_0$, which forces $u_2(s)=u_3(s)=0$, so
\[
\gamma_0(t)=\Psi\bigl(t_i+u_1(s),0\bigr)=c\bigl(t_i+u_1(s)\bigr),
\]
and $c$ being injective, $t^\ast=t_i+u_1(s)$.
Two cases, according to whether $s$ is carried into the knotted part of $u^{(i)}_{\lambda_i}$.
\begin{itemize}
\item $|u_1(s)|<\lambda_i$. Then $|t^\ast-t_i|<\lambda_i<2\lambda_i$, so $t^\ast\in P_i$, contrary to
  $t^\ast\in P_0$.
\item $|u_1(s)|\ge\lambda_i$. There $u^{(i)}_{\lambda_i}$ is already in standard form at the
  parameter $u_1(s)$, so
  $u^{(i)}_{\lambda_i}\bigl(u_1(s)\bigr)=(u_1(s),0,0)=u^{(i)}_{\lambda_i}(s)$, and injectivity of
  $u^{(i)}_{\lambda_i}$ gives $s=u_1(s)$; hence $t^\ast=t_i+u_1(s)=t_i+s=t\in P_i$, again contrary to
  $t^\ast\in P_0$.
\end{itemize}
It should be stressed that injectivity of $u^{(i)}_{\lambda_i}$ does \textbf{not} by itself rule out
$|u_1(s)|\ge\lambda_i$: what it yields there is the identity $s=u_1(s)$, which is not absurd and
merely says that the point sits at a standard end of the block. It is the position of $t^\ast$, not
the injectivity of the long knot, that discards the second case. With both cases excluded, the mixed
case does not occur, $\gamma_0$ is injective, and being an injective immersion of a compact manifold
it is a $C^\infty$ embedding.

\textbf{(4) (T2).} By (T1) the map $\gamma_0$ equals $c$ outside $\bigcup_iJ_i$, and
$\gamma_0(J_i)\subset\operatorname{int}B_i'\subset\operatorname{int}B_i$. Hence
$\gamma_0^{-1}(\partial B_i)=c^{-1}(\partial B_i)$, which by (c) of step (1) consists of exactly two
points at which the intersection is transversal, and $\gamma_0^{-1}(B_i)=c^{-1}(B_i)$ is a single
closed arc. On a neighbourhood of each of the two points $\gamma_0$ equals $c$, an exact
unit-speed circular arc of $\Pi$ of curvature $\kappa_c$: these are the collars.

\textbf{(5) (T3) and (T4).} By (T2) the pair $(B_i,\operatorname{Im}\gamma_0\cap B_i)$ is a
$1$-string tangle. Inside $B_i$ the arc coincides with an arc of $C_0$ except on $J_i$, whose image
lies in $\operatorname{int}B_i'$ and is, through the orientation-preserving chart $\Psi$, a rescaled
copy of the long knot $u^{(i)}$. Hence the tangle is obtained from the trivial $1$-string tangle by a
single local knot insertion of type $K_i$, and its type rel $\partial B_i$ is $K_i$; the orientation
check of step (1) is what makes the type $K_i$ and not $\overline{K_i}$. Since the $B_i$ are pairwise
disjoint and the curve enters and leaves each of them exactly once, iterating the standard
description of the connected sum by local knot insertion \cite[\S7]{ref30} gives
$[\gamma_0]=\#_{i=1}^mK_i$, each $\partial B_i$ exhibiting the corresponding factorisation. (When
some $K_i$ is trivial the sphere $\partial B_i$ is of course not a decomposing sphere in the strict
sense; the displayed connected sum is unaffected, and Remark~\ref{rem:5.15} records that the
construction tolerates trivial factors.) Uniqueness of the prime decomposition \cite{ref4} is not
used here.

\textbf{(6) (T5).} $R$ is the reflection in $\Pi$, $R(B_i)=B_i$ because $B_i$ is round with centre on
$\Pi$, and $R$ fixes $\Pi$ pointwise. On the two collars of $\partial B_i$ the curve lies in $\Pi$, so
$R\circ\gamma_0=\gamma_0$ \textbf{pointwise} there; the modified map therefore agrees with $\gamma_0$
on an open neighbourhood of the endpoints of $\gamma_0^{-1}(B_i)$ and is $C^\infty$. It is an
embedding: the modification is supported in $\gamma_0^{-1}(B_i)$, so nothing outside $B_i$ is touched
--- $R$ fixes $\Pi$ pointwise but not that exterior --- while $R(B_i)=B_i$ keeps the reflected arc
inside $B_i$, where it is the image of an injective map under the diffeomorphism $R$; the $B_i$ being
pairwise disjoint, modifications for distinct $i\in S$ cannot interfere. Since
$R$ reverses the orientation of $\mathbb R^3$, the tangle inside $B_i$ is replaced by its mirror
image, whose type rel $\partial B_i$ is $\overline{K_i}$; the balls being disjoint, the reflections
for distinct $i\in S$ are independent, and the resulting knot type is $\#_iK_i^{\varepsilon_i}$ with
$\varepsilon_i=-1$ exactly for $i\in S$.
\end{proof}

Only three inputs are used: the long-knot representation, the tubular neighbourhood theorem for an
embedded circle, and the description of the connected sum by local knot insertion.

\subsection{Proof of Lemma~\ref{lem:5.3} (relative removal of zero curvature)}
\label{sec:A.2}

We first record the isotopy-extension lemma used here and again in Lemma~\ref{lem:A.2}(4).

\begin{lemma}[embedding and factor types via isotopy extension]
\label{lem:A.1}
The curve $\widetilde\gamma$ of Lemma~\ref{lem:5.3} is ambient isotopic to $\gamma_0$, and inside
each $B_i$ the tangle keeps its type rel $\partial B_i$.
\end{lemma}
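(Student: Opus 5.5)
The plan is to connect $\gamma_0$ to $\widetilde\gamma$ by the straight-line homotopy
\[
\gamma_\theta:=(1-\theta)\gamma_0+\theta\widetilde\gamma,\qquad \theta\in[0,1],
\]
and to show that it is an isotopy through embeddings whose motion is confined to a compact subset of $\bigcup_i\operatorname{int}B_i$, away from the spheres $\partial B_i$. The isotopy extension theorem will then give an ambient isotopy. Moreover, since the support of that ambient isotopy can be taken to avoid each $\partial B_i$, each tangle type rel $\partial B_i$ is preserved.

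\textbf{Step 1: the homotopy consists of embeddings.} Lemma~\ref{lem:5.3} gives $\|\widetilde\gamma-\gamma_0\|_{C^2}<\epsilon$, so $\|\gamma_\theta-\gamma_0\|_{C^1}<\epsilon$ for every $\theta$. Embeddings form a $C^1$-open set on the compact domain $S^1$, as already used in Lemma~\ref{lem:6.2}. Hence, shrinking $\epsilon$ once (which Lemma~\ref{lem:5.3} permits), every $\gamma_\theta$ is an embedding. The family $\theta\mapsto\gamma_\theta$ is smooth in $(\theta,t)$, so this is a smooth isotopy of embeddings.

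\textbf{Step 2: the isotopy moves only inside the open balls.} The perturbation is supported in $\overline U$, and by \eqref{eq:5.1} we have $\overline U\subset\coprod_i(\alpha_i+2\rho,\beta_i-2\rho)$. Convention~\ref{conv:5.2} gives $[\alpha_i+2\rho,\beta_i-2\rho]\subset(\alpha_i,\beta_i)$, so $\gamma_0$ maps this closed arc into the compact set $\gamma_0([\alpha_i+2\rho,\beta_i-2\rho])\subset\operatorname{int}B_i$, at some positive distance $d_i$ from $\partial B_i$. Taking $\epsilon<\min_i d_i/2$, the moving part $\gamma_\theta(\overline U\cap[\alpha_i,\beta_i])$ stays in a fixed compact set $C_i\subset\operatorname{int}B_i$ for all $\theta$. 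Off $\overline U$ the curve does not move at all.

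\textbf{Step 3: apply isotopy extension with compact support.} The time-dependent vector field along the moving arcs is supported in $\bigcup_iC_i$. I would extend it by the standard construction (extending the normal velocity field in a tubular neighbourhood of the moving image, then cutting off) to a smooth vector field on $[0,1]\times\mathbb R^3$. Its support is taken inside a neighbourhood of $\bigcup_iC_i$ small enough to lie in $\bigcup_i\operatorname{int}B_i$ and to miss the image of the fixed part of the curve outside $\overline U$. Integrating this field yields diffeotopies $h_\theta$ with $h_0=\mathrm{id}$, $h_\theta\circ\gamma_0=\gamma_\theta$, and $h_\theta=\mathrm{id}$ near $\partial B_i$ and outside $\bigcup_iB_i$. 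Consequently $h_1$ carries $\gamma_0$ to $\widetilde\gamma$, so the two curves are ambient isotopic. Since each $h_\theta$ preserves each $B_i$ and fixes $\partial B_i$, the pair $(B_i,\operatorname{Im}\gamma_0\cap B_i)$ is carried to $(B_i,\operatorname{Im}\widetilde\gamma\cap B_i)$ by an isotopy rel $\partial B_i$. The tangle type is therefore unchanged, and it equals $K_i$ by (T3).

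The main obstacle is the localisation in Step 3. The extension has to be constructed so that it vanishes near $\partial B_i$ and near the unmoved strands, which might pass close to the moving arc inside $B_i$. This is handled by the relative (compactly supported) version of isotopy extension: the velocity field is required to vanish along the fixed portion of the embedded curve, which is possible because the velocity is zero there. The rest of the argument reduces to the quantitative smallness of $\epsilon$ from Steps 1 and 2.
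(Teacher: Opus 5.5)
Your proposal is correct and follows essentially the same route as the paper: the straight-line homotopy, $C^1$-openness of embeddings on the compact circle, and the Isotopy Extension Theorem with a cutoff making the diffeotopy supported in $\bigcup_i\operatorname{int}B_i$, so that each $\partial B_i$, the collars and the exterior are fixed. One phrase in Step~3 should be tightened: the support of the extended field cannot literally miss the fixed strands, since they abut the moving arc. As you say in your last paragraph, all that is needed is that the field vanish on those strands, and this is automatic because the velocity there is zero.
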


\begin{proof}
The straight-line homotopy $\gamma_t=(1-t)\gamma_0+t\widetilde\gamma$ is supported in
$\bigcup\operatorname{int}(B_i)$ and fixes $A$; for small $\epsilon$ every $\gamma_t$ is an
embedding, embeddings being $C^1$-open on a compact domain, so $t\mapsto\gamma_t$ is an isotopy of
embeddings. By the Isotopy Extension Theorem \cite[Thm.~8.1.3]{ref5} it extends to a compactly
supported diffeotopy $H_t$ of $\mathbb R^3$; since the velocity field vanishes on the fixed set $A$
and the trajectory is compactly contained in $\bigcup\operatorname{int}(B_i)$, a cutoff makes $H_t$
supported in $\bigcup\operatorname{int}(B_i)$, fixing the collars, each $\partial B_i$ and the
exterior.
\end{proof}

\begin{proof}[Proof of Lemma~\ref{lem:5.3}]
The set $\Sigma_2=\{(x,v,a):v\neq0,\ v\times a=0\}$ is a smooth codimension-$2$ submanifold of the
regular $2$-jet space, since the map $a\mapsto v\times a$ has rank $2$. Choose
$Z\subset\operatorname{int}K\subset K\subset U$ with $U$ precompact,
$\overline U\Subset\bigcup_i\operatorname{int}(B_i)$ and $U$ disjoint from $A$; since $Z$ is disjoint
from $A$ and contained in $\bigcup_i\operatorname{int}(B_i)$, Convention~\ref{conv:5.2} gives
$Z\subset\coprod_i[\alpha_i+2\rho,\ \beta_i-2\rho]$, so $U$ may be chosen to satisfy \eqref{eq:5.1}.

Perturb only on the compact set $K$: take a \textbf{finite} coordinate cover of $K$ and on each chart
three bumps, of position, first and second order, generating a finite family
$\{\psi_k\}\subset C_c^\infty(U,\mathbb R^3)$ such that $(s,w)\mapsto j^2\gamma_w(s)$, with
$\gamma_w=\gamma_0+\sum_kw_k\psi_k$, is submersive onto the normal directions of $\Sigma_2$ at each
point of $K$; a finite cover of the compact $K$ suffices. By the parametric form of the Jet
Transversality Theorem \cite[Thm.~3.2.8]{ref5}, for almost every small $w$ the holonomic $2$-jet
extension $j^2\gamma_w$ is transverse to $\Sigma_2$ on $K$. Since
$\dim S^1=1<\operatorname{codim}\Sigma_2=2$, transversality means empty preimage, so $\kappa>0$ on
$K$. Take the perturbation $C^2$-small: on the compact set $S^1\setminus\operatorname{int}K$ the
curve $\gamma_0$ has no zero-curvature point, so $\kappa_{\gamma_0}\ge c_0>0$, and $\kappa$ is a
continuous function of the $2$-jet, so $C^2$-smallness gives $\kappa_{\gamma_w}\ge c_0/2>0$ there.
Since $\operatorname{supp}\psi_k\subset U$, the collars and the exteriors of the balls are unchanged;
taking the perturbation also $C^1$-small preserves the embedding by Lemma~\ref{lem:A.1}.
\end{proof}

\subsection{The low-order gates}
\label{sec:A.3}

\begin{lemma}[low-order gates and arclength reparametrisation]
\label{lem:A.2}
There is $\epsilon_0>0$ such that for every $u\in C^\infty(S^1,\mathbb R^3)$ with $u\equiv0$ on
$S^1\setminus\Gamma$ and $\|u\|_{C^4}<\epsilon_0$, the curve $\eta:=\gamma_\flat+u$ satisfies:

\begin{enumerate}
\item \textbf{$\kappa>0$.} On the compact $S^1$ one has $\kappa\ge c_0>0$ for $\gamma_\flat$;
   $\kappa$ is a continuous function of the $2$-jet, so $\|u\|_{C^2}\le\|u\|_{C^4}$ small gives
   $\kappa_\eta\ge c_0/2>0$, and $\eta$ is still regular.
\item \textbf{The core image stays inside the private balls.} By Convention~\ref{conv:5.2}(P2),
   $\gamma_\flat(\overline{\Gamma_i})$ is a compact subset of $\operatorname{int}B_i$, so
   $\varrho_i:=\operatorname{dist}\bigl(\gamma_\flat(\overline{\Gamma_i}),\ \partial B_i\bigr)>0$;
   taking $\epsilon_0<\min_i\varrho_i$, the bound $\|u\|_{C^0}<\epsilon_0$ guarantees
   $\eta(\overline{\Gamma_i})\subset\operatorname{int}B_i$. Hence $\eta^{-1}(B_i)$ is still
   $[\alpha_i,\beta_i]$ and $\eta$ still meets $\partial B_i$ transversally in exactly two points.
\item \textbf{Embedding.} The set of embeddings of a compact domain is open in the $C^1$ topology,
   and $\|u\|_{C^1}\le\|u\|_{C^4}$ small preserves embeddedness.
\item \textbf{Relative tangle types.} The straight-line homotopy $\eta_t=\gamma_\flat+tu$ is
   supported in $\Gamma$ and fixes $S^1\setminus\Gamma$; by 2 and 3 each $\eta_t$ is an embedding
   with track compactly contained in $\bigcup\operatorname{int}B_i$. As in Lemma~\ref{lem:A.1} this
   extends to a compactly supported diffeotopy supported in $\bigcup\operatorname{int}B_i$, fixing
   each $\partial B_i$, the collars and the exterior. Hence $[\eta]=[\gamma_\flat]=\#_iK_i$ and the
   type of each tangle relative to $\partial B_i$ is unchanged.
\item \textbf{Reflection structure.} $\eta$ is still a $\Pi$-circular arc on a left neighbourhood of
   $c_i$ and a right neighbourhood of $d_i$, in particular near $\alpha_i,\beta_i$, and $R(B_i)=B_i$
   is unchanged, so Lemmas~\ref{lem:5.9} and \ref{lem:5.10} apply verbatim to $\eta$.
\item \textbf{Arclength reparametrisation.} Let $\hat\eta=\eta\circ\varphi^{-1}$ be the arclength
   reparametrisation of $\eta$. Then the image, the knot type, embeddedness, $\kappa>0$ and the
   simplicity of the torsion zeros are preserved (Lemma~\ref{lem:6.9}), and
   $\hat\tau=\tau\circ\varphi^{-1}$ gives $Z_\infty(\hat\tau)=\varphi\bigl(Z_\infty(\tau)\bigr)$:
   infinite-order flatness is invariant under composition with a $C^\infty$ diffeomorphism, because
   by the Faà di Bruno formula $\hat\tau^{(k)}(\varphi(t))$ is a polynomial in
   $\tau^{(1)}(t),\dots,\tau^{(k)}(t)$ and the derivatives of $\varphi^{-1}$, and symmetrically in
   the other direction. Hence the number $c$ of components and the correspondence between components
   are unchanged. \hfill$\square$
\end{enumerate}
\end{lemma}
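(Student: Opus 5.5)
The plan is to choose $\epsilon_0$ as the minimum of finitely many thresholds, one for each of the six items, each threshold coming from a low-order ($C^0$, $C^1$ or $C^2$) openness statement applied to the fixed curve $\gamma_\flat$. The constraint $\|u\|_{C^4}<\epsilon_0$ then dominates every lower norm that is needed. The support condition $u\equiv0$ off $\Gamma$ carries all the structural items (2, 4, 5). The reparametrisation item 6 is handled separately by a jet-level invariance argument. I would fix the order of quantifiers explicitly: first $\gamma_\flat$ and $\Gamma$, then the finitely many thresholds, then $\epsilon_0$. This order is what lets Lemma~\ref{lem:5.6} and Proposition~\ref{prop:5.7} later choose $\lambda$ and $w$ inside the gate.

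Items 1 and 3 are direct. The curvature $\kappa_{\gamma_\flat}=|\gamma_\flat'\times\gamma_\flat''|/|\gamma_\flat'|^3$ is positive by Lemma~\ref{lem:5.3}, so it is bounded below by some $c_0>0$ on the compact circle. Since $\kappa$ and $|\eta'|$ are continuous functions of the $2$-jet, a $C^2$-small $u$ keeps $\kappa\ge c_0/2$ and $\eta'\neq0$. For embeddedness I would cite $C^1$-openness of embeddings of a compact manifold, as already used in Lemma~\ref{lem:6.2}.

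For item 2, I would use Convention~\ref{conv:5.2}(P2), which gives $\varrho_i>0$, and take $\epsilon_0<\min_i\varrho_i$. On $S^1\setminus\Gamma$ we have $\eta=\gamma_\flat$, so the only possible change in $\eta^{-1}(B_i)$ comes from $\overline{\Gamma_j}$. Each $\overline{\Gamma_j}$ is carried into $\operatorname{int}B_j$, which is disjoint from $B_i$ for $j\neq i$. Hence $\eta^{-1}(B_i)=[\alpha_i,\beta_i]$, and transversality at $\alpha_i,\beta_i$ is inherited because $\eta=\gamma_\flat$ on the collars.

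Item 4 repeats the proof of Lemma~\ref{lem:A.1} with the isotopy $\gamma_\flat+tu$. Items 1–3 hold for every $tu$, since $\|tu\|_{C^4}<\epsilon_0$, so the homotopy is an isotopy of embeddings fixed off $\Gamma$, with track in $\bigcup\operatorname{int}B_i$. Isotopy extension with a cutoff then gives a diffeotopy supported in the balls.

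Item 5 uses (P1): $\gamma_\flat$ is an exact $\Pi$-circular arc on $[c_i,c_i+\rho)$ and $(d_i-\rho,d_i]$. Since $u$ vanishes on $S^1\setminus\Gamma\ni c_i$, the curve $\eta$ agrees with $\gamma_\flat$ on $(\alpha_i-2\rho,c_i]$, which is a left neighbourhood of $c_i$ containing the collar at $\alpha_i$; the right endpoint $d_i$ is symmetric. Lemmas~\ref{lem:5.9}--\ref{lem:5.10} use only pointwise coincidence on open neighbourhoods of $\alpha_i,\beta_i$ together with $R(B_i)=B_i$, so they transfer verbatim.

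The step I expect to be the only real point is item 6, which concerns the correspondence of $Z_\infty$ and of simplicity under the arclength diffeomorphism $\varphi$. Here $\varphi'=|\eta'|>0$ and $\varphi$ is $C^\infty$, so Lemma~\ref{lem:6.9} gives preservation of simplicity, image, knot type and curvature positivity. For $Z_\infty$, I would argue by induction using Faà di Bruno. Each $\hat\tau^{(k)}\circ\varphi$ is a linear combination of $\tau^{(1)},\dots,\tau^{(k)}$ with smooth coefficients built from derivatives of $\varphi^{-1}$. Hence the vanishing of all derivatives of $\tau$ at $t$ implies the vanishing of all derivatives of $\hat\tau$ at $\varphi(t)$, and conversely with $\varphi$ replaced by $\varphi^{-1}$. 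Since $\varphi$ is a homeomorphism, it maps components of $S^1\setminus Z_\infty(\tau)$ bijectively onto components of $S^1\setminus Z_\infty(\hat\tau)$, so $c$ is preserved.
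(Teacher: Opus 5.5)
Your proposal is correct and follows the paper's argument essentially item by item. It uses the same ingredients: $\epsilon_0$ as the minimum of finitely many low-order thresholds ($c_0/2$ for curvature, $\min_i\varrho_i$ for the balls, $C^1$-openness of embeddings), the isotopy extension of Lemma~\ref{lem:A.1} applied to $\gamma_\flat+tu$, pointwise coincidence with $\gamma_\flat$ off $\Gamma$ for the collars, and Faà di Bruno in both directions for $Z_\infty$. Your item 2 is slightly more explicit than the paper's, since you check that $\eta(\overline{\Gamma_j})\subset\operatorname{int}B_j$ cannot meet $B_i$ for $j\neq i$.
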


\textbf{Why the low-order and high-order requirements do not conflict.} Items 1--5 are \textbf{open}
conditions at the $C^0$, $C^1$ or $C^2$ level, and
$\|\cdot\|_{C^0}\le\|\cdot\|_{C^1}\le\|\cdot\|_{C^2}\le\|\cdot\|_{C^4}$; meanwhile both the amplitude
$\lambda$ of Lemma~\ref{lem:5.6} and the parameter $w$ of Proposition~\ref{prop:5.7} may be taken
arbitrarily small --- the first because the identity of Lemma~\ref{lem:5.5} is linear in $\phi$, the
second because the transversality is a \textbf{full-measure} statement and therefore does not require
a large perturbation. One therefore fixes $\epsilon_0$ according to 1--5 first, and chooses $\lambda$
and $w$ inside the $\epsilon_0$-ball of $C^4$ afterwards.

%% file: sections/B-proposition-57.tex
\section{Proof of Proposition~\ref{prop:5.7}(ii)}
\label{app:B}

Part (i) was proved in the body. For (ii) the order in which objects are produced is the whole point:
the local candidates are constructed first, a finite subcover is extracted, the complete family
$\{\psi_k\}_{k=1}^N$ --- and with it the dimension $N$ --- is fixed, and only afterwards is each local
$2\times2$ block treated as a continuous function of the complete variable
$(s,w)\in V\times\mathbb R^N$ and a single parameter ball chosen. No radius $\delta$ in the
$w$-variable is available before the family, and hence $N$, has been fixed.

\textbf{Step 1 (local candidates; only the information at $w=0$ is used).} Fix $s_0\in K$. By
Lemma~\ref{lem:A.2}(1), whose proof does not use the present proposition, so that there is no circularity, we
have $\kappa_{\gamma_1}>0$ and hence $w_0:=(v\times a)(s_0)\neq0$, where
$(v,a)=(\gamma_1'(s_0),\gamma_1''(s_0))$. Choose an \textbf{open} neighbourhood $O_{s_0}\subset V$ of
$s_0$ and $\chi\in C_c^\infty(O_{s_0})$ with $\chi\equiv1$ near $s_0$, and put
\[
\psi^{(s_0)}_1:=\chi\cdot\frac{(s-s_0)^3}{6}\,w_0,\qquad
\psi^{(s_0)}_2:=\chi\cdot\frac{(s-s_0)^4}{24}\,w_0 .
\]
At $s_0$ these satisfy $\psi'=\psi''=0$, $(\psi_1''',\psi_1'''')=(w_0,0)$ and
$(\psi_2''',\psi_2'''')=(0,w_0)$. Since
$\partial_wD=\det(\psi',a,j)+\det(v,\psi'',j)+\det(v,a,\psi''')$ and the first two terms vanish at
$s_0$, similarly for $D'$, along these two directions and at the base point $w=0$ --- where
$\gamma_w=\gamma_1$, so that the block is already well defined and independent of any perturbations
added later --- one has
\[
\frac{\partial(D,D')}{\partial\bigl(w^{(s_0)}_1,w^{(s_0)}_2\bigr)}\Bigl|_{(s_0,\,w=0)}=|w_0|^2\,I_2,
\]
which is invertible. At $w=0$ this block depends on $s$ only, and continuously, so there is an
\textbf{open} neighbourhood $O'_{s_0}\subset O_{s_0}$ of $s_0$ on which it is invertible at $w=0$; shrink
once more to an open $O''_{s_0}\ni s_0$ with $\overline{O''_{s_0}}\subset O'_{s_0}$ and
$\overline{O''_{s_0}}$ compact. This step involves no radius in the $w$-variable.

\textbf{Step 2 (fixing the complete finite family).} Since $K$ is compact, extract from
$\{O''_{s_0}\}_{s_0\in K}$ a finite subcover $K\subset\bigcup_{l=1}^{L}O''_{s_l}$, put $N:=2L$ and
\[
\{\psi_k\}_{k=1}^{N}:=\bigl\{\psi^{(s_l)}_1,\psi^{(s_l)}_2\bigr\}_{l=1}^{L}
\subset C_c^\infty(V,\mathbb R^3),
\]
with $w_{2l-1},w_{2l}$ the coefficients of $\psi^{(s_l)}_1,\psi^{(s_l)}_2$. The complete family, and
with it the dimension $N$, is now fixed, so that $w\in\mathbb R^N$,
$\gamma_w:=\gamma_1+\sum_{k=1}^Nw_k\psi_k$ and $(D_{\gamma_w},D'_{\gamma_w})$ are all well defined.

\textbf{Step 3 (one $\delta$, chosen on the complete variable $(s,w)$).} For $1\le l\le L$ put
\[
B_l(s,w):=\frac{\partial(D_{\gamma_w},D'_{\gamma_w})}{\partial(w_{2l-1},w_{2l})}(s)\in\mathbb R^{2\times2}.
\]
Since $\gamma_w$ is affine in $w$ with smooth coefficients, $B_l$ --- and hence $\det B_l$ --- is a
continuous function of $(s,w)\in V\times\mathbb R^N$, jointly in all $N$ coordinates and not merely
in $s$. By Step 1, $\det B_l(\cdot,0)$ is nowhere zero on the compact set $\overline{O''_{s_l}}$, so
$\min_{\overline{O''_{s_l}}}\bigl|\det B_l(\cdot,0)\bigr|>0$; and $\det B_l$ is uniformly continuous
on the compact set $\overline{O''_{s_l}}\times\{|w|\le1\}$, so there is $\delta_l\in(0,1)$ with
$\det B_l\neq0$ on $\overline{O''_{s_l}}\times\{|w|<\delta_l\}$. Taking the minimum over the
\textbf{finitely many} $l$, set $\delta:=\min_{1\le l\le L}\delta_l>0$ and
$W:=\{w\in\mathbb R^N:|w|<\delta\}$.

\textbf{Step 4 (parametric transversality and Sard).} Fix $l$ and let $F_l:O''_{s_l}\times W\to\mathbb R^2$,
$F_l(s,w):=(D_{\gamma_w},D'_{\gamma_w})(s)$; its domain is an open manifold \textbf{without boundary}. By
the invertibility of Step 3, $\partial_wF_l$ is surjective on all of $O''_{s_l}\times W$, hence
$dF_l$ is surjective, $(0,0)$ is a regular value of $F_l$, and $F_l^{-1}(0)$ is a smooth manifold of
dimension $(1+N)-2$. Apply Sard's theorem to the projection $\pi:F_l^{-1}(0)\to W$: its regular
values form a set of full measure in $W$. If $w$ is a regular value of $\pi$ then $(0,0)$ is a
regular value of $F_l(\cdot,w):O''_{s_l}\to\mathbb R^2$; but the image of the tangent space of a
$1$-manifold is at most $1$-dimensional and cannot span $\mathbb R^2$, so
$F_l(\cdot,w)^{-1}(0)=\varnothing$. Intersecting the full-measure sets over all $l$ --- still of full
measure, hence meeting every neighbourhood of $0$ --- and using $K\subset\bigcup_lO''_{s_l}$ gives the
required $w$.

Finally, since $\dim S^1=1<\operatorname{codim}\Xi=2$, transversality can hold only as empty
intersection, so $j^4\gamma_w\pitchfork\Xi$ on $K$ is equivalent to $(D,D')\neq(0,0)$ on $K$; and
since $D'_{\gamma_w}=(D_{\gamma_w})'$ --- of the three terms in the derivative of the determinant the
first two have a repeated column and vanish --- this is in turn equivalent to all zeros of
$D_{\gamma_w}$ on $K$ being simple. \hfill$\blacksquare$

%% file: sections/C-constants.tex
\section{Explicit constants and elementary verifications}
\label{app:C}

\subsection{Proof of Lemma~\ref{lem:8.14}}
\label{sec:C.1}

\begin{proof}
Unit speed gives $\alpha'\times\alpha''=\kappa B$ and $|\alpha'\times\alpha''|=\kappa=|\alpha''|$;
inserting this into the parametrisation-free formula of Lemma~\ref{lem:2.8}, Proof 2, yields
\[
\tau_\alpha=\frac Du,\qquad D:=\det(\alpha',\alpha'',\alpha'''),\qquad u:=|\alpha''|^2=\kappa^2 .
\]
Multilinearity of the determinant, together with the vanishing of a determinant with two equal
columns, gives
\[
D'=\det(\alpha',\alpha'',\alpha^{(4)}),\qquad
D''=\det(\alpha',\alpha''',\alpha^{(4)})+\det(\alpha',\alpha'',\alpha^{(5)}).
\]
So the computation involves no derivative beyond $\alpha^{(5)}$, and $C^5$ is exactly what is needed
--- which is why \S\ref{sec:8.3} onwards fixes $r=5$, giving $\tau\in C^{r-3}=C^2$. Write
$m_j=\|\alpha^{(j)}\|_{C^0}$; the summed norm gives $m_j\le M$, $m_1=1$ and $M\ge1$. Hadamard's
inequality gives $|D|,|D'|\le M^2$ and $|D''|\le2M^2$; from $u'=2\langle\alpha'',\alpha'''\rangle$
and $u''=2|\alpha'''|^2+2\langle\alpha'',\alpha^{(4)}\rangle$ we get $M^{-2}\le u\le M^2$,
$|u'|\le2M^2$ and $|u''|\le4M^2$. Hence $w:=1/u$ satisfies $|w|\le M^2$, $|w'|=|u'|/u^2\le2M^6$ and
$|w''|\le|u''|/u^2+2|u'|^2/u^3\le4M^6+8M^{10}$. Applying Leibniz to $\tau=Dw$ and summing gives the
stated bound.
\end{proof}

\subsection{Verification of the family of Theorem~\ref{thm:7.2}}
\label{sec:C.2}

Throughout $\epsilon_0=10^{-3}$ and $0<\epsilon\le\epsilon_0$, so $T=\epsilon^{1/3}\le0.1$. Write
$p(s):=(s^2+T^2)^{-1/2}$. We first record the elementary extrema
\[
q'(x)=30x^2(1-x)^2,\qquad q''(x)=60x(1-x)(1-2x),
\]
\[
\max_{[0,1]}|q'|=q'(\tfrac12)=\tfrac{15}8,\qquad\max_{[0,1]}|q''|=\tfrac{10}{\sqrt3}<5.774,
\]
\[
\|p\|_\infty=T^{-1},\qquad\|p'\|_\infty=\tfrac{2}{3\sqrt3}T^{-2}<0.3850\,T^{-2},\qquad
\|p''\|_\infty=T^{-3},
\]
since $|p'(s)|=|s|(s^2+T^2)^{-3/2}$ is maximal at $s=T/\sqrt2$, while putting $s=xT$ gives
$p''=T^{-3}\phi(x)$ with $\phi(x)=\frac{2x^2-1}{(x^2+1)^{5/2}}$, whose modulus is maximal at $x=0$
with value $1$.

\textbf{(a) The $C^2$ splice and the periodic extension.} One has $q(0)=q'(0)=q''(0)=0$ and
$q(1)=10-15+6=1$, $q'(1)=30-60+30=0$, $q''(1)=60-180+120=0$, so at both splice points $x=0,1$ the
value and the first two derivatives of $\psi$ match those of the constants $0$ and $1$ on either
side; hence $\psi\in C^2(\mathbb R)$, the third derivative jumping there, which is immaterial. For
$\chi(s)=\psi(2(1-|s|))$: on a neighbourhood of $|s|<\frac12$ one has $\chi\equiv1$, so the failure
of $|s|$ to be differentiable at $s=0$ is harmless, while for $s\ne0$ the map $|s|$ is smooth and
composition preserves $C^2$. Thus $\chi\in C^2(\mathbb R)$ with
\[
\chi'(s)=-2\operatorname{sgn}(s)\,\psi'\bigl(2(1-|s|)\bigr),\qquad
\chi''(s)=4\,\psi''\bigl(2(1-|s|)\bigr),
\]
\[
\|\chi'\|_\infty\le2\cdot\tfrac{15}8=3.75,\qquad\|\chi''\|_\infty\le4\cdot\tfrac{10}{\sqrt3}<23.10,
\]
\[
\operatorname{supp}\chi',\ \operatorname{supp}\chi''\subset\{\tfrac12\le|s|\le1\}.
\]
Since $\operatorname{supp}\chi\subset[-1,1]\subset(-\pi,\pi)$ and $\chi$ vanishes together with its
first two derivatives for $|s|\ge1$, the $2\pi$-periodic extension of
$h_\epsilon=\lambda\epsilon\,\chi\,p$ has vanishing value and first two derivatives at the splice
points $s=\pm\pi$. Hence $h_\epsilon\in C^2(\mathbb R/2\pi\mathbb Z)$ and therefore
$g_\epsilon=f-h_\epsilon\in C^2(\mathbb R/2\pi\mathbb Z)$.

\textbf{(b) A uniform $C^2$ bound, independent of $\epsilon$.} On $\{\frac12\le|s|\le1\}$ one has
$p\le(1/4)^{-1/2}=2$ and $|p'|\le1\cdot(1/4)^{-3/2}=8$. From $T=\epsilon^{1/3}$ we get
$\epsilon T^{-1}=\epsilon^{2/3}$, $\epsilon T^{-2}=\epsilon^{1/3}$ and $\epsilon T^{-3}=1$ --- the
choice of exponent that keeps the $C^2$ norm bounded --- so Leibniz gives
\[
\|h_\epsilon\|_{C^0}\le\lambda\epsilon T^{-1}=\lambda\epsilon^{2/3},\qquad
\|h_\epsilon'\|_{C^0}\le\lambda\epsilon\bigl(\|\chi'\|_\infty\cdot2+\|p'\|_\infty\bigr)
\le\lambda\bigl(7.5\,\epsilon+0.3850\,\epsilon^{1/3}\bigr),
\]
\[
\|h_\epsilon''\|_{C^0}\le\lambda\epsilon\bigl(\|\chi''\|_\infty\cdot2+2\|\chi'\|_\infty\cdot8+\|p''\|_\infty\bigr)
\le\lambda\bigl(46.2\,\epsilon+60\,\epsilon+1\bigr),
\]
the terms containing $\chi'$ or $\chi''$ being supported in $\operatorname{supp}\chi'$, where the
bounds $p\le2$ and $|p'|\le8$ apply. Summing, in the sum-type $C^2$ norm,
\[
\|h_\epsilon\|_{C^2}\le\lambda\bigl(1+0.385\,\epsilon^{1/3}+\epsilon^{2/3}+113.7\,\epsilon\bigr)
\le0.25\times1.162<0.291\qquad(\epsilon\le10^{-3}),
\]
so that $\|f\|_{C^2}+\|g_\epsilon\|_{C^2}\le3+(3+0.291)=6.291\le7=B$, independently of $\epsilon$.

\textbf{(c) The non-degeneracy bound.} Here $f^2+(f')^2=\sin^2+\cos^2\equiv1$, and for $g=f-h$,
\[
g^2+(g')^2=1-2\bigl(fh+f'h'\bigr)+h^2+(h')^2\ \ge\ 1-2\|h\|_{C^0}-2\|h'\|_{C^0}.
\]
By (b), for $\epsilon\le10^{-3}$ we have $\|h\|_{C^0}\le0.25\times10^{-2}=2.5\times10^{-3}$ and
$\|h'\|_{C^0}\le0.25(7.5\times10^{-3}+0.0385)\le0.0115$, whence
\[
g_\epsilon^2+(g_\epsilon')^2\ \ge\ 1-0.005-0.023=0.972\ >\ 0.97\ >\ 0.81=\delta^2,
\]
and $\delta=0.9$ is likewise independent of $\epsilon$.

\textbf{(d) Two-sided bounds for $\eta_\epsilon$.} Write $f^2-g^2=h(2f-h)$. \emph{Upper bound}: using
$|\sin s|\le|s|$ and $\chi\le1$,
\[
|f(s)h(s)|=|\sin s|\,\lambda\epsilon\,\chi(s)p(s)\le\lambda\epsilon\frac{|s|}{\sqrt{s^2+T^2}}\le\lambda\epsilon,
\]
so $\eta_\epsilon\le2\|fh\|_{C^0}+\|h\|_{C^0}^2\le2\lambda\epsilon+\lambda^2\epsilon^{4/3}
=2\lambda\epsilon\bigl(1+\frac\lambda2\epsilon^{1/3}\bigr)\le2.1\,\lambda\epsilon$.
\emph{Lower bound}: it suffices to evaluate at the \textbf{fixed} point $s_*=\frac12$, where $\chi(s_*)=1$;
since $T\le0.1$,
\[
h(s_*)=\frac{\lambda\epsilon}{\sqrt{1/4+T^2}}\ \ge\ \frac{\lambda\epsilon}{\sqrt{0.26}}
\ \ge\ 1.9611\,\lambda\epsilon,\qquad h(s_*)\le\frac{\lambda\epsilon}{1/2}=2\lambda\epsilon\le5\times10^{-4},
\]
\[
2f(s_*)-h(s_*)=2\sin\tfrac12-h(s_*)\ \ge\ 0.95885-0.0005=0.95835,
\]
\[
\eta_\epsilon\ \ge\ |f^2-g^2|(s_*)=h(s_*)\bigl(2f(s_*)-h(s_*)\bigr)
\ \ge\ 1.9611\times0.95835\,\lambda\epsilon\ \ge\ 1.87\,\lambda\epsilon .
\]
Both bounds are used in step (f) of the proof of Theorem~\ref{thm:7.2}, and in opposite directions.

\subsection{The explicit bounds \eqref{eq:7.3} and \eqref{eq:8.1}}
\label{sec:C.3}

Both are stated on an explicit range of the parameters rather than as unquantified $O(\cdot)$
symbols, and both are obtained by enlarging each term of the constant separately.

\textbf{(a) Proof of \eqref{eq:7.3}.} Assume
\[
B\ge1,\qquad 0<\delta\le\min\{1,B\},
\]
so that $\delta^{-1}\le\delta^{-4}$, $B\le B^3$ and $\delta^{-k}\le\delta^{-4}$ for $k\le4$. Recall
$r=\frac{\delta}{4B}$, $a_0=\frac{\delta^2}{16B}$, $N_*=\frac{2BL_0}{\delta}$ and
$\eta_0=\frac{\delta^4}{512B^2}$. First,
$c_1=\max\{\frac4\delta,\frac1{\sqrt{2B\delta}}\}=\frac4\delta$, because
$\frac{16}{\delta^2}\ge\frac1{2B\delta}$ is equivalent to $32B\ge\delta$, which holds. The six
non-logarithmic groups of terms of $C_A$ are then bounded by
\[
\frac{2BL_0}{\eta_0}=1024\,B^3L_0\delta^{-4},\qquad
\frac{L_0}{2\delta}\le\tfrac12B^3L_0\delta^{-4},\qquad
\frac{2BL_0}{\delta^2}\le2B^3L_0\delta^{-4},
\]
\[
\frac{L_0}{a_0}=\frac{16BL_0}{\delta^2}\le16\,B^3L_0\delta^{-4},\qquad
N_*\cdot\frac{32B}{\delta^2}=64\,B^2L_0\delta^{-3}\le64\,B^3L_0\delta^{-4},
\]
\[
N_*\cdot4Bc_1^2=\frac{2BL_0}{\delta}\cdot\frac{64B}{\delta^2}=128\,B^2L_0\delta^{-3}
\le128\,B^3L_0\delta^{-4}.
\]
For the logarithmic term, $r\sqrt{2B\delta}=\frac{\sqrt2}{4}\,\delta^{3/2}B^{-1/2}\le\frac{\sqrt2}4<1$,
so
\[
\bigl|\ln\bigl(r\sqrt{2B\delta}\bigr)\bigr|=\ln\tfrac{4}{\sqrt2}+\tfrac32\ln\tfrac1\delta+\tfrac12\ln B
\le1.04+\tfrac32\,\delta^{-1}+\tfrac12B,
\]
using $\ln x\le x$ twice. Hence
\[
N_*\cdot\frac2\delta\Bigl(1+\bigl|\ln(r\sqrt{2B\delta})\bigr|\Bigr)
\le\frac{4BL_0}{\delta^2}\Bigl(2.04+\tfrac32\delta^{-1}+\tfrac12B\Bigr)
\]
\[
\le\bigl(8.16+6+2\bigr)B^3L_0\delta^{-4}\le16.2\,B^3L_0\delta^{-4}.
\]
Adding, $C_A\le(1024+0.5+2+16+64+128+16.2)\,B^3L_0\delta^{-4}\le1300\,B^3L_0\delta^{-4}$, which is
\eqref{eq:7.3}. The dominant term is the first, coming from the trivial branch (P2)(1) at the
threshold $\eta_0$, and no attempt has been made to balance it against the others.

\textbf{(b) Proof of \eqref{eq:8.1}.} Let $M\ge1$ and $0<\delta\le1$, and put $\mathsf B=2B(M)=44M^{12}$,
so $\mathsf B\ge44\ge1$ and $\delta\le1\le\mathsf B$: the range of (a) applies. Since
$44^3=85184$,
\[
C_A(\mathsf B,\delta,L_0)\le1300\,\mathsf B^3L_0\delta^{-4}=1300\cdot85184\;M^{36}L_0\delta^{-4}
\le1.108\times10^{8}\,M^{36}L_0\delta^{-4}.
\]
Therefore, using $M^{36}\delta^{-4}\ge1$,
\[
C_{\mathrm{geo}}=\sqrt2\,L_0\bigl(L_0+C_A(\mathsf B,\delta,L_0)\bigr)
\le\sqrt2\,L_0^2\bigl(1+1.108\times10^{8}M^{36}\delta^{-4}\bigr)
\]
\[
\le1.6\times10^{8}\,L_0^2M^{36}\delta^{-4},
\]
which is \eqref{eq:8.1}. Both bounds are sufficient, not sharp; in particular the exponent $-4$ in
$\delta$ is the one that this proof delivers, and \S\ref{sec:questions} records that its true order is
not determined here.

%% file: sections/D-notation.tex
\section{Notation}
\label{app:D}

\begin{center}
\tabtext{%
\begin{tabular}{@{}>{\raggedright\arraybackslash}p{0.26\textwidth}
                  >{\raggedright\arraybackslash}p{0.46\textwidth}
                  >{\raggedright\arraybackslash}p{0.20\textwidth}@{}}
\toprule
Symbol & Meaning & Introduced \\
\midrule
$\kappa,\tau$ & curvature and signed torsion, sign fixed by $B'=-\tau N$ & Def.~\ref{def:2.3} \\
\addlinespace
$F_{(\kappa,|\tau|)}$ & the unsigned Frenet datum (information operator) & Def.~\ref{def:2.3} \\
\addlinespace
$\mathrm{Fib}(d)$, $\mathrm{Fib}_{SE}(d)$, $\mathrm{Fib}_{E}(d)$ & the fibre over an unsigned datum, and its quotients by $SE(3)$ and by $E(3)$ & Def.~\ref{def:2.4} \\
\addlinespace
$\mathcal F(d)=\mathcal F(\kappa,|\tau|)$ & knot types realised in the fibre, in a common label & Def.~\ref{def:2.4} \\
\addlinespace
$\operatorname{ord}_pf$, $Z$, $Z_{\mathrm{fin}}$, $Z_\infty$ & order of a zero; zero set; finite- and infinite-order parts & Def.~\ref{def:2.6} \\
\addlinespace
$SE(3)$, $E(3)$, $\overline K$, $K^{\pm1}$ & rigid motions; with reflections; mirror knot & Def.~\ref{def:2.2} \\
\addlinespace
$\Omega$, $F=(T|N|B)$ & Frenet coefficient matrix and frame & Lem.~\ref{lem:2.10} \\
\addlinespace
$\mathcal L(f)$ & smooth signed lifts: $\{g\in C^\infty:|g|=|f|\}$ & \S\ref{sec:branch} \\
\addlinespace
$\mathcal E(f)$, $\Phi$ & locally constant signs on $S^1\setminus Z_\infty$; the bijection & Thm.~\ref{thm:3.7} \\
\addlinespace
$c(f)$ & \textbf{branch invariant}: $\#$ components of $S^1\setminus Z_\infty(f)$ & Cor.~\ref{cor:3.8} \\
\addlinespace
$\Pi$, $R$, $\kappa_c$ & mirror plane $\{z=0\}$; reflection in it; collar curvature & \S\ref{sec:flexibility} \\
\addlinespace
$B_i$, $\Gamma_i=(c_i,d_i)$, $\Gamma$ & private balls; core open arcs; their union & Lem.~\ref{lem:5.1}, Conv.~\ref{conv:5.2} \\
\addlinespace
$D_\gamma$, $D'_\gamma$ & $\det(\gamma',\gamma'',\gamma''')$, $\det(\gamma',\gamma'',\gamma'''')$ & \S\ref{sec:5.2}, Def.~\ref{def:6.1} \\
\addlinespace
(H) & prime, chiral, pairwise non-mirror factors & \S\ref{sec:5.3} \\
\addlinespace
$\gamma_\varepsilon$, $\varepsilon\in\{\pm1\}^m$ & the mirrored family & Thm.~\ref{thm:5.11} \\
\addlinespace
$\mathcal E^r$, $\mathcal G^r$, $\mathcal G^r_{L_0}$ & $C^r$ embeddings; simple torsion zeros; normalised slice & Def.~\ref{def:6.1}, Def.~\ref{def:8.1} \\
\addlinespace
$\rho(t)=t(1+\log_+\frac1t)$ & the log-Lipschitz modulus & \S\ref{sec:engine} \\
\addlinespace
$\eta$, $\Theta$ & $\Vert f^2-g^2\Vert_{C^0}$; $\Vert(\kappa_\alpha,\tau^2_\alpha)-(\kappa_\beta,\tau^2_\beta)\Vert_{C^0}$ & Thm.~\ref{thm:7.1}, Thm.~\ref{thm:8.17} \\
\addlinespace
$C_A$, $B(M)$, $C_{\mathrm{geo}}$, $C_*$ & one-dimensional, torsion, geometric and optimal constants & Thm.~\ref{thm:7.1}, Lem.~\ref{lem:8.14}, Thm.~\ref{thm:8.17}, Def.~\ref{def:8.21} \\
\addlinespace
$N_{L_0}$, $\mathcal D$ & normalisation to length $L_0$; squared datum $(\kappa,\tau^2)$ of the normalised curve & Def.~\ref{def:8.1} \\
\addlinespace
$D$, $\bar D$ & Hausdorff orbit distance mod $E(3)$; the same after normalising both arguments & Def.~\ref{def:8.1} \\
\addlinespace
$d_{\mathrm{lab}}$ & labelled orbit distance $\inf_{g\in E(3)}\Vert\alpha-g\beta\Vert_{C^0}$; $D\le d_{\mathrm{lab}}$ & Def.~\ref{def:8.1}, Lem.~\ref{lem:8.2} \\
\addlinespace
$\Delta(\alpha)$ & \textbf{quantitative non-degeneracy}: $\inf_s\sqrt{\tau^2+(\tau')^2}$ & Def.~\ref{def:8.13} \\
\addlinespace
$\mathcal K^5_{\delta,M}(L_0)$ & non-degenerate normalised strata & Def.~\ref{def:8.13} \\
\bottomrule
\end{tabular}}
\end{center}

%% file: sections/E-declarations.tex
%

\section*{Acknowledgements}
\addcontentsline{toc}{section}{Acknowledgements}

We thank Professor Song Dai, of the Center for Applied Mathematics, Tianjin University, for reading
the manuscript and for valuable guidance. We also thank Zhongzhen Hengyu Intelligent Technology
(Tianjin) Co., Ltd. for providing computing facilities and research resources in support of this
work.

\section*{Statements and Declarations}
\addcontentsline{toc}{section}{Statements and Declarations}

\paragraph{Funding.}
This work was supported by the Tianjin Municipal Science and Technology Major Program under Grant
No.\ 25ZXZSSS00680.

\paragraph{Competing Interests.}
One of the authors, Q.\ Tian, is affiliated with Zhongzhen Hengyu Intelligent Technology (Tianjin)
Co., Ltd.\ in addition to Tianjin Normal University, and that company provided computing facilities
and research resources in support of the work reported here. This is disclosed in place of a generic
declaration of no competing interests. The authors have no other relevant financial or non-financial
interests to disclose.

\paragraph{Author Contributions.}
All authors contributed to the conception of the problem and to the formulation of the results.
The proofs were developed and written by all authors jointly; all authors read, checked and approved
the final manuscript.

\paragraph{Data Availability.}
No datasets were generated or analysed during the current study. All statements of the paper are
mathematical, and the arguments supporting them are contained in the paper itself.

\paragraph{Ethics Approval.}
Not applicable: this work involves no human participants, no animals, and no personal data.